\title{Spectral theorems for random walks on mapping class groups and $\text{Out}(F_N)$}
\author{François Dahmani and Camille Horbez}
\begin{document}
\maketitle
\newtheorem{de}{Definition} [section]
\newtheorem{theo}[de]{Theorem} 
\newtheorem{prop}[de]{Proposition}
\newtheorem{lemma}[de]{Lemma}
\newtheorem{cor}[de]{Corollary}
\newtheorem{propd}[de]{Proposition-Definition}

\theoremstyle{remark}
\newtheorem{rk}[de]{Remark}
\newtheorem{ex}[de]{Example}
\newtheorem{question}[de]{Question}

\normalsize

\begin{abstract}
We establish spectral theorems for random walks on mapping class groups of connected, closed, oriented, hyperbolic surfaces, and on $\text{Out}(F_N)$. In both cases, we relate the asymptotics of the stretching factor of the diffeomorphism/automorphism obtained at time $n$ of the random walk to the Lyapunov exponent of the walk, which gives the typical growth rate of the length of a curve -- or of a conjugacy class in $F_N$ -- under a random product of diffeomorphisms/automorphisms. 

In the mapping class group case, we first observe that the drift of the random walk in the curve complex is also equal to the linear growth rate of the translation lengths in this complex. By using a contraction property of typical Teichmüller geodesics, we then lift the above fact to the realization of the random walk on the Teichmüller space. For the case of $\text{Out}(F_N)$, we follow the same procedure with the free factor complex in place of the curve complex, and the outer space in place of the Teichmüller space. A general criterion is given for making the lifting argument possible.
\end{abstract}
\setcounter{tocdepth}{1}
\tableofcontents

\section*{Introduction}

 In a famous work on random products of matrices \cite{FK60}, Furstenberg and Kesten proved that, under very general conditions, the norm of a random product of $n$ matrices grows as a deterministic exponential function of $n$. This result was later improved by Furstenberg, who established in \cite[Theorems 8.5 and 8.6]{Fur63} that if $(A_i)_{i\in\mathbb{N}}$ is a sequence of matrices chosen uniformly at random among a finite generating set for $SL(N,\mathbb{Z})$, then there exists $\lambda>1$ such that for all vectors $v\in\mathbb{R}^{N}\smallsetminus\{0\}$, almost surely, one has $$\lim_{n\to +\infty}\sqrt[n]{||A_n\dots A_1.v||}=\lambda$$ (and $\log\lambda$ is usually called the \emph{Lyapunov exponent} of the random process). Furstenberg's theorem was a starting point in the study of growth of vectors under random products of matrices, which had many further developments, including the famous multiplicative ergodic theorem of Oseledets \cite{Ose68}. Guivarc'h established in \cite[Théorème 8]{Gui90} a spectral theorem, relating the asymptotics of the top eigenvalue $\lambda(A_n\dots A_1)$ of the matrix $A_n\dots A_1$ to the Lyapunov exponent, showing that almost surely, one has $$\lim_{n\to +\infty}\sqrt[n]{\lambda(A_n\dots A_1)}=\lambda$$ (Guivarc'h actually relates the whole spectrum of the matrix $A_n\dots A_1$ to the set of all Lyapunov exponents of the process).
\\
\\
\indent The goal of the present paper is to establish spectral theorems for the random walks on either the mapping class group $\text{Mod}(S)$ of a closed, connected, oriented, hyperbolic surface $S$ -- i.e. the group of all isotopy classes of orientation-preserving diffeomorphisms of $S$ -- or on the group $\text{Out}(F_N)$ of outer automorphisms of a finitely generated free group. 

\paragraph*{Mapping class groups.} We fix once and for all a hyperbolic metric $\rho$ on $S$. Given a simple closed curve $c$ on $S$, we will denote by $l_{\rho}(c)$ the smallest length with respect to $\rho$ of a curve in the isotopy class of $c$. 
{An important} theorem of Thurston describes the possible growth rates of the lengths of essential simple closed curves on $S$ under iteration of a single mapping class of the surface: Thurston proved in \cite[Theorem 5]{Thu88} that for all $\Phi\in\text{Mod}(S)$, there is a finite set $1\le\lambda_1<\lambda_2<\dots <\lambda_K$ of algebraic integers such that for any essential simple closed curve $c$ on $S$, there exists $i\in\{1,\dots,K\}$ such that $$\lim_{n\to +\infty}\sqrt[n]{l_{\rho}(\Phi^n(c))}=\lambda_i.$$ In addition, the mapping class $\Phi$ is pseudo-Anosov if and only if $K=1$ and $\lambda_1>1$.

In \cite{Kar12}, Karlsson proved a version of Thurston's theorem for random products of mapping classes, describing the growth of simple closed curves on $S$ under such products. Given a probability measure $\mu$ on $\text{Mod}(S)$, the \emph{(right) random walk} on $(\text{Mod}(S),\mu)$ is the Markov chain whose value at time $n$ is a product $\Phi_n$ of $n$ independent random mapping classes $s_i$, all distributed with respect to the law $\mu$, i.e. $\Phi_n=s_1\dots s_n$. In the following statement, a subgroup of $\text{Mod}(S)$ is \emph{nonelementary} if it contains two pseudo-Anosov diffeomorphisms of the surface that generate a free subgroup of $\text{Mod}(S)$  (equivalently \cite{McP89}, it is not virtually cyclic, and does not virtually preserve the isotopy class of any proper essential subsurface of $S$).

\begin{theo}(Karlsson \cite[Corollary 4]{Kar12})\label{K-intro}
Let $\mu$ be a probability measure on $\text{Mod}(S)$, with finite first moment with respect to the Teichmüller metric, whose support generates a nonelementary subgroup of $\text{Mod}(S)$. Then there exists $\lambda>1$ such that for all essential simple closed curves $c$ on $S$, and almost every sample path $(\Phi_n)_{n\in\mathbb{N}}$ of the random walk on $(\text{Mod}(S),\mu)$, one has $$\lim_{n\to +\infty}\sqrt[n]{l_{\rho}(\Phi_n^{-1}(c))}=\lambda.$$ 
\end{theo}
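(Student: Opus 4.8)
The plan is to realise the random walk on Teichmüller space $\mathcal{T}(S)$ and to run the whole argument at the level of extremal lengths, converting back to hyperbolic lengths only at the very end. Fix the convention $\Phi\cdot X=X\circ\Phi^{-1}$ for the $\text{Mod}(S)$-action on $\mathcal{T}(S)$, so that $l_X(\Phi^{-1}(c))=l_{\Phi\cdot X}(c)$ and, likewise, $\mathrm{Ext}_X(\Phi^{-1}(c))=\mathrm{Ext}_{\Phi\cdot X}(c)$ for every essential simple closed curve $c$, where $\mathrm{Ext}$ denotes extremal length. Put $Y_n=\Phi_n\cdot\rho$; then $l_\rho(\Phi_n^{-1}(c))=l_{Y_n}(c)$ and $\mathrm{Ext}_\rho(\Phi_n^{-1}(c))=\mathrm{Ext}_{Y_n}(c)$. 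First I would apply Kingman's subadditive ergodic theorem to the subadditive cocycle $n\mapsto d_{\mathcal{T}}(\rho,Y_n)$: by the finite first moment hypothesis the drift $\ell:=\lim_n\tfrac1n d_{\mathcal{T}}(\rho,Y_n)$ exists almost surely and is a deterministic constant. That $\ell>0$ when the support of $\mu$ generates a nonelementary subgroup is a known fact that I would quote (it follows, for instance, from positivity of the drift of the induced walk on the Gromov-hyperbolic curve complex, or directly from the work of Kaimanovich--Masur).

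Next I would describe the geometry of a typical sample path. By Kaimanovich--Masur, almost every path $(Y_n)$ converges in Thurston's compactification $\overline{\mathcal{T}(S)}=\mathcal{T}(S)\cup\mathcal{PMF}(S)$ to a foliation $\xi\in\mathcal{PMF}(S)$ which is uniquely ergodic and fills $S$; in particular $i(c,\xi)>0$ for every essential simple closed curve $c$. Unique ergodicity guarantees a unique unit-speed Teichmüller geodesic ray $\gamma\colon[0,+\infty)\to\mathcal{T}(S)$ with $\gamma(0)=\rho$ and $\gamma(t)\to\xi$, namely the one whose initial quadratic differential has vertical foliation representing $\xi$; along it $\mathrm{Ext}_{\gamma(t)}(\xi)=e^{-2t}\mathrm{Ext}_\rho(\xi)$. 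Then I would invoke sublinear tracking for the action of $\text{Mod}(S)$ on $\mathcal{T}(S)$ — available under the first moment assumption — to conclude that almost surely $\tfrac1n d_{\mathcal{T}}(Y_n,\gamma(\ell n))\to 0$.

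The heart of the argument is then to pin down the exponential growth rate of $\mathrm{Ext}_{Y_n}(c)$. For the upper bound, Kerckhoff's formula for the Teichmüller metric gives $\log\mathrm{Ext}_{Y_n}(c)\le\log\mathrm{Ext}_\rho(c)+2d_{\mathcal{T}}(\rho,Y_n)=2\ell n+o(n)$. For the lower bound I would apply Minsky's inequality $i(c,\xi)^2\le\mathrm{Ext}_X(c)\,\mathrm{Ext}_X(\xi)$ at $X=\gamma(\ell n)$: together with $\mathrm{Ext}_{\gamma(\ell n)}(\xi)=e^{-2\ell n}\mathrm{Ext}_\rho(\xi)$ and $i(c,\xi)>0$ this yields $\mathrm{Ext}_{\gamma(\ell n)}(c)\ge C\,e^{2\ell n}$ for a constant $C=C(c,\xi)>0$. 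Transporting both estimates from $\gamma(\ell n)$ to $Y_n$ costs only a factor $e^{\pm 2d_{\mathcal{T}}(Y_n,\gamma(\ell n))}=e^{o(n)}$, again by Kerckhoff, so $\mathrm{Ext}_{Y_n}(c)=e^{2\ell n+o(n)}$ for every essential simple closed curve $c$; that the rate $2\ell$ is independent of $c$ is precisely what the filling property of $\xi$ buys. I expect this lower bound to be the main obstacle, since it is where one is forced to use the precise nature of the limiting foliation — unique ergodicity to make the tracking geodesic unambiguous, and the filling property to make the rate curve-independent — whereas the upper bound is soft.

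Finally I would transfer from extremal to hyperbolic length on the fixed surface $\rho$. By Gardiner--Masur, extremal length extends to a continuous function on $\mathcal{MF}(S)$, homogeneous of degree $2$ and strictly positive off the origin; the square $l_\rho(\cdot)^2$ of the hyperbolic length function has the same properties. Hence their ratio descends to a continuous, strictly positive function on the compact space $\mathcal{PMF}(S)$, so it is bounded above and below, which gives $\mathrm{Ext}_\rho(\alpha)\asymp l_\rho(\alpha)^2$ uniformly over all essential simple closed curves $\alpha$ (the systole of $\rho$ being positive, there are no short curves to worry about). Applying this with $\alpha=\Phi_n^{-1}(c)$ and combining with $\mathrm{Ext}_\rho(\Phi_n^{-1}(c))=\mathrm{Ext}_{Y_n}(c)=e^{2\ell n+o(n)}$ yields $l_\rho(\Phi_n^{-1}(c))=e^{\ell n+o(n)}$, that is $\sqrt[n]{l_\rho(\Phi_n^{-1}(c))}\to\lambda:=e^{\ell}$ for every essential simple closed curve $c$ and almost every sample path; and $\lambda>1$ because $\ell>0$.
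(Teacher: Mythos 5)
Your proof is correct, but note that the paper does not actually prove Theorem \ref{K-intro}: it is quoted verbatim from Karlsson \cite[Corollary 4]{Kar12}, so the relevant comparison is with Karlsson's original argument rather than with anything in this article. Karlsson's route is ergodic-theoretic: he applies the Karlsson--Ledrappier multiplicative ergodic theorem to the isometric action on $(\mathcal{T}(S),d_{\mathcal{T}})$, producing for almost every sample path a horofunction $h$ in the metric compactification with $-h(\Phi_n\cdot\rho)=\ell n+o(n)$, and then uses the Gardiner--Masur description of horofunctions of the Teichmüller metric in terms of extremal lengths and intersection numbers to turn this into the growth rate of $\mathrm{Ext}_{\rho}(\Phi_n^{-1}(c))$. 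You replace the horofunction machinery by the more geometric package ``Kaimanovich--Masur convergence to a uniquely ergodic filling foliation, plus Tiozzo's sublinear tracking by a Teichmüller ray, plus Minsky's inequality along that ray''; the two inputs you quote without proof (positivity of the drift and arationality/unique ergodicity of the limit) are exactly the ones Karlsson also needs. What your approach buys is transparency -- the upper bound is pure Kerckhoff plus the drift, and the lower bound is visibly the decay $\mathrm{Ext}_{\gamma(t)}(\xi)=e^{-2t}\mathrm{Ext}_{\rho}(\xi)$ combined with $i(c,\xi)>0$ -- at the cost of invoking sublinear tracking, which is a harder theorem than the one-sided ergodic inequality Karlsson uses. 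All individual steps check out, including the final conversion $\mathrm{Ext}_{\rho}(\alpha)\asymp l_{\rho}(\alpha)^2$ uniformly over simple closed curves (the degree-$2$ versus degree-$1$ homogeneity and the compactness of $\mathcal{PMF}$ are handled correctly, and the fixed systole of $\rho$ disposes of the short-curve regime where the comparison would only be linear). The one point to make explicit is that the tracking geodesic must be taken to be the ray from $\rho$ toward $\xi=\text{bnd}(\mathbf{\Phi})$, which is indeed the ray Tiozzo's construction provides, since it is only for that ray that the extremal length of $\xi$ decays at the rate you use.
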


The real number $\log\lambda>0$, which we call the \emph{Lyapunov exponent} of the random walk on $(\text{Mod}(S),\mu)$, is deterministic: it is equal to the drift of the realization of the random walk on the Teichmüller space of the surface, with respect to the Teichmüller metric.

On the other hand, when $\mu$ has finite support, it is known that the mapping class obtained at time $n$ from the 
 random walk on $\text{Mod}(S)$ is pseudo-Anosov with probability tending to $1$ exponentially fast \cite{Riv08,Sis11, Mah12,CM13}. 
Using the Borel--Cantelli Lemma, this implies that almost surely, eventually all mapping classes $\Phi_n$ are pseudo-Anosov (and hence they have a well-defined stretching factor $\lambda(\Phi_n)$). We prove the following spectral theorem for random walks on $\text{Mod}(S)$, which relates the asymptotics of the stretching factors $\lambda(\Phi_n)$ to the Lyapunov exponent of the random walk.

\begin{theo}\label{spectral-intro}
Let $S$ be a closed, connected, oriented, hyperbolic surface, and let $\mu$ be a probability measure on $\text{Mod}(S)$, whose support is finite and generates a semigroup containing two independent pseudo-Anosov elements. 
Then for almost every sample path $(\Phi_n)_{n\in\mathbb{N}}$ of the random walk on $(\text{Mod}(S),\mu)$, we have $$\lim_{n\to +\infty}\sqrt[n]{\lambda(\Phi_n)}=\lambda,$$ where $\log\lambda$ is the Lyapunov exponent of the random walk.
\end{theo}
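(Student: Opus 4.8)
I would write $\mathcal{T}(S)$ for the Teichmüller space with its Teichmüller metric $d_{\mathcal{T}}$, and $\mathcal{C}(S)$ for the curve complex, which is Gromov hyperbolic; fix a basepoint $o$ in the thick part of $\mathcal{T}(S)$, and use the same letter for its coarse image in $\mathcal{C}(S)$ under a coarsely $\text{Mod}(S)$-equivariant, coarsely Lipschitz systole map. For an isometry $g$ of a metric space let $\tau(g)$ denote its stable translation length. A pseudo-Anosov mapping class acts on $\mathcal{T}(S)$ with an invariant Teichmüller geodesic along which it translates by exactly $\log\lambda(\Phi)$, so $\tau_{\mathcal{T}}(\Phi_n)=\log\lambda(\Phi_n)$ for all large $n$ almost surely (the ``eventually pseudo-Anosov'' statement is recalled in the excerpt); and the Lyapunov exponent is by definition the drift $\ell_{\mathcal{T}}:=\lim_n\tfrac1n d_{\mathcal{T}}(o,\Phi_no)$, which is positive and deterministic almost surely. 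So the theorem reduces to $\tfrac1n\tau_{\mathcal{T}}(\Phi_n)\to\ell_{\mathcal{T}}$ a.s. One inequality is free: $\tau(g)\le d(x,gx)$ for every isometry, so $\limsup_n\tfrac1n\tau_{\mathcal{T}}(\Phi_n)\le\ell_{\mathcal{T}}$ a.s.; the content is the reverse bound.

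\textbf{Step 1: the curve complex.} The two independent pseudo-Anosov elements in the semigroup generated by $\mathrm{supp}(\mu)$ act on $\mathcal{C}(S)$ as loxodromic isometries with four distinct endpoints, so the walk on $\mathcal{C}(S)$ is non-elementary; by the theory of random walks on Gromov hyperbolic spaces it has positive drift $\ell_{\mathcal{C}}:=\lim_n\tfrac1n d_{\mathcal{C}}(o,\Phi_no)>0$ a.s., and — the observation I would record first — the curve complex translation length grows at the same rate, $\tfrac1n\tau_{\mathcal{C}}(\Phi_n)\to\ell_{\mathcal{C}}$ a.s. The mechanism is standard: the sample path converges to a boundary point $\xi_+$ and is tracked sublinearly by a geodesic ray to $\xi_+$, the reflected walk $(\Phi_n^{-1}o)_n$ converges likewise to some $\xi_-\ne\xi_+$ a.s., and in a hyperbolic space an ``aligned'' triple $g^{-1}o,o,go$ (small Gromov product $(g^{-1}o\mid go)_o$ relative to $d(o,go)$) forces $g$ loxodromic with $\tau(g)\ge d(o,go)-2(g^{-1}o\mid go)_o-O(1)$; so it suffices to check $(\Phi_n^{-1}o\mid\Phi_no)_o=o(n)$, which follows by splitting the walk at a middle time and using convergence of the two halves to the distinct points $\xi_+,\xi_-$.

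\textbf{Step 2: lifting to Teichmüller space.} This is the heart of the matter, compensating for the non-hyperbolicity of $\mathcal{T}(S)$. Almost surely the realization $(\Phi_no)_n$ converges to a uniquely ergodic $\xi\in\mathcal{PMF}$ (Kaimanovich--Masur), is tracked sublinearly by the Teichmüller geodesic ray $\gamma$ from $o$ to $\xi$, and $(\Phi_n^{-1}o)_n$ converges to some $\check\xi\ne\xi$ along a ray $\check\gamma$. The input I would isolate as a \emph{general criterion} is: because the walk has \emph{positive} $\mathcal{C}$-drift and tracks $\gamma$ sublinearly, $\gamma$ makes linear progress in $\mathcal{C}(S)$ and is therefore ``typical'' — it recurs to the thick part with a quantitative control making it (sublinearly) contracting, with contraction data almost surely finite and uniform along the sequence. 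Granting this, I would rerun the alignment argument of Step 1 inside $\mathcal{T}(S)$, with contraction in the role of $\delta$-hyperbolicity and the curve complex as a guide: $o$ lies boundedly $\mathcal{C}$-close to the $\mathcal{C}$-geodesic joining the images of $\check\xi$ and $\xi$ (it is tracked by the forward walk to $\xi$ and, via reflection, towards $\check\xi$), and contraction upgrades this to $o$ being $d_{\mathcal{T}}$-sublinearly close to the Teichmüller geodesic $[\Phi_n^{-1}o,\Phi_no]$, which is itself uniformly (sublinearly) contracting. Translating by $\Phi_n$ and feeding the overlapping contracting segments into a local-to-global principle for contracting quasigeodesics, the orbit $(\Phi_n^ko)_{k\in\mathbb{Z}}$ is a contracting quasigeodesic with uniform data on which $\Phi_n$ translates, so $\tau_{\mathcal{T}}(\Phi_n)\ge d_{\mathcal{T}}(o,\Phi_no)-o(n)$, i.e. $\liminf_n\tfrac1n\tau_{\mathcal{T}}(\Phi_n)\ge\ell_{\mathcal{T}}$; with the easy half this proves the theorem.

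\textbf{Where the difficulty lies.} Everything hangs on Step 2. The delicate points will be: (i) extracting from positivity of the $\mathcal{C}$-drift and sublinear tracking a \emph{quantitative, almost-sure, uniform-in-$n$} contraction (or thick-recurrence) statement for the relevant geodesics — Teichmüller geodesics with merely linear $\mathcal{C}$-progress need not have bounded geometry, so the right notion is a sublinear or ``positive proportion of time in the thick part'' version, and one must verify it persists for the limiting laminations of $\Phi_n$; and (ii) transferring the alignment of $\Phi_n^{-1}o,o,\Phi_no$ from the hyperbolic curve complex, where it is cheap, into $\mathcal{T}(S)$, combined with the contraction estimates while keeping all error terms genuinely $o(n)$ rather than $O(\varepsilon n)$. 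I expect (i)--(ii), packaged as the general lifting criterion, to be the real work; Step 1 and the easy half are routine given the cited random-walk and pseudo-Anosov background.
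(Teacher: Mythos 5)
Your overall architecture coincides with the paper's: the easy inequality $\tau(g)\le d(x,gx)$, the curve-complex alignment of $\Phi_n^{-1}o,o,\Phi_no$ via the Gromov product estimate $(\Phi_n^{-1}o\mid\Phi_no)_o=o(n)$, and a lifting of this alignment to $\mathcal{T}(S)$ using a contraction property of typical Teichmüller geodesics. Step 1 and the easy half are fine and match Propositions \ref{estimate}--\ref{hyp-drift} and Step 4 of the proof of Theorem \ref{spectral-gen}.

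The gap is in Step 2, exactly where you suspect, and the inference you offer to bridge it does not hold. From positivity of the $\mathcal{C}(S)$-drift and sublinear tracking you conclude that the tracking Teichmüller geodesic ``makes linear progress in $\mathcal{C}(S)$ and is therefore typical --- it recurs to the thick part with a quantitative control making it contracting, with contraction data uniform along the sequence.'' Linear progress of the ray at the global scale does not imply that the specific bounded-length subsegment sitting near the basepoint (or near $\Phi_{n_2}.y_0$ for a suitable intermediate time $n_2$) makes definite $\mathcal{C}(S)$-progress: a Teichmüller geodesic with linear coarse progress can still contain arbitrarily long subsegments deep in the thin part where the systole projection is essentially constant, and the Dowdall--Duchin--Masur contraction criterion (Proposition \ref{DDM}) needs a segment of \emph{bounded} $d_{\mathcal{T}}$-length with $\mathcal{C}(S)$-diameter at least a fixed $B_0$. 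What actually closes this is a separate probabilistic argument: one shows that the ``progress time'' $C(F^-,F^+)$ (the length needed for the segment starting at the projection of $y_0$ to the bilateral geodesic $[F^-,F^+]$ to achieve diameter $B_0$) is finite and locally bounded on $\mathcal{UE}\times\mathcal{UE}\smallsetminus\Delta$ (a compactness/limit-of-geodesics argument, Lemma \ref{C-mod}), and then applies Birkhoff's ergodic theorem to the Bernoulli shift on the space of \emph{bilateral} sample paths to get a positive density of times $k$ at which the geodesic is $(B,D)$-contracting at the projection of $\Phi_k.y_0$, with constants independent of $k$ and of the path (Proposition \ref{density-2}). Your proposal names neither the equivariance/shift-invariance structure nor the ergodic theorem, and without them the uniformity you need is unavailable. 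Two smaller points: your insistence on error terms that are genuinely $o(n)$ is unnecessary --- the paper's argument produces $O(\epsilon n)$ errors for each fixed $\epsilon$ and lets $\epsilon\to 0$ only at the very end, which suffices for the liminf; and your detour through a local-to-global principle for the orbit $(\Phi_n^k o)_{k\in\mathbb{Z}}$ is avoidable, since a pseudo-Anosov has an honest invariant Teichmüller axis realizing $\log\lambda(\Phi_n)$ exactly, and the contraction property directly forces that axis to pass within a uniformly bounded distance of the basepoint, which is all that is needed.
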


Finiteness of the support of $\mu$ can be replaced by the assumption that $\mu$ has finite second moment with respect to the Teichmüller metric, see Remarks \ref{rk-abstract} and \ref{rk-mod} below. If $\mu$ is only assumed to have finite first moment with respect to the Teichmüller metric, then the probability that $\Phi_n$ is pseudo-Anosov still converges to $1$ \cite[Theorem 1.4]{MT14} but we do not know whether the speed of convergence still ensures that eventually all   $\Phi_n$  are pseudo-Anosov. 
 However, in this situation, Theorem \ref{spectral-intro} remains valid if one replaces the limit by a limsup.  

We also note that Theorem \ref{spectral-intro} remains valid if $S$ is a compact surface with one boundary component, in which case it may be viewed as a particular case of Theorem \ref{spectral-out}.

At the cost of {an anticipation on}   
 the tools and methods (which are detailed later in this introduction), we would like to insist on the difference in nature between Theorems \ref{K-intro} and \ref{spectral-intro}. On the one hand, the geometric interpretation of the growth described by Theorem \ref{K-intro} is an expression of the drift of the realization of the random walk in the Teichmüller space $\mathcal{T}(S)$, in terms of a certain (random) limiting horofunction. On the other hand, the geometric interpretation of our main result (Theorem \ref{spectral-intro}) is a description of the growth of the translation length $\log\lambda(\Phi_n)$ along the axis of $\Phi_n$ in $\mathcal{T}(S)$. While the distance to the origin in $\mathcal{T}(S)$ is subadditive along the trajectories of the random walk (which allows the use of Kingman ergodic theorem to deduce the existence of the drift), the translation length $\log\lambda(\Phi_n)$ along an axis is not, in fact it can be very different for two very close elements. Therefore, the existence of the limit in Theorem \ref{spectral-intro} is far from obvious. The best straightforward
analogue in a free group of the quantity we want to measure would be the length of a
cyclic reduction of a word. However, as we are measuring all distances in the Teichmüller space $\mathcal{T}(S)$, which is not Gromov hyperbolic, understanding the behaviour of $\log\lambda(\Phi_n)$ is much more difficult in our setting. 

This being said, Karlsson formulated a wish, in \cite[(iii) page 220]{Kar12}, to compare his result ``that random walk trajectories eventually look
pseudo-Anosov from [the growth] perspective'' to the genericity of being
pseudo-Anosov. We believe that our result is a very relevant piece of comparison.  

We would finally like to emphasize that, although the mapping class group $\text{Mod}(S)$ has relations with the linear group $GL(N,\mathbb{Z})$ (for example through the symplectic representation), our main result cannot be deduced from Guivarch's: in fact, Theorem \ref{spectral-intro} also applies to highly non-linear situations, for example it applies to generic random walks on the Torelli group of the surface.

\paragraph*{$\text{Out}(F_N)$.} The same question also makes sense for $\text{Out}(F_N)$. An element $\Phi\in\text{Out}(F_N)$ is \emph{fully irreducible} if no power $\Phi^k$ (with $k\neq 0$) fixes the conjugacy class of a proper free factor of $F_N$. Every fully irreducible outer automorphism $\Phi\in\text{Out}(F_N)$ has a well-defined \emph{stretching factor} $\lambda(\Phi)>0$, which gives the exponential growth rate of all primitive conjugacy classes in $F_N$ under iteration of $\Phi$ (an element of $F_N$ is \emph{primitive} if it belongs to some free basis of $F_N$). The analogue of Thurston's theorem, giving possible growth rates of conjugacy classes of $F_N$ under iteration of a single automorphism of $F_N$, also holds true in this context -- this can be established using train track theory, see \cite{BH92,Lev09}.  

The second author of the present paper proved in \cite[Corollary 5.4]{Hor14-2} an analogue of Karlsson's theorem for  random walks on $\text{Out}(F_N)$, showing that all primitive conjugacy classes in $F_N$ grow exponentially fast along typical sample paths of the random walk, with a deterministic exponential growth rate $\lambda$. Here, the length $||g||$ of a nontrivial element $g\in F_N$ should be understood as the smallest word length of a conjugate of $g$, written in some prescribed free basis of $F_N$. A subgroup of $\text{Out}(F_N)$ is \emph{nonelementary} if it is not virtually cyclic, and does not virtually preserve the conjugacy class of any proper free factor of $F_N$.

\begin{theo}(Horbez \cite[Corollary 5.4]{Hor14-2})\label{intro-out}
Let $\mu$ be a probability measure on $\text{Out}(F_N)$, with finite first moment with respect to the Lipschitz metric on outer space, whose support generates a nonelementary subgroup of $\text{Out}(F_N)$. Then there exists $\lambda>1$ such that for all primitive elements $g\in F_N$, and almost every sample path of the random walk on $(\text{Out}(F_N),\mu)$, we have $$\lim_{n\to +\infty}\sqrt[n]{||\Phi_n^{-1}(g)||}=\lambda.$$ 
\end{theo}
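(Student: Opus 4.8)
The plan is to run the strategy of Karlsson's proof of Theorem \ref{K-intro}, with the outer space $CV_N$ equipped with the (possibly asymmetric) Lipschitz metric $d$ playing the role of the Teichmüller space, and the free factor complex $\mathcal{FF}_N$ playing the role of the curve complex. First I would fix as basepoint the rose $T_0\in CV_N$ whose marking is the prescribed free basis, write $||g||_T$ for the $T$-translation length of (the conjugacy class of) $g$, so that $||g||=||g||_{T_0}$, and use the elementary identity $||\Phi_n^{-1}(g)||_{T_0}=||g||_{\Phi_n T_0}$, valid for all $g$ and $n$. This reduces the statement to showing that, along almost every sample path, $\frac1n\log||g||_{\Phi_n T_0}$ converges to a deterministic number $\log\lambda>0$, the same for every primitive $g$.

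The upper bound is the easy half. By definition of the Lipschitz metric, $||g||_{\Phi_n T_0}\le e^{d(T_0,\Phi_n T_0)}||g||_{T_0}$. Since $d$ is $\text{Out}(F_N)$-invariant and the increments of the walk are independent and identically distributed, the quantities $d(\Phi_m T_0,\Phi_n T_0)$ (for $m\le n$) form a subadditive cocycle over the shift on the path space, so the finite first moment hypothesis allows Kingman's subadditive ergodic theorem: $\frac1n d(T_0,\Phi_n T_0)$ converges almost surely (and in $L^1$) to a deterministic constant $\ell\ge 0$, whence $\limsup_n\frac1n\log||\Phi_n^{-1}(g)||\le\ell$ for every nontrivial $g$. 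To see that $\ell>0$, I would use nonelementarity: the induced action on the Gromov hyperbolic complex $\mathcal{FF}_N$ is then nonelementary, so the walk has positive drift in $\mathcal{FF}_N$, and since $CV_N\to\mathcal{FF}_N$ is coarsely Lipschitz this forces $\ell>0$. One then sets $\lambda:=e^{\ell}>1$, which is the drift of the realization of the walk on $(CV_N,d)$, i.e. the Lyapunov exponent.

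The lower bound is the crux, and would rest on two inputs from the boundary theory of $\text{Out}(F_N)$. First: under the nonelementarity hypothesis, almost every sample path converges in $\overline{CV_N}$ to a point $[T_\infty]$ which is an \emph{arational} tree — the analogue of the almost sure convergence of the mapping class group walk to a filling uniquely ergodic measured lamination. Second: by Reynolds' classification of arational trees, the conjugacy classes $g$ with $||g||_{[T_\infty]}=0$ are only the trivial one and, when $T_\infty$ is dual to an arational foliation on a compact one-holed surface, the peripheral ones — and none of these is primitive, since a peripheral element of the fundamental group of a one-holed surface dies in $H_1(\,\cdot\,;\mathbb{Z}/2)$ whereas a primitive element of $F_N$ does not. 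Hence $||g||_{[T_\infty]}\in(0,+\infty)$ for every primitive $g$. I would then invoke the Karlsson--Ledrappier ergodic theorem for the isometric action of $\text{Out}(F_N)$ on $(CV_N,d)$: for almost every sample path it provides a horofunction $b_\omega$, attached to the limit point $[T_\infty]$ and realized as a subsequential limit of the functions $d(\,\cdot\,,\Phi_{n_k}T_0)-d(T_0,\Phi_{n_k}T_0)$, such that $\frac1n b_\omega(\Phi_n T_0)\to-\ell$. After cancelling the scaling factors, $b_\omega(T)=\lim_k\big(d(T,\hat z_k)-d(T_0,\hat z_k)\big)$ with $\hat z_k=[\Phi_{n_k}T_0]\to[T_\infty]$ the normalized trees; estimating $d(T,\hat z_k)\ge\log(||g||_{\hat z_k}/||g||_T)$ and $d(T_0,\hat z_k)\le C$ for large $k$ (which uses that $d(T_0,\,\cdot\,)$ is dominated by the finite optimal Lipschitz constant towards $[T_\infty]$, upper semicontinuous near $[T_\infty]$), and passing to the limit using the convergence $||g||_{\hat z_k}\to||g||_{[T_\infty]}$, one obtains a uniform lower bound $b_\omega(T)\ge\log(||g||_{[T_\infty]}/||g||_T)-C$, valid for every $g$. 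Evaluating at $T=\Phi_n T_0$ gives $\log||g||_{\Phi_n T_0}\ge \ell n-o(n)+\log||g||_{[T_\infty]}-C$, so $\liminf_n\frac1n\log||\Phi_n^{-1}(g)||\ge\ell$ for every primitive $g$. Intersecting the countably many resulting full-measure events over primitive conjugacy classes yields a single almost sure event on which $\sqrt[n]{||\Phi_n^{-1}(g)||}\to\lambda$ for all primitive $g$ simultaneously, as claimed.

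The hardest part is the lower bound, and within it precisely the two boundary-theoretic inputs: setting up and controlling the horofunction compactification of the non-hyperbolic, asymmetric space $(CV_N,d)$ carefully enough to run the Karlsson--Ledrappier machinery there (checking the moment and subadditivity requirements, and the boundedness of $d(T_0,\,\cdot\,)$ near the boundary), and identifying the almost sure limit of the walk as an arational tree so that Reynolds' classification can be used. Once these are available, the upper bound, the positivity of the drift, and the countable intersection are routine.
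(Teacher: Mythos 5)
This statement is quoted in the paper from \cite[Corollary 5.4]{Hor14-2} and is not proved here, so there is no internal proof to compare against; your outline does, however, follow essentially the same route as the cited proof: reduce to the growth of $\|g\|_{\Phi_n T_0}$, get the upper bound and the value of $\ell$ from Kingman plus positivity of the drift, and get the lower bound from a Karlsson--Ledrappier horofunction for the (asymmetric) Lipschitz metric combined with the fact that the a.s.\ limit of the walk is an arational tree in which primitive elements are hyperbolic. One phrase in your write-up is literally false and should be repaired: $d(T_0,\hat z_k)$ for $\hat z_k=\Phi_{n_k}T_0\in CV_N$ is not bounded --- it grows like $\ell n_k$. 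What is bounded, after rescaling the trees so that their length functions converge to a representative $T_\infty$ in $\overline{cv_N}$, is the quantity $\sup_g \|g\|_{T_\infty}/\|g\|_{T_0}$, i.e.\ the optimal Lipschitz constant from $T_0$ towards that representative (finiteness uses that the supremum is realized on the finite set of candidates of $T_0$); since the rescaling cancels in the difference $d(T,\hat z_k)-d(T_0,\hat z_k)$, your inequality $b_\omega(T)\ge\log\bigl(\|g\|_{T_\infty}/\|g\|_T\bigr)-C$ survives with this $C$. With that correction, and granting the two nontrivial external inputs you identify (the horoboundary/ergodic theorem for the asymmetric Lipschitz metric, and a.s.\ convergence to arational trees together with Reynolds' classification to exclude primitive elements from being elliptic), the argument is sound.
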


As above, we call $\log\lambda$ the \emph{Lyapunov exponent} of the random walk, which is also equal to the drift of the realization of the walk on Culler--Vogtmann's outer space, with respect to the (asymmetric) Lipschitz metric, i.e. $$\log\lambda=\lim_{n\to +\infty}\frac{1}{n}d_{CV_N}(y_0,\Phi_n.y_0)$$ for almost every sample path $(\Phi_n)_{n\in\mathbb{N}}$ of the random walk on $(\text{Out}(F_N),\mu)$ and any $y_0\in CV_N$. Here again, if $\mu$ has finite support, then the automorphism $\Phi_n$ obtained at time $n$ of the process is fully irreducible with high probability \cite{Riv08,Sis11,CM13},    
 and therefore it has a well-defined stretching factor $\lambda(\Phi_n)$. We prove the following spectral theorem for $\text{Out}(F_N)$.

\begin{theo}\label{spectral-out}
Let $\mu$ be a probability measure on $\text{Out}(F_N)$, whose support is finite and generates a semi-group containing two independent fully irreducible elements of  
$\text{Out}(F_N)$. 
Then for $\mathbb{P}$-a.e. sample path of the random walk on $(\text{Out}(F_N),\mu)$, we have $$\lim_{n\to +\infty}\sqrt[n]{\lambda(\Phi_n^{-1})}=\lambda,$$ where $\log\lambda$ is the Lyapunov exponent of the random walk.
\end{theo}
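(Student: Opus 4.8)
The plan is to follow the same two-step scheme as for Theorem~\ref{spectral-intro}, with the free factor complex $\mathcal{FF}_N$ in place of the curve complex and Culler--Vogtmann outer space $CV_N$ in place of Teichmüller space. Write $\Phi_n=s_1\cdots s_n$ and fix a basepoint $y_0\in CV_N$. Since $\mu$ has finite support and its support generates a semigroup with two independent fully irreducible elements, the automorphism $\Phi_n$ is fully irreducible with probability tending to $1$ exponentially fast, so by Borel--Cantelli almost surely $\Phi_n$ (hence $\Phi_n^{-1}$) is fully irreducible for all large $n$, and $\lambda(\Phi_n^{-1})$ is eventually defined. For a fully irreducible $\Psi$ it is known that $\log\lambda(\Psi^{-1})=\ell_{CV_N}(\Psi)$, where $\ell_{CV_N}(\Psi):=\lim_{k\to\infty}\tfrac1k\,d_{CV_N}(y_0,\Psi^k y_0)$ is the stable translation length of $\Psi$ for the (asymmetric) Lipschitz metric; this is what explains the appearance of $\Phi_n^{-1}$ in the statement. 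So it suffices to prove that almost surely $\tfrac1n\,\ell_{CV_N}(\Phi_n)\to\log\lambda$.

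One inequality is easy. Subadditivity of $k\mapsto d_{CV_N}(y_0,\Phi_n^k y_0)$ gives $\ell_{CV_N}(\Phi_n)\le d_{CV_N}(y_0,\Phi_n y_0)$, and by Theorem~\ref{intro-out} (equivalently, Kingman's theorem applied to the subadditive cocycle $n\mapsto d_{CV_N}(y_0,\Phi_n y_0)$) the right-hand side divided by $n$ converges almost surely to $\log\lambda$. Hence $\limsup_n\tfrac1n\,\ell_{CV_N}(\Phi_n)\le\log\lambda$ almost surely.

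For the reverse inequality we first work in $\mathcal{FF}_N$, which is Gromov hyperbolic. The hypothesis makes the walk nonelementary, hence it has positive drift $\ell_{\mathcal{FF}}>0$ in $\mathcal{FF}_N$; the first step, valid for nonelementary random walks on a hyperbolic space and already recorded for the curve complex, is that almost surely $\tfrac1n\,\ell_{\mathcal{FF}}(\Phi_n)\to\ell_{\mathcal{FF}}$ (stable translation length on $\mathcal{FF}_N$) and that the sample path does not backtrack near the basepoint, so that the distance from $\pi(y_0)$ to a quasi-axis of $\Phi_n$ in $\mathcal{FF}_N$ stays bounded, where $\pi\colon CV_N\to\mathcal{FF}_N$ is the coarsely equivariant, coarsely Lipschitz projection. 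We then lift this to $CV_N$. Outer space is not hyperbolic, so the key geometric input is the analogue for $CV_N$ of Rafi's theorem: a folding path in $CV_N$ whose projection to $\mathcal{FF}_N$ is a parametrized quasigeodesic is strongly contracting, with constants depending only on the quasigeodesic data. Along a typical sample path the geodesic segment $[y_0,\Phi_n y_0]$ fellow-travels such a folding path, because its $\mathcal{FF}_N$-projection progresses linearly, uniformly from time $0$ -- this uniformity is where the finiteness of the support (a finite second moment would suffice) enters, through a maximal inequality and Borel--Cantelli. Consequently $[y_0,\Phi_n y_0]$ is uniformly strongly contracting, the bi-infinite concatenation of its $\Phi_n$-translates is a uniform quasigeodesic line invariant under $\Phi_n$ and at bounded Hausdorff distance from the $\langle\Phi_n\rangle$-orbit of $y_0$, and feeding this back together with the no-backtracking statement in $\mathcal{FF}_N$ and the $\text{Out}(F_N)$-invariance of $d_{CV_N}$ yields a uniform constant $C$ with $\ell_{CV_N}(\Phi_n)\ge d_{CV_N}(y_0,\Phi_n y_0)-C$ for all large $n$, almost surely. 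Dividing by $n$ and using the drift gives $\liminf_n\tfrac1n\,\ell_{CV_N}(\Phi_n)\ge\log\lambda$, which combined with the upper bound finishes the proof. The abstract form of this lifting step -- a group acting on a hyperbolic space and on a metric space, linked by a coarsely equivariant coarsely Lipschitz map, together with a contraction hypothesis on the relevant geodesics and a finite second moment -- is the general criterion referred to in the introduction.

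The routine parts are the subadditive/Kingman argument and the input on random walks in hyperbolic spaces; the real difficulty is the lifting step. It requires (i) establishing the contraction property of progressing folding paths in the non-hyperbolic space $CV_N$; (ii) making the contraction constant uniform along typical sample paths, which is exactly where the moment assumption on $\mu$ is used; and, most delicately, (iii) upgrading the comparison between $d_{CV_N}(y_0,\Phi_n y_0)$ and $\ell_{CV_N}(\Phi_n)$ from an $o(n)$ error to an additive $O(1)$ error, since only the additive version yields the sharp constant $\log\lambda$ rather than $\log\lambda$ up to a multiplicative factor; this last point also requires controlling the asymmetry of the Lipschitz metric along the axis of $\Phi_n$, near the thin part of $CV_N$.
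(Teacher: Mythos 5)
Your overall architecture (work in the hyperbolic space $FF_N$ to locate a quasi-axis of $\Phi_n$ near the basepoint, then lift to $CV_N$ via a contraction property of folding paths, with the trivial upper bound $l_{CV_N}(\Phi_n^{-1})\le d_{CV_N}(y_0,\Phi_n.y_0)$) is the paper's strategy, and your identification of the asymmetry issue (why $\Phi_n^{-1}$ appears) is correct. But the lifting step as you describe it has a genuine gap. You apply a contraction statement of the form ``a folding path whose $FF_N$-projection is a \emph{parametrized} quasigeodesic is strongly contracting'' to the segment $[y_0,\Phi_n.y_0]$, on the grounds that its projection ``progresses linearly, uniformly from time $0$.'' That uniformity fails for typical sample paths: sublinear tracking and positive drift only give progress on average, and almost surely the trajectory contains arbitrarily long excursions with no progress in $FF_N$ (no moment condition or maximal inequality repairs this). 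What is actually available is that a set of times of density $\theta<1$ (arbitrarily close to $1$) is progressing, obtained by applying Birkhoff's ergodic theorem to the shift on the space of bilateral paths; this is why the paper works with a \emph{local} notion of progress at a point of a folding line (Definition \ref{progression}) and a local contraction statement (Proposition \ref{contr-out}), and why a nontrivial measurability/compactness argument (Lemmas \ref{X_0-compact}--\ref{C-bdd}, Corollary \ref{map-C}) is needed to make the contraction constants uniform on a positive-measure set of boundary pairs. Note also Remark \ref{rk-prog}: the naive analogue of the Teichmüller (Dowdall--Duchin--Masur/Rafi) contraction criterion is \emph{not known} in the $\text{Out}(F_N)$ setting, for lack of a distance formula, which is precisely why the definition of progress there is more elaborate than the one you invoke.

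Your point (iii) is a misdiagnosis in two directions. First, the additive bound $l_{CV_N}(\Phi_n)\ge d_{CV_N}(y_0,\Phi_n.y_0)-C$ with a uniform $C$ is not achievable: the quasi-axis of $\Phi_n$ in $FF_N$ only approaches $\pi(y_0)$ to within roughly the Gromov product $(\Phi_n.x_0\,|\,\Phi_n^{-1}.x_0)_{x_0}$, which is controlled only as $\le\epsilon n$ (Proposition \ref{estimate}), not as $O(1)$; correspondingly the axis of $\Phi_n^{-1}$ in $CV_N$ only passes within $O(\epsilon n)$ of $y_0$. Second, the $O(1)$ bound is not needed: the inequality $l_{CV_N}(\Phi_n^{-1})\ge(1-C'\epsilon)L n$ for every $\epsilon>0$ and all large $n$ already yields $\liminf\frac1n l_{CV_N}(\Phi_n^{-1})\ge L$ exactly, not merely up to a multiplicative factor. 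This is exactly how Step 4 of the proof of Theorem \ref{spectral-gen} concludes. So the hard content is not upgrading $o(n)$ to $O(1)$, but (a) proving the local contraction criterion for progressing folding paths in $CV_N$ without a distance formula, and (b) the ergodic-theoretic argument producing, for each $n$, a single contracting and high point of the limiting folding line at distance $O(\epsilon n)$ from $y_0$ which every quasi-axis of $\Phi_n$ must cross in $FF_N$.
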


Contrary to the mapping class group case, the stretching factor of an outer automorphism of $F_N$ can be different from the stretching factor of its inverse. It is therefore important in the above statement to consider $\lambda(\Phi_n^{-1})$ and not $\lambda(\Phi_n)$. 
As in the mapping class group case, finiteness of the support can be replaced by the assumption that $\mu$ has finite second moment with respect to the Lipschitz metric on outer space, see Remarks \ref{rk-abstract} and \ref{rk-out} below. If $\mu$ is only assumed to have finite first moment with respect to the Lipschitz metric, then one should 
 replace the limit by a limsup. 

\paragraph*{Strategy of proofs.} We now explain the general idea of the proofs of Theorems \ref{spectral-intro} and \ref{spectral-out}. 

The mapping class group $\text{Mod}(S)$ acts both on the Teichmüller space $\mathcal{T}(S)$, and on the curve complex $\mathcal{C}(S)$, which is known to be Gromov hyperbolic thanks to work of Masur and Minsky \cite{MM99}.

The behaviour of the realization of the random walk of $\text{Mod}(S)$ on the hyperbolic complex $\mathcal{C}(S)$ is well-understood: there is a rich literature on random walks on hyperbolic spaces \cite{Kai00, CM13,MT14}. Typical sample paths of the realization of the random walk on $\text{Mod}(S)$ converge to a boundary point $\xi\in\partial_{\infty}\mathcal{C}(S)$. The length of the overlap between the geodesic ray from a basepoint $x_0\in\mathcal{C}(S)$ to $\xi$ and a quasi-axis of the mapping class $\Phi_n$ obtained at time $n$ of the process grows linearly with $n$. In particular the translation length of $\Phi_n$ in $\mathcal{C}(S)$ grows linearly with $n$, with speed equal to the drift of the realization on $\mathcal{C}(S)$ of the random walk on $\text{Mod}(S)$.

On the other hand, the logarithm of the stretching factor of $\Phi_n$ is equal to the translation length of $\Phi_n$ on the Teichmüller space $\mathcal{T}(S)$, equipped with the Teichmüller metric. Therefore, we need to establish the same property as above for the realization of the random walk on $\mathcal{T}(S)$: namely, we need to show that the translation length of $\Phi_n$ in $\mathcal{T}(S)$ grows linearly fast at a speed given by the drift   
  of the random walk in the Teichmuller metric. 
The rough idea is that typical geodesics in $\mathcal{T}(S)$ have a hyperbolic-like behaviour. To make this precise, we appeal to a contraction property established by Dowdall, Duchin and Masur in \cite{DDM14}. This asserts that any Teichmüller geodesic segment $I$ whose projection to $\mathcal{C}(S)$ makes progress in $\mathcal{C}(S)$, is contracting in $\mathcal{T}(S)$, in the sense that every other Teichmüller geodesic with same projection as $I$ in $\mathcal{C}(S)$, must pass uniformly close to $I$ in $\mathcal{T}(S)$. We then prove that the realization in $\mathcal{T}(S)$ of a typical sample path of the random walk on $(\text{Mod}(S),\mu)$ stays close to a geodesic ray which contains infinitely many such subsegments. Since the quasi-axis of $\Phi_n$  
 in $\mathcal{C}(S)$ passes close to $x_0$, the contraction property implies  that the translation axis of $\Phi_n$ in $\mathcal{T}(S)$ has to pass close to these subsegments, and hence close to the basepoint $y_0$ (we call this a \emph{lifting argument}).

This implies that the translation length of $\Phi_n$ in $\mathcal{T}(S)$ grows linearly fast, at a speed equal to the drift of the realization of the random walk in $\mathcal{T}(S)$, as required.
\\
\\
\indent Our \emph{lifting argument} is presented in a more abstract setting, see Theorem \ref{spectral-gen} for a precise statement. We require having two actions of a group $G$ on both a metric space $Y$ and a hyperbolic metric space $X$, with a coarsely $G$-equivariant map from $Y$ to $X$, such that geodesics in $Y$ map to unparameterized quasigeodesics in $X$. Typical geodesics in $Y$ are required to contain infinitely many subsegments satisfying the above contraction property. Under such assumptions, if typical elements of $G$ act with a translation axis in $Y$, then the translation length in $Y$ of the element obtained at time $n$ of the random walk on $G$ grows linearly fast, with speed equal to the drift of the realization of the random walk in $Y$.
\\
\\
\indent Similarly, the group $\text{Out}(F_N)$ acts both on Culler--Vogtmann's outer space $CV_N$, and on the free factor complex $FF_N$, whose hyperbolicity was established by Bestvina and Feighn \cite{BF12}. Again, the logarithm of the stretching factor of the automorphism $\Phi_n$ obtained at time $n$ of the process is equal to the translation length of $\Phi_n$ in outer space, equipped with the (asymmetric) Lipschitz metric. We give in Section \ref{sec-contr-out} a similar -- though more technical -- condition ensuring that a folding path in outer space satisfies the above contraction property, and prove that typical folding paths in $CV_N$ contain infinitely many such subsegments. Theorem \ref{spectral-out} then follows from the same general criterion established in Section \ref{abstract}.

\paragraph*{Structure of the paper.} The paper is organized as follows. In Section \ref{sec1}, we review properties of random walks on groups acting on hyperbolic spaces, which mainly come from the work of Calegari--Maher \cite{CM13} and Maher--Tiozzo \cite{MT14}. In Section \ref{abstract}, we state our general criterion for making the \emph{lifting argument} possible. We check in Section \ref{sec3} that typical Teichmüller geodesics in $\mathcal{T}(S)$ contain infinitely many subsegments that have the required contraction property, and we use this to derive Theorem \ref{spectral-intro}. We check in Section \ref{sec4} that typical folding lines in outer space contain infinitely many subsegments having the required contraction property, and we use this to derive Theorem \ref{spectral-out}. 

\subsection*{Acknowledg{e}ments.}

We would like to thank Ursula Hamenstädt for discussions 
  during the \emph{Young Geometric Group Theory IV} meeting held in Spa in January 2015, regarding contraction properties of folding lines in outer space. We also thank the anonymous referees for their careful reading of the paper and their valuable suggestions. The authors acknowledge support from ANR-11-BS01-013, from the Institut Universitaire de France (FD) and from the Lebesgue Center of Mathematics (CH).

\section{Random walks on groups acting on hyperbolic spaces}\label{sec1}

\subsection{Random walks on groups: background and notations}

\paragraph*{General notations.} Given a probability measure $\mu$ on a group $G$, the \emph{(right) random walk} on $G$ with respect to the measure $\mu$ is the Markov chain on $G$ whose initial distribution is the Dirac measure at the identity element, with transition probabilities $p(x,y):=\mu(x^{-1}y)$. The \emph{step space} for the random walk is the product probability space $\Omega:=(G^{\mathbb{N}^{\ast}},\mu^{\otimes\mathbb{N}^{\ast}})$. The position of the random walk at time $n$ is given from its position $g_0=e$ at time $0$ by successive multiplications on the right of independent $\mu$-distributed increments $s_i$, i.e. $g_n=s_1\dots s_n$. We equip the \emph{path space} $\mathcal{P}:=G^{\mathbb{N}}$ with the $\sigma$-algebra generated by the cylinders $\{\textbf{g}\in\mathcal{P}|g_i=g\}$ for all $i\in\mathbb{N}$ and all $g\in G$, and the probability measure $\mathbb{P}$ induced by the map $(s_1,s_2,\dots)\mapsto (g_0,g_1,g_2,\dots)$. Elements of the path space $\mathcal{P}$ are called \emph{sample paths} of the random walk.

We will also need to work with the space $\overline{\mathcal{P}}:=G^\mathbb{Z}$ of \emph{bilateral paths} ${\textbf{g}}:=(g_n)_{n\in\mathbb{Z}}$ corresponding to bilateral sequences of independent $\mu$-distributed increments $(s_n)_{n\in\mathbb{Z}}$ by the formula $g_n=g_{n-1}s_n$, with $g_0=e$. We let $\overline{\Omega}:=(G^{\mathbb{Z}},\mu^{\otimes\mathbb{Z}})$, and we equip $\overline{\mathcal{P}}$ with the probability measure $\overline{\mathbb{P}}$ induced by the map $(s_n)_{n\in\mathbb{Z}}\mapsto (g_n)_{n\in\mathbb{Z}}$. We denote by $\check{\mu}$ the probability measure on $G$ defined by $\check{\mu}(g):=\mu(g^{-1})$ for all $g\in G$, and by $(\check{\mathcal{P}},\check{\mathbb{P}})$ the corresponding path space. Then there is a natural isomorphism between $(\overline{\mathcal{P}},\overline{\mathbb{P}})$ and the product space $(\check{\mathcal{P}},\check{\mathbb{P}})\otimes (\mathcal{P},\mathbb{P})$, mapping a bilateral path $(g_n)_{n\in\mathbb{Z}}$ to the pair of unilateral paths $((g_{-n})_{n\geq 0},(g_n)_{n\ge 0})$ (note that both paths are initialized by $g_0=e$). 
 The \emph{Bernoulli shift} $\sigma$, defined in $\overline{\Omega}$ by $\sigma((s_n)_{n\in\mathbb{Z}}):=(s_{n+1})_{n\in\mathbb{Z}}$, induces an ergodic transformation $U$ on the space of bilateral paths which satisfies $$(U^k.\mathbf{g})_n=g_k^{-1}g_{n+k}$$ for all $k,n\in\mathbb{Z}$.

\paragraph{Moment and drift.} Assume now that $G$ acts by isometries on a metric space $(X,d_X)$, and let $x_0\in X$. We say that a probability measure $\mu$ on $G$ has \emph{finite first moment} with respect to $d_X$ if $$\int_{G}d_X(x_0,g.x_0)d\mu(g)<+\infty.$$ We say that $\mu$ has \emph{finite second moment} with respect to $d_X$ if $$\int_{G}d_X(x_0,g.x_0)^2d\mu(g)<+\infty.$$ It follows from Kingman's subadditive ergodic theorem \cite{Kin68} that if $\mu$ has finite first moment with respect to $d_X$, then for $\mathbb{P}$-a.e. sample path of the random walk on $(G,\mu)$, the limit $$\lim_{n\to +\infty}\frac{1}{n}d_X(x_0,g_n.x_0)$$ exists. It is called the \emph{drift} of the random walk on $(G,\mu)$ with respect to the metric $d_X$.

\subsection{General definitions in coarse geometry}

Given two metric spaces $X$ and $Y$, and a constant $K\in\mathbb{R}$, a \emph{$K$-quasi-isometric embedding} from $X$ to $Y$ is a map $f:X\to Y$ such that for all $x,x'\in X$, one has $$\frac{1}{K}d_X(x,x')-K\le d_{Y}(f(x),f(x'))\le Kd_X(x,x')+K.$$ A \emph{$K$-quasigeodesic} in $X$ is a $K$-quasi-isometric embedding from an interval in $\mathbb{R}$ to $X$. A \emph{$K$-unparameterized quasigeodesic} is a map $\tau':I'\to X$, where $I'\subseteq\mathbb{R}$ is an interval, such that there exists an interval $I\subseteq\mathbb{R}$ and a non-decreasing homeomorphism $\theta:I\to I'$ such that $\tau'\circ\theta$ is a $K$-quasigeodesic.

\subsection{Background on Gromov hyperbolic spaces}

Let $(X,d_X)$ be a metric space, and let $x_0\in X$ be some basepoint. For all $x,y\in X$, the \emph{Gromov product} of $x$ and $y$ with respect to $x_0$ is defined as
\begin{displaymath}
(x|y)_{x_0}:=\frac{1}{2}(d_X(x_0,x)+d_X(x_0,y)-d_X(x,y)).
\end{displaymath}

A metric space $X$ is \emph{Gromov hyperbolic} if there exists a constant $\delta\ge 0$ such that for all $x,y,z,x_0\in X$, one has $$(x|y)_{x_0}\ge\min\{(x|z)_{x_0},(y|z)_{x_0}\}-\delta.$$  When $X$ is geodesic, hyperbolicity of $X$ is equivalent to a \emph{thin triangles} condition: there exists $\delta'\ge 0$ such that for every geodesic triangle $\Delta$ in $X$, any side of $\Delta$ is contained in the $\delta'$-neighborhood of the union of the other two sides. The smallest such $\delta'$ is then called the \emph{hyperbolicity constant} of $X$.

Let $X$ be a Gromov hyperbolic metric space. A sequence $(x_n)_{n\in\mathbb{N}}\in X^{\mathbb{N}}$ \emph{converges to infinity} if the Gromov product $(x_n|x_m)_{x_0}$ goes to $+\infty$ as $n$ and $m$ both go to $+\infty$. Two sequences $(x_n)_{n\in\mathbb{N}}$ and $(y_n)_{n\in\mathbb{N}}$ that both converge to infinity are \emph{equivalent} if the Gromov product $(x_n|y_m)_{x_0}$ goes to $+\infty$ as $n$ and $m$ go to $+\infty$. It follows from the hyperbolicity of $(X,d)$ that this is indeed an equivalence relation. The \emph{Gromov boundary} $\partial_{\infty} X$ is defined to be the collection of equivalence classes of sequences that converge to infinity. 

An isometry $\phi$ of $X$ is \emph{loxodromic} if for all $x\in X$, the orbit map
\begin{displaymath}
\begin{array}{cccc}
\mathbb{Z}&\to & X\\
n &\mapsto &\phi^n.x
\end{array}
\end{displaymath} 
\noindent is a quasi-isometric embedding. Given $K>0$, we say that a line $\gamma:\mathbb{R}\to X$ is a \emph{$K$-quasi-axis} for $\phi$ if $\gamma$ is a $K$-quasigeodesic, and there exists $T\in\mathbb{R}$ such that for all $t\in\mathbb{R}$ and all $k\in\mathbb{Z}$, one has $$d_X(\phi^k.\gamma(t),\gamma(t+kT))\le K.$$ We note that there exists $K$, only depending on the hyperbolicity constant of $X$, such that all loxodromic isometries of $X$ have a $K$-quasi-axis in $X$. Any loxodromic isometry has exactly two fixed points in $\partial_{\infty}X$ and acts with north-south dynamics on $X\cup\partial_{\infty}X$.

\subsection{Random walks on groups acting on Gromov hyperbolic spaces}

In this section, we review work by Calegari--Maher \cite{CM13}, further developed by Maher--Tiozzo \cite{MT14}, concerning random walks on groups acting by isometries on Gromov hyperbolic spaces   
(these extend previous work of Kaimanovich \cite{Kai00} who considered the case of proper actions on proper hyperbolic spaces). We will not always seek for optimal assumptions on the measure $\mu$, we refer to \cite{MT14} for more precise statements, see also Remark \ref{rk-abstract} below.

Let $X$ be a Gromov hyperbolic space, and let $G$ be a countable group acting on $X$ by isometries. 
 A probability measure on $G$ is then called \emph{nonelementary} (for the action on $X$) if the subsemigroup of $G$ generated by its support contains a pair of loxodromic elements with disjoint pairs of fixed points in the Gromov boundary of $X$.

\begin{theo}\textbf{(Convergence to the boundary)}~(Calegari--Maher \cite[Theorem 5.34]{CM13}, Maher--Tiozzo \cite[Theorem 1.1]{MT14})\label{hyp-cv}
Let $G$ be a countable group that acts by isometries on a separable Gromov hyperbolic space $X$, and let $\mu$ be a nonelementary probability measure on $G$. Then for any $x_0\in X$, and $\mathbb{P}$-a.e. sample path $\mathbf{\Phi}:=(\Phi_n)_{n\in\mathbb{N}}$ of the random walk on $(G,\mu)$, the sequence $(\Phi_n.x_0)_{n\in\mathbb{N}}$ converges to a point $\text{bnd}(\mathbf{\Phi})\in\partial_{\infty}X$.
\end{theo}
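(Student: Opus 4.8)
The plan is to run the classical boundary-convergence argument for random walks on hyperbolic spaces: produce a $\mu$-stationary measure on a suitable compactification of $X$, observe that the sequence of its $\Phi_n$-translates is a measure-valued martingale (hence converges almost surely), and then exploit the hyperbolic geometry together with the nonelementariness of $\mu$ to show that the limiting random measure is almost surely a Dirac mass at a point of $\partial_\infty X$; convergence of the orbit $(\Phi_n.x_0)$ to that point then follows. Since this is precisely the theorem of Calegari--Maher and Maher--Tiozzo that we shall use as a black box, the following is only a sketch of their argument; see \cite{CM13,MT14} for the details, in particular for the deviation and non-atomicity estimates.

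First, since $X$ is separable, I would work with the horofunction compactification $\overline{X}^h$, obtained by embedding $X$ into $C(X)/\mathbb{R}$ via $x\mapsto d_X(x,\cdot)-d_X(x_0,\cdot)$ and taking the closure. Using a countable dense subset of $X$ and the fact that these functions are $1$-Lipschitz, $\overline{X}^h$ is compact and metrizable, $X$ embeds as a dense subset, and $G$ acts on $\overline{X}^h$ by homeomorphisms. For a Gromov hyperbolic $X$ there is a natural $G$-equivariant way of relating $\overline{X}^h$ to the Gromov bordification $X\cup\partial_\infty X$: a horofunction "at infinity" (one that tends to $-\infty$ along some sequence) determines, up to bounded ambiguity, a point of $\partial_\infty X$, and this correspondence realizes $\partial_\infty X$ as a Borel-measurable quotient of a $G$-invariant subset of $\overline{X}^h$. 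On the compact convex set $P(\overline{X}^h)$ of Borel probability measures the convolution $\nu\mapsto\mu\ast\nu$ is continuous and affine, hence has a fixed point: a $\mu$-stationary measure $\nu$.

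Next, let $\mathcal{F}_n$ be the $\sigma$-algebra on the path space generated by the increments $s_1,\dots,s_n$. Stationarity of $\nu$ gives $\mathbb{E}[(\Phi_{n+1})_\ast\nu\mid\mathcal{F}_n]=(\Phi_n)_\ast(\mu\ast\nu)=(\Phi_n)_\ast\nu$, so $\big((\Phi_n)_\ast\nu\big)_{n\in\mathbb{N}}$ is a $P(\overline{X}^h)$-valued martingale (test against a countable dense family of continuous functions); since $P(\overline{X}^h)$ is compact and metrizable, it converges $\mathbb{P}$-almost surely in the weak-$\ast$ topology to a random measure $\nu_{\mathbf{\Phi}}$.

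The crux, and the step I expect to be the main obstacle, is to show that $\nu_{\mathbf{\Phi}}$ is almost surely a Dirac mass supported on $\partial_\infty X$. Here one uses that $\mu$ is nonelementary: the pair of loxodromic isometries in the subsemigroup generated by $\mathrm{supp}(\mu)$ forces the limit set in $\partial_\infty X$ to be infinite and, via a contraction argument, forces the $\partial_\infty X$-projection of any $\mu$-stationary measure to be non-atomic with support the whole limit set. On one hand the orbit escapes to infinity almost surely (a nonelementary action has unbounded orbits, and a $0$--$1$ law rules out a bounded orbit of positive probability); on the other, if $\nu_{\mathbf{\Phi}}$ charged two distinct boundary points, or charged $X$, one contradicts the martingale identity together with the non-atomicity, using the north--south dynamics of loxodromic isometries and a pigeonhole argument on shadows, exactly as in \cite{CM13,MT14}. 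Granting that $\nu_{\mathbf{\Phi}}=\delta_{\mathrm{bnd}(\mathbf{\Phi})}$ with $\mathrm{bnd}(\mathbf{\Phi})\in\partial_\infty X$ almost surely, the convergence $(\Phi_n)_\ast\nu\to\delta_{\mathrm{bnd}(\mathbf{\Phi})}$ combined with the fact that $\nu$ charges the complement of every shadow forces $\Phi_n.x_0\to\mathrm{bnd}(\mathbf{\Phi})$ in $X\cup\partial_\infty X$: any subsequential limit $\eta$ of $(\Phi_n.x_0)$ would, by north--south contraction applied along that subsequence, attract $(\Phi_n)_\ast\nu$ towards $\eta$, whence $\eta=\mathrm{bnd}(\mathbf{\Phi})$. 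This completes the argument.
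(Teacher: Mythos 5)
The paper offers no proof of this statement at all: it is quoted verbatim from Calegari--Maher and Maher--Tiozzo and used as a black box, so there is nothing internal to compare against. Your sketch faithfully reproduces the standard argument of the cited sources (horofunction compactification of the separable space, existence of a $\mu$-stationary measure, martingale convergence of $(\Phi_n)_\ast\nu$, and the nonelementarity/shadow estimates forcing the limit to be a Dirac mass on $\partial_\infty X$), and it is honest about which estimates are being deferred to \cite{CM13,MT14}; this is consistent with how the paper itself treats the result.
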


\begin{theo}\textbf{(Positive drift)}~(Calegari--Maher \cite[Theorem 5.34]{CM13}, Maher--Tiozzo \cite[Theorem 1.2]{MT14})\label{drift}
Let $G$ be a countable group which acts by isometries on a separable Gromov hyperbolic metric space $(X,d_X)$, and let $\mu$ be a nonelementary probability measure on $G$ with finite first moment with respect to $d_X$. Then there exists $L>0$ such that for $\mathbb{P}$-a.e. sample path $(\Phi_n)_{n\in\mathbb{N}}$ of the random walk on $(G,\mu)$, one has $$\lim_{n\to +\infty}\frac{1}{n} d_X(x_0,\Phi_n.x_0)=L.$$
\end{theo}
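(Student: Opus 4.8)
The existence of the limit is Kingman's subadditive ergodic theorem \cite{Kin68}, recalled above; ergodicity of the shift makes it an almost-sure constant $L\ge 0$, and the triangle inequality makes $L$ independent of the chosen basepoint $x_0$. So the whole content of the theorem is the strict positivity $L>0$, and the plan is to exploit the fact that a typical sample path converges to the Gromov boundary \emph{from both time directions}, the two limit points being almost surely distinct.

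First I would set up the two-sided convergence. Since the inverse of a loxodromic isometry is loxodromic with the same pair of fixed points, $\check\mu$ is again nonelementary, so Theorem~\ref{hyp-cv} applies to $\mu$ and to $\check\mu$. Using the identification $(\overline{\mathcal P},\overline{\mathbb P})\cong(\check{\mathcal P},\check{\mathbb P})\otimes(\mathcal P,\mathbb P)$, for $\overline{\mathbb P}$-a.e.\ bilateral path the forward trajectory $\Phi_n.x_0$ converges to some $\xi^+\in\partial_\infty X$ and the backward trajectory $\Phi_{-n}.x_0$ converges to some $\xi^-\in\partial_\infty X$, with $(\xi^-,\xi^+)$ distributed as $\check\nu\otimes\nu$, where $\nu$ (resp.\ $\check\nu$) is the harmonic measure of $\mu$ (resp.\ $\check\mu$). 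Next I would check that $\nu$ (and symmetrically $\check\nu$) has no atoms: otherwise the finite nonempty set $A$ of atoms of maximal $\nu$-mass satisfies $g^{-1}A\subseteq A$, hence $gA=A$, for every $g\in\mathrm{supp}(\mu)$ (by $\mu$-stationarity of $\nu$ and maximality of the mass), so the subgroup generated by $\mathrm{supp}(\mu)$ — which contains two loxodromics with disjoint pairs of fixed points — preserves the finite set $A$; but a loxodromic isometry sends every finite invariant subset of $\partial_\infty X$ into its two-point fixed set, so $A$ would lie in the intersection of two disjoint pairs, a contradiction. With $\nu$ and $\check\nu$ non-atomic, Fubini gives $(\check\nu\otimes\nu)\{(a,b):a=b\}=0$, i.e.\ $\xi^-\neq\xi^+$ for $\overline{\mathbb P}$-a.e.\ bilateral path.

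Distinctness of $\xi^\pm$ makes $F(\mathbf{\Phi}):=(\xi^-|\xi^+)_{x_0}$ finite $\overline{\mathbb P}$-almost everywhere, and, up to an additive error depending only on $\delta$, it equals the distance from $x_0$ to a bi-infinite quasigeodesic $\gamma_{\mathbf{\Phi}}$ joining $\xi^-$ to $\xi^+$. The shift acts compatibly: by the formula $(U^k\mathbf{\Phi})_n=\Phi_k^{-1}\Phi_{n+k}$, the path $U^k\mathbf{\Phi}$ has boundary limits $\Phi_k^{-1}\xi^\pm$, so one may take $\gamma_{U^k\mathbf{\Phi}}=\Phi_k^{-1}\gamma_{\mathbf{\Phi}}$, whence
\[
F(U^k\mathbf{\Phi})=(\Phi_k^{-1}\xi^-\,|\,\Phi_k^{-1}\xi^+)_{x_0}=(\xi^-\,|\,\xi^+)_{\Phi_k.x_0}=d_X(\Phi_k.x_0,\gamma_{\mathbf{\Phi}})+O(\delta).
\]
Choosing $R$ with $\overline{\mathbb P}(F\le R)>1-\varepsilon$ and applying Birkhoff's theorem to the ergodic transformation $U$, for $\overline{\mathbb P}$-a.e.\ $\mathbf{\Phi}$ there is a set of times of lower density $>1-\varepsilon$ at which $\Phi_k.x_0$ lies within $R':=R+O(\delta)$ of $\gamma_{\mathbf{\Phi}}$; since $\Phi_k.x_0\to\xi^+$ as $k\to+\infty$ while $\xi^-\neq\xi^+$, the nearest-point projections of these points must exit $\gamma_{\mathbf{\Phi}}$ toward $\xi^+$.

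The last step is the real obstacle: one must upgrade ``a positive-density set of times at which $\Phi_k.x_0$ is close to $\gamma_{\mathbf{\Phi}}$'' to ``$d_X(x_0,\Phi_n.x_0)$ grows linearly'', since a priori the projections could pile up on a bounded piece of $\gamma_{\mathbf{\Phi}}$; what is needed is that between two consecutive close-to-$\gamma_{\mathbf{\Phi}}$ times the walk advances a definite amount along it. I would argue this by a pivoting/regeneration scheme: by shift-invariance, a close-to-$\gamma_{\mathbf{\Phi}}$ time $k$ is itself the origin of a sub-path with distinct forward and backward boundary limits, and the quantitative escape estimates of Calegari--Maher and Maher--Tiozzo (the exponential decay of the probability that $d_X(x_0,\Phi_m.x_0)$ is small) force, with high probability, the trajectory after time $k$ to leave a bounded neighbourhood of $\pi_{\gamma_{\mathbf{\Phi}}}(\Phi_k.x_0)$ on the $\xi^+$-side; inducing $U$ on $\{F\le R\}$ and applying Birkhoff once more then yields $d_X(x_0,\pi_{\gamma_{\mathbf{\Phi}}}(\Phi_k.x_0))\ge ck$ for a positive-density set of $k$ and some constant $c>0$, hence $d_X(x_0,\Phi_k.x_0)\ge ck-R'$ along that set, and since $\tfrac1n d_X(x_0,\Phi_n.x_0)\to L$ along every $n$, this forces $L\ge c>0$. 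An alternative for this last step, bypassing the explicit geodesic bookkeeping, is to prove by a first-passage decomposition that $\mathbb{E}[d_X(x_0,\Phi_n.x_0)]\ge cn$ and then transfer to almost-sure linear growth using the already-established convergence of $\tfrac1n d_X(x_0,\Phi_n.x_0)$; either way, the quantitative heart of the matter is the escape estimate of \cite{CM13,MT14}.
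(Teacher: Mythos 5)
The paper does not actually prove this theorem: it is imported wholesale from Calegari--Maher and Maher--Tiozzo, and the only commentary the authors offer is the remark (which you reproduce correctly) that the existence of the limit is Kingman's subadditive ergodic theorem and that the entire content is the positivity of $L$. So the question is whether your sketch of positivity closes, and it does not. Everything up to and including the Birkhoff step is sound: the non-atomicity argument via maximal-mass atoms and north--south dynamics is the standard one, and the shift-equivariance $F(U^k\mathbf{\Phi})=(\xi^-|\xi^+)_{\Phi_k.x_0}$ is correct. But this only yields a positive-density set of times $k$ at which the Gromov product $(\xi^-|\xi^+)_{\Phi_k.x_0}$ is bounded, and you identify the upgrade to linear progress as ``the real obstacle.'' Your proposed resolution is circular: ``the exponential decay of the probability that $d_X(x_0,\Phi_m.x_0)$ is small'' is, if ``small'' means sublinear in $m$, precisely the statement $L>0$ (and more); if it means bounded, it gives escape to infinity but no linear rate, which is perfectly compatible with $L=0$. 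Likewise the alternative route, $\mathbb{E}[d_X(x_0,\Phi_n.x_0)]\ge cn$ ``by a first-passage decomposition,'' is exactly the estimate that needs proving and is asserted rather than argued. In \cite{CM13} and \cite{MT14} this quantitative core comes from an independent input -- the decay of harmonic measures of shadows, proved by a stationarity argument on the boundary measure -- and does not follow from the soft boundary-convergence setup you have assembled. As it stands, your proof of the crux is a citation of the very results the theorem is attributed to.

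A secondary issue: in a non-proper hyperbolic space a bi-infinite (quasi)geodesic joining two distinct boundary points need not exist, so the object $\gamma_{\mathbf{\Phi}}$ and the nearest-point projections $\pi_{\gamma_{\mathbf{\Phi}}}$ you manipulate are not available in the generality of the statement -- which is exactly the generality needed later for the curve graph and the free factor graph, neither of which is proper. Maher--Tiozzo work throughout with Gromov products and shadows for this reason. This defect is repairable (replace distances to $\gamma_{\mathbf{\Phi}}$ by Gromov products, as you in fact do in the displayed identity), but both the ``projections must exit toward $\xi^+$'' step and the ``advance a definite amount along $\gamma_{\mathbf{\Phi}}$'' step lean on the geodesic picture and would need to be restated.
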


As recalled above, the existence of the limit in Theorem \ref{drift} is an application of Kingman's subadditive ergodic theorem and only requires that $\mu$ has finite first moment with respect to $d_X$ (without any assumption on $X$, in particular $X$ need not be hyperbolic). The hard part of Theorem \ref{drift} is to prove positivity of $L$, for which hyperbolicity of $X$ and nonelementarity of $\mu$ are crucial.

\begin{theo}\textbf{(Sublinear tracking by quasi-geodesic rays)}~(Maher--Tiozzo \cite[Theorem 1.3]{MT14})\label{souslin}
Let $G$ be a countable group which acts by isometries on a separable Gromov hyperbolic geodesic metric space $(X,d_X)$, and let $\mu$ be a nonelementary probability measure on $G$ with finite first moment with respect to $d_X$. Let $L>0$ be the drift of the random walk on $(G,\mu)$ with respect to $d_X$. Then for $\mathbb{P}$-a.e. sample path $\mathbf{\Phi}:=(\Phi_n)_{n\in\mathbb{N}}$ of the random walk on $(G,\mu)$, there exists a quasi-geodesic ray $\tau:\mathbb{R}_+\to X$ limiting at $\text{bnd}(\mathbf{\Phi})$ such that $$\lim_{n\to +\infty}\frac{1}{n}d_X(\Phi_n.x_0,\tau(Ln))=0.$$ 
\end{theo}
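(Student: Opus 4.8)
The plan is to prove Theorem~\ref{souslin} by combining the convergence-to-the-boundary result (Theorem~\ref{hyp-cv}), the positive-drift result (Theorem~\ref{drift}), and an ergodicity/stationarity argument that upgrades a pointwise estimate along a fixed direction to a uniform sublinear-tracking statement. First I would recall that, by Theorem~\ref{hyp-cv}, for $\mathbb{P}$-a.e. sample path $\mathbf{\Phi}$ the sequence $(\Phi_n.x_0)_{n}$ converges to a boundary point $\text{bnd}(\mathbf{\Phi})\in\partial_\infty X$, and by Theorem~\ref{drift} the drift $L=\lim_n \frac1n d_X(x_0,\Phi_n.x_0)>0$ exists and is positive and deterministic. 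Fix such a path; since $X$ is a geodesic hyperbolic space, choose a quasi-geodesic ray $\tau:\mathbb{R}_+\to X$ from $x_0$ to $\text{bnd}(\mathbf{\Phi})$ (such a ray exists by hyperbolicity, e.g.\ as a limit of geodesic segments $[x_0,\Phi_n.x_0]$, with quasi-geodesy constants depending only on $\delta$). The content to be established is that $\frac1n d_X(\Phi_n.x_0,\tau(Ln))\to 0$.

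The key intermediate step is a \emph{lower bound on Gromov products}: I would show that for $\mathbb{P}$-a.e. path, $\frac1n (\Phi_n.x_0 \mid \text{bnd}(\mathbf{\Phi}))_{x_0}\to L$. The inequality $\liminf \frac1n (\Phi_n.x_0\mid\text{bnd}(\mathbf{\Phi}))_{x_0} \geq L$ is the heart of the matter; the reverse inequality $\limsup \leq L$ is automatic because $(\Phi_n.x_0\mid \text{bnd}(\mathbf{\Phi}))_{x_0}\leq d_X(x_0,\Phi_n.x_0)$ and the right side is asymptotic to $Ln$. To get the $\liminf$ bound one uses stationarity of the measure and the ergodic shift $U$ on bilateral paths: writing the increment decomposition $\Phi_n = \Phi_m \cdot (\Phi_m^{-1}\Phi_n)$ and applying $U^m$, the ``future'' of the path after time $m$ is again a $\mu$-random walk, and its limit point in $\partial_\infty X$ is $\Phi_m^{-1}.\text{bnd}(\mathbf{\Phi})$. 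Hyperbolicity gives, for $m<n$, a three-point inequality relating $(\Phi_n.x_0\mid\text{bnd})_{x_0}$, $d_X(x_0,\Phi_m.x_0)$, and $(\Phi_m^{-1}\Phi_n.x_0\mid \Phi_m^{-1}.\text{bnd})_{x_0}$ up to an additive $\delta$-error; combined with $d_X(x_0,\Phi_m.x_0)\sim Lm$ and a Borel--Cantelli argument controlling the ``backtracking'' terms (their expectation is controlled by the finite first moment via a maximal inequality / Kingman-type argument), one concludes that the normalized Gromov product cannot drop below $L$ in the limit.

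Once $\frac1n(\Phi_n.x_0\mid\text{bnd}(\mathbf{\Phi}))_{x_0}\to L$ is known, I would finish by a purely coarse-geometric computation in the hyperbolic space $X$. Since $\tau$ is a quasi-geodesic ray from $x_0$ to $\text{bnd}(\mathbf{\Phi})$, for any point $p\in X$ the Gromov product $(p\mid\text{bnd}(\mathbf{\Phi}))_{x_0}$ coarsely equals the arclength parameter along $\tau$ of the point of $\tau$ nearest to $p$, with error bounded in terms of $\delta$ and the quasi-geodesy constant; moreover $d_X(p,\tau)$ is coarsely $d_X(x_0,p) - (p\mid\text{bnd}(\mathbf{\Phi}))_{x_0}$ up to a bounded error. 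Applying this with $p=\Phi_n.x_0$ gives $d_X(\Phi_n.x_0,\tau) = d_X(x_0,\Phi_n.x_0) - (\Phi_n.x_0\mid\text{bnd})_{x_0} + O(1) = Ln - Ln + o(n) = o(n)$, and the nearest point on $\tau$ is at parameter $(\Phi_n.x_0\mid\text{bnd})_{x_0}+O(1) = Ln + o(n)$, so that, using that $\tau$ is a quasi-geodesic, $d_X(\Phi_n.x_0,\tau(Ln)) = o(n)$ as desired.

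The main obstacle I expect is the $\liminf \geq L$ bound on the normalized Gromov products: this is where one genuinely needs the interplay of hyperbolicity with the probabilistic structure (stationarity, ergodicity of $U$, and the first-moment hypothesis to kill the backtracking terms via Borel--Cantelli), rather than soft geometry. Everything after that — translating a Gromov-product estimate into a distance-to-ray estimate — is routine $\delta$-hyperbolic bookkeeping, and the existence of the quasi-geodesic ray $\tau$ itself follows from hyperbolicity together with convergence to the boundary.
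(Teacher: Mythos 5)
First, a point of order: the paper does not prove Theorem~\ref{souslin} at all --- it is imported verbatim from Maher--Tiozzo \cite[Theorem 1.3]{MT14} and used as a black box, so there is no internal proof to compare against. Your architecture is nonetheless the standard one and is correct in outline: reduce sublinear tracking to the statement that $\frac{1}{n}(\Phi_n.x_0\mid\text{bnd}(\mathbf{\Phi}))_{x_0}\to L$, then convert this into a distance estimate to a $(1,C)$-quasi-geodesic ray by $\delta$-hyperbolic bookkeeping. The $\limsup\le L$ direction and the final coarse-geometric step are fine.

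The genuine gap is in the $\liminf\ge L$ bound, which you rightly call the heart of the matter but for which the mechanism you propose does not close. Two concrete problems. First, the Kingman/stationarity estimates you invoke give $d_X(\Phi_m.x_0,\Phi_N.x_0)=L(N-m)+o(N)$ as $N\to\infty$ for fixed $m$; feeding this into $(\Phi_m.x_0\mid\Phi_N.x_0)_{x_0}$ leaves an error of size $o(N)$, which is useless against the target $Lm$. Subadditivity cannot detect the direction of travel: what is really needed is a quantitative non-backtracking estimate, i.e.\ that the Gromov product between the reversed past and the future, $(\Phi_m^{-1}.x_0\mid\Phi_m^{-1}\Phi_N.x_0)_{x_0}$, is small --- and this is exactly where non-elementarity and decay of shadows for the harmonic measures of $\mu$ and $\check\mu$ must enter, the same ingredient this paper imports in Lemma~\ref{estimate-1} and Proposition~\ref{estimate}. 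Second, your Borel--Cantelli step needs summable tail bounds, and ``the expectation of the backtracking terms is finite'' does not supply them; the exponential bounds of \cite[Lemmas 5.9 and 5.11]{MT14} are quoted here only under finite support (cf.\ Remark~\ref{rk-abstract}), whereas Theorem~\ref{souslin} is asserted under finite first moment alone. Maher--Tiozzo get around this by using exponential decay of shadows for the hitting measure (which requires no moment hypothesis) together with a more delicate almost-sure argument; an alternative route, due to Tiozzo \cite{Tio13}, deduces sublinear tracking from the ergodic theorem applied to the distance from the basepoint to a measurably chosen bi-infinite geodesic between $\text{bnd}^-$ and $\text{bnd}^+$ --- which is in fact the form of the argument this paper uses later, in the proof of the Claim inside Theorem~\ref{spectral-gen}. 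As written, your sketch names the right difficulty but does not resolve it.
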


\begin{theo}\textbf{(Random isometries are loxodromic)}~(Calegari--Maher \cite[Theorem 5.35]{CM13}, Maher--Tiozzo \cite[Theorem 1.4]{MT14})\label{loxo}
Let $G$ be a countable group which acts by isometries on a separable Gromov hyperbolic metric space $X$, and let $\mu$ be a nonelementary probability measure on $G$ with finite support. Then for $\mathbb{P}$-a.e. sample path $(\Phi_n)_{n\in\mathbb{N}}$ of the random walk on $(G,\mu)$, there exists $n_0\in\mathbb{N}$ such that for all $n\ge n_0$, the element $\Phi_n$ acts loxodromically on $X$.
\end{theo}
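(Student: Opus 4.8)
\emph{Proof strategy.} The argument combines a geometric criterion for loxodromy, the positive--drift theorem (Theorem~\ref{drift}), and the Borel--Cantelli lemma. Recall the standard fact (used already in \cite{CM13,MT14}) that there is a constant $C=C(\delta)$ such that any isometry $g$ of a $\delta$-hyperbolic space $X$ with
\[
d_X(x_0,g.x_0)\ \geq\ 2\,(g.x_0\,|\,g^{-1}.x_0)_{x_0}+C
\]
is loxodromic: the inequality says $g$ does not ``backtrack'' at $x_0$, so by $g$-equivariance the bi-infinite orbit $(g^k.x_0)_{k\in\mathbb{Z}}$ is a local quasigeodesic making definite progress, hence a global quasigeodesic, i.e.\ $g$ is loxodromic. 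Since $\mu$ is nonelementary and finitely supported, Theorem~\ref{drift} applies and gives $L>0$ with $d_X(x_0,\Phi_n.x_0)=Ln+o(n)$ along $\mathbb{P}$-a.e.\ sample path. It therefore suffices to prove that along $\mathbb{P}$-a.e.\ sample path one has $(\Phi_n.x_0\,|\,\Phi_n^{-1}.x_0)_{x_0}=o(n)$: then for all large $n$ the criterion applies with $g=\Phi_n$.

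Bounding the Gromov product $(\Phi_n.x_0\,|\,\Phi_n^{-1}.x_0)_{x_0}$ is the heart of the matter. The obstacle is that, unlike $\Phi_n.x_0$ --- which converges to $\text{bnd}(\mathbf{\Phi})\in\partial_\infty X$ by Theorem~\ref{hyp-cv} --- the sequence $\Phi_n^{-1}.x_0=s_n^{-1}\cdots s_1^{-1}.x_0$ does not converge along a fixed trajectory: its ``direction'' is governed by the \emph{last} increments $s_n,s_{n-1},\dots$, which change with $n$. We circumvent this by splitting. Fix a large integer $k$ and write $\Phi_n=w_1w_2\cdots w_k$, where $w_j$ is the product of the $j$-th block of $\sim n/k$ consecutive increments, so that $w_1,\dots,w_k$ are independent, each distributed as a $\mu$-walk of length $\sim n/k$. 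On the event --- exponentially likely, by large deviations for the drift on each block --- that the concatenations defining $\Phi_n=w_1(w_2\cdots w_k)$ and $\Phi_n^{-1}=w_k^{-1}(w_{k-1}^{-1}\cdots w_1^{-1})$ are ``efficient'' (i.e.\ the geodesic $[x_0,\Phi_n.x_0]$ passes close to $w_1.x_0$, and $[x_0,\Phi_n^{-1}.x_0]$ close to $w_k^{-1}.x_0$), a hyperbolicity computation gives
\[
(\Phi_n.x_0\,|\,\Phi_n^{-1}.x_0)_{x_0}\ \geq\ \tfrac{Ln}{k}+\varepsilon n\ \Longrightarrow\ (w_1.x_0\,|\,w_k^{-1}.x_0)_{x_0}\ \gtrsim\ \tfrac{Ln}{k}.
\]
Now $w_1$ (first block) and $w_k^{-1}$ (reverse of the last block) depend on disjoint sets of increments, hence are independent; and since $\mu$ is nonelementary, the $\mu$- and $\check\mu$-hitting measures on $\partial_\infty X$ are non-atomic (a finite $G$-invariant subset of $\partial_\infty X$ is incompatible with the existence of two loxodromics with disjoint fixed-point pairs), so two independent boundary-convergent walks head in generic, a.s.\ distinct directions; with $\mathrm{supp}(\mu)$ finite, the exponential shadow estimates of Calegari--Maher make this quantitative, yielding $\mathbb{P}\big((w_1.x_0\,|\,w_k^{-1}.x_0)_{x_0}\gtrsim Ln/k\big)\leq c^{\,n}$ for some $c<1$. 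Together with the exponentially small probability that efficiency fails, this shows $\mathbb{P}\big((\Phi_n.x_0\,|\,\Phi_n^{-1}.x_0)_{x_0}\geq Ln/k+\varepsilon n\big)$ is summable in $n$, so by Borel--Cantelli a.s.\ $(\Phi_n.x_0\,|\,\Phi_n^{-1}.x_0)_{x_0}<Ln/k+\varepsilon n$ for all large $n$. Letting $k\to\infty$ and $\varepsilon\to 0$ through a countable family and intersecting the resulting a.s.\ events yields $(\Phi_n.x_0\,|\,\Phi_n^{-1}.x_0)_{x_0}=o(n)$ a.s., completing the proof.

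The main obstacle is exactly this control of $(\Phi_n.x_0\,|\,\Phi_n^{-1}.x_0)_{x_0}$: the non-convergence of $\Phi_n^{-1}.x_0$ defeats a naive argument, and the block decomposition together with the independence of ``early'' and ``late'' increments is the device that repairs it. Two ingredients there require real input: the non-atomicity of the hitting measures (from nonelementarity), and --- crucially for Borel--Cantelli --- the \emph{exponential} decay of the shadow probabilities, which is where finiteness of $\mathrm{supp}(\mu)$ is used; under only a finite-first-moment assumption one still obtains $\mathbb{P}(\Phi_n\text{ not loxodromic})\to 0$, but not evidently fast enough to force $\Phi_n$ loxodromic for \emph{all} large $n$.
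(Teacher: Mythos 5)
Your argument is correct, but it is worth noting that the paper's own proof of Theorem \ref{loxo} is just a two-line citation: it invokes Calegari--Maher's result that $\mathbb{P}(\Phi_n\text{ not loxodromic})$ decays exponentially and applies Borel--Cantelli. What you have written is, in effect, a reconstruction of the content of that cited result, and it coincides almost exactly with the argument the paper itself deploys a few lines later to prove the \emph{stronger} Proposition \ref{hyperbolic}: your bound $(\Phi_n.x_0\,|\,\Phi_n^{-1}.x_0)_{x_0}=o(n)$ is precisely Proposition \ref{estimate}, your ``efficiency'' events and the independence of the first and last blocks are the content of Lemma \ref{estimate-1} (where the paper uses the first and last $\lfloor\epsilon n/2\rfloor$ increments rather than $k$ equal blocks, of which you only use the outermost two anyway), and your hyperbolicity computation transferring a large Gromov product of the endpoints to a large Gromov product of the block midpoints is Lemma \ref{Gromov-product}. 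The one ingredient you add that the paper does not need at this stage is the loxodromy criterion $d_X(x_0,g.x_0)\ge 2(g.x_0\,|\,g^{-1}.x_0)_{x_0}+C(\delta)$, which is standard and correct; combined with positive drift it does yield loxodromy, and in fact for that conclusion alone a single fixed $\epsilon<L/2$ suffices, so the limit over countably many $k$ and $\varepsilon$ is not needed. Your closing remark about where finite support enters is also consonant with the paper's Remark \ref{rk-abstract}: exponential decay of shadows holds for any nonelementary $\mu$, and it is the exponential concentration of $d_X(x_0,\Phi_n.x_0)$ (the ``efficiency'' estimates) that uses bounded support, relaxable to a second-moment condition but not, as far as is known, to a first-moment one. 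What your route buys is a self-contained proof that simultaneously yields the quantitative statement needed later (Proposition \ref{hyperbolic}); what the paper's route buys is brevity, at the cost of deferring all the work to the reference.
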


\begin{proof}
Calegari and Maher prove in \cite[Theorem 5.35]{CM13} that the probability that $\Phi_n$ be loxodromic converges exponentially fast to $1$. Theorem \ref{loxo} then follows by applying the Borel--Cantelli lemma. 
\end{proof} 

Given $\kappa>0$, a quasigeodesic segment $\gamma:J\to X$ (where $J\subseteq\mathbb{R}$ is a segment), and a quasigeodesic $\gamma':I\to X$ (where $I\subseteq\mathbb{R}$ is an interval), we say that $\gamma'$ \emph{crosses $\gamma$ up to distance $\kappa$} if there exists an increasing map $\theta:J\to I$ such that $d_X(\gamma(t),\gamma'(\theta(t)))\le\kappa$ for all $t\in J$ (notice in particular that the orientations of $\gamma$ and $\gamma'$ are required to be the same on their ``overlap'').
The following proposition is a slight elaboration on Theorem \ref{loxo}.

\begin{prop}\textbf{(Axes of random isometries pass close to the basepoint)}\label{hyperbolic}
Let $G$ be a group acting by isometries on a separable geodesic Gromov hyperbolic space $X$, and let $\mu$ be a nonelementary probability measure on $G$ with finite support. For all $K>0$, there exists $\kappa>0$ such that for $\mathbb{P}$-a.e. sample path $(\Phi_n)_{n\in\mathbb{N}}$ of the random walk on $(G,\mu)$, and all $\epsilon\in (0,1)$,  
 there exists $n_0\in\mathbb{N}$ such that for all $n\ge n_0$, the element $\Phi_n$ acts loxodromically on $X$, and for all geodesic segments $\gamma$ from $x_0$ to $\Phi_n.x_0$, every $K$-quasi-axis of $\Phi_n$ crosses a subsegment of $\gamma$ of length at least $(1-\epsilon)d_X(x_0,\Phi_n.x_0)$ up to distance $\kappa$.
\end{prop}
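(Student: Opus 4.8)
The plan is to combine Theorem~\ref{loxo} (random isometries are eventually loxodromic) with the sublinear tracking statement (Theorem~\ref{souslin}) and basic thin-triangle estimates in the hyperbolic space $X$, together with the north-south dynamics of loxodromic isometries. First I would fix $K>0$. By the existence of $K'$-quasi-axes for loxodromic isometries (with $K'$ depending only on $\delta$), any two $K$-quasi-axes for the same loxodromic $\Phi_n$ lie within bounded Hausdorff distance of one another, and within bounded distance of a biinfinite geodesic joining the two fixed points $\Phi_n^{\pm\infty}\in\partial_\infty X$; this already reduces the statement to controlling \emph{one} such quasi-axis, say a genuine geodesic line $\ell_n$ through $\Phi_n^{-\infty}$ and $\Phi_n^{+\infty}$. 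The constant $\kappa$ will be produced at the end as a function of $K$, $\delta$, and these bounds.

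The heart of the argument is to show that, along almost every sample path, the geodesic $[x_0,\Phi_n.x_0]$ fellow-travels $\ell_n$ on a subsegment of length $(1-\epsilon)d_X(x_0,\Phi_n.x_0)$. The key inputs are: (i) $(\Phi_n.x_0)_n$ converges to $\xi:=\mathrm{bnd}(\mathbf{\Phi})\in\partial_\infty X$ (Theorem~\ref{hyp-cv}); (ii) the reflected sample path converges to a boundary point $\check\xi$, the limit of the ``past'' of the bilateral random walk, which plays the role of $\Phi_n^{-\infty}$; more precisely, using the bilateral path space and the shift $U$ as set up in the excerpt, $\Phi_n^{-1}$ (or rather the increments read backwards) converge to a point $\check\xi$, and one shows that for large $n$ the attracting fixed point $\Phi_n^{+\infty}$ is close to $\xi$ and the repelling one $\Phi_n^{-\infty}$ is close to $\check\xi$ — this is the standard mechanism by which ``random walk elements have their axes aligned with the bi-infinite trajectory'', and it follows from the contraction/alignment estimates in \cite{MT14} (alternatively from a ping-pong argument using that $d_X(x_0,\Phi_n.x_0)\to\infty$ linearly while $\Phi_n.x_0\to\xi$ and $\Phi_n^{-1}.x_0\to\check\xi$ with $\check\xi\ne\xi$ a.s.). Granting that $\Phi_n^{-\infty}\to\check\xi$ and $\Phi_n^{+\infty}\to\xi$ with $\check\xi\neq\xi$, a Gromov-product computation shows that the geodesic $[x_0,\Phi_n.x_0]$ and the line $\ell_n=(\Phi_n^{-\infty}\Phi_n^{+\infty})$ are within distance $O(\delta)$ of each other on a segment whose two ``defects'' at either end are controlled by $(\Phi_n^{-\infty}|x_0)$-type quantities; because $x_0$ is at bounded Gromov product with both $\check\xi$ and $\xi$ (it is a fixed basepoint and $\check\xi\ne\xi$), these defects are $O(\delta)$, independent of $n$. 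Hence $[x_0,\Phi_n.x_0]$ crosses a sub-geodesic of $\ell_n$ of length $d_X(x_0,\Phi_n.x_0)-O(\delta)\geq(1-\epsilon)d_X(x_0,\Phi_n.x_0)$ once $n$ is large enough (here $n_0$ depends on $\epsilon$ and on the sample path, as in the statement), up to distance $O(\delta)$; translating back to an arbitrary $K$-quasi-axis absorbs everything into the final $\kappa$. The orientation requirement in the definition of ``crosses'' is automatic since both $[x_0,\Phi_n.x_0]$ and the chosen quasi-axis run from near $\Phi_n^{-\infty}\approx\check\xi$ toward $\Phi_n^{+\infty}\approx\xi$.

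The main obstacle is step (ii): making precise and rigorous the claim that the repelling and attracting fixed points of $\Phi_n$ converge to $\check\xi$ and $\xi$ respectively, i.e. that the quasi-axis of $\Phi_n$ genuinely ``aligns'' with the bilateral trajectory through the basepoint. One clean way is to invoke the bilateral setup from the excerpt: apply Theorem~\ref{hyp-cv} to both $\mu$ and $\check\mu$ to get $\xi$ and $\check\xi$, note $\check\xi\neq\xi$ almost surely (two independent boundary points of a nonelementary walk), and then use that $\Phi_n$ maps the ``backward'' trajectory to the ``forward'' one to pin the fixed points; combined with a Morse-type stability argument for quasi-geodesics in $X$, this forces $\ell_n$ to pass within $O(\delta)$ of $x_0$ and to shadow $[x_0,\Phi_n.x_0]$ on the claimed proportion. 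The remaining steps — quasi-axis uniqueness up to Hausdorff distance, the Gromov-product bookkeeping, and the passage from $\ell_n$ to an arbitrary $K$-quasi-axis via the Morse lemma — are routine $\delta$-hyperbolic geometry and I would not belabor them. Finiteness of the support of $\mu$ is used only through Theorem~\ref{loxo}, to guarantee that eventually \emph{every} $\Phi_n$ (not just a density-one set) is loxodromic, via Borel--Cantelli.
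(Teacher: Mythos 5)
Your geometric bookkeeping (reduce to a geodesic line through the fixed points, use the Morse lemma to pass to an arbitrary $K$-quasi-axis, use Theorem \ref{loxo} plus Borel--Cantelli for loxodromy) is fine, but the central step (ii) rests on a false claim. You assert that $\Phi_n^{-1}.x_0$ converges almost surely to $\check\xi$, and hence that the repelling fixed point $\Phi_n^{-\infty}$ converges to $\check\xi$, so that the ``defects'' at the ends of the overlap are bounded (you even say $O(\delta)$). This fails: $\Phi_n^{-1}=s_n^{-1}\cdots s_1^{-1}$ reads the increments in reverse order, so $(\Phi_n^{-1})_{n}$ is a \emph{left} random walk; it converges to $\check\xi$ only in distribution, not almost surely. (Already for the simple random walk on $F_2$ acting on its Cayley tree, the first letter of the reduced word representing $\Phi_n^{-1}$ keeps changing, so $\Phi_n^{-1}.x_0$ has no a.s.\ limit, and the quantity $(\Phi_n.x_0|\Phi_n^{-1}.x_0)_{x_0}$ — which is exactly the distance from $x_0$ to the axis, up to $O(\delta)$ — is a.s.\ unbounded in $n$.) The bilateral path $(\Phi_{-n})_{n\ge 0}$, which does converge to $\text{bnd}^-(\mathbf{\Phi})$, is built from the increments $s_0^{-1},s_{-1}^{-1},\dots$ and is not the sequence $(\Phi_n^{-1})_n$; conflating the two is where the ping-pong argument breaks. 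This is also why the statement is phrased with a proportion $(1-\epsilon)$ rather than ``all but a bounded amount'': the defect is genuinely unbounded, only sublinear.

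The paper's proof supplies exactly the missing quantitative input (Proposition \ref{estimate}): for a.e.\ sample path and every $\epsilon>0$, eventually $(\Phi_n.x_0|\Phi_n^{-1}.x_0)_{x_0}\le\epsilon n$. This is proved not by any a.s.\ convergence of $\Phi_n^{-1}.x_0$, but by exploiting that the first $\lfloor\epsilon n/2\rfloor$ increments of $\Phi_n$ are independent of the first $\lfloor\epsilon n/2\rfloor$ increments of $\Phi_n^{-1}$, combined with exponential decay of shadows (Maher--Tiozzo, Lemmas 5.9 and 5.11) and Borel--Cantelli. Given that estimate, the deterministic part is close to what you wrote: a $K$-quasi-axis crosses the central subsegment of $[x_0,\Phi_n.x_0]$ of length $d_X(x_0,\Phi_n.x_0)-2(\Phi_n.x_0|\Phi_n^{-1}.x_0)_{x_0}$ up to a distance $\kappa(K,\delta)$, and positivity of the drift converts the $\epsilon n$ bound into the $(1-\epsilon)$ proportion. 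Your parenthetical appeal to ``contraction/alignment estimates in \cite{MT14}'' points at the right source, but as written your proposal does not contain the argument that makes the defect sublinear, and the concrete mechanism you do offer is incorrect.
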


We will now prove Proposition \ref{hyperbolic}.  
 Our proof follows the same argumentation as in recent work by Taylor--Tiozzo \cite{TT}, and is based on the following fact. 

\begin{prop}\label{estimate}
Let $G$ be a countable group acting by isometries on a separable Gromov hyperbolic metric space $(X,d_X)$, let $x_0\in X$, and let $\mu$ be a nonelementary probability measure on $G$ with finite support. Then for $\mathbb{P}$-a.e. sample path $(\Phi_n)_{n\in\mathbb{N}}$ of the random walk on $(G,\mu)$, and all $\epsilon>0$, there exists $n_0\in\mathbb{N}$ such that for all $n\ge n_0$, we have $(\Phi_n.x_0|\Phi_n^{-1}.x_0)_{x_0}\le\epsilon n$.
\end{prop}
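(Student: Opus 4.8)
The plan is to understand the quantity $(\Phi_n.x_0 \mid \Phi_n^{-1}.x_0)_{x_0}$ as measuring the ``backtracking'' of the bilateral random walk trajectory near the origin, and to show that this backtracking is sublinear almost surely. The key identity is that the bilateral path $\overline{\mathbf{g}}$ decomposes into a ``past'' path governed by the reflected measure $\check\mu$ and a ``future'' path governed by $\mu$, and that the two halves are independent (as recalled in the excerpt, $(\overline{\mathcal P},\overline{\mathbb P}) \cong (\check{\mathcal P},\check{\mathbb P}) \otimes (\mathcal P,\mathbb P)$). Concretely, if we write the increments as $(s_n)_{n\in\mathbb Z}$, then $\Phi_n = s_1\cdots s_n$ is a time-$n$ point of the forward walk, while $\Phi_n^{-1}$ has the same law as a time-$n$ point of the $\check\mu$-walk, and moreover the conditional behaviour of $\Phi_n^{-1}.x_0$ given the forward path involves an \emph{independent} copy. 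So I would set up the bilateral picture and reindex so that $\Phi_n.x_0$ and $\Phi_n^{-1}.x_0$ become the endpoints of two independent sample paths emanating from $x_0$.

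First I would recall from Theorem \ref{hyp-cv} that along $\mathbb P$-a.e.\ forward path $(\Phi_n.x_0)_n$ converges to a boundary point $\mathrm{bnd}(\mathbf\Phi)\in\partial_\infty X$, and along $\check{\mathbb P}$-a.e.\ backward path the sequence $(\Phi_n^{-1}.x_0)_n$ converges to a boundary point $\mathrm{bnd}(\check{\mathbf\Phi})$. The crucial point is that these two boundary points are \emph{almost surely distinct}: this is where nonelementarity is used, and it follows from the fact that the hitting measure $\nu$ (resp.\ $\check\nu$) on $\partial_\infty X$ is non-atomic for a nonelementary measure (Maher--Tiozzo \cite{MT14}), so the product measure $\nu\otimes\check\nu$ gives zero mass to the diagonal. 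Next, I would invoke Theorem \ref{drift}: along a.e.\ forward path $d_X(x_0,\Phi_n.x_0)\to\infty$ linearly, and similarly for the backward path with the $\check\mu$-drift (which equals the $\mu$-drift, since reflecting does not change the distance distribution). The elementary fact about Gromov hyperbolic spaces I want is: if $a_n\to\xi$ and $b_n\to\eta$ with $\xi\neq\eta$, and $d_X(x_0,a_n),d_X(x_0,b_n)\to\infty$, then $(a_n\mid b_n)_{x_0}$ stays bounded; indeed $\sup_n (a_n\mid b_n)_{x_0} < \infty$ is essentially the definition of $\xi\neq\eta$ in the sequential boundary (up to the standard $\delta$-fudge relating the Gromov product of sequences to that of their limits). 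Applying this with $a_n = \Phi_n.x_0$, $b_n = \Phi_n^{-1}.x_0$ gives that $(\Phi_n.x_0\mid\Phi_n^{-1}.x_0)_{x_0}$ is bounded along almost every bilateral path, hence is $o(n)$, which is even stronger than the claimed $\le \epsilon n$ eventually.

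The main obstacle is making the independence/identification precise: one has to check carefully that the pair $(\Phi_n.x_0,\Phi_n^{-1}.x_0)$ under $\overline{\mathbb P}$ really does correspond, via the isomorphism $(\overline{\mathcal P},\overline{\mathbb P})\cong(\check{\mathcal P},\check{\mathbb P})\otimes(\mathcal P,\mathbb P)$, to the endpoints of two \emph{independent} walks (forward under $\mu$, backward under $\check\mu$), so that the forward path converges $\nu$-a.s.\ and the backward path converges $\check\nu$-a.s.\ and these limits are independent and hence a.s.\ distinct. One subtlety: we need $\overline{\mathbb P}$-a.s.\ convergence of \emph{both} coordinates together with distinctness of limits, which is exactly where the non-atomicity of $\nu$ and $\check\nu$ enters via Fubini. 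A second, minor point is the passage from ``Gromov product of the limiting boundary points'' to ``Gromov product of the sequences'' — this requires only the standard inequality $(\xi\mid\eta)_{x_0} \le \liminf (a_n\mid b_n)_{x_0} \le \limsup (a_n\mid b_n)_{x_0} \le (\xi\mid\eta)_{x_0} + 2\delta$ for sequences $a_n\to\xi$, $b_n\to\eta$, valid in any $\delta$-hyperbolic space. Once these are in place, the bound $(\Phi_n.x_0\mid\Phi_n^{-1}.x_0)_{x_0}\le\epsilon n$ for $n\ge n_0$ is immediate (with $n_0$ depending on the sample path).
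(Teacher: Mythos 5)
There is a genuine gap, and it sits at the heart of your argument: the sequence $(\Phi_n^{-1}.x_0)_{n\in\mathbb{N}}$ does \emph{not} converge to a boundary point almost surely, so the Gromov products $(\Phi_n.x_0\mid\Phi_n^{-1}.x_0)_{x_0}$ are \emph{not} bounded along a.e.\ sample path. The confusion is between $\Phi_n^{-1}$ and the backward coordinate $\Phi_{-n}$ of the bilateral walk. Writing $\Phi_n=s_1\cdots s_n$, one has $\Phi_n^{-1}=s_n^{-1}s_{n-1}^{-1}\cdots s_1^{-1}$, so $\Phi_{n+1}^{-1}=s_{n+1}^{-1}\cdot\Phi_n^{-1}$: this is a \emph{left} random walk (new increments are prepended), and prepending an isometry can move $\Phi_n^{-1}.x_0$ by an amount comparable to $d_X(x_0,\Phi_n^{-1}.x_0)$. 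For a fixed $n$ the random variable $\Phi_n^{-1}$ does have the same law as the time-$n$ position of the right $\check\mu$-walk, but the \emph{process} $(\Phi_n^{-1})_n$ does not have the law of a $\check\mu$-sample path, and Theorem \ref{hyp-cv} does not apply to it. (Already in $F_2$ with the uniform measure on a free basis and its inverses: the first letter of the reduced word for $\Phi_n^{-1}$ is essentially $s_n^{-1}$, which changes infinitely often, so $\Phi_n^{-1}.x_0$ oscillates; in fact $\limsup_n(\Phi_n.x_0\mid\Phi_n^{-1}.x_0)_{x_0}=+\infty$ a.s.\ by a second Borel--Cantelli argument.) Your other independence claim is also off: $\Phi_n^{-1}$ is a deterministic function of $\Phi_n$, so conditionally on the forward path there is no ``independent copy'' — the bilateral isomorphism makes $\Phi_{-n}$ independent of $\Phi_n$, not $\Phi_n^{-1}$.

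What \emph{is} true, and is the actual mechanism of the paper's proof, is that the first $\lfloor\epsilon n/2\rfloor$ increments of $\Phi_n$ and the first $\lfloor\epsilon n/2\rfloor$ increments of $\Phi_n^{-1}$ (i.e.\ $s_n^{-1},\dots,s_{n-\lfloor\epsilon n/2\rfloor+1}^{-1}$) are independent when $\epsilon<1$, and these initial blocks control the initial portions of the geodesics $[x_0,\Phi_n.x_0]$ and $[x_0,\Phi_n^{-1}.x_0]$ up to a bounded error (Lemma \ref{Gromov-product}). This only yields, for each fixed $n$, a bound on the \emph{probability} that $(\Phi_n.x_0\mid\Phi_n^{-1}.x_0)_{x_0}>\epsilon n$; to upgrade to the almost-sure eventual statement one needs this probability to be summable in $n$, which is where the quantitative input (exponential decay of shadows, Lemma \ref{estimate-1}) and the Borel--Cantelli lemma are indispensable. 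A purely qualitative convergence-to-the-boundary argument cannot produce the statement, both because the convergence fails and because even a bound of the form $\mathbb{P}(\cdot>\epsilon n)\to 0$ without a rate would not suffice.
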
 

The idea behind the proof of Proposition \ref{estimate} is that the first increments in the product $\Phi_n$ are independent from the first increments in the product $\Phi_n^{-1}$. Hyperbolicity of $X$ then forces the geodesic segments $[x_0,\Phi_n.x_0]$ and $[x_0,\Phi_n^{-1}.x_0]$ to diverge rapidly with high probability. To make this precise, we will use the following two lemmas. These basically follow from Maher--Tiozzo's work.

\begin{lemma}(Maher--Tiozzo \cite[Lemmas 5.9 and 5.11]{MT14})\label{estimate-1}
Let $G$ be a countable group acting by isometries on a separable Gromov hyperbolic space $(X,d_X)$, and let $\mu$ be a nonelementary probability measure on $G$ with finite support. Let $L_X>0$ denote the drift of the random walk on $(G,\mu)$ with respect to $d_X$. Then for all $\epsilon >0$, there exists $C>0$ such that for all $n\in\mathbb{N}$, one has $$\mathbb{P}\left((\Phi_n.x_0|\Phi_{\lfloor \frac{\epsilon n}{2}\rfloor}.x_0)_{x_0}\ge\frac{\epsilon L_X.n}{4}\right)\ge 1-e^{-Cn}$$ and $$\mathbb{P}\left((\Phi_n^{-1}.x_0|\Phi_n^{-1}\Phi_{n-\lfloor \frac{\epsilon n}{2}\rfloor}.x_0)_{x_0}\ge\frac{\epsilon L_X.n}{4}\right)\ge 1-e^{-Cn}$$ and $$\mathbb{P}\left((\Phi_n^{-1}\Phi_{n-\lfloor \frac{\epsilon n}{2}\rfloor}.x_0|\Phi_{\lfloor \frac{\epsilon n}{2}\rfloor}.x_0)_{x_0}\le\frac{\epsilon L_X.n}{5}\right)\ge 1-e^{-Cn}.$$
\end{lemma}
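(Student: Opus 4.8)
The plan is to assemble all three estimates from two quantitative inputs of Maher--Tiozzo's theory, applied along suitably chosen blocks of the increments $s_1,\dots,s_n$; indeed these inequalities are essentially a reformulation, in the present notation, of \cite[Lemmas 5.9 and 5.11]{MT14}. The two inputs are: \textbf{(i)} \emph{linear progress with exponential decay} --- for every $\ell<L_X$ there are constants $C,C'>0$ with $\mathbb{P}(d_X(x_0,\Phi_k.x_0)\le \ell k)\le C'e^{-Ck}$ for all $k\in\mathbb{N}$, and the same holds for the walk driven by $\check\mu$, whose drift is again $L_X$; and \textbf{(ii)} \emph{exponential divergence of independent walks} --- there are constants $C,C'>0$ such that whenever $g$ is a position at time $k$ of the $\check\mu$-walk and $h$ is a position at time $l$ of the $\mu$-walk, with $g$ and $h$ independent, one has $\mathbb{P}((g.x_0|h.x_0)_{x_0}\ge r)\le C'e^{-Cr}$, uniformly in $k$ and $l$. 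Input (i) is the exponential form of positivity of the drift (Theorem \ref{drift}), available here because $\mu$ is finitely supported; input (ii) records that the forward and backward walks converge to almost surely distinct, ``transversal'' boundary points, the overlap of the two limiting geodesic rays having an exponential tail. We may assume $\epsilon<1$ (the only range needed in Proposition \ref{estimate}) and write $m=\lfloor \epsilon n/2\rfloor$.

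The manipulations rely on the elementary identity $(a|b)_c=d_X(c,b)-(a|c)_b$ for any three points $a,b,c\in X$, together with $(a|c)_b=(\psi^{-1}.a|\psi^{-1}.c)_{x_0}$ for any isometry $\psi$ with $\psi.x_0=b$. For the first inequality I would take $b=\Phi_m.x_0$ and $\psi=\Phi_m$, obtaining $(\Phi_n.x_0|\Phi_m.x_0)_{x_0}=d_X(x_0,\Phi_m.x_0)-(\Phi_m^{-1}\Phi_n.x_0|\Phi_m^{-1}.x_0)_{x_0}$. Here $\Phi_m^{-1}=s_m^{-1}\cdots s_1^{-1}$ and $\Phi_m^{-1}\Phi_n=s_{m+1}\cdots s_n$ depend on disjoint blocks of increments, hence are independent (the first being, in law, a $\check\mu$-walk of length $m$, the second a $\mu$-walk of length $n-m$), so by (ii) the subtracted term is at most $\epsilon L_X n/8$ outside an event of probability $\le C'e^{-Cn}$; and by (i), with $\ell$ chosen close enough to $L_X$, one has $d_X(x_0,\Phi_m.x_0)\ge \ell m\ge 3\epsilon L_X n/8$ for $n$ large, again outside an exponentially small event. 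Subtracting gives $(\Phi_n.x_0|\Phi_m.x_0)_{x_0}\ge \epsilon L_X n/4$, as wanted. The second inequality is the mirror image: with $b=\Phi_n^{-1}\Phi_{n-m}.x_0$ and $\psi=\Phi_n^{-1}\Phi_{n-m}$ one gets $(\Phi_n^{-1}.x_0|\Phi_n^{-1}\Phi_{n-m}.x_0)_{x_0}=d_X(x_0,\Phi_{n-m}^{-1}\Phi_n.x_0)-(\Phi_{n-m}^{-1}\Phi_n.x_0|\Phi_{n-m}^{-1}.x_0)_{x_0}$, where $\Phi_{n-m}^{-1}\Phi_n=s_{n-m+1}\cdots s_n$ is a $\mu$-walk of length $m$, independent of $\Phi_{n-m}^{-1}=s_{n-m}^{-1}\cdots s_1^{-1}$; the same two inputs then give the bound. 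Finally, for the third inequality, $\Phi_n^{-1}\Phi_{n-m}=s_n^{-1}\cdots s_{n-m+1}^{-1}$ depends on the increments indexed by $\{n-m+1,\dots,n\}$ and $\Phi_m=s_1\cdots s_m$ on $\{1,\dots,m\}$; these are disjoint precisely because $2m\le\epsilon n\le n$ (this is where $\epsilon<1$ enters), so the two elements are independent and (ii) directly bounds $(\Phi_n^{-1}\Phi_{n-m}.x_0|\Phi_m.x_0)_{x_0}$ by $\epsilon L_X n/5$ off an exponentially small event (with room to spare --- in fact this Gromov product has a uniform exponential tail, hence is $O(1)$ typically). Taking $C$ to be the minimum, and $C'$ the sum, of the constants from the finitely many exceptional events yields the lemma; the asymmetry between $1/4$ and $1/5$ is left deliberately, so that two applications of the $\delta$-inequality for Gromov products in the proof of Proposition \ref{estimate} leave a fixed positive gap.

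The routine parts here are the bookkeeping of which increments each term depends on, and the elementary Gromov-product identity. The one substantive point --- and the only place where anything beyond the bare hyperbolicity of $X$ is used --- is input (ii): that two \emph{independent} random-walk positions have a Gromov product with a uniform exponential tail. Establishing this is the heart of Maher--Tiozzo's work: it requires the harmonic (and co-harmonic) measures on $\partial_\infty X$ to be non-atomic, so that the forward and backward limit points are almost surely distinct, together with a quantitative version --- the length of the overlap of the two limiting geodesic rays from $x_0$ has an exponential tail --- which itself follows from exponential control on the speed of convergence of the walk to its boundary point. If one is content to quote \cite[Lemmas 5.9 and 5.11]{MT14} as a black box, the proof collapses to checking that, after the relabelling of increment blocks indicated above, the quantities there match those in the statement.
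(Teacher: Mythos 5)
Your proof is correct and follows essentially the same route as the paper, whose own argument simply defers to Maher--Tiozzo: the two inputs you isolate (linear progress with exponential decay, and the exponential tail of the Gromov product of independent walk positions coming from exponential decay of shadows) are exactly the ingredients behind \cite[Lemmas 5.9 and 5.11]{MT14}, and your block decomposition of the increments matches the paper's remark that the first two estimates correspond to Lemma 5.11 and the third to Lemma 5.9. The only (harmless) discrepancy is that your bounds hold in the form $1-C'e^{-Cn}$, or for $n$ large enough, rather than literally for all $n\in\mathbb{N}$ with no multiplicative constant --- but that caveat is already implicit in the paper's statement and is irrelevant to the Borel--Cantelli application.
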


\begin{proof}
This is a version of \cite[Lemmas 5.9 and 5.11]{MT14}, where basically one only replaces $m=\lfloor \frac{n}{2}\rfloor $ by $\lfloor \frac{\epsilon n}{2}\rfloor$ (all arguments work in the same way). As noticed in Maher--Tiozzo's paper, the exponential rate follows from exponential decay of shadows, which was already established in \cite{Mah12}. The first two estimates play symmetric roles and correspond to \cite[Lemma 5.11]{MT14}, and the third one corresponds to \cite[Lemma 5.9]{MT14}.
\end{proof}

\begin{lemma}(Maher--Tiozzo \cite[Lemma 5.9]{MT14})\label{Gromov-product}
Let $X$ be a Gromov hyperbolic space, and let $x_0\in X$. Then there exist $\kappa_1,\kappa_2>0$ that only depend on the hyperbolicity constant of $X$ such that for all $a,b,c,d\in X$, if there exists $A>0$ such that $(a|b)_{x_0}\ge A$, $(c|d)_{x_0}\ge A$ and $(a|c)_{x_0}\le A-\kappa_1$, then $|(a|c)_{x_0}-(b|d)_{x_0}|\le\kappa_2$.
\end{lemma}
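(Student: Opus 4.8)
The plan is to prove the lemma by a direct manipulation of Gromov products, using only the defining four-point inequality $(u|w)_{x_0}\ge\min\{(u|v)_{x_0},(v|w)_{x_0}\}-\delta$, valid for all $u,v,w\in X$ and some $\delta\ge 0$ depending only on the hyperbolicity constant of $X$. All Gromov products below are taken with respect to $x_0$, and I suppress the subscript. I will take $\kappa_1=2\delta+1$ and $\kappa_2=2\delta+1$, both of which depend only on $\delta$, hence only on the hyperbolicity constant of $X$. The strategy is to first establish $|(a|c)-(b|c)|\le\delta$, then $|(b|c)-(b|d)|\le\delta$, and conclude by the ordinary triangle inequality on $\mathbb{R}$.

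For the first comparison, I would apply the four-point inequality twice. Applied to the triple $(b,a,c)$ it reads $(b|c)\ge\min\{(a|b),(a|c)\}-\delta$; since $(a|b)\ge A$ while $(a|c)\le A-\kappa_1<A$, the minimum is $(a|c)$, giving $(b|c)\ge(a|c)-\delta$. Applied to the triple $(a,b,c)$ it reads $(a|c)\ge\min\{(a|b),(b|c)\}-\delta$; here the minimum cannot be $(a|b)$, since that would force $(a|c)\ge A-\delta$, contradicting $(a|c)\le A-2\delta-1$; hence the minimum is $(b|c)$ and $(a|c)\ge(b|c)-\delta$. Therefore $|(a|c)-(b|c)|\le\delta$.

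For the second comparison, first note that the inequality just obtained gives $(b|c)\le(a|c)+\delta\le A-2\delta-1+\delta=A-\delta-1<A\le(c|d)$. Now I would run exactly the same two applications with $c,d$ in place of $a,b$: from the triple $(b,c,d)$, using $(b|c)<(c|d)$, one gets $(b|d)\ge(b|c)-\delta$; from the triple $(b,d,c)$ one gets $(b|c)\ge\min\{(b|d),(c|d)\}-\delta$, where the minimum cannot be $(c|d)$ (that would force $(b|c)\ge A-\delta$, contradicting $(b|c)\le A-\delta-1$), so $(b|c)\ge(b|d)-\delta$. Thus $|(b|c)-(b|d)|\le\delta$, and combining the two comparisons yields $|(a|c)-(b|d)|\le 2\delta\le\kappa_2$, as required.

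There is no real obstacle here; the one point that requires a little care — and the reason the statement insists on the hypothesis $(a|c)_{x_0}\le A-\kappa_1$ with a specific $\kappa_1$ rather than merely $(a|c)_{x_0}<A$ — is the bookkeeping of constants. Each comparison step loses an additive $\delta$ in the relevant upper bound (the bound on $(b|c)$ is weaker than the one on $(a|c)$ by $\delta$), so that the second step can still exclude the ``wrong'' minimum only if one started with $\kappa_1$ strictly larger than $2\delta$; this is why $\kappa_1=2\delta+1$ (or any value $>2\delta$) works, whereas $\kappa_1>\delta$ alone would not suffice for the chained argument. If one wants the dependence on $X$ phrased through the thin-triangles constant rather than the four-point constant, it suffices to recall that the two are comparable up to a universal factor, so the resulting $\kappa_1,\kappa_2$ still depend only on the hyperbolicity constant.
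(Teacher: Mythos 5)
Your proof is correct: both applications of the four-point inequality are used with the right triples, the exclusion of the ``wrong'' branch of the minimum is justified at each stage by the running upper bound on $(a|c)$ (respectively $(b|c)$), and the bookkeeping showing that $\kappa_1>2\delta$ suffices is accurate. The paper itself gives no proof of this lemma --- it only cites Maher--Tiozzo --- so there is nothing to compare against; your elementary, self-contained derivation with explicit constants $\kappa_1=\kappa_2=2\delta+1$ is exactly the standard argument one would expect.
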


\begin{proof}[Proof of Proposition \ref{estimate}]
For all $\epsilon>0$, there exists $C>0$ such that all three events considered in Lemma \ref{estimate-1} happen simultaneously with probability greater than $1-e^{-Cn}$. Applying Lemma \ref{Gromov-product} to $a:=\Phi_{\lfloor\frac{\epsilon n}{2}\rfloor}.x_0$, $b:=\Phi_n.x_0$, $c:=\Phi_n^{-1}\Phi_{n-\lfloor\frac{\epsilon n}{2}\rfloor}.x_0$ and $d:=\Phi_n^{-1}.x_0$, with $A=\frac{\epsilon L_X.n}{4}$, one deduces that for all $\epsilon>0$, there exists $C>0$ such that for all $n\in\mathbb{N}$, one has $$\mathbb{P}\left((\Phi_n.x_0|\Phi_n^{-1}.x_0)_{x_0}\le\epsilon n\right)\ge 1-e^{-Cn}.$$ Proposition \ref{estimate} then follows by applying the Borel--Cantelli Lemma.
\end{proof}

\begin{proof}[Proof of Proposition \ref{hyperbolic}] 
 Consider a  $K$-quasi-axis for $\Phi$, and $y_0$ a closest point to $x_0$ on it. By hyperbolicity and the minimising property for $y_0$, all geodesic segments $[\Phi^{-1}. x_0, \Phi^{-1}. y_0]$, $[\Phi^{-1}. y_0, \Phi. y_0]$ and $[\Phi .y_0 \Phi. x_0]$ remain $\kappa_0=\kappa_0(K,\delta)$-close to any geodesic  $[\Phi^{-1}. x_0,\Phi. x_0]$. Since the lengths of $[\Phi^{-1}. x_0, \Phi^{-1}. y_0]$, $[x_0, y_0]$ and $[\Phi x_0, \Phi y_0]$ are  within $\kappa_1(K,\delta)$  of  $   (\Phi. x_0| \Phi^{-1}.x_0 )_{x_0} $, we know that for every $K>0$, there exists $\kappa=\kappa(K,\delta)$ (where $\delta$ is the hyperbolicity constant of $X$) such that all $K$-quasi-axes of loxodromic isometries $\Phi$ cross the central subsegment of length $d_X(x_0, \Phi. x_0) -2 (\Phi. x_0| \Phi^{-1}.x_0 )_{x_0}$ of any geodesic segment $[x_0, \Phi. x_0]$, up to distance $\kappa$. Proposition \ref{hyperbolic} is therefore a consequence of Theorem \ref{drift} (Positive drift) and Proposition \ref{estimate}, together with the fact that $\mathbb{P}$-a.s., the element $\Phi_n$ is loxodromic for all $n$ large enough (Theorem \ref{loxo}).
\end{proof}

 Given $\epsilon>0$, a quasi-geodesic ray $\gamma:\mathbb{R}_+\to X$, and $n\in\mathbb{N}$, we denote by $t_1^{\epsilon,\gamma}(n)$ (resp. $t_2^{\epsilon,\gamma}(n)$) the infimum of all real numbers such that $d_X(\gamma(0),\gamma(t_1(n)))\ge \epsilon L_X.n$ (resp. $d_X(\gamma(0),\gamma(t_2(n)))\ge (1-\epsilon)L_X.n$). Combining Theorem \ref{hyp-cv} (convergence to the boundary), Theorem \ref{souslin} (sublinear tracking) and Proposition \ref{hyperbolic} (axes of random isometries pass close to the basepoint), one can establish the following. Details of the proof are left to the reader.

\begin{prop}\label{hyp-drift}
Let $X$ be a separable geodesic Gromov hyperbolic metric space, with hyperbolicity constant $\delta$. For all $K>0$, the exists $\kappa=\kappa(K,\delta)$, such that the following holds. Let $G$ be a group acting by isometries on $X$, and let $\mu$ be a nonelementary probability measure on $G$ with finite support. Let $L_X>0$ denote the drift of the random walk on $(G,\mu)$ with respect to $d_X$. Then for $\mathbb{P}$-a.e. sample path $\mathbf{\Phi}:=(\Phi_n)_{n\in\mathbb{N}}$ of the random walk on $(G,\mu)$, all $\epsilon>0$, and all $K$-quasi-geodesic rays $\gamma:\mathbb{R}_+\to X$ converging to $\text{bnd}(\mathbf{\Phi})$, there exists $n_0\in\mathbb{N}$ such that for all $n\ge n_0$, any $K$-quasi-axis of $\Phi_n$ crosses $\gamma_{|[t_1^{\epsilon,\gamma}(n),t_2^{\epsilon,\gamma}(n)]}$ up to distance $\kappa$.       
\qed
\end{prop}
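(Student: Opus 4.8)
The plan is to combine the three preparatory results in the order suggested by the statement. Fix a $K$-quasi-geodesic ray $\gamma:\mathbb{R}_+\to X$ converging to $\mathrm{bnd}(\mathbf{\Phi})$, and recall from Theorem \ref{souslin} that $\mathbb{P}$-a.s. there is a \emph{particular} quasi-geodesic ray $\tau:\mathbb{R}_+\to X$ limiting at $\mathrm{bnd}(\mathbf{\Phi})$ which is sublinearly tracked by the sample path, i.e.\ $d_X(\Phi_n.x_0,\tau(L_X n))=o(n)$. Since $\gamma$ and $\tau$ are both $K'$-quasi-geodesic rays (for a uniform $K'$) with the same endpoint on $\partial_\infty X$, a standard Morse-type lemma in hyperbolic spaces gives a uniform constant $R=R(K',\delta)$ such that $\gamma$ and $\tau$ are within Hausdorff distance $R$ of one another, and more precisely $d_X(\gamma(t),\tau(\theta(t)))\le R$ for a suitable nondecreasing reparametrization $\theta$; in particular $d_X(\gamma(t_i^{\epsilon,\gamma}(n)),\tau(L_X n \cdot(\text{something near }i)))$ stays bounded, so it suffices to establish the crossing property with $\tau$ in place of $\gamma$ and then transport it to $\gamma$ at the cost of enlarging $\kappa$ by $2R$.

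First I would set up the geodesic segment $\sigma_n:=[x_0,\Phi_n.x_0]$. By Proposition \ref{hyperbolic} (applied with $\epsilon/2$, say), $\mathbb{P}$-a.s.\ there is $n_0$ so that for $n\ge n_0$ every $K$-quasi-axis of $\Phi_n$ crosses a subsegment $\sigma_n'\subseteq\sigma_n$ of length at least $(1-\epsilon/2)d_X(x_0,\Phi_n.x_0)$ up to distance $\kappa_1=\kappa_1(K,\delta)$, and $\Phi_n$ is loxodromic. Next I would locate $\sigma_n'$ inside $\sigma_n$ more precisely: since $\sigma_n'$ has length at least $(1-\epsilon/2)\,|\sigma_n|$ and $\sigma_n$ has length $|\sigma_n|=d_X(x_0,\Phi_n.x_0)$, the subsegment $\sigma_n'$ must contain the central portion of $\sigma_n$ — concretely, it contains every point of $\sigma_n$ lying at distance between $(\epsilon/4)|\sigma_n|$ and $(1-\epsilon/4)|\sigma_n|$ from $x_0$, up to an error absorbed into $\epsilon$ versus $\epsilon/2$, because cutting out a subsegment of relative length $1-\epsilon/2$ can delete at most $\epsilon/2$ of the total length from the two ends combined. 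Then I would use that, by the drift theorem (Theorem \ref{drift}) and sublinear tracking, $|\sigma_n|=L_X n + o(n)$ and the segment $\sigma_n$ fellow-travels the initial portion $\tau_{|[0,L_X n]}$ of $\tau$ up to additive error $o(n)$ (this is a standard consequence of Theorems \ref{hyp-cv} and \ref{souslin}: the endpoints $x_0=\tau(0)$ and $\Phi_n.x_0\approx\tau(L_X n)$ are $o(n)$-close to the ends of $\tau_{|[0,L_X n]}$, and in a $\delta$-hyperbolic space a geodesic between two points each within $o(n)$ of a quasi-geodesic segment stays within $o(n)+O(\delta)$ of it). Consequently the points $\tau(t_1^{\epsilon,\tau}(n))$ and $\tau(t_2^{\epsilon,\tau}(n))$, which sit at parameter-distance $\epsilon L_X n$ and $(1-\epsilon)L_X n$ from $\tau(0)$, lie within $o(n)$ of points of $\sigma_n$ at distance $\epsilon L_X n+o(n)$ and $(1-\epsilon)L_X n + o(n)$ from $x_0$ — hence, for $n$ large, strictly inside the range $[(\epsilon/4)|\sigma_n|,\,(1-\epsilon/4)|\sigma_n|]$, so they lie $o(n)$-close to $\sigma_n'$, and in fact the whole sub-ray $\tau_{|[t_1^{\epsilon,\tau}(n),t_2^{\epsilon,\tau}(n)]}$ fellow-travels $\sigma_n'$ up to additive error that is $o(n)$ plus a uniform constant.

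The final assembly: the $K$-quasi-axis of $\Phi_n$ crosses $\sigma_n'$ up to distance $\kappa_1$, and $\sigma_n'$ is fellow-traveled by $\tau_{|[t_1^{\epsilon,\tau}(n),t_2^{\epsilon,\tau}(n)]}$; composing the two fellow-traveling reparametrizations (both nondecreasing, both compatible with orientation since all objects point from the $x_0$-side toward $\mathrm{bnd}(\mathbf{\Phi})$) yields a nondecreasing map realizing the crossing of $\tau_{|[t_1^{\epsilon,\tau}(n),t_2^{\epsilon,\tau}(n)]}$ by the quasi-axis up to distance $\kappa_1+o(n)+O(\delta)$. The one genuine subtlety is that this error is $o(n)$, not bounded; but recall $t_2^{\epsilon,\tau}(n)-t_1^{\epsilon,\tau}(n)\asymp (1-2\epsilon)L_X n\to\infty$, and what we actually need is a \emph{fixed} $\kappa=\kappa(K,\delta)$ — so I would instead run the argument with $\epsilon$ replaced by $2\epsilon$ in Proposition \ref{hyperbolic}, giving a subsegment $\sigma_n'$ of relative length $1-\epsilon$ whose \emph{central sub-subsegment} of relative length $1-\tfrac32\epsilon$, say, still covers the parameter window $[t_1^{\epsilon,\tau}(n),t_2^{\epsilon,\tau}(n)]$ once $n$ is large, and — this is the key point — in a $\delta$-hyperbolic space, once we know $\tau_{|[t_1,t_2]}$ and a subsegment of $\sigma_n$ have the same pair of endpoints up to $o(n)$ \emph{and both are quasi-geodesics with uniform constants}, the Morse lemma upgrades the $o(n)$ fellow-traveling to fellow-traveling up to a \emph{uniform} constant $\kappa_2(K,\delta)$, independent of $n$. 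Then $\kappa:=\kappa_1+\kappa_2+2R$ works, where $R$ is the Morse constant comparing $\gamma$ and $\tau$ from the first paragraph, and we transport the crossing back from $\tau$ to the originally given ray $\gamma$. The main obstacle is precisely this last bookkeeping step: making sure that all the comparison constants ($\kappa_1$ from Proposition \ref{hyperbolic}, the Morse constants for $\gamma$ vs.\ $\tau$ and for $\sigma_n'$ vs.\ $\tau_{|[t_1,t_2]}$, the reparametrization matching $t_i^{\epsilon,\gamma}$ to $t_i^{\epsilon,\tau}$) depend only on $K$ and $\delta$, and that the orientations all agree so the composed reparametrization is genuinely nondecreasing; this is exactly the kind of routine-but-delicate chase the authors defer with ``Details of the proof are left to the reader.'' \qed
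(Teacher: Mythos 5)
Your proposal is correct and follows exactly the route the paper indicates (the authors give no details beyond the instruction to combine Theorems \ref{hyp-cv} and \ref{souslin} with Proposition \ref{hyperbolic}): reduce to the sublinearly tracking ray $\tau$, use Proposition \ref{hyperbolic} to make every $K$-quasi-axis of $\Phi_n$ cross the central part of $[x_0,\Phi_n.x_0]$, show that this central part uniformly fellow-travels the window on $\tau$, and transport back to the given ray $\gamma$. Two repairs are needed in the write-up. First, your assertion that ``the Morse lemma upgrades the $o(n)$ fellow-traveling to fellow-traveling up to a uniform constant'' once the endpoints agree up to $o(n)$ is false as stated (already in a tree, two geodesics whose endpoints are $o(n)$ apart need only be $o(n)$-Hausdorff-close); the correct and standard argument is via thinness of the quasigeodesic quadrilateral with long sides $[x_0,\Phi_n.x_0]$ and $\tau_{|[0,s_n]}$ and short sides $[\tau(0),x_0]$ (length $O(1)$) and $[\tau(s_n),\Phi_n.x_0]$ (length $o(n)$): every point of the window $\tau_{|[t_1^{\epsilon,\tau}(n),t_2^{\epsilon,\tau}(n)]}$ is at distance $\geq\epsilon L_X n-o(n)$ from both short sides, hence for $n$ large must lie within a uniform constant $\kappa_2(K,\delta)$ of $[x_0,\Phi_n.x_0]$, with a coarsely monotone, orientation-preserving correspondence. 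Second, the $\epsilon$-bookkeeping in your final paragraph goes the wrong way: you must feed Proposition \ref{hyperbolic} a \emph{smaller} parameter (e.g.\ $\epsilon/2$, as in your first pass --- and note the guaranteed central portion then runs from $(\epsilon/2)|\sigma_n|$ to $(1-\epsilon/2)|\sigma_n|$, not $(\epsilon/4)$ to $(1-\epsilon/4)$), so that after the $o(n)$ losses the crossed subsegment still covers the target window; with these adjustments the composition of the two monotone crossing maps gives the claim with $\kappa=\kappa_1+\kappa_2+2R$ depending only on $K$ and $\delta$.
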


\section{Translation lengths of random elements: abstract setting}\label{abstract}

The goal of this section is to provide the general setting for the \emph{lifting argument} we wish to use for proving our spectral theorems. We will establish sufficient conditions under which, if $G$ is a group acting by isometries on two metric spaces $Y$ and $X$, where $X$ is a hyperbolic coarse Lipschitz $G$-equivariant image of $Y$, then elements obtained along a typical sample path of the random walk on $G$ have linearly growing translation length in $Y$. The main result of this section is Theorem \ref{spectral-gen} below.

Let $G$ be a countable group acting by isometries on a (possibly asymmetric) geodesic metric space $(Y,d_Y)$ (i.e. $d_Y$ need not be symmetric, but it satisfies the triangle inequality, as well as the separation property  that $d_Y(a,b)=0$ if and only if $a=b$). 
 We denote by $d_Y^{sym}$ the symmetrized version of the metric, defined by letting $d_Y^{sym}(x,y):=d_Y(x,y)+d_Y(y,x)$ for all $x,y\in Y$. A \emph{bordification} of $Y$ is a Hausdorff, second-countable topological space $\overline{Y}$ such that $Y$ is homeomorphic to an open dense subset of $\overline{Y}$, and such that the $G$-action on $Y$ extends to an action on $\overline{Y}$ by homeomorphisms. We let $\partial Y:=\overline{Y}\smallsetminus Y$. From now on, when we talk about a \emph{metric $G$-space} $(Y,d_Y)$, we assume that the $G$-action is by isometries, but we do not necessarily assume that the metric $d_Y$ is symmetric.   
 Note that one can talk about geodesics in a non-symetric metric space $(Y, d_Y)$: a geodesic is a map $\gamma: I\to Y$ where $I$ is an interval in $\mathbb{R}$, for which, for all $a<b$ in $I$, $d_Y(\gamma(a), \gamma(b)) = b-a$.  

\begin{de}\textbf{(Boundary convergence)}
 Let $G$ be a countable group, let $Y$ be a metric $G$-space which admits a bordification $\overline{Y}$. Let $\mu$ be a probability measure on $G$. We say that $(G,\mu)$ is \emph{$\overline{Y}$-boundary converging} if for $\overline{\mathbb{P}}$-a.e. bilateral sample path $\mathbf{\Phi}:=(\Phi_n)_{n\in\mathbb{Z}}$ of the random walk on $(G,\mu)$, there exist $\text{bnd}^-(\mathbf{\Phi}),\text{bnd}^+(\mathbf{\Phi})\in\partial Y$ such that for all $y_0\in Y$, the sequence $(\Phi_n.y_0)_{n\in\mathbb{N}}$ converges to $\text{bnd}^+(\mathbf{\Phi})$, and the sequence $(\Phi_{-n}.y_0)_{n\in\mathbb{N}}$ converges to $\text{bnd}^-(\mathbf{\Phi})$. 
\end{de}

\begin{de}\textbf{(Hyperbolic electrification)}
A \emph{hyperbolic $G$-electrification} is a pair of $G$-spaces $(Y,X)$, together with a map $\pi:Y\to X$, such that

\begin{itemize}
\item the space $(Y,d_Y)$ is a (possibly asymmetric) geodesic metric $G$-space, and
\item the space $(X,d_X)$ is a separable Gromov hyperbolic (symmetric) geodesic metric $G$-space, and
\item there exists $K_1>0$ such that for all $x,y\in Y$ and all $g\in G$, we have $$d_X(g\pi(x),\pi(gx))\le K_1$$ (we say that $\pi$ is \emph{coarsely $G$-equivariant}) and $$d_X(\pi(x),\pi(y))\le K_1d_Y(x,y)+K_1$$ (we say that $\pi$ is \emph{coarsely Lipschitz}), and
\item there exists $K_2>0$ such that for all $x,y\in Y$, there exists a geodesic segment from $x$ to $y$ whose $\pi$-image in $X$ is a $K_2$-unparameterized quasigeodesic.
\end{itemize}
\end{de}

We fix from now on a hyperbolic $G$-electrification $(Y,X)$, a map $\pi:Y\to X$, and real numbers $K_1,K_2>0$ given by the above definition. We also assume that $Y$ admits a bordification $\overline{Y}$. From now on, nonelementarity of a probability measure on $G$ will always be meant with respect to the $G$-action on $X$. Given an interval $I\subseteq\mathbb{R}$, we say that a geodesic $\gamma:I\to Y$ is \emph{electric} if $\pi\circ\gamma:I\to X$ is a $K_2$-unparameterized quasigeodesic (the last property in the above definition states the existence of an electric geodesic between any two points in $Y$). We denote by $\Gamma_{el} Y$ the collection of all electric geodesics in $Y$. We let $\kappa=\kappa(K_2,\delta)$ be the constant given by Proposition \ref{hyp-drift}. 

\begin{de}\label{de-contr}\textbf{(Contraction)}
Let $\gamma:\mathbb{R}\to Y$ be an electric geodesic line, and let $y\in Y$ be a point that lies on the image of $\gamma$. Given $B,D>0$, we say that $\gamma$ is \emph{$(B,D)$-contracting at $y$} if the following holds. Let $a\in\mathbb{R}$ be such that $\gamma(a)=y$, and let $b\in\mathbb{R}$ be the infimum of all real numbers such that $\pi\circ\gamma([a,b])$ has $d_X$-diameter at least $B$. Then for all geodesic lines $\gamma':\mathbb{R}\to Y$, if $\pi\circ\gamma'$ crosses $\pi\circ\gamma_{|[a,b]}$ up to distance $\kappa$, then there exists $a'\in\mathbb{R}$ such that $d_Y(\gamma'(a'),\gamma(a))\le D$.
\end{de}

The following definition was first given in \cite[Definition 6.13]{AK}: roughly speaking, a point $y\in Y$ is high if for all $y'\in Y$, it is shorter to go from $y$ to $y'$ than to go from $y'$ to $y$.

\begin{de}\textbf{(Highness)}
Given $M>0$, we say that a point $y\in Y$ is \emph{$M$-high} if for all $y'\in Y$, we have $d_Y^{sym}(y',y)\le M d_Y(y',y)$.
\end{de}

\noindent We define the density $\theta(A)$ of a subset $A\subseteq\mathbb{Z}$ as $$\theta(A):=\min\left\{\liminf_{n\to +\infty}\frac{|A\cap [-n,0]|}{n},\liminf_{n\to +\infty}\frac{|A\cap [0,n]|}{n}\right\}.$$ 
The following definition is inspired by \cite[Theorem 6]{Tio13}.

\begin{de}\label{pencil}\textbf{(Contracting pencils of geodesics)}
Let $G$ be a countable group, let $(Y,X)$ be a hyperbolic $G$-electrification, and let $y_0\in Y$. Let $\mu$ be a nonelementary probability measure on $G$, such that the random walk on $(G,\mu)$ is $\overline{Y}$-boundary converging. Denote by $\check{\nu}$ and $\nu$ the hitting measures on $\partial Y$ for the random walks on $(G,\check{\mu})$ and $(G,\mu)$. We say that $(G,\mu)$ \emph{has contracting pencils of geodesics} if there exists a $G$-equivariant map $P:\partial Y\times\partial Y\to \mathcal{P}(\Gamma_{el} Y)$ which associates to any pair of points $(x,y)\in\partial Y$ a set of electric geodesics in $Y$ joining $x$ to $y$, so that  
\begin{itemize}
\item the map 
\begin{displaymath}
\begin{array}{cccc}
D:&\partial Y\times \partial Y&\to & \mathbb{R}\\
& (x,y)&\mapsto& \sup_{\gamma\in P(x,y)}d_Y^{sym}(y_0,\gamma(\mathbb{R}))
\end{array}
\end{displaymath}
\noindent is measurable and $\check{\nu}\otimes\nu$-a.e. finite, and
\item for all $\theta\in (0,1)$, there exist $B,D,M>0$ such that for $\overline{\mathbb{P}}$-a.e. bilateral sample path $\mathbf{\Phi}:=(\Phi_n)_{n\in\mathbb{Z}}$ of the random walk on $(G,\mu)$, and all geodesic lines $\gamma\in P(\text{bnd}^-(\mathbf{\Phi}),\text{bnd}^+(\mathbf{\Phi}))$, the set of integers $k\in\mathbb{Z}$ such that for all $d_Y^{sym}$-closest-point projections $z$ of $\Phi_k.y_0$ to $\gamma$, the point $z$ is $M$-high, and $\gamma$ is $(B,D)$-contracting at $z$, has density at least $\theta$.
\end{itemize} 
\end{de}

Let $\Phi\in G$ be a loxodromic isometry of $X$. We say that $\Phi$ \emph{has an electric translation axis}  
 in $Y$ if there exists a $\Phi$-invariant electric geodesic line $\gamma:\mathbb{R}\to Y$ such that for all $y\in\gamma(\mathbb{R})$, the line $\gamma$ restricts to a geodesic segment from $\Phi.y$ to $y$, and $$d_Y(\Phi.y,y)=\inf_{y'\in Y}d_Y(\Phi.y',y').$$ We then denote by $l_Y(\Phi)$ the \emph{translation length} of $\Phi$, defined as $l_Y(\Phi):= d_Y(\Phi.y,y)$ for all $y$ lying on the image of an electric axis of $\Phi$. We notice that the translation length of $\Phi$ is well-defined, i.e. it does not depend on the choice of an axis for $\Phi$, and in addition, for all $y\in Y$, we have $l_Y(\Phi)\le d_Y(\Phi.y,y)$.

 Notice that with the above definition, the action of $\Phi$ on an electric translation axis is by translation in the negative direction. Therefore, with our definitions, any electric geodesic axis for $\Phi$ in $Y$ projects (as an oriented line) to a $K_2$-quasi-axis for $\Phi^{-1}$ in $X$. 
These conventions may sound a bit awkward, however they will turn out to be quite natural in the context where $Y$ is either the Teichmüller space of a surface or Culler--Vogtmann's outer space, because the natural actions of either $\text{Mod}(S)$ or $\text{Out}(F_N)$ on these spaces are right actions (so that one needs to pass to inverses when considering left actions).

\begin{theo}\label{spectral-gen}
Let $G$ be a countable group, and let $(Y,X)$ be a $G$-hyperbolic electrification, with a basepoint $y_0\in Y$. Assume that $Y$ admits a bordification $\overline{Y}$. Let $\mu$ be a nonelementary probability measure on $G$ with finite support. Assume that all elements of $G$ acting loxodromically on $X$ have an electric translation axis in $Y$. If the random walk on $(G,\mu)$ is $\overline{Y}$-boundary converging and has contracting pencils of geodesics, then for $\mathbb{P}$-a.e. sample path $(\Phi_n)_{n\in\mathbb{N}}$ of the random walk on $(G,\mu)$, we have $$\lim_{n\to +\infty} \frac{l_Y(\Phi_n^{-1})}{n}=L_Y,$$ where $L_Y$ denotes the drift of the random walk on $(G,\mu)$ with respect to $d_Y$. 
\end{theo}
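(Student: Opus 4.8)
The plan is to use the hyperbolic geometry of $X$ to locate, for $\mathbb{P}$-a.e.\ sample path and all large $n$, a long subsegment of a fixed electric geodesic $\gamma$ joining $\text{bnd}^-$ to $\text{bnd}^+$ that is simultaneously crossed (in $X$) by a $K_2$-quasi-axis of $\Phi_n^{-1}$ and contains a contracting point close to $\Phi_n.y_0$; the contraction property then forces the electric translation axis of $\Phi_n^{-1}$ in $Y$ to pass within bounded $d_Y$-distance of $\Phi_n.y_0$ (equivalently, the axis of $\Phi_n$ passes close to $y_0$), from which the translation length estimate drops out. First I would fix $\theta$ close to $1$ and extract $B,D,M$ from the contracting-pencils hypothesis, and pass to the bilateral path space $\overline{\mathcal{P}}$, using the isomorphism with $\check{\mathcal{P}}\otimes\mathcal{P}$ and the $U$-invariance recalled in Section~\ref{sec1}; this lets me speak of a fixed electric geodesic line $\gamma\in P(\text{bnd}^-(\mathbf{\Phi}),\text{bnd}^+(\mathbf{\Phi}))$ along which the set of ``good'' indices $k$ (those where the closest-point projection $z$ of $\Phi_k.y_0$ to $\gamma$ is $M$-high and $\gamma$ is $(B,D)$-contracting at $z$) has density at least $\theta$.

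Next I would transport the picture to $X$. Since $\pi\circ\gamma$ is a $K_2$-unparameterized quasigeodesic limiting at $\text{bnd}(\mathbf{\Phi})\in\partial_\infty X$ (boundary convergence pushes through $\pi$ by coarse equivariance and coarse Lipschitzness), Proposition~\ref{hyp-drift} applies: for all $\epsilon>0$ and all large $n$, every $K_2$-quasi-axis of $\Phi_n^{-1}$ crosses $\pi\circ\gamma$ restricted to the parameter window $[t_1^{\epsilon,\pi\circ\gamma}(n),t_2^{\epsilon,\pi\circ\gamma}(n)]$ up to distance $\kappa$. (Here $\Phi_n^{-1}$ enters rather than $\Phi_n$ because, by the orientation conventions after Definition~\ref{pencil}, an electric axis for $\Phi_n^{-1}$ in $Y$ projects to a $K_2$-quasi-axis for $\Phi_n$; more to the point the sequence $(\Phi_n.y_0)$ converges to $\text{bnd}^+$, so it is the axis whose attracting endpoint matches $\text{bnd}(\mathbf{\Phi})$ that we can see.) Because the window $[t_1,t_2]$ covers a definite proportion of the $d_X$-distance travelled, and because $\gamma$ makes linear $d_Y$-progress (drift $L_Y>0$ by Kingman, $L_Y$ being finite by finite support, and positive because $\pi$-image progress is linear), I can choose a good index $k=k(n)$ — with $k/n$ bounded away from $0$ and $1$ — whose contracting point $z$ on $\gamma$ lies in the crossed window. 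At $z$, the $(B,D)$-contraction of $\gamma$ applied to any $K_2$-quasi-axis $\gamma'$ of $\Phi_n^{-1}$ yields a point $a'$ with $d_Y(\gamma'(a'),z)\le D$; combined with $M$-highness of $z$ and $d_Y^{sym}(\Phi_k.y_0,z)\le 2D$ (say), this gives $d_Y^{sym}(\gamma'(\mathbb{R}),\Phi_k.y_0)$ bounded by a constant depending only on $B,D,M,K_i,\delta$.

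Finally, with the electric axis of $\Phi_n^{-1}$ passing uniformly close to $\Phi_k.y_0$ in $d_Y^{sym}$, I get
\[
l_Y(\Phi_n^{-1}) \;\ge\; d_Y\big(\Phi_k^{-1}\Phi_n^{-1}\Phi_k.y_0,\,y_0\big) - 2C
\]
for a constant $C$, because translating the axis by $\Phi_k^{-1}$ brings the relevant point near $y_0$ and the displacement along an axis is the translation length; and the matching upper bound $l_Y(\Phi_n^{-1})\le d_Y(\Phi_n^{-1}.y_0,y_0)$ is automatic. Writing $\Phi_k^{-1}\Phi_n^{-1}\Phi_k = (\Phi_k^{-1}\Phi_n\Phi_k)^{-1}$ and noting that $U$-stationarity (or directly the structure of the bilateral walk) makes $d_Y(\Phi_k^{-1}\Phi_n^{-1}\Phi_k.y_0,y_0)$ comparable, up to sublinear error, to $d_Y(\Phi_{n}^{-1}.y_0,y_0)$ which is $L_Y n + o(n)$, both bounds give $l_Y(\Phi_n^{-1})/n\to L_Y$. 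The main obstacle I expect is the matching of the two parameterizations: one must show that the good index $k(n)$ whose $Y$-progress lands inside $\gamma_{|[t_1,t_2]}$ after projection to $X$ can always be found — this requires comparing $d_Y$-arclength along $\gamma$ with $d_X$-distance along $\pi\circ\gamma$ (controlled by coarse Lipschitzness in one direction, and by the linear lower bound on $\pi$-progress coming from positive $X$-drift in the other), together with a Birkhoff-type argument that density-$\theta$ sets of indices cannot avoid a window of proportional length — and then keeping all the additive constants uniform so that the final error is genuinely $o(n)$.
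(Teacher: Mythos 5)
Your overall architecture is the paper's: pass to the bilateral walk, use the contracting-pencils hypothesis plus Birkhoff to get a density-$\theta$ set of high/contracting points on a tracking geodesic $\gamma$, use Proposition~\ref{hyp-drift} to make the quasi-axis (of $\Phi_n$, not $\Phi_n^{-1}$ --- with the paper's conventions the electric axis of $\Phi_n^{-1}$ in $Y$ projects to a quasi-axis of $\Phi_n$ in $X$, which is what Proposition~\ref{hyp-drift} controls) cross the window, and let contraction pull the axis down to $Y$. The gap is in the final accounting. First, you cannot get the axis within a \emph{uniform} constant of $\Phi_k.y_0$: the distance from $\Phi_k.y_0$ to its closest-point projection $z$ on $\gamma$ is controlled only by sublinear tracking, so it is $o(k)$, not $O(1)$; your ``$d_Y^{sym}(\Phi_k.y_0,z)\le 2D$ (say)'' has no justification. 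Second, and more seriously, your lower bound routes through $d_Y\bigl(\Phi_k^{-1}\Phi_n^{-1}\Phi_k.y_0,\,y_0\bigr)$ with $k/n$ bounded away from $0$, and the claim that this is within sublinear error of $d_Y(\Phi_n^{-1}.y_0,y_0)$ is unjustified and false in general. By $G$-invariance this quantity equals $d_Y\bigl((s_{k+1}\cdots s_n)^{-1}.y_0,\,\Phi_k.y_0\bigr)$; the only a priori comparison with $d_Y(y_0,\Phi_n.y_0)$ is via the triangle inequality, which introduces an error of order $d_Y^{sym}(y_0,\Phi_k.y_0)\sim L_Y^{sym}\,k$, i.e.\ \emph{linear} in $n$ when $k\sim cn$. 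The element $\Phi_k^{-1}\Phi_n\Phi_k=(s_{k+1}\cdots s_n)(s_1\cdots s_k)$ is the time-$n$ position of the walk driven by a cyclic shift of the increments; it has the same distribution as $\Phi_n$ but is not a position of the shifted walk $U^k.\mathbf{\Phi}$, so neither $U$-stationarity nor Kingman gives you an almost-sure sublinear comparison along a fixed sample path with $k=k(n)$ growing linearly.

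The fix is the paper's choice of scale: instead of taking $k$ in the middle of the window, take the good index $k=n_2\in[5\epsilon n,6\epsilon n]$, near the $y_0$-end. Then sublinear tracking places the contracting point $z$ within $O(\epsilon L_Y n)$ of $\tau_Y(0)$, hence of $y_0$; contraction plus $M$-highness places a point $y'$ of the axis of $\Phi_n^{-1}$ with $d_Y^{sym}(y_0,y')\le 10M\epsilon L_Y n$; and since $y'$ lies \emph{on} the axis, $l_Y(\Phi_n^{-1})=d_Y(y',\Phi_n.y')\ge d_Y(y_0,\Phi_n.y_0)-20M\epsilon L_Y n$ by two applications of the triangle inequality --- no conjugation and no comparison of two different random displacements is needed. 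All errors are then $O(\epsilon n)$ with constants independent of $n$, and letting $\epsilon\to 0$ gives $\liminf l_Y(\Phi_n^{-1})/n\ge L_Y$; the upper bound $l_Y(\Phi_n^{-1})\le d_Y(y_0,\Phi_n.y_0)$ is, as you say, automatic. One must also check (the paper's Step~2) that the point $\pi(z)$ actually lies inside the crossed window $\tau_X([t_1(n),t_2(n)])$, which uses the coarse Lipschitz property of $\pi$ and the $X$-drift estimate, and requires $9\epsilon\le 1-\epsilon$; this is the parameterization-matching issue you flagged, and it is resolved at the level of the single index $n_2$ rather than by a density argument inside the window.
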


\begin{rk}\label{rk-abstract}
Since the support of $\mu$ is assumed to be finite, it follows from Theorem \ref{loxo} that for $\mathbb{P}$-a.e. sample path $(\Phi_n)_{n\in\mathbb{N}}$ of the random walk on $(G,\mu)$, eventually all elements $\Phi_n$ act as loxodromic isometries of $X$, which justifies that it makes sense to write $l_Y(\Phi_n^{-1})$ in Theorem \ref{spectral-gen}. Combining Benoist--Quint's arguments from \cite{BQ14} with Maher--Tiozzo's results on random walks on nonproper hyperbolic spaces \cite{MT14}, one can actually show that Proposition \ref{hyperbolic} remains valid if $\mu$ is only assumed to have finite second moment with respect to $d_X$ (see \cite[Section 2]{Hor15}). Therefore, Theorem \ref{spectral-gen} remains true if $\mu$ is only assumed to have finite second moment with respect to $d_Y$.

If the support of $\mu$ is only assumed to have finite first moment with respect to $d_Y$, then we don't know whether infinitely many $\Phi_n$ fail to satisfy the conclusion of Proposition \ref{hyperbolic}  (or even fail to be loxodromic),     
however the conclusion of Theorem \ref{spectral-gen} remains valid if one takes the limit along the subsequence corresponding to those integers $n$ for which $\Phi_n$ satisfies the conclusion of Proposition \ref{hyperbolic}. Therefore, Theorem \ref{spectral-gen} remains valid in this situation if one replaces the limit by a limsup.
\end{rk}

The general idea of the proof of Theorem \ref{spectral-gen} is the following, it is illustrated in Figure \ref{fig-gen}. Typical sample paths of the realization in $Y$ of the random walk on $(G,\mu)$ stay close to a (random) geodesic ray $\tau_Y$. Using the existence of contracting pencils of geodesics, for all $n\in\mathbb{N}$, we can find a contracting subsegment $J$ of the image of $\tau_Y$, whose distance to $y_0$ is small compared to $L_Y.n$ (Step 1 in the proof below). Using the properties of random walks on isometry groups of hyperbolic spaces recalled in Section \ref{sec1}, one can also arrange so that every quasi-axis of $\Phi_n$ in $X$ crosses $\pi(J)$ up to distance $\kappa$ (Step 2). The contraction property implies that $\Phi_n^{-1}$ has an electric axis in $Y$ that passes at small distance from $y_0$ (Step 3). This is enough to deduce that the translation length of $\Phi_n^{-1}$ in $Y$ is close to $L_Y.n$, as required (Step 4).

\begin{figure}
\begin{center}
\includegraphics[scale=.6]{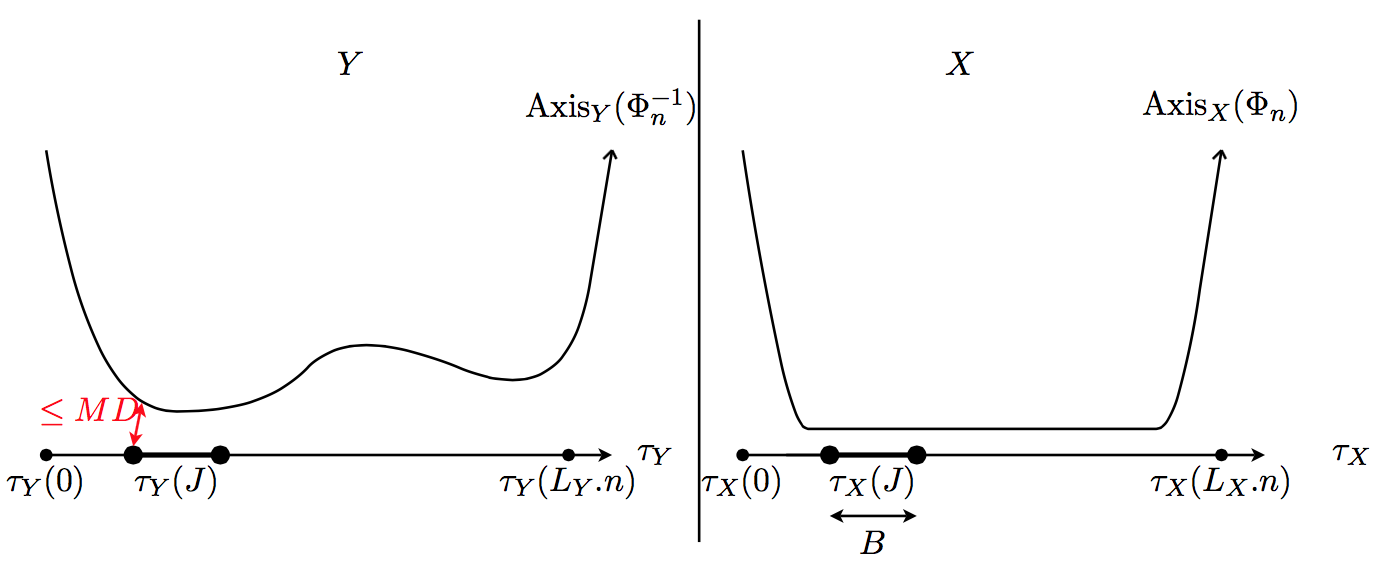}
\caption{The situation in the proof of Theorem \ref{spectral-gen}.}
\label{fig-gen}
\end{center}
\end{figure}

\begin{proof}[Proof of Theorem \ref{spectral-gen}]
By Theorem \ref{drift}, the drift $L_X$ of the realization of the random walk on $X$ (with respect to the metric $d_X$) is positive. Since $\pi$ is coarsely $G$-equivariant and coarsely Lipschitz, this implies that the drift $L_Y$ of the realization of the random walk on $Y$ (with respect to the metric $d_Y$) is also positive. 

Let $K_1\ge 1$ be a constant (provided by the definition of a hyperbolic $G$-electrification) such that for all $x,y\in Y$, we have $$d_X(\pi(x),\pi(y))\le K_1 d_Y(x,y)+K_1.$$ Let $K_2\ge 1$ be a  constant such that for all electric geodesics $\gamma:I\to Y$ (where $I\subseteq\mathbb{R}$ is an interval), we have that $\pi\circ\gamma:I\to X$ is an unparameterized $K_2$-quasigeodesic. 
 
Let $\epsilon>0$. We fix a basepoint $y_0\in Y$, and let $x_0:=\pi(y_0)$. Since $(G,\mu)$ is $\overline{Y}$-boundary converging, we have that for $\mathbb{P}$-a.e. sample path $\mathbf{\Phi}:=(\Phi_n)_{n\in\mathbb{N}}$ of the random walk on $(G,\mu)$, the sequence $(\Phi_n.y_0)_{n\in\mathbb{N}}$ converges to a point $\text{bnd}_Y(\mathbf{\Phi})\in \partial Y$, and by Theorem \ref{hyp-cv}, the sequence $(\Phi_n.x_0)_{n\in\mathbb{N}}$ converges to a point $\text{bnd}_X(\mathbf{\Phi})\in\partial_{\infty}X$.  

Let $\theta:=\frac 7 8$, and let $B,D>0$ and $M\ge 1$ be the corresponding constants provided by the definition of having contracting pencils of geodesics (Definition \ref{pencil}). 
\\
\\
\textbf{Claim.} For $\mathbb{P}$-a.e. sample path $\mathbf{\Phi}:=(\Phi_n)_{n\in\mathbb{N}}$ of the random walk on $(G,\mu)$, there exists an integer $n_0\in\mathbb{N}$ and an electric geodesic ray $\tau_Y:\mathbb{R}_+\to Y$, such that for all $n\ge n_0$, the element $\Phi_n$ acts as a loxodromic isometry of $X$, and letting $\tau_X:=\pi\circ\tau_Y$, we have 
\begin{itemize}
\item[$(i)$] $|d_{Y}(y_0,\Phi_n.y_0)-L_{Y}.n|\le\epsilon L_{Y}. n$, and
\item[$(ii)$] $d_{Y}^{sym}(\Phi_n.y_0,\tau_{Y}(L_{Y}.n))\le\frac{\min\{L_{Y},L_{X}\}}{6K_1}.n-1$, and 
\item[$(iii)$] $|d_{X}(\tau_X(0),\Phi_n.x_0)-L_X.n|\le\frac{L_{X}.n}{12}$, and
\item[$(iv)$] there are at least $\frac{6n}{7}$ integers $k\in\{0,\dots,n\}$ such that
   for all $d_Y^{sym}$-closest-point projections
   $z$ of $\Phi_k.y_0$ to $\tau_Y$, 
   the point $z$ is $M$-high, and $\tau_Y$ is $(B,D)$-contracting at $z$, and 
\item[$(v)$] every $K_2$-quasi-axis of $\Phi_n$ in $X$ crosses $\tau_{X}([t_1(n),t_2(n)])$ up to distance $\kappa$, where $\kappa=\kappa(K_2,\delta)$ is the constant provided by Proposition \ref{hyp-drift}, and $t_1(n)$ (resp. $t_2(n)$) is the infimum of all real numbers such that $d_X(\tau_X(0),\tau_X(t_1(n)))\ge \epsilon L_X.n$ (resp. $d_X(\tau_X(0),\tau_X(t_2(n)))\ge (1-\epsilon)L_X.n$).
\end{itemize}

\begin{proof}[Proof of the Claim.] Let us explain how to choose $\tau_Y$. For $\overline{\mathbb{P}}$-a.e. bilateral path $\mathbf{\Phi}$ of the random walk on $(G,\mu)$, Tiozzo produces in \cite[Theorem 6]{Tio13} a bi-infinite geodesic between the boundary limit points $\text{bnd}^-(\mathbf{\Phi})$ and $\text{bnd}^+(\mathbf{\Phi})$, in the pencil of geodesics $D(\text{bnd}^-(\mathbf{\Phi}),\text{bnd}^+(\mathbf{\Phi}))$, from which the random walk has sublinear tracking. We choose $\tau_Y$ to be a positive ray in this geodesic. Property $(ii)$ follows from the sublinear tracking, and property $(iv)$ follows from the properties of our pencil of geodesics.

Properties $(i)$ and $(iii)$ come from the definition of the drifts of the random walk on $Y$ and $X$. 
 Property $(v)$   
 is   guaranteed by Proposition \ref{hyp-drift}.
\end{proof}

We let $\mathbf{\Phi}:=(\Phi_n)_{n\in\mathbb{N}}$ be a sample path that satisfies the conclusion of the above claim. By choosing $n_0$ sufficiently large, we may also assume that $B+K_1\le L_X.n_0$, that $D\le L_Y.n_0$, that $d_Y^{sym}(y_0,\tau_Y(0))\le L_Y.n_0$ and that $2K_1\le L_X.n_0$. 
\\
\\
\textbf{Step 1: We show the existence of a point $y$ close to $y_0$ at which $\tau_Y$ is contracting.}
\\
\\
Let $n\in\mathbb{N}$ be such that $\epsilon n\ge n_0$. Then the interval $[5\epsilon n,6\epsilon n]$ contains an integer $n_2$ 
 {fulfilling the} 
Property $(iv)$ above. In addition, by Property $(ii)$, we have $$d_{Y}^{sym}(\Phi_{n_2}.y_0,\tau_{Y}(L_{Y}.n_2))\le\frac{\min\{L_{Y},L_{X}\}.n_2}{6K_1}-1,$$ so in particular we have $$d_{Y}^{sym}(\Phi_{n_2}.y_0,\tau_{Y}(L_{Y}.n_2))\le\frac{L_{Y}.n_2}{6}\le \epsilon L_{Y}.n.$$ Let $y$ be a $d_{Y}^{sym}$-closest point projection of $\Phi_{n_2}.y_0$ to $\tau_Y$. By definition of a closest-point projection, we necessarily have $$d_{Y}^{sym}(\Phi_{n_2}.y_0,y)\le\epsilon L_{Y}.n.$$ It then follows from the triangle inequality that $$d_{Y}^{sym}(y,\tau_{Y}(L_{Y}.n_2))\le 2\epsilon L_{Y}.n,$$ so $y\in\tau_{Y}([3\epsilon L_{Y}.n,8\epsilon L_{Y}.n])$. 
\\
\\
We now let $a\in\mathbb{R}$ be such that $\tau_Y(a)=y$, and let $b\ge a$ be the infimum of all real numbers such that $\tau_X([a,b])$ has diameter at least $B$. We let $J:=[a,b]$.   
By definition for all $\epsilon >0$,  $\tau_X([a,b-\epsilon])$ has diameter at most $B$,   thus  $\tau_X(J)$ has diameter at most $B+K_1$.
\\
\\
\textbf{Step 2: We show that every $K_2$-quasi-axis of $\Phi_n$ in $X$ crosses $(\tau_X)_{|J}$ up to distance $\kappa$.}
\\
\\
Recall from above that $$d_Y^{sym}(\Phi_{n_2}.y_0,\tau_Y(L_Y.n_2))\le\frac{L_X.n_2}{6K_1}-1,$$ and hence by definition of a closest-point projection $$d_Y^{sym}(\Phi_{n_2}.y_0,y)\le\frac{L_X.n_2}{6K_1}-1.$$ Using the fact that $\pi$ is coarsely Lipschitz and that $d_Y\le d_Y^{sym}$, we get that $$d_{X}(\pi(\Phi_{n_2}.y_0),\pi(y))\le\epsilon L_{X}.n.$$ By Property $(iii)$ above, we also have $$|d_{X}(\Phi_{n_2}.x_0,\tau_{X}(0))-L_X.n_2|\le\frac{L_{X}.n_2}{12}\le\frac{\epsilon L_{X}.n}{2}.$$ Finally, since $\pi$ is coarsely $G$-equivariant, we have $$d_X(\pi(\Phi_{n_2}.y_0),\Phi_{n_2}.x_0)\le K_1\le\frac{\epsilon L_X.n}{2}.$$ Together with the triangle inequality, the above three inequalities imply that $$|d_{X}(\pi(y),\tau_{X}(0))-L_X.n_2|\le 2\epsilon L_{X}.n.$$ As $B+K_1\le\epsilon L_X.n$, and $\tau_X(J)$ has diameter at most $B+K_1$, this implies that $$|d_{X}(\pi(y'),\tau_{X}(0))-L_X.n_2|\le 3\epsilon L_{X}.n$$ for all $y'\in \tau_Y(J)$. 
By Property $(v)$ above, we know in addition
that every $K_2$-quasi-axis of $\Phi_n$ in $X$ crosses $\tau_{X}([t_1(n),t_2(n)])$ up to distance $\kappa$ (where $t_1(n)$ and $t_2(n)$ are defined as in Property $(v)$). Assuming that $\epsilon>0$ had been chosen small enough such that $9\epsilon\le 1-\epsilon$, we get that every $K_2$-quasi-axis of $\Phi_n$ in $X$ crosses $(\tau_X)_{|J}$ up to distance $\kappa$.
\\
\\
\textbf{Step 3: We show that the element $\Phi_n^{-1}$ has an electric axis in $Y$ that passes close to $y_0$.}
\\
\\ 
By hypothesis, all elements of $G$ acting loxodromically on $X$ have an electric axis. In particular, the element $\Phi_n^{-1}$ has an axis in $Y$, whose projection to $X$ is a $K_2$-quasi-axis for $\Phi_n$, and hence crosses $(\tau_X)_{|J}$ up to distance $\kappa$. Since $\tau_Y$ is $(B,D)$-contracting at $y$ (Property $(iv)$), there exists a point $y'$ lying on the axis of $\Phi_n^{-1}$ in $Y$ such that $d_{Y}(y',y)\le D$, and hence $d_Y^{sym}(y,y')\le MD$ since $y$ is $M$-high. Hence we have
\begin{displaymath}
\begin{array}{rl}
d_{Y}^{sym}(y_0,y')&\le d_{Y}^{sym}(y_0,\tau_Y(0))+d_{Y}^{sym}(\tau_Y(0),y)+d_{Y}^{sym}(y,y')\\
&\le d_{Y}^{sym}(y_0,\tau_Y(0))+Md_{Y}(\tau_Y(0),y)+d_{Y}^{sym}(y,y')\\
&\le d_{Y}^{sym}(y_0,\tau_Y(0))+8M\epsilon L_{Y}.n+MD\\
&\le 10M\epsilon L_{Y}.n.
\end{array}
\end{displaymath}
\noindent Notice here that the second inequality uses the fact that $y$ is $M$-high. 
\\
\\
\textbf{Step 4: End of the proof.}
\\
\\
By $G$-invariance of $d_{Y}^{sym}$, we also have $d_{Y}^{sym}(\Phi_n.y_0,\Phi_n.y')\le 10M\epsilon L_{Y}.n$. Using the triangle inequality, we get $d_{Y}(y',\Phi_n.y')\ge d_{Y}(y_0,\Phi_n.y_0)-20M\epsilon L_{Y}.n$. Property $(i)$ above then implies that $d_{Y}(y',\Phi_n.y')\ge (1-21M\epsilon)L_{Y}.n$. Since $y'$ belongs to the translation axis of $\Phi_n^{-1}$, we obtain $\frac{1}{n} l_Y(\Phi_n^{-1})\ge (1-21M\epsilon)L_{Y}$. As $\epsilon>0$ was chosen arbitrary small, and since the above inequality holds for all sufficiently large $n\in\mathbb{N}$, we have $$\liminf_{n\to +\infty}\frac{1}{n}l_Y(\Phi_n^{-1})\ge L_Y.$$ Notice in addition that for all $n\in\mathbb{N}$, we have $$\frac{1}{n}l_Y(\Phi_n^{-1})\le\frac{1}{n}d_Y(y_0,\Phi_n.y_0),$$ and hence $$\limsup_{n\to +\infty}\frac{1}{n}l_Y(\Phi_n^{-1})\le L_Y.$$ This completes the proof of Theorem \ref{spectral-gen}.
\end{proof}

\section{Spectral theorem for random walks on $\text{Mod}(S)$}\label{sec3}

Let $S$ be a closed, connected, oriented, hyperbolic surface. We will now prove our spectral theorem for random walks on $\text{Mod}(S)$. We fix a hyperbolic metric $\rho$ on $S$. We refer to Section \ref{sec-mod-background} below for definitions.

\begin{theo}\label{spectral}
Let $S$ be a closed, connected, oriented, hyperbolic surface, and let $\mu$ be a nonelementary probability measure on $\text{Mod}(S)$ with finite support. Then for $\mathbb{P}$-a.e. sample path $(\Phi_n)_{n\in\mathbb{N}}$ of the random walk on $(\text{Mod}(S),\mu)$, we have $$\lim_{n\to +\infty}\frac{1}{n}\log(\lambda(\Phi_n))=L_{\mathcal{T}},$$ where $\lambda(\Phi_n)$ denotes the stretching factor of the pseudo-Anosov mapping class $\Phi_n$, and $L_{\mathcal{T}}>0$ is the drift of the random walk on $(\text{Mod}(S),\mu)$ with respect to the Teichmüller metric. 
\end{theo}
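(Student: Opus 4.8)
The plan is to deduce Theorem \ref{spectral} by applying the abstract criterion of Theorem \ref{spectral-gen} to the group $G=\text{Mod}(S)$, with $Y=\mathcal{T}(S)$ equipped with the Teichmüller metric and $X=\mathcal{C}(S)$ the curve complex, and $\pi:\mathcal{T}(S)\to\mathcal{C}(S)$ the systole (or nearest-point) projection. The first task is to check that the hypotheses of Theorem \ref{spectral-gen} are satisfied. The map $\pi$ is coarsely $\text{Mod}(S)$-equivariant and coarsely Lipschitz, and by a theorem of Masur--Minsky the $\pi$-image of a Teichmüller geodesic is an unparameterized quasigeodesic in $\mathcal{C}(S)$; hence $(\mathcal{T}(S),\mathcal{C}(S))$ is a hyperbolic $G$-electrification. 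The Teichmüller space admits the Thurston bordification (or the visual/horofunction bordification) $\overline{\mathcal{T}(S)}$, and the results of Kaimanovich--Masur and Duchin--Masur guarantee that for a nonelementary finitely supported $\mu$, almost every bilateral sample path converges in both forward and backward time to points of $\partial\mathcal{T}(S)$, so that the random walk is $\overline{Y}$-boundary converging. Next, a pseudo-Anosov mapping class is exactly an element acting loxodromically on $\mathcal{C}(S)$, and it has an axis in $\mathcal{T}(S)$ (its Teichmüller geodesic) whose translation length equals $\log\lambda(\Phi)$; one must check that with the orientation conventions of the excerpt this electric axis for $\Phi$ projects to a quasi-axis for $\Phi^{-1}$, which is automatic since for a surface $\lambda(\Phi)=\lambda(\Phi^{-1})$, so the passage to inverses is harmless.

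The heart of the verification, and the main obstacle, is establishing that the random walk on $(\text{Mod}(S),\mu)$ \emph{has contracting pencils of geodesics} in the sense of Definition \ref{pencil}; this occupies the bulk of Section \ref{sec3}. For the map $P:\partial\mathcal{T}(S)\times\partial\mathcal{T}(S)\to\mathcal{P}(\Gamma_{el}\mathcal{T}(S))$ I would take $P(x,y)$ to be the set of Teichmüller geodesic lines joining $x$ to $y$ (nonempty and of uniformly bounded ``width'' for a full-measure set of endpoint pairs with respect to $\check\nu\otimes\nu$, using that the hitting measures are supported on uniquely ergodic foliations); the measurability and a.e.\ finiteness of the width function $D$ then follows from the Kaimanovich--Masur description of the Poisson boundary. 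The key input for the second bullet is the contraction property of typical Teichmüller geodesics established by Dowdall--Duchin--Masur \cite{DDM14}: a Teichmüller segment whose $\mathcal{C}(S)$-projection makes definite progress is strongly contracting in $\mathcal{T}(S)$. I would therefore show that along $\overline{\mathbb{P}}$-a.e.\ bilateral path, the bi-infinite Teichmüller geodesic $\gamma$ tracking the walk (produced by Tiozzo \cite[Theorem 6]{Tio13}) contains, with density at least any prescribed $\theta<1$, subsegments near the closest-point projections of the $\Phi_k.y_0$ that both (a) are $(B,D)$-contracting — because the projection of the walk to $\mathcal{C}(S)$ makes linear progress, so infinitely many subsegments have the DDM property — and (b) have $M$-high closest-point projections, which holds because the Teichmüller metric on $\mathcal{T}(S)$ is symmetric ($d_Y^{sym}=2d_Y$), so every point is $2$-high and the highness condition is vacuous. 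This last observation considerably simplifies the surface case relative to the $\text{Out}(F_N)$ case. The density statement requires combining the ergodicity of the shift $U$ on bilateral paths with the a.e.\ positivity of the frequency of contracting times, via the Birkhoff ergodic theorem applied to an appropriate indicator function.

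Having checked all hypotheses, Theorem \ref{spectral-gen} yields, for $\mathbb{P}$-a.e.\ sample path, $\lim_{n\to+\infty}\frac{1}{n}l_{\mathcal{T}}(\Phi_n^{-1})=L_{\mathcal{T}}$. To conclude I would invoke the two classical facts that, for a pseudo-Anosov $\Phi$, the translation length of $\Phi$ along its Teichmüller axis equals $\log\lambda(\Phi)$, and that $\lambda(\Phi_n)=\lambda(\Phi_n^{-1})$ on a closed surface; together with Theorem \ref{loxo} (so that $\Phi_n$ is eventually pseudo-Anosov and $\lambda(\Phi_n)$ is well-defined), this gives $\lim_{n\to+\infty}\frac{1}{n}\log\lambda(\Phi_n)=L_{\mathcal{T}}$. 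Positivity of $L_{\mathcal{T}}$ follows from positivity of the drift $L_{\mathcal{C}(S)}$ in the curve complex (Theorem \ref{drift}) together with coarse Lipschitzness of $\pi$, exactly as in the first lines of the proof of Theorem \ref{spectral-gen}. I expect the routine parts — coarse equivariance of $\pi$, the Masur--Minsky quasigeodesic statement, boundary convergence — to be quick citations, while the genuinely new work is the density-of-contracting-subsegments argument using \cite{DDM14}, which is where I would spend most of the effort.
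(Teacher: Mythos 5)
Your proposal is correct and follows essentially the same route as the paper: the same hyperbolic electrification $(\mathcal{T}(S),\mathcal{C}(S))$, boundary convergence via Kaimanovich--Masur, the pencil of Teichmüller lines between uniquely ergodic foliations (with finiteness of the width function via Tiozzo), the Dowdall--Duchin--Masur contraction criterion for progressing subsegments, the Birkhoff ergodic theorem on the bilateral shift to get density of contracting times, the observation that highness is automatic by symmetry of $d_{\mathcal{T}}$, and the identification $l_{\mathcal{T}}(\Phi)=\log\lambda(\Phi)=\log\lambda(\Phi^{-1})$. The only place where the paper does slightly more work than your sketch indicates is in justifying the positive density of progressing times: rather than directly invoking linear progress of the walk in $\mathcal{C}(S)$, it shows (Lemma \ref{C-mod}) that the ``length needed to make definite progress'' is an upper semicontinuous, a.e.\ finite function of the pair of endpoint foliations, which is what makes the Birkhoff argument go through.
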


\begin{rk}\label{rk-mod}
 We recall that when $\mu$ has finite support, then with probability equal to $1$, all $\Phi_n$ are eventually pseudo-Anosov, so that the stretching factor $\lambda(\Phi_n)$ is well-defined (notice also that in this context, we have $\lambda(\Phi_n)=\lambda(\Phi_n^{-1})$,   
  {as their logarithms are the translation lenghts in Teichm\"uller space equipped with the Teichm\"uller metric which is symmetric}). Theorem \ref{spectral} will be established by applying Theorem \ref{spectral-gen} to the $\text{Mod}(S)$-hyperbolic electrification made of the Teichmüller space $\mathcal{T}(S)$ and the curve graph $\mathcal{C}(S)$. In view of Remark \ref{rk-abstract}, Theorem \ref{spectral} remains valid if $\mu$ is only assumed to have finite second moment with respect to the Teichmüller metric $d_{\mathcal{T}}$. However, if $\mu$ is only assumed to have finite first moment with respect to $d_{\mathcal{T}}$, then one has to replace the limit by a limsup in the statement.
\end{rk}

The section is organized as follows. In Section \ref{sec-mod-background}, we record the necessary background on mapping class groups. In Section \ref{sec-contr-mod}, we establish a sufficient criterion for a subsegment of a Teichmüller geodesic in $\mathcal{T}(S)$ to have the required contraction property: namely, we prove that if the projection of this subsegment to the curve graph $\mathcal{C}(S)$ makes progress, then this subsegment is contracting. We prove in Section \ref{sec3-3} that typical Teichmüller geodesics contain infinitely many such subsegments. This will enable us to conclude the proof of Theorem \ref{spectral} in Section \ref{sec3-4} by applying Theorem \ref{spectral-gen}. 

\subsection{Background on mapping class groups}\label{sec-mod-background}

We review 
 some rather classical background on mapping class groups of surfaces. The familiar reader may skip directly to Section \ref{sec-contr-mod}.

Let $S$ be a closed, connected, oriented, hyperbolic surface. The \emph{mapping class group} $\text{Mod}(S)$ is the group of isotopy classes of orientation-preserving diffeomorphisms of $S$. The group $\text{Mod}(S)$ acts on the Teichmüller space $\mathcal{T}(S)$, and on the curve graph $\mathcal{C}(S)$ of the surface, whose definitions we now recall.

The \emph{Teichmüller space} $\mathcal{T}(S)$ is the space of isotopy classes of conformal  
 structures on $S$. It can be equipped with several metrics; we review the definition of the \emph{Teichmüller metric}. Given $x,y\in\mathcal{T}(S)$, the Teichmüller distance $d_{\mathcal{T}}(x,y)$ is defined as $$d_{\mathcal{T}}(x,y):=\frac{1}{2}\text{inf}_f \log K(f),$$ where the infimum is taken over all quasiconformal diffeomorphisms $f:(S,x)\to (S,y)$ that are isotopic to the identity, and where $K(f)$ denotes the quasiconformal dilatation of $f$. (We note that the Teichmüller metric is a true metric which satisfies the symmetry property.) Teichmüller space, equipped with the Teichmüller metric, is a uniquely geodesic space, and $\text{Mod}(S)$ acts by isometries on $(\mathcal{T}(S),d_{\mathcal{T}})$. 

A simple closed curve $c$ on $S$ is \emph{essential} if it does not bound a disk on $S$. We denote by $\mathcal{S}$ the collection of all isotopy classes of essential simple closed curves on $S$. Given $c\in\mathcal{S}$ and $x\in\mathcal{T}(S)$, we denote by $l_x(c)$ the smallest length of a curve in the isotopy class $c$, measured in the unique hyperbolic metric in the conformal class $x$.  
 Thurston has defined a compactification of $\mathcal{T}(S)$ by taking the closure of the map 
\begin{displaymath}
\begin{array}{cccc}
\mathcal{T}(S)&\to &\mathbb{PR}^{\mathcal{S}}\\
x&\mapsto & (l_x(c))_{c\in\mathcal{S}}
\end{array}
\end{displaymath}
\noindent and he identified the boundary with the space $\mathcal{PMF}$ of projective measured foliations on $S$ (up to Whitehead equivalence), see \cite[Exposé 8]{FLP79}. 

The \emph{curve graph} $\mathcal{C}(S)$ is the graph whose vertices are the isotopy classes of essential simple closed curves on $S$, two vertices being joined by an edge if there are disjoint representatives in the isotopy classes of the corresponding curves. We denote by $d_{\mathcal{C}}$ the simplicial metric on $\mathcal{C}(S)$. Masur and Minsky have proved in \cite{MM99} that $(\mathcal{C}(S),d_{\mathcal{C}})$ is Gromov hyperbolic.

There is a coarsely $\text{Mod}(S)$-equivariant, coarsely Lipschitz map $\pi:\mathcal{T}(S)\to\mathcal{C}(S)$, mapping every point in $\mathcal{T}(S)$ to one of the essential simple closed curves whose length is minimal in $S$. It also follows from the work of Masur and Minsky \cite{MM99} that projections of Teichmüller geodesics to the curve graph of $S$ are uniform unparameterized quasi-geodesics. We sum up the above discussion in the following proposition, with the terminology from Section \ref{abstract}.

\begin{prop}\label{elec-mod}
Let $S$ be a closed, connected, oriented, hyperbolic surface. Then $(\mathcal{T}(S),\mathcal{C}(S))$ is a hyperbolic $\text{Mod}(S)$-electrification. Teichmüller geodesics are electric.
\end{prop}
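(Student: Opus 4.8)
The plan is to verify, one by one, the three bulleted conditions in the definition of a \emph{hyperbolic $G$-electrification}, together with the geodesic-electricity property, for the pair $(\mathcal{T}(S),\mathcal{C}(S))$ with $G=\mathrm{Mod}(S)$ and $\pi$ the nearest-systole projection. First I would recall that $\mathcal{T}(S)$ equipped with the Teichmüller metric is a genuine (symmetric) geodesic metric space on which $\mathrm{Mod}(S)$ acts by isometries, and that $\mathcal{C}(S)$ is separable (it is a connected graph with countably many vertices), is a geodesic metric space, and is Gromov hyperbolic by Masur--Minsky \cite{MM99}; $\mathrm{Mod}(S)$ also acts on it by isometries since the mapping class group acts simplicially. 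So the first two bullets are essentially definitional once these classical facts are in hand.

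Next I would address coarse $G$-equivariance and the coarse Lipschitz property of $\pi$. For equivariance: if $c$ is a systole of $x$ for the hyperbolic metric in the conformal class $x$, then $gc$ is a systole of $gx$ of the same length, so $\pi(gx)$ and $g\pi(x)$ are both short curves on $gx$; since the set of curves of length at most (systole length $+\,2\log 2$, say) on a hyperbolic surface has uniformly bounded diameter in $\mathcal{C}(S)$ (a standard consequence of the collar lemma), we get a uniform bound $d_{\mathcal{C}}(g\pi(x),\pi(gx))\le K_1$. For the coarse Lipschitz bound, it suffices to treat $x,y$ at Teichmüller distance $\le 1$ (then chain), and here one uses the elementary fact that moving a distance $\le 1$ in the Teichmüller metric changes every length function by a bounded multiplicative factor ($e^{2}$), so a systole of $x$ has bounded length on $y$ and hence is within bounded $\mathcal{C}(S)$-distance of $\pi(y)$; this gives $d_{\mathcal{C}}(\pi(x),\pi(y))\le K_1 d_{\mathcal{T}}(x,y)+K_1$.

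The fourth bullet --- existence, between any two points of $\mathcal{T}(S)$, of a geodesic whose $\pi$-image is a uniform unparameterized quasigeodesic in $\mathcal{C}(S)$ --- is where the real content sits, and I would cite it directly from Masur--Minsky \cite{MM99}: they prove that the projection to $\mathcal{C}(S)$ of \emph{any} Teichmüller geodesic is an unparameterized $K_2$-quasigeodesic, with $K_2$ depending only on $S$. (One can alternatively phrase this via the contraction/progress estimates in that paper, but the cleanest route is the quoted theorem.) Since Teichmüller geodesics between two given points always exist --- indeed are unique --- this simultaneously establishes the fourth bullet and the final sentence of the proposition, that Teichmüller geodesics are electric.

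The step I expect to be the main obstacle, or at least the one requiring the most care in citation, is precisely this last point: making sure the Masur--Minsky statement is invoked in the uniform, orientation-compatible form demanded by the definition of an unparameterized quasigeodesic here. I would therefore be explicit that the reparametrizing homeomorphism can be taken non-decreasing (the projection is nested/monotone along the geodesic up to bounded backtracking, which one absorbs into the quasigeodesic constant), and that $K_2$ is independent of the endpoints. Everything else --- the metric-space axioms, separability, hyperbolicity, isometric actions, and the two coarse properties of $\pi$ --- is routine once the collar lemma and the basic distortion estimate for length functions under unit Teichmüller displacement are recorded. The proposition then follows by assembling these verifications against Definition of a hyperbolic $G$-electrification.
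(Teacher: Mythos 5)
Your proposal is correct and follows exactly the route the paper takes: the paper itself offers no proof beyond the preceding summary paragraph, which records that the systole map is coarsely $\mathrm{Mod}(S)$-equivariant and coarsely Lipschitz and cites Masur--Minsky both for the hyperbolicity of $\mathcal{C}(S)$ and for the fact that projections of Teichmüller geodesics are uniform unparameterized quasigeodesics. Your write-up simply fills in the routine verifications (collar lemma, Wolpert-type distortion of length functions) that the paper leaves implicit.
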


A measured foliation in $\mathcal{PMF}$ is \emph{arational} if it does not contain any simple closed curve on $S$. It is \emph{uniquely ergodic} if in addition, every foliation in $\mathcal{PMF}$ with the same topological support as $F$ is equal to $F$. We denote by $\mathcal{UE}$ the subspace of $\mathcal{PMF}$ consisting of uniquely ergodic arational foliations, and by $\Delta$ the diagonal subset of $\mathcal{UE}\times\mathcal{UE}$. Given $F,F'\in\mathcal{UE}\times\mathcal{UE}\smallsetminus\Delta$, there is a parameterized \emph{Teichmüller line} $\gamma:\mathbb{R}\to\mathcal{T}(S)$, unique up to precomposition by a translation of $\mathbb{R}$, which is a geodesic line for the Teichmüller metric, and such that $\lim_{t\to -\infty}\gamma(t)=F$ and $\lim_{t\to +\infty}\gamma(t)=F'$ (\cite{GM91}, see also the exposition in \cite[\S 2.5]{Ser12}, for instance). We will usually denote its image by $[F,F']$, which one may still name Teichmüller line, though unparameterized. Similarly, a point $x_0\in\mathcal{T}(S)$ together with a foliation $F\in\mathcal{UE}$ determine a unique \emph{Teichmüller ray}. In addition, given a pair $(F,F')\in\mathcal{UE}\times\mathcal{UE}\smallsetminus\Delta$, and a sequence $((F_n,F'_n))_{n\in\mathbb{N}}\in(\mathcal{UE}\times\mathcal{UE}\smallsetminus\Delta)^{\mathbb{N}}$ that converges to $(F,F')$, the Teichmüller lines $[F_n,F_n']$ admit parameterizations $\gamma_n:\mathbb{R}\to\mathcal{T}(S)$ (as geodesics) that  converge uniformly on compact sets to a geodesic parameterization $\gamma:\mathbb{R}\to\mathcal{T}(S)$  of $[F,F']$.
\\
\\
\indent An element $\Phi\in\text{Mod}(S)$ is \emph{pseudo-Anosov} if there exists a pair of transverse measured foliations $(\mathcal{F}^s,\mu^s)$ and $(\mathcal{F}^u,\mu^u)$ on $S$, together with a real number $\lambda(\Phi)>1$, such that $$\Phi.(\mathcal{F}^s,\mu^s)=(\mathcal{F}^s,\frac{1}{\lambda(\Phi)}\mu^s)$$ and $$\Phi.(\mathcal{F}^u,\mu^u)=(\mathcal{F}^u,\lambda(\Phi)\mu^u).$$ The coefficient $\lambda(\Phi)$ is called the \emph{stretching factor} of $\Phi$. It was shown by Thurston \cite{Thu88} that $\Phi$ is pseudo-Anosov if and only if no power $\Phi^k$ with $k\neq 0, (k\in \mathbb{N})$  fixes the isotopy class of an essential simple closed curve on $S$. The stretching factor $\lambda(\Phi)$ is also equal to the exponential growth rate of the length of any essential simple closed curve on $S$ under iteration of $\Phi$. 

Pseudo-Anosov mapping classes are precisely the elements of $\text{Mod}(S)$ which act as loxodromic isometries of $\mathcal{C}(S)$, see \cite{MM99}. When acting on $\mathcal{T}(S)$, every pseudo-Anosov element $\Phi\in\text{Mod}(S)$ preserves the Teichmüller line $\gamma$ from $\mathcal{F}^s$ to $\mathcal{F}^u$, and acts on $\gamma$ by translation, with $d_{\mathcal{T}}$-translation length equal to $\log\lambda(\Phi)$. With our terminology from the previous section, pseudo-Anosov mapping classes have electric translation axes.
\\
\\
\noindent We finally review work by Kaimanovich--Masur \cite{KM96} concerning random walks on $\text{Mod}(S)$. We note that a subgroup $H$ of $\text{Mod}(S)$ is nonelementary with respect to its action on $\mathcal{C}(S)$ if and only if it contains two pseudo-Anosov elements that generate a nonabelian free subgroup of $H$. Equivalently \cite{McP89}, the subgroup $H$ is nonelementary if $H$ is not virtually cyclic, and $H$ does not virtually preserve the conjugacy class of any essential simple closed curve on $S$. As in the previous sections, we say that a probability measure on $\text{Mod}(S)$ is \emph{nonelementary} if the semigroup generated by its support is a nonelementary subgroup of $\text{Mod}(S)$. Kaimanovich and Masur have proved the following theorem.

\begin{theo}(Kaimanovich--Masur \cite[Theorem 2.2.4]{KM96})\label{KM}
Let $\mu$ be a nonelementary probability measure on $\text{Mod}(S)$. Then for $\mathbb{P}$-a.e. sample path $\mathbf{\Phi}:=(\Phi_n)_{n\in\mathbb{N}}$ of the random walk on $(\text{Mod}(S),\mu)$, there exists a point $\text{bnd}(\mathbf{\Phi})\in\mathcal{UE}$ such that for all $y_0\in\mathcal{T}(S)$, the sequence $(\Phi_n.y_0)_{n\in\mathbb{N}}$ converges to $\text{bnd}(\mathbf{\Phi})$. The hitting measure $\nu$ defined by setting $$\nu(S):=\mathbb{P}(\text{bnd}(\mathbf{\Phi})\in S)$$ for all Borel subsets $S\subseteq \mathcal{PMF}$ is nonatomic. 
\end{theo}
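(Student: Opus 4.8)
The plan is to run the Kaimanovich--Masur argument via a stationary measure on Thurston's compactification $\overline{\mathcal{T}(S)}=\mathcal{T}(S)\sqcup\mathcal{PMF}$, which is compact and metrizable and on which $\text{Mod}(S)$ acts by homeomorphisms; write $\Gamma^+\le\text{Mod}(S)$ for the subsemigroup generated by $\text{supp}\,\mu$. Nonelementarity provides two independent pseudo-Anosov elements $\psi_1,\psi_2\in\Gamma^+$; each $\psi_i$ has unbounded orbits in $\mathcal{T}(S)$ and acts on $\mathcal{PMF}$ with north--south dynamics fixing exactly its (uniquely ergodic) stable and unstable foliations, and independence makes $\mathcal{F}_{\psi_1}^{\pm},\mathcal{F}_{\psi_2}^{\pm}$ pairwise distinct. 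By compactness there is a $\mu$-stationary probability measure $\lambda$ on $\overline{\mathcal{T}(S)}$, i.e.\ $\lambda=\sum_g\mu(g)\,g_*\lambda$. If $\lambda$ had an atom, the (finite, nonempty) set of atoms of maximal mass would be $\Gamma^+$-invariant, hence a finite $\Gamma^+$-orbit --- impossible, since $\psi_1$ has no finite orbit in $\mathcal{T}(S)$ and fixes only two points of $\mathcal{PMF}$, and no finite set can be invariant under both $\psi_1$ and $\psi_2$. So $\lambda$ is non-atomic. That $\lambda(\mathcal{T}(S))=0$ follows from proper discontinuity of the $\text{Mod}(S)$-action on $\mathcal{T}(S)$ (powers of $\psi_1$ push any compact set off to infinity) together with a recurrence argument showing no stationary measure can be trapped in a compact part of $\mathcal{T}(S)$.

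The heart of Step~1 is the \emph{Main Lemma}: $\lambda(\mathcal{PMF}\smallsetminus\mathcal{UE})=0$. A non-arational foliation determines a nonempty isotopy class of disjoint essential subsurfaces or simple closed curves carrying its non-minimal (resp.\ non-filling) part, and a foliation supported on a fixed subsurface but not uniquely ergodic lies in a simplex of measures on that support. Restricting a hypothetical stationary measure on $\mathcal{PMF}\smallsetminus\mathcal{UE}$ to a stratum of minimal complexity and pushing it forward to the associated \emph{countable} set of isotopy classes of subsurfaces/multicurves yields a $\mu$-stationary measure there; the maximal-mass-atom trick then produces a finite $\Gamma^+$-invariant family of proper essential subsurfaces or essential simple closed curves, contradicting nonelementarity. (Arationality of the limit can alternatively be obtained from Theorem \ref{hyp-cv} applied to the $\text{Mod}(S)$-action on the hyperbolic curve graph $\mathcal{C}(S)$ together with Klarreich's description of $\partial_\infty\mathcal{C}(S)$, but unique ergodicity still requires this measure-theoretic argument.)

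For convergence along sample paths, note that the translates $\lambda_n:=(\Phi_n)_*\lambda$ form a bounded measure-valued martingale on $\mathrm{Prob}(\overline{\mathcal{T}(S)})$, hence converge $\mathbb{P}$-a.s.\ weak-$*$ to a random probability measure $\lambda_{\mathbf{\Phi}}$ with $\mathbb{E}[\lambda_{\mathbf{\Phi}}]=\lambda$; since $\lambda(\mathcal{UE})=1$ this forces $\lambda_{\mathbf{\Phi}}(\mathcal{UE})=1$ a.s. The crucial point is that $\lambda_{\mathbf{\Phi}}$ is a Dirac mass: by \emph{proximality} of the $\Gamma^+$-action on $\mathcal{UE}$ --- any two points of $\mathcal{UE}$ can be pushed into any neighborhood of $\mathcal{F}_{\psi_1}^{+}$ by a large power of $\psi_1$ --- Furstenberg's classical argument gives $\lambda_{\mathbf{\Phi}}=\delta_{\text{bnd}(\mathbf{\Phi})}$ a.s.\ for some random $\text{bnd}(\mathbf{\Phi})\in\mathcal{UE}$. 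One then upgrades this to convergence of orbit points using the \emph{contraction property} of Teichm\"uller geodesics with uniquely ergodic vertical foliation: for $\mathcal{F}\in\mathcal{UE}$, any compact $Q\subseteq\mathcal{T}(S)$ and any neighborhood $U$ of $\mathcal{F}$ in $\overline{\mathcal{T}(S)}$, there is a smaller neighborhood $V$ of $\mathcal{F}$ such that every Teichm\"uller segment or ray from a point of $Q$ toward a point of $V$ stays in $U$ outside a bounded initial piece. Combined with the fact that $\Phi_n y_0$ leaves every compact subset of $\mathcal{T}(S)$ (its image in $\mathcal{C}(S)$ tends to infinity by Theorem \ref{hyp-cv}, since $\pi$ is coarsely Lipschitz), this forces $\Phi_n y_0\to\text{bnd}(\mathbf{\Phi})$ in $\overline{\mathcal{T}(S)}$; independence of the limit from $y_0$ is immediate from $d_{\mathcal{T}}(\Phi_n y_0,\Phi_n y_0')=d_{\mathcal{T}}(y_0,y_0')$ being bounded. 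Finally, $\lambda=\mathbb{E}[\lambda_{\mathbf{\Phi}}]=\mathbb{E}[\delta_{\text{bnd}(\mathbf{\Phi})}]=\nu$, so the hitting measure $\nu$ \emph{is} the stationary measure of Step~1 and is therefore non-atomic and supported on $\mathcal{UE}\subseteq\mathcal{PMF}$ (alternatively one checks directly from the Markov property that $\nu$ is $\mu$-stationary and reruns the maximal-mass-atom argument). The main obstacles are the Main Lemma and the contraction lemma for uniquely ergodic rays --- where surface topology (Masur's criterion, subsurface projections/train tracks) and the full strength of nonelementarity genuinely enter --- and, if one wants the statement without any moment assumption on $\mu$, the careful passage from concentration of $(\Phi_n)_*\lambda$ to honest convergence of the orbit.
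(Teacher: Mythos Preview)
The paper does not give its own proof of this statement: Theorem~\ref{KM} is quoted verbatim from Kaimanovich--Masur \cite[Theorem 2.2.4]{KM96} and used as a black box, so there is nothing in the paper to compare your argument against. Your sketch is a faithful outline of the original Kaimanovich--Masur strategy (stationary measure on Thurston's compactification, the Main Lemma pushing the measure onto $\mathcal{UE}$, martingale convergence plus proximality to get Dirac limits, and the contraction/convergence-of-rays argument to upgrade to orbit convergence), and as such it is appropriate and essentially correct as a roadmap.

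One small caution: in the proximality step you invoke ``Furstenberg's classical argument'' to conclude that the limiting random measure $\lambda_{\mathbf{\Phi}}$ is a Dirac mass. What Kaimanovich--Masur actually use is not mere topological proximality of the $\Gamma^+$-action but the measure-theoretic fact that for $\lambda$-a.e.\ pair of points in $\mathcal{UE}$ and $\mathbb{P}$-a.e.\ sample path the images under $\Phi_n$ converge to the same point; this relies on the specific structure of $\mathcal{PMF}$ (transverse foliations intersect positively, and $\lambda$ gives zero mass to the set of foliations non-transverse to a given one). Your parenthetical about the contraction lemma and the passage from measure concentration to honest orbit convergence being the real work is exactly right---that is where the surface-specific input (Masur's criterion and the geometry of Teichm\"uller rays with uniquely ergodic vertical foliation) is indispensable.
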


With the terminology from Section \ref{abstract}, the random walk on $\text{Mod}(S)$ is $\overline{\mathcal{T}(S)}$-boundary converging. Kaimanovich--Masur's theorem implies that for $\overline{\mathbb{P}}$-almost every bilateral sample path $\mathbf{\Phi}:=(\Phi_n)_{n\in\mathbb{Z}}$ of the random walk on $(\text{Mod}(S),\mu)$, denoting by $\text{bnd}_-(\mathbf{\Phi})$ (resp. $\text{bnd}_+(\mathbf{\Phi})$) the limit of the sequence $(\Phi_{-n}.y_0)_{n\ge 0}$ (resp. $(\Phi_n.y_0)_{n\ge 0}$), then we have $(\text{bnd}_-(\mathbf{\Phi}),\text{bnd}_+(\mathbf{\Phi}))\in\mathcal{UE}\times\mathcal{UE}\smallsetminus\Delta$. In particular, the Teichmüller geodesic line $[\text{bnd}_-(\mathbf{\Phi}),\text{bnd}_+(\mathbf{\Phi})]$ is $\overline{\mathbb{P}}$-a.s. well-defined.

\subsection{Teichmüller geodesics that make progress are contracting.}\label{sec-contr-mod}

The goal of the next three sections is to prove Theorem \ref{spectral}, by applying Theorem \ref{spectral-gen} to the hyperbolic $\text{Mod}(S)$-electrification $(\mathcal{T}(S),\mathcal{C}(S))$. The key point is to prove that $(\text{Mod}(S),\mu)$ has contracting pencils of geodesics. Proposition \ref{contraction} below will be crucial for establishing the contraction property.

Let $\delta$ be the hyperbolicity constant of $\mathcal{C}(S)$, let $K_2>0$ be a constant such that $\pi$-images of Teichmüller geodesics are $K_2$-unparameterized quasigeodesics in $\mathcal{C}(S)$, and let $\kappa=\kappa(K_2,\delta)$ be the constant provided by Proposition \ref{hyp-drift}. The notion of $(B,D)$-contracting geodesics will be understood with respect to this value of $\kappa$. 

\begin{de}\textbf{(Progress for Teichmüller geodesics)}\\
Given $B_0,C>0$, a Teichmüller line $\gamma:\mathbb{R}\to\mathcal{T}(S)$, and a point $y\in\mathcal{T}(S)$ lying on the image of $\gamma$, we say that $\gamma$ is \emph{$(C;B_0)$-progressing at $y$} if the $\pi$-image in $\mathcal{C}(S)$ of the subsegment of $\gamma$ of $d_{\mathcal{T}}$-length $C$ starting at $y$ has diameter at least $B_0$.
\end{de}

\begin{prop}\label{contraction}
There exists a constant $B_0>0$ such that for all $B\ge B_0$ and all $C>0$, there exists $D>0$, such that for all Teichmüller lines $\gamma:\mathbb{R}\to\mathcal{T}(S)$ and all $y\in\mathcal{T}(S)$ lying on the image of $\gamma$, if $\gamma$ is $(C;B)$-progressing at $y$, then $\gamma$ is $(B,D)$-contracting at $y$. 
\end{prop}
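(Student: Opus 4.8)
The plan is to obtain Proposition~\ref{contraction} from the contraction property for Teichmüller geodesics of Dowdall--Duchin--Masur \cite{DDM14}, after a short reduction turning the ``progress'' hypothesis into the hypothesis of their theorem. Write $\gamma(a)=y$, and let $b$ be as in Definition~\ref{de-contr}, so that $\pi\circ\gamma_{|[a,b]}$ has $d_{\mathcal C}$-diameter at least $B$ (up to a bounded error coming from the fact that $\pi$ is only coarsely Lipschitz, which will be harmless). Since $\gamma$ is $(C;B)$-progressing at $y$, the $\pi$-image of $\gamma_{|[a,a+C]}$ already has $d_{\mathcal C}$-diameter at least $B$, so $b\le a+C$. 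Thus $\sigma:=\gamma_{|[a,b]}$ is a Teichmüller geodesic segment of $d_{\mathcal T}$-length at most $C$ whose $\pi$-image has $d_{\mathcal C}$-diameter at least $B-O(K_1)$. The only two quantities to relate are therefore the curve-graph progress made by $\gamma$ on the \emph{short} segment $[a,b]$, and the $d_{\mathcal T}$-distance from $\gamma(a)$ to an arbitrary Teichmüller geodesic whose $\pi$-image fellow-travels $\pi\circ\sigma$.

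First I would set $B_0$ to be the threshold constant furnished by the contraction theorem of \cite{DDM14}, taken large enough in terms of $K_2$, $\delta$ and $\kappa=\kappa(K_2,\delta)$ so that the application below is legitimate. That theorem asserts: for $B\ge B_0$ there is $D_1=D_1(B)>0$ such that, if $\sigma$ is a Teichmüller geodesic segment whose $\pi$-image has $d_{\mathcal C}$-diameter at least $B$, then every Teichmüller geodesic $\gamma'$ whose $\pi$-image stays uniformly Hausdorff-close to $\pi\circ\sigma$ over all of $\sigma$ passes within $d_{\mathcal T}$-distance $D_1$ of a point of $\sigma$.

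Next I would check that the hypothesis ``$\pi\circ\gamma'$ crosses $\pi\circ\gamma_{|[a,b]}$ up to distance $\kappa$'' of Definition~\ref{de-contr} places us in that situation, with $\sigma=\gamma_{|[a,b]}$. From the crossing one gets an increasing map $\theta\colon[a,b]\to\mathbb{R}$ with $d_{\mathcal C}(\pi\gamma(t),\pi\gamma'(\theta(t)))\le\kappa$ for all $t\in[a,b]$; in particular, setting $u:=\theta(a)\le v:=\theta(b)$, the endpoints $\pi\gamma'(u),\pi\gamma'(v)$ are $\kappa$-close to $\pi\gamma(a),\pi\gamma(b)$ respectively. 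Now $\pi\circ\gamma'_{|[u,v]}$ and $\pi\circ\sigma$ are two $K_2$-unparameterized quasigeodesics in the $\delta$-hyperbolic space $\mathcal C(S)$ (Proposition~\ref{elec-mod}) with $\kappa$-close endpoints, hence by stability of quasigeodesics in hyperbolic spaces they are $\kappa'$-Hausdorff-close for some $\kappa'=\kappa'(K_2,\delta,\kappa)$. Choosing $B_0$ large enough in terms of $\kappa'$, this is exactly the input needed for \cite{DDM14}, which yields a point $\gamma(t)$, $t\in[a,b]$, within $d_{\mathcal T}$-distance $D_1$ of $\gamma'$. Since $d_{\mathcal T}$ is symmetric and $|t-a|\le b-a\le C$, the point $y=\gamma(a)$ lies within $d_{\mathcal T}$-distance $D_1+C$ of $\gamma'$; so Proposition~\ref{contraction} holds with $D:=D_1+C$, uniformly over all Teichmüller lines $\gamma$ and all $y$ on their image.

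I expect the only genuine difficulty to lie in extracting from \cite{DDM14} the precise quantitative statement used in the second paragraph: namely, that a \emph{fixed} Hausdorff fellow-travelling scale $\kappa'$ in $\mathcal C(S)$ (independent of the length of $\sigma$) suffices to force $d_{\mathcal T}$-closeness, provided the curve-graph progress $B$ is large compared to $\kappa'$, and that $\gamma'$ may be taken to be an arbitrary Teichmüller geodesic line crossing $\pi\circ\sigma$ rather than one with prescribed endpoint foliations. This is where the substance of \cite{DDM14} — that along an interval of definite curve-complex progress a Teichmüller geodesic behaves like a geodesic in a hyperbolic space — really enters; the remaining ingredients are the elementary length bound $b-a\le C$ and stability of quasigeodesics in $\mathcal C(S)$.
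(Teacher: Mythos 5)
There is a genuine gap, and it is exactly the point you flag at the end as ``the only genuine difficulty'': the black-box version of \cite{DDM14} that your argument rests on is stronger than what Dowdall--Duchin--Masur actually prove. Their contraction statement (Proposition~\ref{DDM} in the paper) concerns two Teichm\"uller geodesic \emph{segments with a common endpoint}: if $[x,y]$ contains a progressing subsegment $I$ and $\pi([x,z])$ crosses $\pi(I)$, then $[x,z]$ passes close to $I$. It does not apply, as stated, to an arbitrary Teichm\"uller line $\gamma'$ whose curve-graph shadow merely fellow-travels $\pi(I)$ --- and Definition~\ref{de-contr} requires precisely that case. Your reduction via stability of quasigeodesics in $\mathcal C(S)$ only shows that the \emph{shadows} are Hausdorff-close, which is the hypothesis you would need for the strengthened statement, not the hypothesis of the theorem you cite. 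So the step ``this is exactly the input needed for \cite{DDM14}'' does not go through, and the remaining content of the proof is exactly the part you have deferred.

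The paper closes this gap with a two-step application of Proposition~\ref{DDM}, each time with a correctly chosen common endpoint. First, one takes the Teichm\"uller segment from $\gamma(a)$ to a far point $\gamma'(t_2)$ on $\gamma'$; since this segment shares the endpoint $\gamma(a)$ with $\gamma$ and its shadow crosses $\pi(I)$, DDM produces a subsegment $J\subseteq[\gamma(a),\gamma'(t_2)]$ at bounded Hausdorff distance from a piece of $\gamma([a,b])$, still making definite progress $B_1$ in $\mathcal C(S)$ (this is why $B_0$ must absorb the loss $B_2$, i.e.\ $B_0\ge B_1+B_2$). Second, one applies DDM to the pair of segments $[\gamma'(t_2),\gamma(a)]\supseteq J$ and $[\gamma'(t_2),\gamma'(t_1)]\subseteq\gamma'$, which now share the endpoint $\gamma'(t_2)$, to force $\gamma'$ to pass within bounded distance of $J$, hence of $\gamma([a,b])$, hence of $\gamma(a)$. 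Your elementary observations ($b-a\le C$ from the progress hypothesis, and the quasigeodesic stability giving a uniform crossing constant $\kappa+\kappa'$) are correct and are indeed used in the paper, but they are the easy part; without the endpoint-transfer argument the proof is incomplete.
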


The proof of Proposition \ref{contraction} relies on the following result, due to Dowdall--Duchin--Masur \cite[Theorem A]{DDM14}, which is stated there in terms of the $\epsilon$-thick proportion of $I$. However, the important fact in the proof of \cite[Theorem A]{DDM14} is that $I$ makes definite progress when projected to the curve graph $\mathcal{C}(S)$. 

\begin{prop}(Dowdall--Duchin--Masur \cite[Theorem A]{DDM14})\label{DDM}
For all $\kappa>0$, there exist $B_1,B_2>0$ such that the following holds. For all $C>0$, there exists $D'=D'(C,\kappa)>0$ such that if $[x,y]$ is a Teichmüller segment that contains a subsegment $I$ of $d_{\mathcal{T}}$-length $C$ such that $\text{diam}_{\mathcal{C}(S)}(\pi(I))\ge B_1$, then for all $z\in\mathcal{T}(S)$, if $\pi([x,z])$ crosses $\pi(I)$ up to distance $\kappa$, then $[x,z]$ contains a subsegment $J$ at $d_{\mathcal{T}}$-Hausdorff distance at most $D'$ from a subsegment $I'\subseteq I$, such that $\pi(J)$ has diameter at least $\text{diam}_{\mathcal{C}(S)}(\pi(I))-B_2$.
\end{prop}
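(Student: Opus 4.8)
The plan is to reconstruct the proof of \cite[Theorem A]{DDM14} in the normalised form stated here, i.e.\ to upgrade ``$\pi([x,z])$ fellow‑travels $\pi(I)$ in the hyperbolic complex $\mathcal{C}(S)$'' to a genuine $d_{\mathcal{T}}$‑fellow‑travelling statement, by exploiting that a Teichmüller geodesic segment making definite curve‑graph progress behaves hyperbolically. The inputs I would use are all classical: Rafi's thick--thin analysis of Teichmüller geodesics together with his coarse formula expressing $d_{\mathcal{T}}$ as a sum of (thresholded) subsurface and annular projection terms; Masur--Minsky's bounded geodesic image theorem; Minsky's product regions theorem describing the thin parts of $\mathcal{T}(S)$; and Rafi's fact that along a uniformly thick Teichmüller geodesic every proper subsurface and annular projection is uniformly bounded and $\pi$ is a \emph{parametrised} quasigeodesic whose constant depends only on the thickness bound. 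The constants $B_1,B_2$ will be produced first, depending only on $\kappa$ and the topology of $S$ (through the bounded‑geodesic‑image constant, the Morse constant of $\mathcal{C}(S)$, and Rafi's constants), and only afterwards, given $C$, do I produce $D'=D'(C,\kappa)$.

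The first step is a reduction. The hypothesis $\operatorname{diam}_{\mathcal{C}(S)}(\pi(I))\ge B_1$ over a subsegment of $d_{\mathcal{T}}$‑length $C$ forces, via Rafi's thick--thin picture (a short curve along a Teichmüller geodesic contributes only $O(1)$ to curve‑graph progress, whereas a uniformly $\epsilon_0$‑thick stretch progresses at a definite rate), that $I$ contains a uniformly $\epsilon_0$‑thick subsegment $I_0$ carrying all but $O(1)$ of the $\mathcal{C}(S)$‑progress, where $\epsilon_0>0$ is a fixed Margulis‑type constant. It then suffices to $d_{\mathcal{T}}$‑fellow‑travel $[x,z]$ with $I_0$, up to bounded loss at the two ends of $I_0$ (this loss is the source of $B_2$).

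For the core estimate: since $\pi([x,z])$ crosses $\pi(I)$ up to $\kappa$ and $\pi([x,y])$ is a uniform unparametrised quasigeodesic of $\mathcal{C}(S)$ containing $\pi(I)$, the Morse lemma in the $\delta$‑hyperbolic $\mathcal{C}(S)$ gives that the $\mathcal{C}(S)$‑geodesic $[\pi(x),\pi(z)]$ stays within a constant $\kappa'=\kappa'(\kappa,S)$ of $\pi(I_0)$. By the bounded geodesic image theorem, for every subsurface $Y\subsetneq S$ with $\partial Y$ far in $\mathcal{C}(S)$ from $[\pi(x),\pi(z)]$ one has $d_Y(x,z)\asymp d_Y(x,y)$; and the only subsurfaces whose boundary comes close to the common geodesic along the $\pi(I_0)$‑portion are innocuous, their $Y$‑projections being uniformly bounded there by $\epsilon_0$‑thickness of $I_0$. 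Plugging this into Rafi's distance formula and comparing $[x,z]$ with $I_0$ point by point yields a uniform bound $D'$ on the $d_{\mathcal{T}}$‑Hausdorff distance between suitable subsegments $J\subseteq[x,z]$ and $I'\subseteq I_0\subseteq I$; the dependence of $D'$ on $C$ enters only through the number of thin intervals of $I$, hence the number of error terms to be absorbed, so $B_1,B_2$ stay $C$‑independent. Finally $\operatorname{diam}(\pi(J))\ge\operatorname{diam}(\pi(I))-B_2$ because $\pi(J)$ covers $\pi(I_0)$ except for its two ends, which together account for at most $B_2=O(1)$ of the progress (one $O(1)$ for the passage $I\rightsquigarrow I_0$ and one for the thin ``caps'' of $I_0$).

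The step I expect to be the main obstacle is the $\mathcal{C}(S)$‑versus‑$d_{\mathcal{T}}$ interface inside thin parts: one must rule out that $[x,z]$ dips deep into a product region (for some subsurface $Y$, hence acquiring a large $d_Y$ or $\log$ term, hence drifting far from $I_0$) while its curve‑graph projection still hugs $\pi(I_0)$. This is exactly where thickness of $I_0$ is used — a deep $Y$‑thin excursion forces $\partial Y$ close to $[\pi(x),\pi(z)]$ and so, by the fellow‑travelling above, close to $\pi(I_0)$, contradicting $\epsilon_0$‑thickness — but making this quantitative with the quantifiers ordered as required ($B_1,B_2$ before $C$, then $D'$) is delicate. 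A cleaner alternative I would also pursue is to invoke \cite[Theorem A]{DDM14} in its original ``definite $\epsilon$‑thick proportion $\Rightarrow$ strongly contracting'' form, and then only (a) verify that $\operatorname{diam}_{\mathcal{C}(S)}(\pi(I))\ge B_1$ over length $C$ implies a definite $\epsilon_0$‑thick proportion of $I$ (the reduction of the second paragraph), and (b) translate ``strongly contracting'' (bounded $d_{\mathcal{T}}$‑projection of far‑away balls onto $I$) into the stated crossing‑implies‑Hausdorff‑closeness conclusion by the standard nearest‑point projection argument in $\mathcal{T}(S)$, using again Masur--Minsky's theorem that Teichmüller geodesics project to unparametrised quasigeodesics in $\mathcal{C}(S)$ \cite{MM99}.
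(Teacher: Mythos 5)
There is a genuine gap, and it sits exactly at the step you flag as a ``reduction'': you cannot deduce thickness from curve-graph progress with the uniformity the statement requires. Your first route claims that $\operatorname{diam}_{\mathcal{C}(S)}(\pi(I))\ge B_1$ over $d_{\mathcal{T}}$-length $C$ forces a single uniformly $\epsilon_0$-thick subsegment $I_0\subseteq I$ carrying all but $O(1)$ of the $\mathcal{C}(S)$-progress; your second route (step (a)) claims it forces a definite $\epsilon_0$-thick proportion of $I$. Neither is true with constants independent of $C$. Since $B_1$ is fixed while $C$ and $\operatorname{diam}_{\mathcal{C}(S)}(\pi(I))$ are not, $I$ may consist of several thick pieces, each making only a bounded amount of progress, separated by (possibly long and deep) thin excursions during which progress is $O(1)$ per excursion but whose number and total length are not controlled by $B_1$. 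Then no single thick subsegment captures more than a bounded fraction of $\operatorname{diam}_{\mathcal{C}(S)}(\pi(I))$, so fellow-travelling $I_0$ alone cannot yield $\operatorname{diam}(\pi(J))\ge\operatorname{diam}_{\mathcal{C}(S)}(\pi(I))-B_2$ with $B_2$ universal; and the thick proportion in such examples is of order $B_1/C$, so it degrades with $C$, which ruins the quantifier order you need ($B_1,B_2$ chosen before $C$) if you try to quote the original thick-proportion form of Dowdall--Duchin--Masur as a black box. This is precisely why the paper remarks, before the statement, that what the DDM proof really uses is progress in $\mathcal{C}(S)$, not thickness: the proposition is obtained by re-running their argument under the progress hypothesis, not by converting progress into thickness.

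The paper's sketch is structured differently and avoids your obstacle. With $\tau$ a thinness constant for $K_2$-quasi-triangles in $\mathcal{C}(S)$, one takes $B_1$ large enough that there are points $w\in I$ with $d_{\mathcal{C}}(\pi(w),\pi(x))\ge 2\tau+6$ and $d_{\mathcal{C}}(\pi(w),\pi([y,z]))\ge 2\tau+6$; for every such $w$ (not just those in a thick piece), thin triangles produce $u\in[x,z]$ with $d_{\mathcal{C}}(\pi(w),\pi(u))\le\tau$, and the DDM argument bounds $d_V(w,u)$ for all proper subsurfaces $V$ by a constant depending only on $C$; Rafi's distance formula then bounds $d_{\mathcal{T}}(w,u)$ by $D'(C,\kappa)$. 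Because this works for every $w$ whose projection is merely a bounded distance away from $\pi(x)$ and from $\pi([y,z])$, the only loss in curve-graph diameter is the two ends of $\pi(I)$, which is the source of a $C$-independent $B_2$. Your toolkit (bounded geodesic image, Rafi's formula, Morse lemma in $\mathcal{C}(S)$) is the right one, but the argument must run this way -- bounding all subsurface projections between points of $I$ and companion points of $[x,z]$ directly from the progress hypothesis, with the bound allowed to depend on $C$ -- rather than through an extraction of a thick subsegment or a thick proportion, which is where your proposal breaks down.
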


\begin{proof}[Sketch of proof]
The proof follows the arguments from the proof of \cite[Theorem A]{DDM14}. We only sketch the argument, and refer the reader to \cite{DDM14} for further details. 

Let $\tau\ge\kappa$ be a constant so that every $K_2$-quasi-triangle in $\mathcal{C}(S)$ is $\tau$-thin. By choosing $B_1>0$ large enough, we can ensure that there exists $w\in I$ such that $d_{\mathcal{C}}(\pi(w),\pi(x))\ge 2\tau+6$, and $d_{\mathcal{C}}(\pi(w),\pi([y,z]))\ge 2\tau +6$. For each such $w$, there exists $u\in [x,z]$ so that $d_{\mathcal{C}}(\pi(w),\pi(u))\le\tau$. Dowdall--Duchin--Masur then prove that $d_V(w,u)$ is bounded (with a bound only depending on $C$) for all proper subsurfaces $V\varsubsetneq S$ (where $d_V$ denotes the distance between the projections in the curve complex of $V$). Using the  distance formula \cite[Proposition 2.4]{DDM14} relating the distance in the Teichmüller space to the sum of the distances between the projections to the curve complexes of the various subsurfaces of $S$, this implies that $d_{\mathcal{T}}(u,w)$ is bounded. The claim follows.
\end{proof}

\begin{proof}[Proof of Proposition \ref{contraction}]
We let $\kappa=\kappa(K_2,\delta)$ be the constant provided by Proposition \ref{hyp-drift}. Let $B_0,C>0$, and let $\gamma:[a,b]\to\mathcal{T}(S)$ be a Teichmüller segment of length $C$ such that $\pi\circ\gamma([a,b])$ has diameter at least $B_0$ in $\mathcal{C}(S)$.  Let $\gamma':\mathbb{R}\to\mathcal{T}(S)$ be a Teichmüller line whose projection to $\mathcal{C}(S)$ crosses $\pi\circ\gamma$ up to distance $\kappa$. This implies that for all $t\in [a,b]$, there exists $t'\in\mathbb{R}$ such that $d_{\mathcal{C}}(\pi(\gamma'(t')),\pi(\gamma(t)))\le\kappa$.

There exists $\kappa'=\kappa'(K_2,\delta)$ such that for $t_2\in\mathbb{R}$ sufficiently large, the $\pi$-image of the Teichmüller segment $[\gamma(a),\gamma'(t_2)]$ crosses $\pi\circ\gamma$ up to distance $\kappa'$. Let $B_1=B_1(\kappa+\kappa')\ge B_1(\kappa')$ and $B_2=B_2(\kappa')$ be the constants provided by Proposition \ref{DDM}. Assume that $B_0\ge B_1+B_2$. Proposition \ref{DDM} implies that $[\gamma(a),\gamma'(t_2)]$ contains a subsegment $J$ at Hausdorff distance at most $D'=D'(C,\kappa')$ from a subsegment of $\gamma([a,b])$, whose projection $\pi(J)$ has diameter at least $B_1$ in $\mathcal{C}(S)$. In particular, the segment $J$ has diameter at most $2D'+C$ in $\mathcal{T}(S)$. In addition, there exists $t_1\in\mathbb{R}$ (chosen sufficiently close to $-\infty$) such that the $\pi$-image of $[\gamma'(t_1),\gamma'(t_2)]$ crosses the $\pi$-image of $J$ up to distance $\kappa+\kappa'$. Let $D'':=D'(2D'+C,\kappa+\kappa')$. Applying Proposition \ref{DDM} again to the segment $[\gamma'(t_1),\gamma'(t_2)]$, we get that $d_{\mathcal{T}}(\gamma',J)\le D''$, so $d_{\mathcal{T}}(\gamma',\gamma([a,b]))\le D'+D''$ and hence $d_{\mathcal{T}}(\gamma(a),\gamma'(\mathbb{R}))\le C+D'+D''$. 
\end{proof}

\subsection{Typical geodesics are infinitely often progressing.}\label{sec3-3}

We now fix once and for all the constant $B_0$ provided by Proposition \ref{contraction}. We also fix a basepoint $y_0\in\mathcal{T}(S)$.

\begin{prop}\label{density-2}
Let $\mu$ be a nonelementary probability measure on $\text{Mod}(S)$. Then for all $\theta\in (0,1)$, there exists $C>0$ such that for $\overline{\mathbb{P}}$-a.e. bilateral sample path $\mathbf{\Phi}:=(\Phi_n)_{n\in\mathbb{N}}$ of the random walk on $(\text{Mod}(S),\mu)$, the set of integers $k\in\mathbb{Z}$ such that all subsegments of $[\text{bnd}_-(\mathbf{\Phi}),\text{bnd}_+(\mathbf{\Phi})]$ of length $C$ in $\mathcal{T}(S)$ starting at a closest-point projection of $\Phi_k.y_0$ have a $\pi$-image of diameter at least $B_0$ in $\mathcal{C}(S)$, has density at least $\theta$.  
\end{prop}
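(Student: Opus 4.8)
The plan is to reduce the statement to an ergodic-theoretic counting argument on the bilateral path space, exploiting the contraction/progress machinery together with the north--south dynamics of pseudo-Anosov elements on $\mathcal{T}(S)$ and $\mathcal{C}(S)$. First I would note that, since the support of $\mu$ generates a nonelementary subgroup, it contains a pseudo-Anosov element $\psi$; its axis $\gamma_\psi$ in $\mathcal{T}(S)$ projects to an unparameterized $K_2$-quasigeodesic in $\mathcal{C}(S)$ that makes definite (linear) progress, so one can find a single constant $C_0>0$ and a subsegment $I_0\subseteq\gamma_\psi$ of $d_{\mathcal{T}}$-length $C_0$ whose $\pi$-image has $\mathcal{C}(S)$-diameter at least $B_0$. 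Call a point $z$ on a Teichmüller line \emph{$C$-progressing} if every subsegment of $d_{\mathcal{T}}$-length $C$ starting at $z$ has $\pi$-image of diameter $\ge B_0$; the set to be bounded below in density is exactly the set of $k$ for which the closest-point projection of $\Phi_k.y_0$ to $[\text{bnd}_-,\text{bnd}_+]$ is $C$-progressing.

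The core step is to produce \emph{one} shift-invariant event of positive measure witnessing progression at time $0$, and then invoke ergodicity of the shift $U$ on $(\overline{\mathcal{P}},\overline{\mathbb{P}})$ plus Birkhoff's ergodic theorem to get the density statement with $\theta$ as close to $1$ as desired (by passing to a sufficiently large value of $C$). Concretely: the bilateral limits $(\text{bnd}_-(\mathbf{\Phi}),\text{bnd}_+(\mathbf{\Phi}))$ lie in $\mathcal{UE}\times\mathcal{UE}\smallsetminus\Delta$ almost surely (by Kaimanovich--Masur, Theorem~\ref{KM}), so the line $[\text{bnd}_-,\text{bnd}_+]$ is well defined and varies measurably; its $\pi$-image fellow-travels a bi-infinite $\mathcal{C}(S)$-geodesic between the corresponding boundary points, and by Maher--Tiozzo (Theorems~\ref{hyp-cv},~\ref{souslin}) the sequence $\Phi_k.x_0$ tracks this geodesic sublinearly while its $\mathcal{C}(S)$-distance to $x_0$ grows linearly. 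I would use this to show that for $\overline{\mathbb{P}}$-a.e. $\mathbf{\Phi}$, the closest-point projection of $\Phi_k.y_0$ to $[\text{bnd}_-,\text{bnd}_+]$ escapes to infinity along the line as $k\to\pm\infty$; combined with the fact (Masur--Minsky / subsurface projection control, as in the proof of Proposition~\ref{DDM}) that the $\mathcal{C}(S)$-progress made by $[\text{bnd}_-,\text{bnd}_+]$ over any $d_{\mathcal{T}}$-length-$C$ window is \emph{uniformly bounded below once $C$ is large} away from a uniformly bounded ``bad'' portion of the line, one concludes that the proportion of $k\in[-n,n]$ for which the projection fails to be $C$-progressing is controlled by the proportion of time the tracking sequence spends in that bounded bad portion — which has zero asymptotic density by linear escape. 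Quantitatively, fix $\theta<1$; choose $C$ large enough (depending on $\mu$, $B_0$, and the drift $L_{\mathcal{C}}$ in the curve graph) so that the bad event at time $0$ has $\overline{\mathbb{P}}$-measure at most $1-\theta$; the indicator of ``time $0$ is bad'' is a measurable function on $\overline{\mathcal{P}}$, and Birkhoff applied to $U$ gives that the Cesàro averages of its shifts converge a.e. to that measure on both the forward and backward half-lines, yielding density at least $\theta$ for the good set.

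The main obstacle is the passage from ``the $\mathcal{C}(S)$-projection of the Teichmüller line makes linear progress globally'' to ``\emph{every} length-$C$ subsegment starting at the projection point makes progress $\ge B_0$'': a priori a quasigeodesic can backtrack, so progress need not be monotone at the $\mathcal{C}(S)$ level over short windows. The way I would handle this is to work not with the raw $\pi$-image but with a genuine $\mathcal{C}(S)$-geodesic $g$ fellow-travelling it (within a uniform constant depending only on $\delta$ and $K_2$), reparametrize the Teichmüller line by $\mathcal{C}(S)$-progress along $g$ via the nearest-point projection $\mathcal{T}(S)\to g$, and observe that this reparametrization is coarsely monotone and coarsely Lipschitz with \emph{both} bounds uniform — the lower bound being exactly the statement that the line makes progress, which fails only on a bounded part near any ``thick'' or ``slow'' region; quantifying the length (in $d_{\mathcal{T}}$) of those slow regions uniformly is precisely the content extracted from the distance formula in the proof of Proposition~\ref{DDM}, and it is where the argument is genuinely technical. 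Once that uniform lower bound on the progress rate is in hand, choosing $C$ a fixed multiple of $B_0$ divided by that rate forces every length-$C$ window to achieve diameter $\ge B_0$, except within a bounded initial/terminal segment of the line whose contribution vanishes in density by the linear-escape estimate above.
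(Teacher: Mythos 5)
Your overall architecture is the same as the paper's: reduce to a statement about the window at time $0$, use the equivariance $\mathrm{bnd}^{\pm}(U^k.\mathbf{\Phi})=\Phi_k^{-1}\mathrm{bnd}^{\pm}(\mathbf{\Phi})$ implicitly, and apply Birkhoff's ergodic theorem to the shift $U$ on $(\overline{\mathcal{P}},\overline{\mathbb{P}})$ after choosing $C$ so that the ``bad at time $0$'' event has measure at most $1-\theta$. That skeleton is correct and is exactly how the paper concludes. The gap is in how you justify that such a $C$ exists, i.e.\ that $\overline{\mathbb{P}}(\text{time }0\text{ is bad for window }C)\to 0$ as $C\to\infty$, together with measurability of that event. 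You derive this from the claim that the $\mathcal{C}(S)$-progress of $[\mathrm{bnd}_-,\mathrm{bnd}_+]$ over every $d_{\mathcal{T}}$-length-$C$ window is uniformly bounded below ``away from a uniformly bounded bad portion of the line.'' This is false: a Teichm\"uller geodesic between two uniquely ergodic foliations can make excursions into the thin part of unbounded $d_{\mathcal{T}}$-length, and for a typical bilateral path these slow stretches recur with positive frequency along the whole line rather than being confined to a bounded subset. Indeed, if your claim were true, your own argument would give density $1$ of good times for a single fixed $C$, which is strictly stronger than the proposition (where $C$ must depend on $\theta$) and is not what holds. The reference to subsurface-projection control in Proposition~\ref{DDM} does not rescue this: that result bounds how far a competing geodesic can stray from a progressing segment, not the length of the non-progressing stretches of the line itself.

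What is actually needed, and what the paper proves, is that the progression constant $C'(F^-,F^+)$ (the infimal $C$ working at the closest-point projections of $y_0$ to $[F^-,F^+]$) is finite and \emph{locally bounded from above} as a function on $\mathcal{UE}\times\mathcal{UE}\smallsetminus\Delta$. Finiteness at a single pair is easy (the $\pi$-image of the line is an unparameterized quasigeodesic of infinite diameter), but local boundedness requires a compactness argument: if $C'$ blew up along a sequence $(F_n^-,F_n^+)\to(F^-,F^+)$, the uniform convergence on compacta of the corresponding Teichm\"uller lines would produce a length-$C$ window on the limit line with small $\pi$-diameter, a contradiction (Lemma~\ref{C-mod}). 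Local boundedness then yields an upper semicontinuous, hence measurable, dominating function (Lemma~\ref{measurable}), and tightness of its distribution gives the choice of $C=C(\theta)$. Your proposal supplies neither the measurability nor a correct finiteness-in-distribution argument, so as written the key quantitative input to Birkhoff is unsupported. (The opening paragraph about a pseudo-Anosov in the support and a progressing subsegment of its axis is not used anywhere in your argument and can be dropped.)
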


Before proving Proposition \ref{density-2}, we first establish a general lemma.

\begin{lemma}\label{measurable}
Let $X$ be a  topological space, and let $f:X\to\mathbb{R}$ be a map. If $f$ is locally bounded from above, then $f$ is bounded from above by an upper semicontinuous map. 
\end{lemma}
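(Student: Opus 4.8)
The statement to prove is: if $X$ is a topological space and $f\colon X\to\mathbb{R}$ is locally bounded from above, then there exists an upper semicontinuous map $\widetilde f\colon X\to\mathbb{R}$ with $f\le\widetilde f$ pointwise. The natural candidate is the smallest upper semicontinuous function dominating $f$, which can be constructed explicitly as a ``local supremum regularization''. Concretely, I would define
\[
\widetilde f(x):=\inf_{U\ni x}\ \sup_{y\in U} f(y),
\]
where the infimum is over all open neighbourhoods $U$ of $x$. Local boundedness from above is exactly what guarantees that $\sup_{y\in U}f(y)<+\infty$ for at least one (hence all smaller) $U$, so $\widetilde f(x)\in\mathbb{R}$ is well-defined and finite for every $x$. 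It is immediate from the definition that $f(x)\le\widetilde f(x)$, since $x\in U$ for each $U$ in the infimum.

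The remaining point is to check that $\widetilde f$ is upper semicontinuous, i.e. that for every $\alpha\in\mathbb{R}$ the set $\{x : \widetilde f(x)<\alpha\}$ is open. So fix $x_0$ with $\widetilde f(x_0)<\alpha$. By definition of the infimum there is an open neighbourhood $U_0$ of $x_0$ with $\sup_{y\in U_0}f(y)<\alpha$. Now for any point $x\in U_0$, the set $U_0$ is itself an open neighbourhood of $x$, so $\widetilde f(x)\le\sup_{y\in U_0}f(y)<\alpha$. Hence $U_0\subseteq\{x:\widetilde f(x)<\alpha\}$, which shows this set is open. Therefore $\widetilde f$ is upper semicontinuous and dominates $f$, proving the lemma.

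This argument is entirely elementary and I do not expect any genuine obstacle; the only thing to be careful about is not to claim more than stated (we do not need $\widetilde f$ to be \emph{real-valued} a priori beyond what local boundedness gives, and we do not need any separation or countability hypothesis on $X$). If one wanted $\widetilde f$ to be the \emph{least} upper semicontinuous majorant one would add a one-line verification that any upper semicontinuous $g\ge f$ satisfies $g\ge\widetilde f$ (because $g(x)\ge\inf_{U\ni x}\sup_{y\in U}g(y)\ge\inf_{U\ni x}\sup_{y\in U}f(y)$, using upper semicontinuity of $g$ for the first inequality), but this is not needed for the statement as phrased.
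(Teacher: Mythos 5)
Your proposal is correct and is essentially the paper's argument: the authors define $\widehat{f}(x):=\limsup_{x'\to x}f(x')$ and assert it works, which is exactly your regularization $\inf_{U\ni x}\sup_{y\in U}f(y)$ written more compactly (your version, taking the supremum over all of $U$ including $x$ itself, cleanly guarantees $f\le\widetilde f$ everywhere). You simply supply the routine verifications of finiteness and upper semicontinuity that the paper omits.
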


\begin{proof}
Let $f:X\to\mathbb{R}$, and assume that $f$ is locally bounded from above. Let $\widehat{f}:X\to\mathbb{R}$ be the map defined by $$\widehat{f}(x):=\limsup_{x' \to x} f(x')<+\infty.$$ 
 Then $\widehat{f}$ satisfies the required conditions. 
\end{proof}

\begin{lemma}\label{C-mod}
There exists an upper semicontinuous map $C:\mathcal{UE}\times\mathcal{UE}\smallsetminus\Delta\to\mathbb{R}$ such that for $\check{\nu}\otimes\nu$-a.e. $(F^-,F^+)\in\mathcal{UE}\times\mathcal{UE}\smallsetminus\Delta$, all segments of $d_{\mathcal{T}}$-length $C(F^-,F^+)$ starting at a closest-point projection of $y_0$ to the Teichmüller line $[F^-,F^+]$ have a $\pi$-image of diameter at least $B_0$ in $\mathcal{C}(S)$. 
\end{lemma}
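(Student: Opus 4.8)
The plan is to produce the map $C$ by a pointwise definition followed by an application of Lemma~\ref{measurable} to make it upper semicontinuous. Fix a pair $(F^-,F^+)\in\mathcal{UE}\times\mathcal{UE}\smallsetminus\Delta$, let $\gamma$ be a Teichmüller parameterization of $[F^-,F^+]$, and let $p(F^-,F^+)$ be a $d_{\mathcal{T}}$-closest-point projection of $y_0$ to $\gamma(\mathbb{R})$. Since $F^-$ and $F^+$ are uniquely ergodic arational foliations, the projection $\pi\circ\gamma:\mathbb{R}\to\mathcal{C}(S)$ is not only an unparameterized quasigeodesic but is \emph{unbounded in both directions}: its endpoints in $\partial_\infty\mathcal{C}(S)$ are the distinct points associated to $F^-$ and $F^+$ (this uses Klarreich's identification of $\partial_\infty\mathcal{C}(S)$ with the space of arational foliations, together with the fact that a Teichmüller geodesic with uniquely ergodic vertical and horizontal foliations stays in the thick part up to the endpoints). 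In particular, for every threshold $B_0$, the quantity
\begin{displaymath}
C_0(F^-,F^+):=\sup\Bigl\{\,C>0 \;:\; \text{there is a subsegment of }\gamma\text{ of length }C\text{ starting at }p(F^-,F^+)\text{ whose }\pi\text{-image has diameter}<B_0\,\Bigr\}
\end{displaymath}
is finite: if it were infinite, one of the two sub-rays of $\gamma$ issuing from $p(F^-,F^+)$ would have bounded $\pi$-image, contradicting unboundedness of $\pi\circ\gamma$ in both directions. (If no such subsegment exists we set $C_0:=0$.) Then any segment of length $C>C_0(F^-,F^+)$ starting at the closest-point projection has $\pi$-image of diameter at least $B_0$.

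Next I would check that $C_0$ is \emph{locally bounded from above} on the full-measure set where it is finite, which is where the subtlety lies. The key input is the convergence property of Teichmüller lines recalled in Section~\ref{sec-mod-background}: if $(F^-_n,F^+_n)\to(F^-,F^+)$ in $\mathcal{UE}\times\mathcal{UE}\smallsetminus\Delta$, then suitable geodesic parameterizations $\gamma_n$ converge uniformly on compact sets to a parameterization $\gamma$ of $[F^-,F^+]$. From uniform convergence on compacts, together with continuity of $d_{\mathcal{T}}$, the closest-point projections $p(F^-_n,F^+_n)$ converge (after adjusting parameterizations) to $p(F^-,F^+)$; and on a fixed compact interval, the curves $\pi\circ\gamma_n$ coarsely converge to $\pi\circ\gamma$, because $\pi$ is coarsely Lipschitz and $\mathcal{C}(S)$ is a locally finite graph. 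Hence, once $\pi\circ\gamma$ achieves diameter $\geq B_0+1$ on some segment $[p,p+C_1]$ of definite length $C_1$ issuing from the projection, the same is true for $\gamma_n$ on a nearby segment of comparable length for all large $n$; this bounds $\limsup_{n}C_0(F^-_n,F^+_n)$ by roughly $C_1$, giving local boundedness. Applying Lemma~\ref{measurable} yields an upper semicontinuous $C\geq C_0$, and any such $C$ has the required property.

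The main obstacle I anticipate is the continuity/coarse-convergence bookkeeping in the second paragraph: one must simultaneously track the parameterizations $\gamma_n$ (which are only canonical up to translation), the closest-point projections (which need not be unique, so one should fix a measurable choice or argue with the set of all of them), and the comparison of the unbounded curves $\pi\circ\gamma_n$ with $\pi\circ\gamma$ in the non-proper space $\mathcal{C}(S)$. All of these are handled by restricting to compact parameter intervals, where uniform convergence in $\mathcal{T}(S)$ plus coarse Lipschitzness of $\pi$ suffice; the full-measure hypothesis ($\check\nu\otimes\nu$-a.e.) is exactly what guarantees $(F^-,F^+)\in\mathcal{UE}\times\mathcal{UE}\smallsetminus\Delta$, hence that $[F^-,F^+]$ and its projection behave as described, by Kaimanovich--Masur (Theorem~\ref{KM}) and nonatomicity of the hitting measures. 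This lemma then feeds directly into the verification that $(\mathrm{Mod}(S),\mu)$ has contracting pencils of geodesics, via Proposition~\ref{contraction}.
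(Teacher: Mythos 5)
Your overall strategy coincides with the paper's: define the pointwise optimal constant (the paper takes the infimum of admissible $C$, you take the supremum of inadmissible ones --- the same number), prove it is finite and locally bounded from above using uniform-on-compacta convergence of Teichm\"uller lines together with coarse Lipschitzness of $\pi$, and then invoke Lemma \ref{measurable}. The paper runs the local-boundedness step as a proof by contradiction with an explicit sequence $(F_n^-,F_n^+)\to(F^-,F^+)$ and $C'(F_n^-,F_n^+)>n$, but the content is identical to your direct argument.

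Three points need repair. First, your justification of unboundedness of $\pi\circ\gamma$ invokes the claim that a Teichm\"uller geodesic with uniquely ergodic vertical and horizontal foliations stays in the thick part; this is false (Cheung and Masur constructed divergent Teichm\"uller geodesics with uniquely ergodic vertical foliation). The unboundedness you need is true, but it follows from arationality of $F^{\pm}$ and Klarreich's description of $\partial_\infty\mathcal{C}(S)$ alone; the paper uses it tacitly when it chooses $z$ on $[F^-,F^+]$ with $d_{\mathcal{C}}(\pi(x),\pi(z))\ge B_0+4K_1$. Second, the lemma is consumed by Propositions \ref{density-2} and \ref{density} in the form ``at all closest-point projections of $\Phi_k.y_0$,'' so defining $C_0$ relative to one chosen projection $p(F^-,F^+)$ is not enough; you should take the supremum over the (compact) set of all closest-point projections, and in the limiting argument use only that closest-point projections of $y_0$ to $\gamma_n$ subconverge to \emph{some} closest-point projection of $y_0$ to $\gamma$ --- your claim that $p(F_n^-,F_n^+)$ converges to the chosen $p(F^-,F^+)$ is not true in general. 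This is exactly how the paper argues: it picks $x_n$ to be a projection at which progression fails and extracts a convergent subsequence. Third, $\mathcal{C}(S)$ is not locally finite; what you actually use is only that $\pi$ is coarsely Lipschitz, and since the additive constant $K_1$ does not vanish in the limit, the threshold in your last step must be of the form $B_0+4K_1$ rather than $B_0+1$.
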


\begin{proof}
Let $C'$ be the map defined on $\mathcal{UE}\times\mathcal{UE}\smallsetminus\Delta$, by sending any pair $(F^-,F^+)$ to the infimum of all real numbers $C>0$ such that all segments of $d_{\mathcal{T}}$-length $C$ starting at a closest-point projection of $y_0$ to the Teichmüller line joining $F^-$ to $F^+$, project under $\pi$ to a region of $\mathcal{C}(S)$ of diameter at least $B_0$. In view of Lemma \ref{measurable}, it is enough to check that $C'$ is finite and locally bounded from above.

Assume towards a contradiction that it is not. Then we can find $(F^-,F^+)\in\mathcal{UE}\times\mathcal{UE}\smallsetminus\Delta$, and a sequence $((F_n^-,F_n^+))_{n\in\mathbb{N}}\in (\mathcal{UE}\times\mathcal{UE}\smallsetminus\Delta)^{\mathbb{N}}$ converging to $(F^-,F^+)$, such that for all $n\in\mathbb{N}$, we have $C'((F_n^-,F_n^+))>n$. The Teichmüller geodesics $[F_n^-,F_n^+]$ converge uniformly on compact sets to $[F^-,F^+]$ (for good choices of parameterizations). For all $n\in\mathbb{N}$, let $x_n$ be a closest-point projection of $y_0$ to $[F_n^-,F_n^+]$ at which $[F_n^-,F_n^+]$ is not $(n;B_0)$-progressing. Then up to passing to a subsequence, we can assume that the sequence $(x_n)_{n\in\mathbb{N}}$ converges to a point $x\in\mathcal{T}(S)$, and $x$ is a closest-point projection of $y_0$ to the Teichmüller line from $F^-$ to $F^+$.

Let $K_1>0$ be such that for all $y,y'\in\mathcal{T}(S)$, we have $$d_{\mathcal{C}}(\pi(y),\pi(y'))\le K_1d_{\mathcal{T}}(y,y')+K_1.$$ Let $z\in\mathcal{T}(S)$ be a point lying on $[F^-,F^+]$, to the right of $x$, such that $d_{\mathcal{C}}(\pi(x),\pi(z))\ge B_0+4K_1$. Let $(z_n)_{n\in\mathbb{N}}\in\mathcal{T}(S)^{\mathbb{N}}$ be a sequence converging to $z$, with $z_n$ lying on $[F_n^-,F_n^+]$ for all $n\in\mathbb{N}$. For all $n\in\mathbb{N}$, since $[F_n^-,F_n^+]$ is not $(n;B_0)$-progressing at $x_n$, we have $d_{\mathcal{C}}(\pi(x_n),\pi(z_n))\le B_0$ for all sufficiently large $n\in\mathbb{N}$. We choose $n\in\mathbb{N}$ sufficiently large so that we also have $d_{\mathcal{T}}(x_n,x)<\frac{1}{2}$ and $d_{\mathcal{T}}(z_n,z)<\frac{1}{2}$. Then $d_{\mathcal{C}}(\pi(x_n),\pi(x))<2K_1$ and $d_{\mathcal{C}}(\pi(z_n),\pi(z))<2K_1$. These inequalities lead to a contradiction. 
\end{proof}

\begin{proof}[Proof of Proposition \ref{density-2}]
Let $C$ be the map provided by Lemma \ref{C-mod}, and let 
\begin{displaymath}
\begin{array}{cccc}
\Psi:&\overline{\mathcal{P}}&\to & \mathbb{R}\\
&\mathbf{\Phi}&\mapsto & C(\text{bnd}^-(\mathbf{\Phi}),\text{bnd}^+(\mathbf{\Phi}))
\end{array}
\end{displaymath}
\noindent (where we recall that $\overline{\mathcal{P}}$ denotes the space of bilateral sample paths of the random walk on $(\text{Mod}(S),\mu)$). Lemma \ref{C-mod} implies that $\Psi$ is a Borel map  
 and $\overline{\mathbb{P}}$-a.e. finite. Let $U$ be the transformation of $\overline{\mathcal{P}}$ induced by the Bernoulli shift on the space of bilateral sequences of increments. Then for all $k\in\mathbb{Z}$ and all $\mathbf{\Phi}\in\overline{\mathcal{P}}$, we have $\text{bnd}^-(U^k.\mathbf{\Phi})=\Phi_k^{-1}\text{bnd}^-(\mathbf{\Phi})$ and $\text{bnd}^+(U^k.\mathbf{\Phi})=\Phi_k^{-1}\text{bnd}^+(\mathbf{\Phi})$. Hence all subsegments of $d_{\mathcal{T}}$-length $\Psi(U^k.\mathbf{\Phi})$ starting at a closest-point projection of $\Phi_k.y_0$ to the Teichmüller geodesic $[\text{bnd}^-(\mathbf{\Phi}),\text{bnd}^+(\mathbf{\Phi})]$ project under $\pi$ to a region of diameter at most $B_0$ in $\mathcal{C}(S)$. As $\Psi$ is a Borel map  and $\overline{\mathbb{P}}$-a.e. finite, we can choose $M>0$ such that $$\overline{\mathbb{P}}(\Psi(\mathbf{\Phi})\le M)>\theta.$$ An application of Birkhoff's ergodic theorem to the ergodic transformation $U$ then implies that for $\overline{\mathbb{P}}$-a.e. bilateral sample path $\mathbf{\Phi}:=(\Phi_n)_{n\in\mathbb{Z}}$ of the random walk on $(\text{Mod}(S),\mu)$, if $\gamma$ denotes the Teichmüller line joining $\text{bnd}^-(\mathbf{\Phi})$ to $\text{bnd}^+(\mathbf{\Phi})$ (which almost surely exists because the hitting measures are nonatomic and concentrated on $\mathcal{UE}$), then the density of times $k\in\mathbb{Z}$ such that all subsegments of $\gamma$ of $d_{\mathcal{T}}$-length $M$    
starting at a closest-point projection of $\Phi_k.y_0$ has a $\pi$-image of diameter at least $B_0$ in $\mathcal{C}(S)$, is at least $\theta$. 
\end{proof}

\begin{prop}\label{density}
Let $\mu$ be a nonelementary probability measure on $\text{Mod}(S)$. Then for all $\theta\in (0,1)$, there exist $B,D>0$ such that for $\overline{\mathbb{P}}$-a.e. bilateral sample path $\mathbf{\Phi}:=(\Phi_n)_{n\in\mathbb{Z}}$ of the random walk on $(\text{Mod}(S),\mu)$, the set of integers $k\in\mathbb{Z}$ such that $[\text{bnd}^-(\mathbf{\Phi}),\text{bnd}^+(\mathbf{\Phi})]$ is $(B,D)$-contracting at all $d_{\mathcal{T}}$-closest-point projections of $\Phi_k.y_0$ to $[\text{bnd}^-(\mathbf{\Phi}),\text{bnd}^+(\mathbf{\Phi})]$, has density at least $\theta$.  
\end{prop}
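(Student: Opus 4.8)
The plan is to combine the two density results already in hand. Proposition~\ref{density-2} provides, for any given $\theta'\in(0,1)$, a constant $C=C(\theta')>0$ such that for $\overline{\mathbb{P}}$-a.e. bilateral sample path $\mathbf{\Phi}$, the set of integers $k$ for which every subsegment of $\gamma:=[\text{bnd}^-(\mathbf{\Phi}),\text{bnd}^+(\mathbf{\Phi})]$ of $d_{\mathcal{T}}$-length $C$ starting at a closest-point projection of $\Phi_k.y_0$ has $\pi$-image of $d_{\mathcal{C}}$-diameter at least $B_0$, has density at least $\theta'$. In the terminology introduced before Proposition~\ref{contraction}, such a statement says precisely that $\gamma$ is $(C;B_0)$-progressing at every closest-point projection $z$ of $\Phi_k.y_0$. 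So the first step is simply to invoke Proposition~\ref{density-2} with $\theta':=\theta$ to obtain such a $C$.

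The second step is to feed this $C$ into Proposition~\ref{contraction}. Fix $B:=B_0$ (the constant from Proposition~\ref{contraction}); then Proposition~\ref{contraction}, applied with this $B\ge B_0$ and with the constant $C$ just produced, yields a $D=D(B_0,C)>0$ such that any Teichm\"uller line which is $(C;B_0)$-progressing at a point $y$ on its image is automatically $(B_0,D)$-contracting at $y$. The notion of $(B,D)$-contracting here must be taken with respect to $\kappa=\kappa(K_2,\delta)$, which is exactly the convention fixed at the start of Section~\ref{sec-contr-mod}, so there is no mismatch of constants. Combining the two steps: for $\overline{\mathbb{P}}$-a.e. $\mathbf{\Phi}$, at every integer $k$ belonging to the density-$\theta$ set provided by Proposition~\ref{density-2}, the Teichm\"uller line $\gamma$ is $(C;B_0)$-progressing at all closest-point projections $z$ of $\Phi_k.y_0$, hence $(B_0,D)$-contracting at all such $z$. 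This is exactly the desired conclusion with $B:=B_0$ and $D$ as above.

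A couple of routine points need to be checked along the way. One must make sure the ``closest-point projection'' in Proposition~\ref{density-2} is understood in the same sense ($d_{\mathcal{T}}^{sym}$, equivalently $d_{\mathcal{T}}$ since the Teichm\"uller metric is symmetric) as in the statement of the present proposition; since $d_{\mathcal{T}}$ is a genuine symmetric metric this causes no difficulty. One should also note that Proposition~\ref{contraction} is stated for Teichm\"uller \emph{lines} $\gamma:\mathbb{R}\to\mathcal{T}(S)$ and for points $y$ on the image, which is the situation here because $(\text{bnd}^-(\mathbf{\Phi}),\text{bnd}^+(\mathbf{\Phi}))\in\mathcal{UE}\times\mathcal{UE}\smallsetminus\Delta$ almost surely (as recorded after Theorem~\ref{KM}), so the bi-infinite Teichm\"uller line $[\text{bnd}^-(\mathbf{\Phi}),\text{bnd}^+(\mathbf{\Phi})]$ is well-defined. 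Finally, a closest-point projection of $\Phi_k.y_0$ to $\gamma$ exists for every $k$ since $\gamma$ is a complete geodesic in the proper metric space $\mathcal{T}(S)$.

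I do not expect a serious obstacle: the statement is essentially a corollary obtained by chaining Proposition~\ref{density-2} (which supplies ``infinitely often, and with positive density, the geodesic makes definite curve-complex progress near the walk'') with Proposition~\ref{contraction} (``definite progress implies contraction''). The only mild subtlety is bookkeeping of the quantifier structure --- one wants the contraction property to hold simultaneously at \emph{all} closest-point projections of $\Phi_k.y_0$, not just one --- but this is automatic because both Proposition~\ref{density-2} and Proposition~\ref{contraction} are phrased with precisely that universal quantifier over closest-point projections. So the proof is short: choose $\theta'=\theta$ in Proposition~\ref{density-2} to get $C$, set $B=B_0$, choose $D$ from Proposition~\ref{contraction} applied to this $B$ and $C$, and conclude.
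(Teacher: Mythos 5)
Your proposal is correct and is exactly the paper's argument: the authors' entire proof of this proposition is the one-line statement that it follows from Propositions~\ref{contraction} and~\ref{density-2}, and your write-up just makes the chaining of constants explicit ($C$ from Proposition~\ref{density-2} applied with $\theta'=\theta$, then $B=B_0$ and $D=D(B_0,C)$ from Proposition~\ref{contraction}). The quantifier bookkeeping you check --- that $B,D$ end up depending only on $\theta$ and that the contraction holds at \emph{all} closest-point projections --- is handled correctly.
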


\begin{proof}
This follows from Propositions \ref{contraction} and \ref{density-2}.
\end{proof}

\subsection{End of the proof of the spectral theorem}\label{sec3-4}

\begin{proof}[Proof of Theorem \ref{spectral}]
We will check that $(\text{Mod}(S),\mu)$ satisfies the hypotheses in Theorem \ref{spectral-gen}. The pair $(\mathcal{T}(S),\mathcal{C}(S))$ is a hyperbolic $\text{Mod}(S)$-electrification (Proposition \ref{elec-mod}). All elements of $\text{Mod}(S)$ acting loxodromically on $\mathcal{C}(S)$ (i.e. pseudo-Anosov mapping classes) have an electric translation axis in $\mathcal{T}(S)$. In addition $(\text{Mod}(S),\mu)$ is $\overline{\mathcal{T}(S)}$-boundary converging (Theorem \ref{KM}). The first item in the definition of contracting pencils of geodesics (Definition \ref{pencil}) was established by Tiozzo \cite[Lemma 19]{Tio13} for the collection of Teichmüller lines between any two transverse measured foliations. In view of Proposition \ref{density}, the second item in this definition is also satisfied by this collection of lines (notice that all points in $\mathcal{T}(S)$ are $2$-high since the Teichmüller metric on $\mathcal{T}(S)$ is symmetric). Therefore, Theorem \ref{spectral-gen} applies to $(\text{Mod}(S),\mu)$. Since the translation length of any pseudo-Anosov mapping class $\Phi$ on $\mathcal{T}(S)$ is equal to the logarithm of its stretching factor, and $\lambda(\Phi)=\lambda(\Phi^{-1})$, we get that for $\mathbb{P}$-a.e. sample path of the random walk on $(\text{Mod}(S),\mu)$, we have $$\lim_{n\to +\infty}\frac{1}{n}\log\lambda(\Phi_n)=L_{\mathcal{T}},$$ as claimed.
\end{proof}

\section{Spectral theorem for random walks on $\text{Out}(F_N)$}\label{sec4}

Let $N\ge 3$, let $F_N$ be a finitely generated free group of rank $N$, and let $\text{Out}(F_N)$ be its outer automorphism group. The goal of this section is to prove the following spectral theorem for random walks on $\text{Out}(F_N)$. We refer to Section \ref{sec-out-background} below for definitions.

\begin{theo}\label{theo-spectral-out}
Let $\mu$ be a nonelementary probability measure on $\text{Out}(F_N)$ with finite support. Then for $\mathbb{P}$-a.e. sample path $(\Phi_n)_{n\in\mathbb{N}}$ of the random walk on $(\text{Out}(F_N),\mu)$, we have $$\lim_{n\to +\infty}\frac{1}{n}\log(\lambda(\Phi_n^{-1}))=L,$$ where $\lambda(\Phi_n^{-1})$ is the stretching factor of the automorphism $\Phi_n^{-1}$, and $L$ is the drift of the random walk with respect to the Lipschitz metric on outer space.
\end{theo}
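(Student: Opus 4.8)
\noindent\textbf{Strategy of the proof of Theorem~\ref{theo-spectral-out}.} The plan is to deduce this statement from the abstract criterion Theorem~\ref{spectral-gen}, exactly as Theorem~\ref{spectral} was, now applied to the hyperbolic $\text{Out}(F_N)$-electrification made of $Y=CV_N$, Culler--Vogtmann's outer space with the asymmetric Lipschitz metric $d_{CV_N}$, and $X=FF_N$, the free factor complex. So I would begin by recording that $(CV_N,FF_N)$ is indeed a hyperbolic $\text{Out}(F_N)$-electrification: the free factor complex is Gromov hyperbolic by Bestvina--Feighn \cite{BF12}; there is a coarsely $\text{Out}(F_N)$-equivariant, coarsely Lipschitz map $\pi\colon CV_N\to FF_N$ sending a marked graph to a free factor carried by an embedded loop; and, by the work of Bestvina--Feighn and Handel--Mosher, $\pi$ sends (greedy) folding paths, which are $d_{CV_N}$-geodesics, to uniform unparameterized quasigeodesics in $FF_N$. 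Thus folding paths are the electric geodesics. Next, the loxodromic isometries of $FF_N$ are precisely the fully irreducible elements, and each of them admits a periodic folding line on which it acts by translation with translation length $\log\lambda$; this is an electric translation axis in the sense of Section~\ref{abstract}. As in the surface case, the (right-action) conventions of Section~\ref{abstract} arrange that $l_{CV_N}(\Phi_n^{-1})=\log\lambda(\Phi_n^{-1})$; note that, unlike Teichmüller space, the asymmetry of $d_{CV_N}$ makes this distinct from $\log\lambda(\Phi_n)$, which is why the statement is phrased with $\Phi_n^{-1}$. Finally, for boundary convergence I would invoke Horbez's convergence theorem for random walks on $\text{Out}(F_N)$ (building on \cite{Hor14-2}): almost every sample path converges to an arational tree in the boundary of outer space, and for bilateral paths the forward and backward limits are almost surely distinct arational trees, which determine the fold line (or pencil of fold lines) $[\text{bnd}^-,\text{bnd}^+]$ and the hitting measures $\check\nu,\nu$; this gives the bordification $\overline{CV_N}$. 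Since $\mu$ has finite support, Theorem~\ref{loxo} guarantees that $\Phi_n$ is eventually fully irreducible, so $\lambda(\Phi_n^{-1})$ is eventually defined.

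The heart of the matter is to establish that $(\text{Out}(F_N),\mu)$ has contracting pencils of geodesics. I would first prove the $CV_N$-analogue of Proposition~\ref{contraction}: there is $B_0>0$ so that for every $B\ge B_0$ and $C>0$ there is $D>0$ with the property that any folding line segment of $d_{CV_N}$-length $C$ whose $\pi$-image in $FF_N$ has diameter at least $B$ is $(B,D)$-contracting in $CV_N$. This is the contraction criterion for folding lines announced in the introduction; I would prove it by combining the strong-contraction behaviour of fold lines (in the spirit of Algom-Kfir's analysis of axes of fully irreducibles, see \cite{AK}) with a fellow-travelling estimate in the style of Dowdall--Duchin--Masur \cite{DDM14}, the point being that definite progress in $FF_N$ forces uniform control on the auxiliary (subgroup/subsurface-type) projections that measure distances in $CV_N$. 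I would then prove the analogue of Propositions~\ref{density-2} and \ref{density}: for every $\theta\in(0,1)$ there are $C,B,D,M>0$ such that, for $\overline{\mathbb{P}}$-a.e.\ bilateral path, the set of times $k$ at which every $d_{CV_N}^{sym}$-closest-point projection $z$ of $\Phi_k.y_0$ to the fold line is $M$-high and the fold line is $(B,D)$-contracting at $z$ has density at least $\theta$. Following Section~\ref{sec3-3}, the scheme is to produce an upper semicontinuous function of the pair of boundary trees bounding the fold-segment length needed to realize a fixed amount of progress from the projection of $y_0$ (via Lemma~\ref{measurable} and continuity of fold lines in their endpoint data), then transport it along the orbits of the ergodic shift $U$ and apply Birkhoff's ergodic theorem; measurability and $\overline{\mathbb{P}}$-a.e.\ finiteness of the resulting function on path space come from non-atomicity of the hitting measure and its being concentrated on arational trees, together with the pencil formalism of Tiozzo \cite{Tio13}.

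The genuinely new obstacle, which has no counterpart in the mapping class group argument, is the highness condition. Since $d_{CV_N}$ is asymmetric, not every point of $CV_N$ is uniformly high: a point is $M$-high with $M=M(\epsilon)$ only while it remains in an $\epsilon$-thick part of outer space. So, beyond the density statement above, I would have to show that along the typical fold line a positive proportion of the relevant times have their closest-point projection in a fixed thick part. I expect this to follow from the fact that typical fold lines --- equivalently, geodesics directed by the hitting measure --- spend a definite proportion of time in the thick part, proved once again by an ergodic argument for $U$ after encoding the excursion depth into the thin part as a measurable, $\overline{\mathbb{P}}$-a.e.\ finite function on path space, just as for the length function $C$ above. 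I would expect this highness control and the folding-line contraction criterion to be the two technically heaviest ingredients. Granting all of this, Theorem~\ref{spectral-gen} applies and yields $\lim_{n\to+\infty}\frac1n\,l_{CV_N}(\Phi_n^{-1})=L$; identifying $l_{CV_N}(\Phi_n^{-1})$ with $\log\lambda(\Phi_n^{-1})$ then gives Theorem~\ref{theo-spectral-out}.
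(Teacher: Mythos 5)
Your overall architecture matches the paper's: reduce to Theorem \ref{spectral-gen} for the electrification $(CV_N,FF_N)$, verify boundary convergence (the paper cites Namazi--Pettet--Reynolds rather than \cite{Hor14-2}, a minor point), and establish contracting pencils of geodesics via a contraction criterion plus an ergodic density argument. But your central technical step contains a genuine gap. You propose to prove the exact analogue of Proposition \ref{contraction} --- that a folding segment of $d_{CV_N}$-length $C$ whose $\pi$-image in $FF_N$ has diameter at least $B_0$ is automatically $(B,D)$-contracting --- ``by a fellow-travelling estimate in the style of Dowdall--Duchin--Masur,'' arguing that progress in $FF_N$ forces uniform control on the auxiliary projections that measure distance in $CV_N$. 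This is precisely the step the paper flags as unavailable (Remark \ref{rk-prog}): the Dowdall--Duchin--Masur argument rests on the Masur--Minsky distance formula expressing $d_{\mathcal{T}}$ in terms of subsurface projections, and no analogous distance formula is known for the Lipschitz metric on $CV_N$; the authors explicitly say they do not know whether the statement you want is even true. To get around this, the paper replaces the naive notion of progress by the much stronger $(C;A_0,B_0,C_0)$-progress of Definition \ref{progression}, which quantifies over auxiliary greedy folding paths issued from the simplex of nearby trees toward far-away targets $R$, and proves contraction (Proposition \ref{contr-out}) by an entirely different mechanism: the Bestvina--Feighn projection $\text{Pr}_{\gamma}$ to folding paths, the candidate/legality estimate of Lemma \ref{bf} giving near-additivity of $d_{CV_N}$ along folding paths past the projection time, and Dowdall--Taylor's comparison with the closest-point projection. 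Your density/Birkhoff argument then has to be run for this stronger notion, which is why the paper needs the compactness and semicontinuity machinery of Section \ref{sec4-5} together with new geometric input on $FF_N$ obtained via the cyclic splitting graph (Proposition \ref{ff-geom}); none of this appears in your outline.

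On highness your route also diverges: you plan a separate ergodic argument showing that typical fold lines spend a positive proportion of time in a fixed thick part. The paper instead gets highness for free from progress (Proposition \ref{high-out}): if a progressing point were $\epsilon$-thin, the short loop would have to grow to definite length within $d_{CV_N}$-distance $C$ to account for the $FF_N$-progress, forcing $C\ge\log(1/\epsilon)$. Your alternative is plausible but would require its own justification, and in any case it does not repair the main gap above.
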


\begin{rk}\label{rk-out}
We recall that when $\mu$ has finite support, then with probability equal to $1$, eventually all automorphisms $\Phi_n$ are fully irreducible, and therefore the stretching factor $\lambda(\Phi_n^{-1})$ is well-defined. Theorem \ref{theo-spectral-out} will be proved by applying Theorem \ref{spectral-gen} to the hyperbolic $\text{Out}(F_N)$-electrification made of Culler--Vogtmann's outer space $CV_N$ and the free factor graph $FF_N$. Again, in view of Remark \ref{rk-abstract}, if $\mu$ is only assumed to have finite second moment with respect to the Lipschitz metric on $CV_N$, then Theorem \ref{theo-spectral-out} remains true. If $\mu$ is only assumed to have finite first moment with respect to the Lipschitz metric on $CV_N$, then one has to replace the limit by a limsup. 
\end{rk}

\begin{rk}
In the context of automorphisms of free groups, Kapovich also defined in \cite{Kap06} a notion of \emph{generic stretching factor} $\lambda_{A}(\Phi)$ of an outer automorphism $\Phi\in\text{Out}(F_N)$ with respect to a free basis $A$ of $F_N$, which measures the growth under $\Phi$ of an element obtained along a typical trajectory of the nonbacktracking simple random walk on $F_N$ with respect to the generating set $A$. Generic stretching factors of random outer automorphisms of $F_N$ also satisfy the same spectral property as the one established in Theorem \ref{theo-spectral-out}, namely $$\lim_{n\to +\infty}\frac{1}{n}\log(\lambda_{A}(\Phi_n^{-1}))=L,$$ as soon as the measure $\mu$ has finite first moment with respect to the Lipschitz metric on $CV_N$ (the proof of this fact does not require to apply the arguments from the present paper: this follows from \cite[Theorem 1.6]{KL15}, for instance). 
\end{rk}

The section is organized as follows. In Section \ref{sec-out-background}, we review background on $\text{Out}(F_N)$ and spaces it acts on. We then establish one more property we will need concerning the geometry of the free factor graph in Section \ref{sec4-3}. The next three sections are devoted to proving Theorem \ref{theo-spectral-out} by applying Theorem \ref{spectral-gen} to the actions of $\text{Out}(F_N)$ on Culler--Vogtmann's outer space, and on the free factor graph. In Section \ref{sec-contr-out}, we establish a sufficient condition for a subsegment of a folding line to satisfy the required contraction property, and we then check in Section \ref{sec4-5} that typical folding lines contain infinitely many subsegments satisying this condition. The proof of Theorem \ref{theo-spectral-out} is completed in Section \ref{sec4-6}.

\subsection{General background on $\text{Out}(F_N)$}\label{sec-out-background}

The group $\text{Out}(F_N)$ acts on Culler--Vogtmann's outer space $CV_N$ and on the free factor graph $FF_N$. We start by reviewing background about these spaces.

\paragraph*{Outer space and its metric.} \emph{Outer space} $CV_N$ is the collection of equivalence classes of free, simplicial, minimal and isometric actions of $F_N$ on simplicial metric trees \cite{CV86}. Two trees $T$ and $T'$ are equivalent whenever there is an $F_N$-equivariant homothety from $T$ to $T'$. \emph{Unparameterized outer space} $cv_N$ is defined by considering trees up to $F_N$-equivariant isometry, instead of homothety. The group $\text{Out}(F_N)$ acts on $CV_N$ and $cv_N$ on the right by precomposing the actions (one can also consider a left action by setting $\Phi.T:=T.\Phi^{-1}$). Outer space may be viewed as a simplicial complex with missing faces. The \emph{simplex} $\Delta(S)$ of a tree $S$ is the subspace of $CV_N$ obtained by making all edge lengths of $S$ vary (we allow some edges to have length $0$, as soon as the corresponding collapse map does not change the isotopy type of the graph). Given $\epsilon>0$, the \emph{$\epsilon$-thick part} $CV_N^{\epsilon}$ is defined as the subspace of $CV_N$ made of those trees $T\in CV_N$ such that the volume one representative of $T/F_N$ contains no embedded loop of length smaller than $\epsilon$.

Given a tree $T\in CV_N$ and an element $g\in F_N$, the \emph{translation length} of $g$ in $T$ is defined as $$||g||_T:=\inf_{x\in T}d_T(x,gx).$$ Culler--Morgan have compactified outer space \cite{CM87} by taking the closure of the image of the map 
\begin{displaymath}
\begin{array}{cccc}
i:&CV_N &\to & \mathbb{PR}^{F_N}\\
& g&\mapsto & (||g||_T)_{g\in F_N}
\end{array}
\end{displaymath}
\noindent in the projective space $\mathbb{PR}^{F_N}$. This compactification $\overline{CV_N}$ was identified by Cohen--Lustig \cite{CL95} and Bestvina--Feighn \cite{BF94} (see also \cite{Hor14-5}) with the space of equivariant homothety classes of minimal, \emph{very small} $F_N$-trees (i.e. trees whose arc stabilizers are either trivial, or maximally cyclic, and whose tripod stabilizers are trivial).

Outer space is equipped with a natural asymmetric metric for which the action of $\text{Out}(F_N)$ is by isometries: the distance $d_{CV_N}(T,T')$ between two trees $T,T'\in CV_N$ is defined as the logarithm of the infimal Lipschitz constant of an $F_N$-equivariant map from the covolume $1$ representative of $T$ to the covolume $1$ representative of $T'$, see \cite{FM11}. White has proved (see \cite[Proposition 3.15]{FM11} or \cite[Proposition 2.3]{AK11}) that for all $F_N$-trees $T$,$T'\in CV_N$, we also have $$d_{CV_N}(T,T')=\log \sup_{g\in F_N\smallsetminus\{e\}}\frac{||g||_{T'}}{||g||_T},$$ where $T$ and $T'$ are identified with their covolume $1$ representatives. Furthermore, the supremum is achieved for an element $g\in F_N\smallsetminus\{e\}$ that belongs to a finite set $\text{Cand}(T)$ only depending on $T$, made of primitive elements of $F_N$, called \emph{candidates} in $T$, whose translation length in the covolume $1$ representative of $T$ is at most $4$.

The Lipschitz metric on $CV_N$ is not symmetric. However, the following result follows from work by Algom-Kfir and Bestvina. 

\begin{prop}(Algom-Kfir--Bestvina \cite{AKB12})\label{high}
For all $\epsilon>0$, there exists $M>0$ such that all trees in $CV_N^{\epsilon}$ are $M$-high.
\end{prop}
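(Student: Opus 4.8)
The plan is to first unwind the definition of highness. Since
$d_{CV_N}^{sym}(T',T)=d_{CV_N}(T',T)+d_{CV_N}(T,T')$, a tree $T$ is $M$-high if and only if
$d_{CV_N}(T,T')\le (M-1)\,d_{CV_N}(T',T)$ for every $T'\in CV_N$. Thus the proposition is equivalent to the assertion that, for every $\epsilon>0$, there is a constant $M=M(\epsilon,N)$ such that for every $T\in CV_N^{\epsilon}$ the ``reverse'' distance $d_{CV_N}(T,T')$ is bounded by a fixed multiple of the ``forward'' distance $d_{CV_N}(T',T)$, uniformly over the target $T'$. This quantitative failure-of-asymmetry of the Lipschitz metric along the thick part is precisely the phenomenon analysed by Algom-Kfir and Bestvina, and the plan is to deduce the statement from \cite{AKB12}.

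\textbf{The ingredients.}
I would then use White's formula recalled above, $d_{CV_N}(T,T')=\log\max_{c}||c||_{T'}/||c||_T$ over candidates $c$ of $T$, together with two features of candidates of an $\epsilon$-thick tree. First, each candidate $c$ is an immersed loop in $T/F_N$, hence has length at least the girth of $T/F_N$, so $||c||_T\ge\epsilon$; combined with the stated upper bound $||c||_T\le 4$ this pins down $||c||_T$ to a bounded range. Second, $c$ crosses each edge of $T/F_N$ at most twice, so it is a cyclic word of combinatorial length at most $6N-6$ in the edges of $T$. Now pick a candidate $c$ of $T$ realizing the forward distance: then $e^{d_{CV_N}(T,T')}=||c||_{T'}/||c||_T\le ||c||_{T'}/\epsilon$, so the immersed representative of $c$ in the unit-volume graph $T'/F_N$ has length at least $\epsilon\,e^{d_{CV_N}(T,T')}$. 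Since every edge of $T'$ has length at most $1$, it follows that $c$ crosses some single edge of $T'$ at least $\epsilon\,e^{d_{CV_N}(T,T')}$ times.

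\textbf{Producing the reverse witness --- the main obstacle.}
The heart of the argument is to convert the dichotomy ``$c$ is a word of bounded combinatorial length in $T$, yet wraps at least $\epsilon\,e^{d_{CV_N}(T,T')}$ times around some edge of $T'$'' into a \emph{reverse witness}: a nontrivial $g\in F_N$ with $||g||_T$ bounded below by a definite proportion of the forward stretch and $||g||_{T'}$ bounded above, which then gives $d_{CV_N}(T',T)\ge\log(||g||_T/||g||_{T'})\ge\frac{1}{M-1}\,d_{CV_N}(T,T')$ for a suitable $M=M(\epsilon,N)$. This conversion is the only delicate point: the naive choice $g:=c$ is useless, since it only yields $d_{CV_N}(T',T)\ge -d_{CV_N}(T,T')$, which is the wrong inequality; one genuinely needs the finer control on optimal maps and their tension subgraphs in the thick part established by Algom-Kfir and Bestvina. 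Granting their comparison --- on $CV_N^{\epsilon}$ the forward and reverse Lipschitz distances differ by at most a multiplicative constant depending only on $\epsilon$ and $N$ --- one sets $M-1$ equal to that constant, and the proposition follows. (The analogous statement for Teichmüller space used in Section \ref{sec3} is of course trivial, the Teichmüller metric being symmetric, so every point there is $2$-high.)
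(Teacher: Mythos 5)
The paper gives no proof of this proposition at all — it is imported wholesale from Algom-Kfir--Bestvina \cite{AKB12} — and your proposal, after a correct unwinding of the definition of $M$-highness and some accurate but ultimately unused observations about candidates, defers the substantive comparison to that same reference, so the two approaches coincide. The only point to watch is that the definition of highness quantifies over \emph{all} targets $T'\in CV_N$, not just thick ones, so the estimate you need from \cite{AKB12} is the version in which only the point $T$ is assumed $\epsilon$-thick; your opening reduction demands this correctly (``uniformly over the target $T'$''), but your closing invocation (``on $CV_N^{\epsilon}$ the forward and reverse Lipschitz distances differ by at most a multiplicative constant'') reads as a statement about pairs of thick points and should be phrased in the one-sided form.
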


The Lipschitz metric on outer space is geodesic: any two points in outer space are joined by a (nonunique) geodesic segment, that can be obtained \cite[Theorem 5.6]{FM11} as a concatenation of a segment contained in a simplex, and a greedy folding path, as defined in the following paragraph.

\paragraph*{Folding lines.}  
Let $T$ and $T'$ be two $\mathbb{R}$-trees. A \emph{morphism} from $T$ to $T'$ is a map $f:T\to T'$ such that every segment of $T$ can be subdivided into finitely many subsegments, in restriction to which $f$ is an isometry. A \emph{direction} at a point $x\in T$ is a connected component of $T\smallsetminus\{x\}$. A \emph{train track structure} on $T$ is a partition of the set of directions at each point $x\in T$. Elements of the partition are called \emph{gates} at $x$. A pair $(d,d')$ of directions at $x$ is \emph{illegal} if $d$ and $d'$ belong to the same gate. Any morphism $f:T\to T'$ determines a train track structure on $T$, two directions $d,d'$ at a point $x\in T$ being in the same class of the partition if there are intervals $(x,a]\subseteq d$ and $(x,b]\subseteq d'$ which have the same $f$-image. Given $T\in cv_N$ and $T'\in\overline{cv_N}$, and a map $f:T\to T'$ which is linear on edges of $T$, we say that $f$ is \emph{optimal} if it realizes the infimal Lipschitz constant of an $F_N$-equivariant map from $T$ to $T'$, and there are at least two gates at every point in $T$ for the train track structure defined by $f$.

An \emph{optimal folding path} in unprojectivized outer space is a continuous map $\gamma:I\to cv_N$, where $I$ is an interval of $\mathbb{R}$, together with a collection of $F_N$-equivariant optimal morphisms  $f_{t,t'}:\gamma(t)\to \gamma(t')$ for all $t<t'$ in $I$, such that for all $t<t'<t''$, one has $f_{t,t''}=f_{t',t''}\circ f_{t,t'}$. We say that $\gamma$ is a \emph{greedy folding path} if in addition, for all $t_0\in\mathbb{R}$, there exists $\epsilon>0$ such that for all $t\in [t_0,t_0+\epsilon]$, the tree $\gamma(t)$ is obtained from $\gamma(t_0)$ by folding small segments of length $\epsilon$ at all illegal turns of $\gamma(t)$. The projection to $CV_N$ of a (greedy) folding path in $cv_N$ will again be called a (greedy) folding path. 

A train track structure on a tree $T\in CV_N$ is \emph{recurrent} if for every edge $e\subseteq T$, there exists a legal segment in $T$ that crosses $e$, whose projection to $T/F_N$ is a (possibly non-embedded) loop. Bestvina and Reynolds have proved in \cite[Lemma 6.9]{BR13} that for all $T\in CV_N$, and all simplices $\Delta\subseteq CV_N$, there exists a tree $T_0\in\Delta$ such that there exists an optimal morphism from a tree homothetic to $T_0$ to $T$, and any such morphism induces a recurrent train track structure on $T_0$. As noticed in \cite[Remark 6.10]{BR13}, this continues to hold if $T\in\partial CV_N\smallsetminus\overline{\Delta}$. In this situation, we will say that $T_0$ is \emph{fully recurrent} with respect to $T$.

\paragraph*{The free factor graph and its Gromov boundary.} A \emph{free factor} of $F_N$ is a subgroup $A\subseteq F_N$ such that there exists a subgroup $B\subseteq F_N$ with $F_N=A\ast B$. The \emph{free factor graph} $FF_N$ is the graph whose vertices are the conjugacy classes of proper free factors of $F_N$, two vertices $[A]$ and $[B]$ being joined by an edge if there are representatives $A$ and $B$ in their conjugacy classes so that either $A\subsetneq B$ or $B\subsetneq A$. The group $\text{Out}(F_N)$ has a natural left action on $FF_N$. The free factor graph was proven to be Gromov hyperbolic by Bestvina and Feighn \cite{BF12}. One can define a coarsely $\text{Out}(F_N)$-equivariant (with respect to the left actions on both $CV_N$ and $FF_N$), coarsely Lipschitz map $\pi:CV_N\to FF_N$, by sending any tree $T$ to the conjugacy class of a proper free factor that is elliptic in a simplicial tree obtained by equivariantly collapsing some edges in $T$ to points. Bestvina and Feighn also proved that $\pi$-images in $FF_N$ of greedy folding lines in $CV_N$ are uniform unparameterized quasi-geodesics. 

\indent The Gromov boundary of the free factor graph was described by Bestvina--Reynolds \cite{BR13} and independently Hamenstädt \cite{Ham12}. Given a tree $T\in\overline{CV_N}$ and a subgroup $A\subseteq F_N$ which does not fix any point in $T$, there exists a unique minimal nonempty $A$-invariant subtree of $T$, which is called the \emph{$A$-minimal subtree} of $T$. A tree $T\in\partial{CV_N}$ is \emph{arational} if no proper free factor of $F_N$ is elliptic in $T$, and all proper free factors of $F_N$ act freely and simplicially on their minimal subtree in $T$. We denote by $\mathcal{AT}$ the subspace of $\overline{CV_N}$ made of arational $F_N$-trees.

\begin{theo}(Bestvina--Reynolds \cite{BR13}, Hamenstädt \cite{Ham12})\label{bdy}
There exist $M\in\mathbb{R}$ and a map $$\psi:\overline{CV_N}\to FF_N\cup\partial_{\infty}FF_N$$ which agrees with $\pi$ on $CV_N$, and such that 
\begin{itemize}
\item for all $T\in\mathcal{AT}$, and all sequences $(T_n)_{n\in\mathbb{N}}\in CV_N^{\mathbb{N}}$ converging to $T$, the sequence $(\pi(T_n))_{n\in\mathbb{N}}$ converges to $\psi(T)$, and 
\item \cite[Lemma 6.16]{BR13} for all $T\in\overline{CV_N}\smallsetminus\mathcal{AT}$, and all greedy folding lines $\gamma:\mathbb{R}_+\to \overline{CV_N}$, if $\gamma(t)$ converges to $T$ as $t$ goes to $+\infty$, then $\psi(\gamma(t))$ eventually stays at distance at most $M$ from $\psi(T)$.
\end{itemize} 
\end{theo}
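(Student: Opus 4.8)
The plan is to build the map $\psi$ from the coarse projection $\pi\colon CV_N\to FF_N$ by following greedy folding rays, treating the arational and the non-arational parts of $\overline{CV_N}$ separately according to whether the dual lamination of the tree is carried by a proper free factor. First I would dispose of the non-arational locus, where the statement is essentially soft. If $T\in\partial CV_N\smallsetminus\mathcal{AT}$, then by failure of arationality either some proper free factor $A$ is elliptic in $T$, or some proper free factor fails to act freely and simplicially on its minimal subtree in $T$; in the latter case an ``active'' subgroup (a point stabilizer, an arc stabilizer with its edge, or a proper free factor inside the minimal subtree) is again a proper free factor whose conjugacy class $[A]$ is determined by $T$ up to bounded ambiguity in $FF_N$. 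I would set $\psi(T):=[A]$, and $\psi:=\pi$ on $CV_N$. The second bullet is then precisely \cite[Lemma 6.16]{BR13}: along a greedy folding ray $\gamma(t)\to T$, the folding eventually occurs inside the part of the underlying graph complementary to the free factor data of $T$, so $A$ stays elliptic after an equivariant collapse of $\gamma(t)$ for all large $t$, whence $d_{FF_N}(\pi(\gamma(t)),[A])\le M$ for a uniform $M$; for $T\in CV_N$ the same conclusion follows from coarse Lipschitzness of $\pi$ together with the fact that weak convergence to an interior point is $d_{CV_N}$-convergence.

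Next I would define $\psi$ on the arational locus. Let $T\in\mathcal{AT}$. By \cite[Lemma 6.9]{BR13} there is a tree $T_0$ in some simplex of $CV_N$, fully recurrent with respect to $T$, admitting an optimal morphism onto $T$; unfolding this morphism yields a greedy folding ray $\rho\colon\mathbb{R}_+\to CV_N$ with $\rho(t)\to T$ in $\overline{CV_N}$. By the theorem of Bestvina--Feighn quoted above, $\pi\circ\rho$ is a uniform unparameterized quasigeodesic in the hyperbolic graph $FF_N$. The essential point is that $\pi\circ\rho$ has \emph{infinite} diameter: if it stayed in a bounded region, all the $\pi(\rho(t))$ would lie near one conjugacy class of proper free factor $[A]$, and a limiting argument on the train-track data along the ray (whose dual laminations converge to $L(T)$) would produce a proper free factor carrying $L(T)$, contradicting arationality of $T$. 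Since an unbounded unparameterized quasigeodesic ray in a hyperbolic space limits to a point of the Gromov boundary, $\pi\circ\rho$ converges to some point which I would take as $\psi(T)\in\partial_{\infty}FF_N$.

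It then remains to prove the first bullet, which will simultaneously show that $\psi(T)$ is independent of the folding ray used to define it. Let $T\in\mathcal{AT}$ and $T_n\to T$ in $\overline{CV_N}$. Fix $y_0\in CV_N$, and for each $n$ let $\gamma_n$ be a geodesic from $y_0$ to $T_n$ (a simplex segment followed by a greedy folding path, as in \cite[Theorem 5.6]{FM11}); since $\pi$ is coarsely constant on each simplex, $\pi\circ\gamma_n$ is a uniform unparameterized quasigeodesic. Using continuity of the greedy folding construction with respect to its data and metrizability of $\overline{CV_N}$, after passing to a subsequence the $\gamma_n$ converge uniformly on compact sets to a greedy folding ray $\gamma_\infty$ from $y_0$ with $\gamma_\infty(t)\to T$; by the previous step $\pi\circ\gamma_\infty$ limits to $\psi(T)$. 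Because $T$ is arational and $T_n\to T$, the diameters of $\pi\circ\gamma_n$ tend to infinity and $\pi\circ\gamma_n$ fellow-travels $\pi\circ\gamma_\infty$ along initial subsegments of length tending to infinity; by stability of quasigeodesics in the hyperbolic space $FF_N$, the terminal points $\pi(T_n)$ then converge to $\psi(T)$. As this limit depends only on $T$, the first bullet holds and $\psi$ is well defined, and by construction $\psi$ agrees with $\pi$ on $CV_N$.

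The main obstacle is concentrated in the second and third steps: the infinite-progress statement for folding rays converging to arational trees, and its uniform, fellow-traveling refinement used to pass from $\pi\circ\gamma_\infty$ to the sequence $\pi(T_n)$. This is exactly where the structure theory of arational trees and their dual laminations is indispensable, and where one has to rule out that a bounded region of $FF_N$ could trap the projection of the ray. Everything else — $\delta$-hyperbolicity and stability of quasigeodesics in $FF_N$, coarse Lipschitzness of $\pi$, and the uniform quasigeodesic behaviour of $\pi$ on greedy folding lines — is already available from the work of Bestvina--Feighn and Bestvina--Reynolds quoted above.
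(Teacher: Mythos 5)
This statement is not proved in the paper at all: it is quoted verbatim as a theorem of Bestvina--Reynolds and Hamenst\"adt, with the second bullet attributed explicitly to \cite[Lemma 6.16]{BR13}. So there is no in-paper argument to compare yours against; what you have written is a reconstruction of the skeleton of the Bestvina--Reynolds construction, and as such it follows the right architecture (define $\psi$ on the non-arational locus via a reducing free factor, on the arational locus via the limit of $\pi$ along a greedy folding ray, then propagate to arbitrary sequences by a fellow-traveling argument in the hyperbolic graph $FF_N$).

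However, as a proof the proposal has genuine gaps, and they sit exactly where you locate the ``main obstacle.'' First, well-definedness of $\psi(T)$ for $T\in\mathcal{AT}$ is not actually closed by your third step: you define $\psi(T)$ from one chosen ray $\rho$, and in step 3 you extract a \emph{subsequential} limit ray $\gamma_\infty$ and assert its $\pi$-image limits to ``$\psi(T)$'' --- but a priori different subsequences (and different choices of $\rho$) produce different limit rays, and you need the nontrivial fact that any two greedy folding rays converging to the same arational tree have asymptotic $\pi$-images; this is a core part of \cite{BR13}, not a consequence of your construction. Second, the unboundedness of $\pi\circ\rho$ is dismissed with ``a limiting argument on the train-track data,'' but this is precisely the statement that a bounded projection forces a proper free factor to carry (or be elliptic in) the limit tree --- note that when the present paper needs a variant of this fact (Proposition \ref{ff-geom} and Corollary \ref{fz-ff}) it has to route the argument through the cyclic splitting graph $FZ_N$ and the machinery of \cite{Hor14-6}, which is a sign that this step is not soft. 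Third, in step 3 uniform convergence of $\gamma_n$ to $\gamma_\infty$ on compact sets is convergence in the topology of $\overline{CV_N}$, and $\pi$ is only coarsely Lipschitz for $d_{CV_N}$; to conclude that $\pi\circ\gamma_n$ fellow-travels $\pi\circ\gamma_\infty$ on growing initial segments you must control $d_{CV_N}$-closeness, which requires the limit ray to stay in (a thick part of) $CV_N$ for all finite times and the limit of $\gamma_\infty(t)$ to actually be $T$ up to equivalence --- again part of the Bestvina--Reynolds analysis rather than a formality. In short, the outline is faithful to the cited proof strategy, but the three items above are the mathematical content of the theorem and remain unproven in your write-up.
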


We will fix once and for all a map $\psi$ provided by Theorem \ref{bdy}. We denote by $\mathcal{UE}$ the subspace of $\overline{CV_N}$ consisting of trees $T\in\mathcal{AT}$ such that $\psi^{-1}(\{\psi(T)\})=T$ -- this notation is used to mimic the corresponding notation for uniquely ergodic foliations in the mapping class group case; more accurately, trees in $\mathcal{UE}$ are precisely the \emph{uniquely ergometric} trees in the sense of \cite{CHL07}. We denote by $\Delta$ the diagonal subset of $\mathcal{UE}\times\mathcal{UE}$. We mention the following property of greedy folding paths, which follows from work by Bestvina--Reynolds.

\begin{prop}(Bestvina--Reynolds \cite[Corollaries 6.7 and 6.8, Lemma 6.11]{BR13})\label{cv-out}
Let $(T,T')\in\mathcal{UE}\times\mathcal{UE}\smallsetminus\Delta$. Let $((T_n,T'_n))_{n\in\mathbb{N}}\in (\mathcal{UE}\times\mathcal{UE}\smallsetminus\Delta)^{\mathbb{N}}$ be a sequence that converges to $(T,T')$. For all $n\in\mathbb{N}$, let $\gamma_n:\mathbb{R}\to CV_N$ be a greedy folding line from $T_n$ to $T'_n$. Then there exist translations $\tau_n: \mathbb{R}\to \mathbb{R}$ such that the sequence $(\gamma_n\circ \tau_n)_{n\in\mathbb{N}}$ converges uniformly on compact sets to a greedy folding line from $T$ to $T'$.
\end{prop}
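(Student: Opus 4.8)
The plan is to realize the desired limit as an Arzel\`a--Ascoli limit of suitably time-shifted copies of the $\gamma_n$, to check that this limit is again a greedy folding line by passing to the limit in the defining optimal morphisms, and to identify its two endpoints as $T$ and $T'$ using the fact that $T$ and $T'$ are uniquely ergometric.

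\emph{Renormalization and compactness.} Fix a point $p_0$ in $FF_N$ lying on a bi-infinite geodesic $\ell$ from $\psi(T)$ to $\psi(T')$; this geodesic exists because $T,T'\in\mathcal{AT}$ forces $\psi(T),\psi(T')\in\partial_\infty FF_N$, and these two boundary points are distinct since $(T,T')\notin\Delta$. By \cite{BF12} the images $\pi\circ\gamma_n$ are uniform unparameterized quasigeodesics in the hyperbolic graph $FF_N$, and their endpoints in $\overline{FF_N}$ — namely $\psi(T_n)$ and $\psi(T'_n)$, using Theorem \ref{bdy} along the folding rays — converge to $\psi(T)$ and $\psi(T')$ by continuity of $\psi$ on $\mathcal{AT}$ (part of \cite{BR13}); hyperbolicity then lets us choose translations $\tau_n:\mathbb{R}\to\mathbb{R}$ so that $\pi(\gamma_n(\tau_n(0)))$ stays at uniformly bounded distance from $p_0$. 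The essential point — and this is where arationality is genuinely used, since a folding line between arational trees need not remain in any thick part of $CV_N$ — is that, with this normalization, the trees $(\gamma_n\circ\tau_n)(t)$ remain, for each fixed $t$ in a fixed compact interval, inside a compact subset of $CV_N$; this is exactly the control on folding rays to arational trees established in \cite[Cor.\ 6.7, 6.8]{BR13}. Since each $\gamma_n$ is a directed geodesic it is $1$-Lipschitz for $d_{CV_N}$, and on any compact subset of $CV_N$ the (asymmetric) Lipschitz metric is bi-Lipschitz equivalent to its symmetrization (compact sets lie in some $CV_N^{\epsilon}$, on which Proposition \ref{high} applies); hence $(\gamma_n\circ\tau_n)$ is equicontinuous on compact parameter intervals, and a subsequence converges, uniformly on compact sets, to a continuous map $\gamma_\infty:\mathbb{R}\to CV_N$.

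\emph{Identifying the limit.} For $s<t$ the optimal morphisms $f^n_{s,t}:(\gamma_n\circ\tau_n)(s)\to(\gamma_n\circ\tau_n)(t)$ are $1$-Lipschitz and $F_N$-equivariant; after a further diagonal extraction they converge to $1$-Lipschitz $F_N$-equivariant maps $f^\infty_{s,t}:\gamma_\infty(s)\to\gamma_\infty(t)$, and the morphism property, the cocycle identity $f^\infty_{s,u}=f^\infty_{t,u}\circ f^\infty_{s,t}$, optimality (at least two gates at each point), and the greedy normalization all pass to the limit, since gates and illegal turns vary semicontinuously under this convergence — this is the content of \cite[Cor.\ 6.7, 6.8]{BR13}. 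Thus $\gamma_\infty$ is a greedy folding line; by \cite{BR13} its forward ray converges in $\overline{CV_N}$ to some tree $T_+$. Because $\pi\circ\gamma_\infty$ still fellow-travels $\ell$ toward $\psi(T')$, the point $\psi(\gamma_\infty(t))$ diverges in $FF_N$, so $T_+\in\mathcal{AT}$ (otherwise Theorem \ref{bdy} would keep $\psi(\gamma_\infty(t))$ near the vertex $\psi(T_+)$), hence $\psi(\gamma_\infty(t))\to\psi(T_+)=\psi(T')$, and since $T'\in\mathcal{UE}$ we get $T_+=T'$ from $\psi^{-1}(\{\psi(T')\})=\{T'\}$. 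Symmetrically the backward ray limits to $T$, so $\gamma_\infty$ is a greedy folding line from $T$ to $T'$. Finally, since $T,T'\in\mathcal{UE}$ the greedy folding line from $T$ to $T'$ is unique up to translation — this is again where unique ergometricity, rather than mere arationality, is needed, cf.\ \cite[Lemma 6.11]{BR13} — so every subsequential limit of $(\gamma_n\circ\tau_n)$ agrees with $\gamma_\infty$ up to a bounded time-shift; after adjusting the $\tau_n$ so that in addition $\gamma_n(\tau_n(0))\to\gamma_\infty(0)$, the whole sequence $(\gamma_n\circ\tau_n)$ converges to $\gamma_\infty$.

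\emph{Main obstacle.} The delicate step is the compactness claim: a greedy folding line between two arational trees can dive arbitrarily deep into the thin part of $CV_N$ — just as a Teichm\"uller geodesic between two uniquely ergodic foliations may leave every thick part — so equicontinuity on compact intervals and non-degeneration of the Arzel\`a--Ascoli limit are not formal and rest on the fine analysis of folding rays converging to arational trees in \cite{BR13}. The secondary difficulty is checking that optimality and the greedy normalization of the morphisms survive the limit, which amounts to controlling how the train-track structures (gates and illegal turns) degenerate along the converging sequence of trees.
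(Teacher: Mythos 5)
The paper does not prove Proposition \ref{cv-out} at all: it is stated as a citation to Bestvina--Reynolds \cite[Corollaries 6.7 and 6.8, Lemma 6.11]{BR13}, so there is no in-paper argument to compare against. Your reconstruction (normalize in $FF_N$, extract an Arzel\`a--Ascoli limit using the compactness supplied by the analysis of folding rays to arational trees, pass the optimal morphisms to the limit in the equivariant Gromov--Hausdorff topology, identify the endpoints via unique ergometricity) is essentially the route taken in \cite{BR13} for the cited corollaries, and the first two paragraphs are sound modulo one attribution quibble: the convergence of the morphisms $f^n_{s,t}$ and of the induced train-track structures is really the Guirardel--Levitt machinery (\cite[Proposition 2.4]{GL07}, which is what this paper invokes elsewhere for exactly this purpose), not the content of Corollaries 6.7--6.8.

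The genuine gap is the last step. You upgrade subsequential convergence to convergence of the whole sequence by asserting that the greedy folding line from $T$ to $T'$ is unique up to translation when $T,T'\in\mathcal{UE}$, ``cf.\ Lemma 6.11.'' That is not what Lemma 6.11 of \cite{BR13} says (in this paper it is used to extract a common elliptic free factor from limits of fully recurrent trees), and uniqueness of greedy folding lines between two fixed trees in $\mathcal{UE}$ is not known and is most likely false: non-uniqueness comes from the choice of optimal morphism (and, for bi-infinite lines, of compatible germs), not from any failure of unique ergodicity/ergometricity of the endpoints. This is precisely why the present paper works with \emph{pencils} $P(x,y)$, i.e.\ \emph{sets} of electric geodesics between two boundary points, and why \cite[Lemmas 7.16--7.17]{NPR14} control the supremum of $d^{sym}_{CV_N}(y_0,\gamma(\mathbb{R}))$ over all such lines rather than invoking a single one. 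Without uniqueness, your argument only yields that every subsequence has a further subsequence converging to \emph{some} greedy folding line from $T$ to $T'$ --- which is in fact all that \cite{BR13} proves and all that this paper uses (Lemma \ref{X_0-compact} only needs that the $\gamma_n$ \emph{accumulate} on such a line). So either prove convergence along a subsequence, which your first two paragraphs already do, or find a genuinely different mechanism for full convergence; the uniqueness claim as written does not stand.
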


\paragraph*{Projections to folding paths.} 
In their proof of the hyperbolicity of the free factor graph, Bestvina and Feighn introduced a notion of projection of a free factor to a greedy folding path in the following way. Let $I\subseteq\mathbb{R}$ be an interval, and let $\gamma:I\to CV_N$ be a greedy folding path in $CV_N$ determined by a morphism $f$. The morphism $f$ determines a train track structure on all trees $\gamma(t)$ with $t\in I$. Given a primitive element $g\in F_N$, we let $\text{right}_{\gamma}(g)$ be the supremum of all times $t\in I$ such that the axis of $g$ in the covolume $1$ representative of $\gamma(t)$ contains a segment of length $M_{BF}$ that does not contain any legal subsegment of length $3$, where $M_{BF}$ is a fixed constant that only depends on the rank $N$ of the free group, see \cite[Section 6]{BF12}. As follows from the work by Bestvina--Feighn, the projection satisfies the following property. 

\begin{lemma}(Bestvina--Feighn \cite{BF12})\label{bf}
There exists $K_0>0$ (only depending on the rank $N$ of the free group) such that the following holds. Let $\gamma:I\to CV_N$ be a greedy folding path, let $g\in F_N$ be a primitive element, and let $t,t'\ge\text{right}_{\gamma}(g)$  
 with $t\le t'$. Then $$\left|d_{CV_N}(\gamma(t),\gamma(t'))-\log\frac{||g||_{\gamma(t')}}{||g||_{\gamma(t)}}\right|\le K_0.$$
\end{lemma}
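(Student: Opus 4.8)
The plan is to establish the two inequalities hidden in the statement of Lemma \ref{bf} separately. The inequality $\log\frac{\|g\|_{\gamma(t')}}{\|g\|_{\gamma(t)}}\le d_{CV_N}(\gamma(t),\gamma(t'))$ requires no hypothesis and holds with no additive error at all: it is exactly White's formula $d_{CV_N}(T,T')=\log\sup_{h\in F_N\smallsetminus\{e\}}\|h\|_{T'}/\|h\|_{T}$ recalled above (with $T,T'$ replaced by their covolume-$1$ representatives), specialized to $h=g$ and $(T,T')=(\gamma(t),\gamma(t'))$. So the content of the lemma is the reverse inequality $d_{CV_N}(\gamma(t),\gamma(t'))\le\log\frac{\|g\|_{\gamma(t')}}{\|g\|_{\gamma(t)}}+K_0$ for some constant $K_0=K_0(N)$; that is, one must show that past $\text{right}_{\gamma}(g)$ the element $g$ is almost a maximal stretcher along the path.

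To prove this, I would first normalize the greedy folding path to unit speed, so that $d_{CV_N}(\gamma(s),\gamma(s'))=s'-s$ for $s\le s'$; then the defining optimal morphism $f_{t,t'}\colon\gamma(t)\to\gamma(t')$ has Lipschitz constant $e^{t'-t}$, it scales every legal segment of $\gamma(t)$ (legal for the train-track structure induced by the morphism defining the path) by exactly the factor $e^{t'-t}$, and length can only be lost at the illegal turns, which are folded. I would then look at the axis of $g$ in the covolume-$1$ representative of $\gamma(t)$, a $\langle g\rangle$-invariant line of length $\|g\|_{\gamma(t)}$ per fundamental domain, and cut one fundamental domain into maximal legal subsegments separated by illegal turns. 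The hypothesis $t\ge\text{right}_{\gamma}(g)$ says precisely that this line has no subsegment of length $M_{BF}$ avoiding a legal subsegment of length $3$; hence the \emph{long} maximal legal subsegments, those of length at least $3$, occur at most $M_{BF}$ apart along the line, so their total length per fundamental domain is at least $\frac{3}{3+M_{BF}}\|g\|_{\gamma(t)}$. Each such long legal subsegment is mapped by $f_{t,t'}$ by the factor $e^{t'-t}$ onto a legal subsegment of the axis of $g$ in $\gamma(t')$; and because it is long relative to the amount of folding that can occur at the two illegal turns bounding it — this is exactly what the constants $3$ and $M_{BF}$ of \cite{BF12} are calibrated against, namely the bounded-cancellation constant of the morphism — a definite proportion $c_0=c_0(N)>0$ of its length survives, without cancellation against its neighbours, inside the axis of $g$ in $\gamma(t')$. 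Summing over a fundamental domain yields
\[
\|g\|_{\gamma(t')}\ \ge\ c_0\,\frac{3}{3+M_{BF}}\;e^{t'-t}\,\|g\|_{\gamma(t)},
\]
and taking logarithms gives the desired inequality with $K_0:=\log\!\big(\tfrac{3+M_{BF}}{3\,c_0}\big)$, which depends only on $N$.

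The step I expect to be the main obstacle is the uniform no-cancellation claim: that a maximal legal subsegment of length $\ge 3$ in the axis retains a fixed proportion of its length under $f_{t,t'}$, with a bound independent of $t$, $t'$ and of how far along the path one has travelled. This rests on the fact that $\gamma$ is greedy (folding proceeds at the same rate at every illegal turn) together with the monotonicity of legality along a folding path — a turn legal at $\gamma(t)$ stays legal — so that the images of successive long legal pieces can overlap only through bounded folds at the illegal turns separating them; making the relevant bounded-cancellation constant depend on $N$ alone is the technical heart and is where one genuinely invokes the estimates of \cite[Section 6]{BF12}. Two minor points remain: the borderline case $t=\text{right}_{\gamma}(g)$ (a supremum), handled by continuity or by absorbing a bounded error, and the case of very short axes $\|g\|_{\gamma(t)}<M_{BF}$, where \emph{fundamental domain} must be read on the periodic line rather than on $\gamma(t)/F_N$; both cost only a bounded additive amount, which can be folded into $K_0$.
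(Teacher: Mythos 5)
Your proposal is correct and follows essentially the same route as the paper's proof: one inequality is White's formula, and the other comes from decomposing a fundamental domain of the axis of $g$ in $\gamma(t)$ into legal segments of length $3$ of definite density (guaranteed by $t\ge\text{right}_{\gamma}(g)$) whose middle portions are stretched at the maximal rate $e^{t'-t}$ and never get identified, by the estimates of \cite[Lemma 4.4 and Corollary 4.8]{BF12}. The ``uniform no-cancellation'' step you flag as the technical heart is exactly what the paper resolves by citing those Bestvina--Feighn estimates (the length-$1$ middle subsegments grow faster than their extremities get folded).
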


\begin{proof}
Let $\sigma$ be a loop obtained by projecting a fundamental domain of the axis of $g$ to the volume $1$ representative of the quotient graph $\gamma(t)/F_N$. Then all subsegments of length $M_{BF}$ in $\sigma$ contain a legal subsegment of length $3$ for the train-track structure on $\gamma(t)$. It follows from the estimate in \cite[Lemma 4.4]{BF12} that $\sigma$ contains $\left\lfloor\frac{1}{M_{BF}}||g||_{\gamma(t)}\right\rfloor$ legal segments of length $3$, whose middle subsegments of length $1$ all grow exponentially fast with maximal speed along the folding path $\gamma$, and never get identified with one another during the folding procedure: exponential growth is stated in \cite[Corollary 4.8]{BF12}, and the fact that the segments never get identified also follows from Bestvina--Feighn's arguments because these segments grow faster than their extremities get folded. The required estimate follows from this observation.
\end{proof}

Given a tree $S\in CV_N$, we let $$\text{right}_{\gamma}(S):=\sup_{g\in\text{Cand}(S)}\text{right}_{\gamma}(g).$$ We then let $\text{Pr}_{\gamma}(S):=\gamma(\text{right}_{\gamma}(S))$ (we will sometimes also use the notation $\text{Pr}_I(S)$ when it turns out to be more convenient). The following contraction property for projections of greedy folding paths to $FF_N$ was established by Bestvina and Feighn. This is stated in \cite{BF12} with a slightly different notion of projection (namely, the left projection instead of the right projection), however it follows from \cite[Proposition 6.4]{BF12} that this other projection lies at bounded distance apart from the one we consider.

\begin{lemma}(Bestvina--Feighn \cite[Proposition 7.2]{BF12})\label{BF-contr}
There exists $D_1>0$ such that for all greedy folding paths $\gamma:I\to CV_N$ (where $I\subseteq\mathbb{R}$ is an interval), and all $H,H'\in CV_N$, if $d_{CV_N}(H,H')\le d_{CV_N}(H,\text{Im}(\gamma))$, then $d_{FF_N}(\pi(\text{Pr}_{\gamma}(H)),\pi(\text{Pr}_{\gamma}(H')))\le D_1$.
\end{lemma}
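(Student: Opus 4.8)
This is \cite[Proposition 7.2]{BF12}, and we only recall the shape of the argument and indicate the one adaptation that is needed. The plan is to reduce the statement to an assertion about individual candidates, to control those candidates by White's description of the Lipschitz metric, and then to run the strong contraction argument of \cite[Section 7]{BF12}, whose core estimate is the maximal-growth phenomenon already exploited in the proof of Lemma \ref{bf}. Since $\text{Cand}(H)$ and $\text{Cand}(H')$ are finite, choose $g\in\text{Cand}(H)$ and $g'\in\text{Cand}(H')$ realizing the suprema defining $\text{right}_{\gamma}(H)$ and $\text{right}_{\gamma}(H')$, so that $\text{Pr}_{\gamma}(H)=\gamma(\text{right}_{\gamma}(g))$, $\text{Pr}_{\gamma}(H')=\gamma(\text{right}_{\gamma}(g'))$, and $\|g\|_H\le 4$, $\|g'\|_{H'}\le 4$. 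It suffices to bound $d_{FF_N}\big(\pi(\gamma(\text{right}_{\gamma}(g))),\pi(\gamma(\text{right}_{\gamma}(g')))\big)$ by a constant depending only on $N$.

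Set $R:=d_{CV_N}(H,\text{Im}(\gamma))$, so $d_{CV_N}(H,H')\le R$. Three facts drive the argument. First, White's formula $d_{CV_N}(T,T')=\log\sup_{w\neq e}\|w\|_{T'}/\|w\|_T$ together with $\|g\|_H\le 4$ gives $\|g\|_{H'}\le 4e^{R}$, so $g$ remains short, relative to $R$, in both $H$ and $H'$. Second, for $t$ just below $\text{right}_{\gamma}(g)$ the axis of $g$ in $\gamma(t)$ carries a segment of length $M_{BF}$, so $\|g\|_{\gamma(\text{right}_{\gamma}(g))}\gtrsim M_{BF}$, while for $t\ge\text{right}_{\gamma}(g)$ the length $\|g\|_{\gamma(t)}$ grows at the maximal rate along $\gamma$ (this is the estimate behind Lemma \ref{bf}); the same holds for $g'$ past $\text{right}_{\gamma}(g')$. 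Third, $\pi\circ\gamma$ is a uniform unparameterized quasigeodesic in the Gromov hyperbolic graph $FF_N$ and $\pi$ is coarsely Lipschitz, so a large $d_{FF_N}$-separation of the two projection points along $\pi\circ\gamma$ entails a proportionally large $d_{CV_N}$-separation of $\gamma(\text{right}_{\gamma}(g))$ and $\gamma(\text{right}_{\gamma}(g'))$. Feeding these facts into the bookkeeping of \cite[Section 7]{BF12}, one sees that a large separation between $\pi(\gamma(\text{right}_{\gamma}(g)))$ and $\pi(\gamma(\text{right}_{\gamma}(g')))$ would, through the maximal-growth estimate, make $g$ (or $g'$) exponentially long at one of the two projection points, which is incompatible with $g$ being short in $H$ (resp.\ $g'$ in $H'$) and with $H$ and $H'$ being $R$-close, since $\text{Pr}_\gamma(H)$ and $\text{Pr}_\gamma(H')$ are coarsely closest-point projections of $H$ and $H'$ onto $\gamma$. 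This yields a uniform $D_1=D_1(N)$.

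Finally, \cite[Proposition 7.2]{BF12} is phrased for the \emph{left} variant of the marked point $\text{right}_{\gamma}$, whereas $\text{Pr}_{\gamma}$ uses the right variant; by \cite[Proposition 6.4]{BF12} the two corresponding points of $\gamma$ have $\pi$-images at uniformly bounded $d_{FF_N}$-distance, so the bound transfers to $\text{Pr}_{\gamma}$ after enlarging $D_1$. The genuinely delicate point throughout is the asymmetry of $d_{CV_N}$: the hypothesis bounds $d_{CV_N}(H,H')$ but not $d_{CV_N}(H',H)$, so one must push the candidate $g$ of $H$ \emph{forward} into $H'$, where its bounded $H$-length controls its $H'$-length, rather than trying to pull back a candidate of $H'$; carrying the correct direction through every estimate is exactly what Bestvina--Feighn's setup is designed to handle.
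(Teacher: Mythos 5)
Your proposal matches the paper's treatment: the paper gives no proof of this lemma, deferring entirely to \cite[Proposition 7.2]{BF12} and adding only the remark that the left projection used there and the right projection $\text{Pr}_{\gamma}$ used here are at uniformly bounded distance by \cite[Proposition 6.4]{BF12} — exactly the adaptation you identify in your final paragraph. Your additional sketch of the internal Bestvina--Feighn argument (candidates, White's formula, maximal growth past the projection time) is a reasonable gloss but goes beyond what the paper itself records.
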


The following lemma, due to Dowdall and Taylor, relates the Bestvina--Feighn projection to the closest-point projection in $FF_N$. Given a greedy folding path $\gamma:I\to CV_N$, we should denote by $\mathbf{n}_{\pi\circ\gamma}$ the closest-point projection to the image of $\pi\circ\gamma$ in $FF_N$. However,  this is a priori a subset, and it is sometimes convenient to have  chosen a single point in it, thus we rather define  $\mathbf{n}_{\pi\circ\gamma}$  to be the leftmost  closest-point projection in the sense that, in the parametrisation of $ \pi\circ\gamma $ as a path, it has smaller parameter value among the closest point projections.

\begin{lemma}(Dowdall--Taylor \cite[Lemma 4.2]{DT14})\label{DT}
There exists a constant $D_2>0$ such that for any $H\in CV_N$, and any greedy folding path $\gamma:I\to CV_N$, we have $$d_{FF_N}(\pi(\text{Pr}_{\gamma}(H)),\mathbf{n}_{\pi\circ\gamma}(\pi(H)))\le D_2.$$
\end{lemma}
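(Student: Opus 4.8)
The plan is to reduce the statement to Lemma~\ref{BF-contr} together with general facts about closest-point projections in Gromov hyperbolic spaces. I would first recall the setup: $\pi\circ\gamma$ is a uniform unparameterized quasigeodesic in the hyperbolic graph $FF_N$ (by Bestvina--Feighn), so it is within bounded Hausdorff distance of an honest geodesic, and closest-point projection onto it is coarsely well-defined and coarsely Lipschitz, with constants depending only on the hyperbolicity constant of $FF_N$ and the quasigeodesic constant. Denote by $Q:=\pi(\mathrm{Pr}_\gamma(H))$ the Bestvina--Feighn projection point and by $P:=\mathbf{n}_{\pi\circ\gamma}(\pi(H))$ the leftmost closest-point projection; both lie on the quasigeodesic $\pi\circ\gamma$, so it suffices to bound the distance between them along this quasigeodesic, and in particular it suffices to show that each is a coarse closest-point projection of $\pi(H)$ to $\mathrm{Im}(\pi\circ\gamma)$ (two coarse closest-point projections of the same point to a quasiconvex set in a hyperbolic space are uniformly close).

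The main work is to show that $Q=\pi(\mathrm{Pr}_\gamma(H))$ is a coarse closest-point projection of $\pi(H)$. Here is where Lemma~\ref{BF-contr} enters. Let $H'$ be a tree on $\mathrm{Im}(\gamma)$ realizing (up to an additive constant) the distance $d_{CV_N}(H,\mathrm{Im}(\gamma))$, so that $d_{CV_N}(H,H')\le d_{CV_N}(H,\mathrm{Im}(\gamma))$ holds after replacing $H'$ by a slightly closer point if necessary, or argue directly with $H'=\gamma(t)$ for any $t$ and the fact that $d_{CV_N}(H,H')\le d_{CV_N}(H,\gamma(t))$ when $\gamma(t)$ is a closest point. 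Lemma~\ref{BF-contr} then gives $d_{FF_N}(\pi(\mathrm{Pr}_\gamma(H)),\pi(\mathrm{Pr}_\gamma(H')))\le D_1$. But $\mathrm{Pr}_\gamma(H')$ is, up to bounded error, just $H'$ itself (a point already on $\gamma$ projects close to itself — this is a standard property of the Bestvina--Feighn projection, following from Lemma~\ref{bf} applied to candidates, or one includes it as a cited consequence of \cite{BF12}), so $\pi(\mathrm{Pr}_\gamma(H'))$ is within a bounded distance of $\pi(H')$, which is a point of $\mathrm{Im}(\pi\circ\gamma)$ closest to $\pi(H)$ among images of points of $\mathrm{Im}(\gamma)$. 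Since $\pi$ is coarsely Lipschitz and coarsely surjective onto a bounded neighborhood of $\pi\circ\gamma$, a $d_{CV_N}$-closest point on $\gamma$ maps to a coarsely $d_{FF_N}$-closest point on $\pi\circ\gamma$; I would need a short argument (or invoke the relevant statement from \cite{BF12} or \cite{DT14}) that closeness in $CV_N$ to $\mathrm{Im}(\gamma)$ transfers to closeness in $FF_N$ to $\mathrm{Im}(\pi\circ\gamma)$ up to bounded error — this is the only slightly delicate point. Concatenating: $Q$ is within $D_1+O(1)$ of $\pi(H')$, and $\pi(H')$ is a coarse closest-point projection of $\pi(H)$; hence $Q$ is too.

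Once both $P$ and $Q$ are established to be coarse closest-point projections of $\pi(H)$ to the uniformly quasiconvex set $\mathrm{Im}(\pi\circ\gamma)$ in the $\delta$-hyperbolic space $FF_N$, a standard hyperbolicity lemma (uniqueness of nearest-point projection up to an additive constant $2\delta + O(1)$) gives $d_{FF_N}(P,Q)\le D_2$ for a constant $D_2$ depending only on $N$ (through $\delta$ and the quasigeodesic constants of $\pi\circ\gamma$ and the constant $D_1$ of Lemma~\ref{BF-contr}). This is exactly the claimed bound. The hard part, as indicated above, is the transfer of the minimizing property from $CV_N$ to $FF_N$: one must be careful that $d_{CV_N}(H,\mathrm{Im}(\gamma))$ is controlled by, and controls, $d_{FF_N}(\pi(H),\mathrm{Im}(\pi\circ\gamma))$ up to the additive and multiplicative constants of the coarsely Lipschitz map $\pi$ together with the coarse surjectivity onto a neighborhood of the quasigeodesic — but all of this is contained in, or is a routine consequence of, the results of Bestvina--Feighn \cite{BF12} recalled above, so no new idea is needed beyond bookkeeping of constants.
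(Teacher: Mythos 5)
The paper does not actually prove this lemma: it is quoted verbatim from Dowdall--Taylor, so there is no internal proof to compare with. Judged on its own terms, your reconstruction has a genuine gap, and it sits exactly at the step you flag as ``the only slightly delicate point'' and then dismiss as routine bookkeeping. You need that if $H'$ is a $d_{CV_N}$-closest point of $\text{Im}(\gamma)$ to $H$, then $\pi(H')$ is a coarse $d_{FF_N}$-closest point of $\text{Im}(\pi\circ\gamma)$ to $\pi(H)$. This does not follow from $\pi$ being coarsely Lipschitz (together with quasiconvexity of the image): $\pi$ is an electrification, so it collapses unbounded regions of $CV_N$ to bounded sets of $FF_N$, and a coarsely Lipschitz map of this kind can completely rearrange which points of a subset are closest. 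Concretely, $H$ may be very far from $\text{Im}(\gamma)$ in $CV_N$ while $\pi(H)$ sits right on $\pi\circ\gamma$, and nothing recalled in the paper prevents the $d_{CV_N}$-minimizer from projecting far from the $d_{FF_N}$-minimizer. Worse, the transfer statement you need is (modulo Lemma \ref{BF-contr} and coarse idempotence of $\text{Pr}_\gamma$ on $\gamma$, which you also invoke) essentially equivalent to the lemma you are trying to prove, so assuming it is circular.

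The argument that actually works (and is, in substance, Dowdall--Taylor's) replaces this transfer by a consequence of the strong contraction property of $\text{Pr}_\gamma$: one first shows that for every $s\in I$ a reverse triangle inequality
$$d_{FF_N}(\pi(H),\pi(\gamma(s)))\ge d_{FF_N}(\pi(H),\pi(\text{Pr}_{\gamma}(H)))+d_{FF_N}(\pi(\text{Pr}_{\gamma}(H)),\pi(\gamma(s)))-C$$
holds for a uniform $C$, i.e.\ every geodesic of $FF_N$ from $\pi(H)$ to a point of $\pi\circ\gamma$ passes uniformly close to $\pi(\text{Pr}_{\gamma}(H))$. This is deduced from Lemma \ref{BF-contr} by moving along a path from $H$ to $\gamma(s)$ in $CV_N$ and observing that the $\text{Pr}_\gamma$-image cannot move until the path enters a controlled neighborhood of $\gamma$, combined with Lemma \ref{bf} and the quasigeodesic property of $\pi\circ\gamma$. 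Once you have this inequality, applying it with $\pi(\gamma(s))$ within bounded distance of $\mathbf{n}_{\pi\circ\gamma}(\pi(H))$, and using $d_{FF_N}(\pi(H),\pi(\text{Pr}_\gamma(H)))\ge d_{FF_N}(\pi(H),\mathbf{n}_{\pi\circ\gamma}(\pi(H)))$ since $\pi(\text{Pr}_\gamma(H))$ lies on the image, immediately bounds $d_{FF_N}(\pi(\text{Pr}_\gamma(H)),\mathbf{n}_{\pi\circ\gamma}(\pi(H)))$. Your final step (two coarse closest-point projections to a quasiconvex set in a hyperbolic space are uniformly close) is fine; the missing content is establishing that $\pi(\text{Pr}_\gamma(H))$ is such a projection, and that is the whole point of the lemma.
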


\paragraph*{Fully irreducible automorphisms.} An element $\Phi\in\text{Out}(F_N)$ is \emph{fully irreducible} if no power $\Phi^k$ with $k\neq 0$ fixes the conjugacy class of a proper free factor of $F_N$. Any fully irreducible automorphism of $F_N$ has a well-defined \emph{stretching factor} $\lambda(\Phi)$, which satisfies that for all primitive elements $g\in F_N$, we have $$\lim_{n\to +\infty}\sqrt[n]{||\Phi^n(g)||}=\lambda(\Phi),$$ where $||\Phi^n(g)||$ denotes the smallest word length of a conjugate of $\Phi^n(g)$, written in some fixed free basis of $F_N$. Bestvina and Feighn have shown \cite{BF12}  
 that an element $\Phi\in\text{Out}(F_N)$ acts as a loxodromic isometry of $FF_N$ if and only if $\Phi$ is fully irreducible. Any fully irreducible element $\Phi\in\text{Out}(F_N)$ has a translation axis in $CV_N$, which is a folding line \cite{HM11}. The amount of translation is equal to $\log(\lambda(\Phi))$. With the terminology from Section \ref{abstract}, all elements in $\text{Out}(F_N)$ acting as fully irreducible isometries of $FF_N$ have an electric axis. 

\paragraph*{Random walks on $\text{Out}(F_N)$.} We finally record a result of Namazi--Pettet--Reynolds concerning random walks on $\text{Out}(F_N)$. A subgroup $H\subseteq\text{Out}(F_N)$ is \emph{nonelementary} if $H$ contains two fully irreducible automorphisms that generate a free subgroup of $\text{Out}(F_N)$: with the terminology from Section \ref{sec1}, this is equivalent to nonelementarity of the $H$-action on $FF_N$. We say that a probability measure on $\text{Out}(F_N)$ is \emph{nonelementary} if the subsemigroup generated by the support of $\mu$ is a nonelementary subgroup of $\text{Out}(F_N)$. With the terminology from Section \ref{abstract}, the following theorem asserts that $(\text{Out}(F_N),\mu)$ is $\overline{CV_N}$-boundary converging.

\begin{theo}(Namazi--Pettet--Reynolds \cite{NPR14})\label{NPR}
Let $\mu$ be a nonelementary probability measure on $\text{Out}(F_N)$ with finite first moment with respect to $d_{CV_N}$. Then for $\mathbb{P}$-a.e. sample path $\mathbf{\Phi}:=(\Phi_n)_{n\in\mathbb{N}}$ of the random walk on $(\text{Out}(F_N),\mu)$, and any $y_0\in CV_N$, the sequence $(\Phi_n.y_0)_{n\in\mathbb{N}}$ converges to a point $\text{bnd}(\mathbf{\Phi})\in\mathcal{UE}$. The hitting measure $\nu$ defined by setting $$\nu(S):=\mathbb{P}(\text{bnd}(\mathbf{\Phi})\in S)$$ for all measurable subsets $S\subseteq \partial CV_N$ is nonatomic. 
\end{theo}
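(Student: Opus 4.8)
The plan is to transport to the compact space $\overline{CV_N}$ the convergence statement that is already available on the Gromov hyperbolic free factor graph $FF_N$ (Theorem~\ref{hyp-cv}), by means of the coarsely $\text{Out}(F_N)$-equivariant coarsely Lipschitz map $\pi\colon CV_N\to FF_N$ and the boundary map $\psi\colon\overline{CV_N}\to FF_N\cup\partial_\infty FF_N$ of Theorem~\ref{bdy} (which may be taken Borel). First I would fix a $\mu$-stationary probability measure $\nu$ on $\overline{CV_N}$, obtained as a weak-$\ast$ limit point of $\frac1n\sum_{k=1}^n(\mu^{\ast k})_\ast\delta_{y_0}$ using compactness. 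The standard martingale argument then gives, for $\mathbb{P}$-a.e.\ sample path $\mathbf\Phi$, a weak limit $\nu_{\mathbf\Phi}:=\lim_n(\Phi_n)_\ast\nu$ with $\nu=\int\nu_{\mathbf\Phi}\,d\mathbb{P}$. Since $\psi$ is Borel, $\psi_\ast\nu$ is $\mu$-stationary on $FF_N\cup\partial_\infty FF_N$; as $\mu$ is nonelementary and $FF_N$ is separable Gromov hyperbolic, the Calegari--Maher / Maher--Tiozzo theory underlying Theorem~\ref{hyp-cv} provides a \emph{unique} such measure $\nu_{FF}$, which is supported on $\partial_\infty FF_N$ and is nonatomic; hence $\psi_\ast\nu=\nu_{FF}$, and, $\psi$ being equivariant on the boundary, $(\Phi_n)_\ast\nu_{FF}\to\delta_\xi$ with $\xi:=\text{bnd}_{FF}(\mathbf\Phi)$, so that $\psi_\ast\nu_{\mathbf\Phi}=\delta_\xi$.

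Next I would show that $\nu$ is concentrated on $\mathcal{UE}$. Since $\psi(CV_N)\subseteq FF_N$ and $\nu_{FF}(FF_N)=0$, the measure $\nu$ charges only $\partial CV_N$; and for $T\in\overline{CV_N}\smallsetminus\mathcal{AT}$ the second item of Theorem~\ref{bdy} forces $\psi(T)\in FF_N$ (the $\psi$-image of a greedy folding line limiting to $T$ is an unparameterized quasigeodesic of $FF_N$ that stays boundedly close to $\psi(T)$, which therefore cannot lie in $\partial_\infty FF_N$), so $\nu$ is supported on $\mathcal{AT}$. It remains to prove that for $\nu_{FF}$-a.e.\ $\xi$ the fibre $\psi^{-1}(\xi)\cap\overline{CV_N}$ is a single point — which then lies in $\mathcal{UE}$ by definition — equivalently, that $\psi$ is injective on a $\nu$-conull subset of $\mathcal{AT}$. \textbf{This is the crux and the main obstacle:} it is the $\text{Out}(F_N)$-analogue of the unique ergodicity statement of Kaimanovich--Masur for mapping class groups and does not follow formally from hyperbolicity of $FF_N$. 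I would attack it using the north--south dynamics of a loxodromic (fully irreducible) element $g$ on $FF_N\cup\partial_\infty FF_N$ together with the contraction property of folding lines (Lemmas~\ref{BF-contr} and~\ref{DT}) and the control of high points (Proposition~\ref{high}): the translation axis of $g$ in $CV_N$ is a folding line whose forward endpoint tree is attracting in $\overline{CV_N}$, and contraction of folding lines forces any tree with the same $\psi$-image to fellow-travel this axis and hence to coincide with its endpoint; an equidistribution/density argument then shows such uniquely ergometric trees carry full $\nu$-measure. (Alternatively one invokes directly the structural analysis of $\partial FF_N$ by Bestvina--Reynolds, which is the route taken in \cite{NPR14}.)

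With the crux in hand, the conclusion is routine. For $\mathbb{P}$-a.e.\ $\mathbf\Phi$ we have $\psi_\ast\nu_{\mathbf\Phi}=\delta_\xi$ with $\psi^{-1}(\xi)=\{T_\infty\}$, $T_\infty=T_\infty(\mathbf\Phi)\in\mathcal{UE}$, so $\nu_{\mathbf\Phi}=\delta_{T_\infty}$. To see that $\Phi_n.y_0$ itself converges to $T_\infty$, note that $\pi(\Phi_n.y_0)\to\xi$ by Theorem~\ref{hyp-cv}; any accumulation point $T$ of $(\Phi_n.y_0)$ in the compact space $\overline{CV_N}$ must lie in $\mathcal{AT}$ (otherwise, as above, a folding line from $y_0$ towards $T$ would have bounded $\psi$-image, contradicting $\pi(\Phi_n.y_0)\to\xi\in\partial_\infty FF_N$), so by the first item of Theorem~\ref{bdy} one gets $\psi(T)=\xi$ and hence $T=T_\infty$. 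Thus $\Phi_n.y_0\to T_\infty=:\text{bnd}(\mathbf\Phi)\in\mathcal{UE}$, and the distribution of $\text{bnd}(\mathbf\Phi)$ is $\nu$. Finally $\nu$ is nonatomic: an atom of $\nu$, necessarily at some $T_0\in\mathcal{UE}$, would be pushed by the ($\nu$-essentially injective) map $\psi$ to an atom of $\psi_\ast\nu=\nu_{FF}$, which has none. This establishes Theorem~\ref{NPR}.
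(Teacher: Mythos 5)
First, a point of comparison: the paper does not prove Theorem \ref{NPR} at all --- it is quoted from Namazi--Pettet--Reynolds \cite{NPR14} and used as a black box (it supplies the $\overline{CV_N}$-boundary-convergence hypothesis of Theorem \ref{spectral-gen}, exactly as Theorem \ref{KM} of Kaimanovich--Masur does on the mapping class group side). So there is no in-paper argument to measure yours against; the question is whether your blind proof stands on its own. Its skeleton is the standard one and does match the strategy of \cite{NPR14}: build a $\mu$-stationary measure $\nu$ on the compact space $\overline{CV_N}$, run the martingale argument to get conditional measures $\nu_{\mathbf\Phi}$, push forward by $\psi$ to the free factor graph and invoke uniqueness and nonatomicity of the stationary measure on $\partial_\infty FF_N$, then argue that $\nu$-a.e.\ fibre of $\psi$ is a singleton so that $\nu_{\mathbf\Phi}$ is a Dirac mass at a point of $\mathcal{UE}$.

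However, the step you yourself flag as the crux --- that $\nu$ is concentrated on $\mathcal{UE}$, i.e.\ that $\psi$ is injective on a $\nu$-conull set --- is not proven, and the sketch you offer would not prove it. The fibre $\psi^{-1}(\xi)$ over a point $\xi\in\partial_\infty FF_N$ is, by the Bestvina--Reynolds/Hamenst\"adt description of the boundary, a whole simplex of arational trees (corresponding to the different ergodic length measures on a common underlying object), and nothing about the north--south dynamics of a \emph{single} fully irreducible element, nor the contraction of folding lines, rules out that the hitting measure charges points $\xi$ with nondegenerate fibre: the attracting trees of the countably many fully irreducible elements of the semigroup form a $\nu$-null set (since $\nu$ is nonatomic), so no ``equidistribution/density'' argument starting from them controls $\nu$-a.e.\ $\xi$. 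This concentration statement is precisely the hard content of \cite{NPR14} --- the $\text{Out}(F_N)$ analogue of Kaimanovich--Masur's unique ergodicity theorem, requiring their analysis of ergodic decompositions along folding and unfolding paths --- so your parenthetical ``alternatively one invokes \cite{NPR14}'' is in fact the only complete route you give, which makes the argument circular as a proof of the theorem. A secondary, more minor, gap: in the last paragraph, to see that a subsequential limit $T$ of $(\Phi_n.y_0)$ is arational you argue via ``a folding line from $y_0$ towards $T$'', but the orbit points do not lie on such a line; what you actually need is that a sequence in $CV_N$ whose $\pi$-image converges to a point of $\partial_\infty FF_N$ accumulates in $\overline{CV_N}$ only on arational trees, which is again part of the boundary theory (in the spirit of Proposition \ref{ff-geom} and Corollary \ref{fz-ff}) and should be cited or proved rather than asserted.
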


\subsection{More on the geometry of the free factor graph}\label{sec4-3}

We will need to establish the following proposition concerning the geometry of the projection of folding paths to the free factor graph. We refer to the statement of Theorem \ref{bdy} above for the definition of the map $\psi$.

\begin{prop}\label{ff-geom}
There exists $M>0$ such that for all sufficiently large $D>0$, all $y_0,y_1\in CV_N$, all trees $T\in\overline{CV_N}$ and all sequences $(T_n)_{n\in\mathbb{N}}\in CV_N^{\mathbb{N}}$ converging to $T$, if $(\pi(T_n)|\pi(y_1))_{\pi(y_0)}\ge D$ for all $n\in\mathbb{N}$, then $(\psi(T)|\pi(y_1))_{\pi(y_0)}\ge D-M$. 
\end{prop}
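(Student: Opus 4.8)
The plan is to reduce the statement to the two cases of Theorem \ref{bdy} (the boundary map $\psi$), combined with the fact that $\pi$-images of greedy folding paths are uniform unparameterized quasigeodesics in $FF_N$, and with the standard hyperbolic fact that Gromov products are coarsely computed by distances to geodesics. First I would fix the basepoint $\pi(y_0)$ and recall that in a $\delta$-hyperbolic space, for any $a,b$, the Gromov product $(a|b)_{\pi(y_0)}$ is within a bounded multiple of $\delta$ of the distance from $\pi(y_0)$ to any geodesic $[a,b]$, and more usefully here, if $\gamma$ is a $K_2$-quasigeodesic from $\pi(y_0)$ toward $a$, then $(a|b)_{\pi(y_0)}$ is coarsely the length of the initial overlap of $[\pi(y_0),a]$ and $[\pi(y_0),b]$. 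So the hypothesis $(\pi(T_n)|\pi(y_1))_{\pi(y_0)}\ge D$ for all $n$ says that a quasigeodesic from $\pi(y_0)$ to $\pi(y_1)$ and a quasigeodesic from $\pi(y_0)$ to $\pi(T_n)$ fellow-travel for a definite length $\gtrsim D$ (up to an additive constant depending only on $\delta$ and $K_2$).

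Next I would pick, for each $n$, a greedy folding path $\gamma_n$ from $y_0$ to $T_n$ in $\overline{CV_N}$; its $\pi$-image is a uniform unparameterized quasigeodesic in $FF_N$ (Bestvina--Feighn), so all the above coarse statements apply to $\pi\circ\gamma_n$ with uniform constants. The key point is to understand the limit: up to passing to a subsequence and reparameterizing, the $\gamma_n$ converge (uniformly on compact sets) to a greedy folding ray $\gamma_\infty:\mathbb{R}_+\to\overline{CV_N}$ with $\gamma_\infty(t)\to T$ as $t\to+\infty$ (one can extract such a limit by a standard Arzelà--Ascoli / diagonal argument for folding paths, using that the starting tree is fixed; this is the same kind of limiting statement used in Proposition \ref{cv-out} and in \cite{BR13}). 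Now split into two cases according to Theorem \ref{bdy}. If $T\in\mathcal{AT}$, then $\pi(\gamma_\infty(t))=\psi(\gamma_\infty(t))$ for $\gamma_\infty(t)\in CV_N$ converges to $\psi(T)\in\partial_\infty FF_N$; since $\pi(T_n)$ lies near $\pi(\gamma_n(\text{end}))$ which converges into the quasigeodesic ray $\pi\circ\gamma_\infty$, the overlap of $[\pi(y_0),\pi(y_1)]$ with $\pi\circ\gamma_n$ of length $\gtrsim D$ persists in the limit to give overlap of $[\pi(y_0),\pi(y_1)]$ with $\pi\circ\gamma_\infty$ of length $\gtrsim D$, hence $(\psi(T)|\pi(y_1))_{\pi(y_0)}\ge D-M$ for a suitable $M=M(\delta,K_2)$. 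If $T\in\overline{CV_N}\smallsetminus\mathcal{AT}$, then by the second bullet of Theorem \ref{bdy}, $\psi(\gamma_\infty(t))$ eventually stays within distance $M_0$ of $\psi(T)\in FF_N$; so $\pi\circ\gamma_\infty$ is a quasigeodesic that ends (coarsely) at the point $\psi(T)$, and again the persistent overlap of length $\gtrsim D$ gives $(\psi(T)|\pi(y_1))_{\pi(y_0)}\ge D-M$ after absorbing $M_0$ and the quasigeodesic constants into $M$.

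A small technical point I would handle carefully: I must make sure the overlap length is measured correctly in the limit, i.e.\ that the fellow-travelling between $[\pi(y_0),\pi(y_1)]$ and $\pi\circ\gamma_n$ over an initial segment of length $\gtrsim D$ does not get "used up" near $\pi(y_0)$ in a way that disappears in the limit --- but since $y_0$ is fixed and the convergence $\gamma_n\to\gamma_\infty$ is uniform on compact subsets of the parameter interval, the initial segments of $\pi\circ\gamma_n$ of bounded length converge to the corresponding initial segment of $\pi\circ\gamma_\infty$, so the overlap is preserved. The main obstacle I anticipate is precisely the extraction and control of the limiting folding ray $\gamma_\infty$ together with the statement that $\pi(T_n)$ is coarsely a point of $\pi\circ\gamma_\infty$ (rather than merely that $\pi\circ\gamma_n\to\pi\circ\gamma_\infty$ on compacta): one needs the endpoint behaviour, which is exactly what Theorem \ref{bdy} and the Bestvina--Reynolds convergence results are designed to supply, so the argument is really a matter of assembling these inputs with the hyperbolic Gromov-product estimates rather than proving anything genuinely new. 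The constant $M$ produced depends only on the hyperbolicity constant $\delta$ of $FF_N$, on the quasigeodesic constant $K_2$ for $\pi$-images of greedy folding paths, and on the constant in the second bullet of Theorem \ref{bdy}; in particular it is independent of $D$, $y_0$, $y_1$, $T$ and the sequence $(T_n)$, as required.
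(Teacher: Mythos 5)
Your overall strategy --- connect $y_0$ to $T_n$ by greedy folding paths $\gamma_n$, extract a limiting folding path $\gamma_\infty$ converging to $T$, and transfer the Gromov product estimate to $\psi(T)$ via Theorem \ref{bdy} --- is the same skeleton as the paper's proof, and the case $T\in\mathcal{AT}$ is indeed immediate from the first bullet of Theorem \ref{bdy}. But there is a genuine gap at exactly the step you dismiss as ``a small technical point'': the claim that the overlap of length $\gtrsim D$ between $[\pi(y_0),\pi(y_1)]$ and $\pi\circ\gamma_n$ persists in the limit to give an overlap between $[\pi(y_0),\pi(y_1)]$ and $\pi\circ\gamma_\infty$. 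Uniform convergence $\gamma_n\to\gamma_\infty$ on compacta does not yield this. The trees $S_n$ on $\gamma_n$ whose $\pi$-images realize the overlap (i.e.\ lie near the point $\pi(y_2)$ at distance $\approx D$ from $\pi(y_0)$ along $[\pi(y_0),\pi(y_1)]$) may leave every compact subset of $CV_N$ and accumulate on a tree $S_\infty\in\partial CV_N$ lying on the closure of the limit path. At such a boundary point $\pi$ is not continuous: neither bullet of Theorem \ref{bdy} lets you conclude that $\lim\pi(S_n)$ is close to $\psi(S_\infty)$, nor that $\pi\circ\gamma_\infty$ actually visits a neighborhood of $\pi(y_2)$. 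Since the whole content of the proposition is precisely to upgrade the second bullet of Theorem \ref{bdy} from sequences along folding paths to arbitrary sequences, your argument implicitly assumes the continuity statement it is meant to prove.

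The paper closes this gap with an ingredient absent from your proposal: the cyclic splitting graph $FZ_N$. Greedy folding paths also project to uniform unparameterized quasigeodesics in $FZ_N$, so the trees $S_n$ can be chosen with bounded image in $FZ_N$; hence the accumulation point $S_\infty$ is not $\mathcal{Z}$-averse, and \cite[Proposition 8.8]{Hor14-6} (the already-established $FZ_N$-analogue of the statement being proved) then forces the image of the limit path in $FZ_N$, and hence in $FF_N$, to meet a bounded neighborhood of $\pi(y_2)$. Only then does the second bullet of Theorem \ref{bdy} apply to finish. Without this detour, or some substitute controlling where $\pi\circ\gamma_\infty$ goes when the underlying trees degenerate, your proof does not close.
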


Proposition \ref{ff-geom} follows from Theorem \ref{bdy} in the particular case where all trees in the sequence $(T_n)_{n\in\mathbb{N}}$ lie on the image of a greedy folding path in $CV_N$. We need to extend this to arbitrary sequences. Our proof of Proposition \ref{ff-geom} will make use of yet another $\text{Out}(F_N)$-graph, namely the \emph{cyclic splitting graph} $FZ_N$. This is the graph whose vertices correspond to the splittings of $F_N$ as a graph of groups with cyclic (possibly trivial) edge groups, two splittings being joined by an edge if they have a common refinement. Hyperbolicity of $FZ_N$ was established by Mann \cite{Man12}, and its geometry was further studied in \cite{Hor14-6}. There is a coarsely $\text{Out}(F_N)$-equivariant map $\Theta$  
 from $FZ_N$ to $FF_N$: given any cyclic splitting $T$ of $F_N$, there exists a proper free factor of $F_N$ which is elliptic in a tree obtained by equivariantly collapsing some edges of $T$, and $\Theta$ maps $T$ to one of these free factors. We will denote by $\pi_Z$ the natural map from $CV_N$ to $FZ_N$. Images in $FZ_N$ of greedy folding paths in $CV_N$ are uniform unparameterized quasigeodesics \cite{Man12}. 

\begin{proof}
Let $f$ be an optimal morphism  
from a tree whose homothety class belongs to the closure $\overline{\Delta(y_0)}$ of the simplex of $y_0$, to $T$. Let $(f_n)_{n\in\mathbb{N}}$ be a sequence of optimal morphisms which converges to $f$ for the Gromov--Hausdorff topology on morphisms introduced in \cite[Section 3.2]{GL07}, such that for all $n\in\mathbb{N}$, the map $f_n$ is a morphism from a tree $y'_n$ whose homothety class belongs to $\Delta(y_0)$, to $T_n$. Let $\gamma:\mathbb{R}_+\to \overline{CV_N}$ be the greedy folding path determined by $f$, and for all $n\in\mathbb{N}$, let $\gamma_n:\mathbb{R}_+\to CV_N$ be the greedy folding path from $y'_n$ to $T_n$ determined by $f_n$. By \cite[Proposition 2.4]{GL07}, the folding paths $\gamma_n$ converge uniformly on compact sets to $\gamma$.

There exists a constant $M_1>0$, only depending on $K_2$ and the hyperbolicity constant of $FF_N$, such that the following holds. Let $y_2\in CV_N$ be such that $\pi(y_2)$ lies at bounded distance apart from a geodesic segment from $\pi(y_0)$ to $\pi(y_1)$ in $FF_N$, and $d_{FF_N}(\pi(y_0),\pi(y_2))=D-M_1$. Then for all $n\in\mathbb{N}$, the $\pi$-images in $FF_N$ of all greedy folding paths $\gamma_n$ pass through the $M_1$-neighborhood $U_1$ of $\pi(y_2)$. 

Recall that greedy folding paths in $CV_N$ project to uniform unparameterized quasigeodesics in both complexes $FZ_N$ and $FF_N$. Therefore, there exists $M_2>0$ (only depending on the hyperbolicity constants of $FF_N$ and $FZ_N$), a bounded neighborhood $U_2$ of $\pi(y_2)$ in $FF_N$, and a tree $S\in CV_N$ whose $\pi$-image in $FF_N$ lies in $U_2$, such that for all $n\in\mathbb{N}$, one can find a tree $S_n$ lying on the image of $\gamma_n$, whose projection to $FZ_N$ lies at distance at most $M_2$ from $\pi_Z(S)$.

Let $S_{\infty}\in\overline{CV_N}$ be a limit point of the sequence $(S_n)_{n\in\mathbb{N}}$. By \cite[Proposition 2.4]{GL07}, the tree $S_{\infty}$ belongs to $\gamma(\mathbb{R}_+\cup\{+\infty\})$. Since the sequence $(\psi_Z(S_n))_{n\in\mathbb{N}}$ is bounded, the tree $S_{\infty}$ is not $\mathcal{Z}$-averse in the sense of \cite{Hor14-6}. It follows from \cite[Proposition 8.8]{Hor14-6} that there exists $M_3>0$ such that the set of reducing $\mathcal{Z}$-splittings of $S_{\infty}$ (defined in \cite[Section 5.1]{Hor14-6}) lies in the $M_3$-neighborhood $U_3$ of $\pi_Z(S)$ in $FZ_N$, and the image of $\gamma$ in $FZ_N$ meets $U_3$. The projection of $U_3$ to $FF_N$ is a bounded neighborhood of $\pi(y_2)$, and the $\pi$-image of $\gamma$ meets this neighborhood.  The last item from Theorem \ref{bdy} implies that all points $\pi\circ\gamma(t)$ with $t$ sufficiently large belong to a bounded neighborhood of $\psi(T)$. The claim follows from these observations.
\end{proof}

From Proposition \ref{ff-geom}, where one chooses $y_0$ such that $\pi(y_0)$ is close to $\psi(T)$, and $y_1$ such that $\pi(y_1)$ belongs to the bounded region of $FF_N$ that contains all points $\pi(T_n)$, one deduces the following analogue for $FF_N$ of \cite[Proposition 8.8]{Hor14-6} (which was established for $FZ_N$ there).

\begin{cor}\label{fz-ff}
For all $D>0$, there exists $M>0$ such that the following holds. Let $T\in\overline{CV_N}$, and let $(T_n)_{n\in\mathbb{N}}\in CV_N^{\mathbb{N}}$ be a sequence of trees that converges to $T$. Assume that all projections $\pi(T_n)$ belong to a common region of $FF_N$ of diameter $D$. Then $T$ is not arational, and for all $n\in\mathbb{N}$, we have $d_{FF_N}(\pi(T_n),\psi(T))\le M$. 
\qed
\end{cor}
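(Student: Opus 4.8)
The plan is to derive Corollary~\ref{fz-ff} from Proposition~\ref{ff-geom} together with the Bestvina--Reynolds/Hamenst\"adt description of $\partial_{\infty}FF_N$. Throughout, write $S\subseteq FF_N$ for the given region of diameter at most $D$ that contains all the projections $\pi(T_n)$.

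First I would check that $T$ is not arational. If it were, then $T\in\mathcal{AT}$, hence $\psi(T)\in\partial_{\infty}FF_N$ (this is the identification of the Gromov boundary of $FF_N$ with the space of arational trees), and by the first item of Theorem~\ref{bdy} the sequence $(\pi(T_n))_{n\in\mathbb{N}}$ converges to $\psi(T)$ in $FF_N\cup\partial_{\infty}FF_N$. But a sequence converging to a point of the Gromov boundary is unbounded in $FF_N$, contradicting $\pi(T_n)\in S$ for all $n$. Hence $\psi(T)\in FF_N$.

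Next I would observe that it suffices to bound $d_{FF_N}(\pi(T_{n_0}),\psi(T))$ for a single index $n_0$: since $\text{diam}_{FF_N}(S)\le D$, such a bound $d_{FF_N}(\pi(T_{n_0}),\psi(T))\le M'$ propagates to $d_{FF_N}(\pi(T_n),\psi(T))\le M'+D$ for all $n$, so one may take $M:=M'+D$. To produce $n_0$, let $M_0$ and $D_0$ be respectively the constant and the ``sufficiently large'' threshold appearing in Proposition~\ref{ff-geom}, and let $C_0$ be a constant, depending only on $N$, such that every point of $FF_N$ lies within $C_0$ of $\pi(CV_N)$ (such a $C_0$ exists since $\pi$ is coarsely surjective). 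Fix $y_0\in CV_N$ with $d_{FF_N}(\pi(y_0),\psi(T))\le C_0$, and set $y_1:=T_1\in CV_N$, so $\pi(y_1)\in S$. Suppose towards a contradiction that $d_{FF_N}(\pi(T_n),\psi(T))>M'$ for all $n$, where $M'$ is a large constant to be fixed below. The triangle inequality then gives $d_{FF_N}(\pi(y_0),\pi(T_n))>M'-C_0$ and $d_{FF_N}(\pi(y_0),\pi(y_1))>M'-C_0$, while $d_{FF_N}(\pi(T_n),\pi(y_1))\le D$, whence
\[
(\pi(T_n)|\pi(y_1))_{\pi(y_0)} \ge M'-C_0-\frac{D}{2} =: D'
\]
for every $n$. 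Choosing $M'$ so that $D'\ge D_0$, Proposition~\ref{ff-geom} (applied to $(T_n)$, which converges to $T$) yields $(\psi(T)|\pi(y_1))_{\pi(y_0)}\ge D'-M_0$. On the other hand, $(\psi(T)|\pi(y_1))_{\pi(y_0)}\le d_{FF_N}(\pi(y_0),\psi(T))\le C_0$, so $C_0\ge M'-C_0-\frac{D}{2}-M_0$; choosing in addition $M'>2C_0+\frac{D}{2}+M_0$ produces a contradiction. Hence some index $n_0$ satisfies $d_{FF_N}(\pi(T_{n_0}),\psi(T))\le M'$, and the reduction above finishes the argument (explicitly, $M':=\max\{D_0+C_0+D/2,\ 2C_0+D/2+M_0\}+1$ works, with $M:=M'+D$).

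I do not expect a genuine obstacle here: the substantive content is already packaged in Proposition~\ref{ff-geom} (and, through it, in the cyclic splitting graph machinery of \cite{Hor14-6}). The only points that need care are the elementary bookkeeping of the constants --- one must check that $M'$ can be chosen large enough to satisfy simultaneously $D'\ge D_0$ and $M'>2C_0+D/2+M_0$, which is automatic --- and the observation that, thanks to $\text{diam}_{FF_N}(S)\le D$, it is enough to control a single point $\pi(T_{n_0})$ rather than the whole sequence.
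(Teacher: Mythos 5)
Your argument is correct and follows exactly the route the paper intends: the corollary is stated with only the one-sentence hint preceding it (choose $y_0$ with $\pi(y_0)$ close to $\psi(T)$ and $y_1$ with $\pi(y_1)$ in the bounded region, then apply Proposition \ref{ff-geom}), and your writeup is a faithful elaboration of that hint, including the correct preliminary observation that arationality of $T$ would force $(\pi(T_n))_{n\in\mathbb{N}}$ to be unbounded via the first item of Theorem \ref{bdy}. The constant bookkeeping checks out, so there is nothing to add.
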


\subsection{A contraction property for folding lines in outer space}\label{sec-contr-out} 

Let $\delta$ be the hyperbolicity constant of $FF_N$. Let $K_2>0$ be a constant such that all $\pi$-images of greedy folding lines in $CV_N$ are $K_2$-unparameterized quasigeodesics in $FF_N$, and let $\kappa=\kappa(K_2,\delta)$ be the constant provided by Proposition \ref{hyp-drift}. We now introduce our definition of progress for folding paths in outer space, which will play the same role in our arguments as in the case of mapping class groups in the previous section.  This is illustrated in Figure \ref{fig-progression}.

\begin{de}\textbf{(Progress for folding paths)}\label{progression}\\
Let $(T,T')\in\mathcal{UE}\times\mathcal{UE}\smallsetminus\Delta$. Let $\gamma:\mathbb{R}\to CV_N$ be a greedy  
 folding line from $T$ to $T'$, and let $S$ be a tree lying on the image of $\gamma$. Given $C,A_0,B_0,C_0>0$, we say that $\gamma$ is \emph{$(C;A_0,B_0,C_0)$-progressing at $S$} if the following holds.\\
Let 
\begin{itemize}
\item $\widetilde{S}\in CV_N$ be a tree lying to the right of $S$ on the image of $\gamma$, satisfying the inequality $d_{FF_N}(\pi(S),\pi(\widetilde{S}))\le A_0$, and
\item $R\in CV_N$ be a tree such that $\textbf{n}_{\pi\circ\gamma}(\pi(R))$ is to the right of $\pi(S)$ and satisfies $d_{FF_N}(\pi(\widetilde{S}),\textbf{n}_{\pi\circ\gamma}(\pi(R)))\ge B_0$ for all trees $\widetilde{S}$ as above, and
\item $\gamma':[a,b]\to CV_N$ (where $[a,b]\subseteq\mathbb{R}$ is a segment) be a greedy folding path from a tree $S'$ in the simplex $\Delta(\widetilde{S})$ to $R$, where either $S'=\widetilde{S}$, or else $S'$ is fully recurrent with respect to $R$.
\end{itemize}
\noindent Then for all $c\in [a,b]$, if $\gamma'([a,c])$ has $d_{CV_N}^{sym}$-diameter at least $C$, then $\pi\circ\gamma'([a,c])$ has diameter at least $C_0$ in $FF_N$. 
\end{de}

\begin{figure}
\begin{center}
\includegraphics[scale=.6]{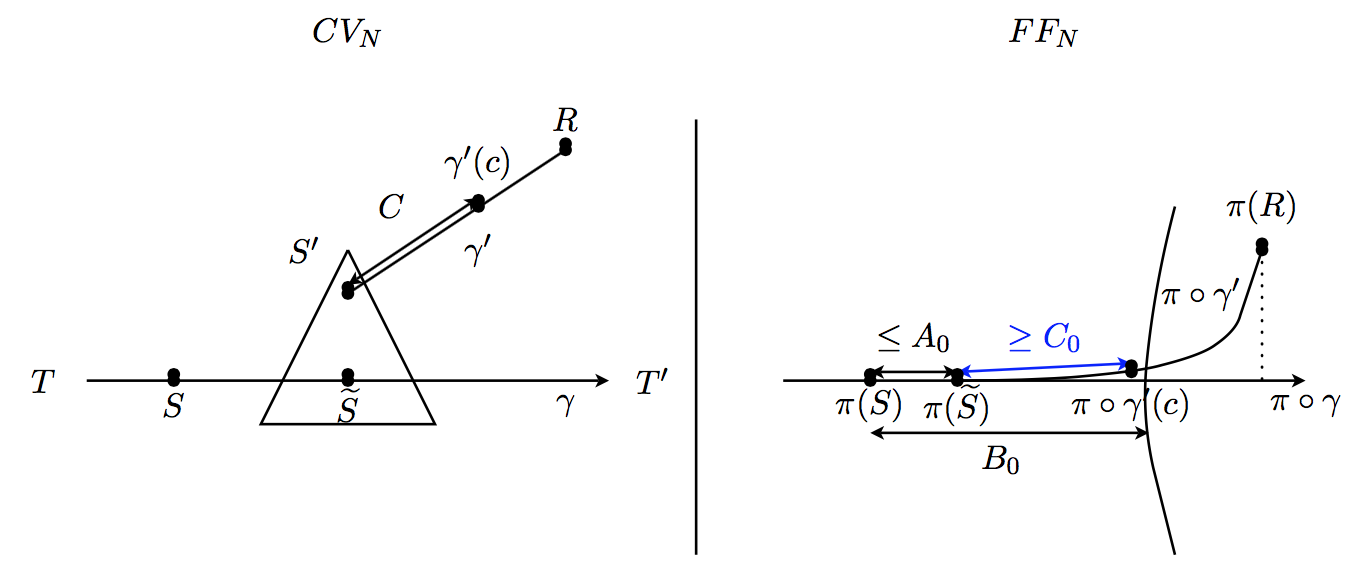}
\caption{Progress for folding paths.}
\label{fig-progression}
\end{center}
\end{figure}

\begin{rk}\label{rk-prog}
The above definition of progress implies in particular that the smallest subsegment on the image of $\gamma$ of $d_{CV_N}^{sym}$-diameter $C$ starting at $S$ has a projection of diameter at least $C_0$ in $FF_N$. This is shown by choosing $R$ to lie sufficiently to the right on the image of $\gamma$, and letting $\gamma'$ be a subpath of $\gamma$ starting at $S$. We do not know however whether Proposition \ref{contr-out} below remains valid if we took the above property as a definition of progress (in other words, we do not know whether the exact analogue of Proposition \ref{contraction} holds in the $\text{Out}(F_N)$ context). As recalled in the proof sketch of Proposition \ref{DDM}, Dowdall--Duchin--Masur's proof in the case of mapping class groups uses a distance formula relating the distance in $\mathcal{T}(S)$ to the distances in curve graphs associated to various subsurfaces of $S$. However, to our knowledge, finding an analogous distance formula in the $\text{Out}(F_N)$ context is still an open question.
\end{rk}

The goal of the present section is to prove that greedy folding paths which make progress are high and satisfy the contraction property from Definition \ref{de-contr}: this is the content of Propositions \ref{high-out} and \ref{contr-out} below. We first prove that progress implies highness, using an argument of Dowdall--Taylor \cite[Lemma 4.3]{DT14}.

\begin{prop}\label{high-out}
Let $A_0,B_0,C_0>0$. There exists $\alpha>0$ such that if $C_0>\alpha$, then the following holds.\\ For all $C>0$, there exists $M>0$ such that for all $(T,T')\in\mathcal{UE}\times\mathcal{UE}\smallsetminus\Delta$, all greedy folding lines $\gamma:\mathbb{R}\to CV_N$ from $T$ to $T'$, and all $S\in CV_N$ lying on the image of $\gamma$, if $\gamma$ is $(C;A_0,B_0,C_0)$-progressing at $S$, then $S$ is $M$-high. 
\end{prop}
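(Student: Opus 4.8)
The plan is to deduce highness of $S$ from the fact that $S$ lies in a uniform thick part of outer space, via Algom-Kfir--Bestvina (Proposition \ref{high}). So I will prove: there are constants $\epsilon_0,\alpha_1>0$ depending only on $N$ such that, setting $\alpha:=2\alpha_1$, if $C_0>\alpha$ and $\gamma$ is $(C;A_0,B_0,C_0)$-progressing at $S$, then $S\in CV_N^{\epsilon}$ with $\epsilon:=\epsilon_0 e^{-C}$; taking $M$ to be the constant provided by Proposition \ref{high} for this $\epsilon$ then proves the statement (note that $\alpha$ depends only on $N$, and $M$ depends on $C$, as required). The crux, which I would establish first and which I expect to be the main obstacle, is the following lemma: there exist $\epsilon_0,\alpha_1>0$ such that for every $T\in CV_N$ and every primitive $w\in F_N$, if the translation length of $w$ in the covolume-one representative of $T$ is at most $\epsilon_0$, then $d_{FF_N}(\pi(T),[\langle w\rangle])\le\alpha_1$. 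In words: a short primitive loop is ``seen'' by the projection to the free factor graph. This is in the spirit of \cite[Lemma 4.3]{DT14}; one way to obtain it is to reduce to a single $w$ (using that $\text{Out}(F_N)$ acts transitively on primitive elements), argue by compactness in $\overline{CV_N}$ — a subsequential limit $T_\infty$ of trees $T_k$ with $\|w\|_{T_k}\to 0$ has $w$ elliptic, hence is not arational — and control $\pi(T_k)$ near the non-arational boundary using Corollary \ref{fz-ff} together with Theorem \ref{bdy}.

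Granting the lemma, I argue by contraposition. Assume $S\notin CV_N^{\epsilon}$ with $\epsilon=\epsilon_0 e^{-C}$, i.e. the covolume-one representative of $S/F_N$ contains an embedded loop of length $<\epsilon$. Its free homotopy class is represented by a primitive element $w\in F_N$ (collapsing a spanning tree containing all but one edge of the loop exhibits $w$ as a basis element), and $\|w\|_S<\epsilon$. Since $N\ge 3$, the rank-one free factor $A:=\langle w\rangle$ is proper, so $[A]$ is a vertex of $FF_N$. Let $t_S$ be a parameter with $\gamma(t_S)=S$. I choose data violating the defining property of $(C;A_0,B_0,C_0)$-progress at $S$: take $\widetilde S:=S$ (admissible, since it lies on $\gamma$ and $d_{FF_N}(\pi(S),\pi(\widetilde S))=0\le A_0$); take $R:=\gamma(t_1)$ with $t_1\ge t_S+C$ large; take $\gamma'$ to be the restriction of $\gamma$ to $[t_S,t_1]$, which is a greedy folding path from $S=\widetilde S\in\Delta(\widetilde S)$ to $R$; and take $c:=t_S+C$. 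Admissibility of $R$ uses that $\pi\circ\gamma$ is a $K_2$-unparameterized quasigeodesic whose forward end limits onto $\psi(T')\in\partial_{\infty}FF_N$ (since $T'\in\mathcal{UE}\subseteq\mathcal{AT}$, by Theorem \ref{bdy}); hence, enlarging $t_1$ if necessary, $\mathbf{n}_{\pi\circ\gamma}(\pi(R))$ lies to the right of $\pi(S)$ at $FF_N$-distance at least $A_0+B_0$ from $\pi(S)$, and therefore at distance at least $B_0$ from $\pi(\widetilde S)$ for every admissible $\widetilde S$.

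It remains to verify the two estimates. First, $\gamma'$ is a directed geodesic of $(CV_N,d_{CV_N})$ (greedy folding paths are geodesics, cf. \cite[Theorem 5.6]{FM11}), so the $d_{CV_N}^{sym}$-diameter of $\gamma'([t_S,c])$ is at least $d_{CV_N}(\gamma(t_S),\gamma(c))=c-t_S=C$. Second, by White's formula $\|w\|_{\gamma(t)}\le\|w\|_S\,e^{\,d_{CV_N}(S,\gamma(t))}=\|w\|_S\,e^{\,t-t_S}<\epsilon\,e^{C}=\epsilon_0$ for every $t\in[t_S,c]$, so the lemma gives $d_{FF_N}(\pi(\gamma(t)),[A])\le\alpha_1$ for all such $t$, whence $\text{diam}_{FF_N}(\pi\circ\gamma'([t_S,c]))\le 2\alpha_1=\alpha<C_0$. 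Thus $\gamma'([t_S,c])$ has $d_{CV_N}^{sym}$-diameter at least $C$ while its $\pi$-image has $FF_N$-diameter $<C_0$, contradicting $(C;A_0,B_0,C_0)$-progress at $S$. Hence $S\in CV_N^{\epsilon}$, and $S$ is $M$-high by Proposition \ref{high}. Apart from the lemma on short loops, every step is routine bookkeeping with White's formula, the quasigeodesic property of $\pi\circ\gamma$, and the definition of progress.
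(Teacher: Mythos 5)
Your proof is correct and follows essentially the same route as the paper's: reduce to highness via Proposition \ref{high} by showing $S$ lies in a thick part depending only on $C$, then combine White's formula with the fact that a short primitive element pins the projection to $FF_N$ down to a bounded region, contradicting the progress hypothesis. The differences are presentational: you isolate as an explicit lemma the ``short loops are seen in $FF_N$'' fact, which the paper invokes implicitly (``if $C_0$ has been chosen sufficiently large, then $g$ cannot have translation length smaller than $1$ in the covolume one representative of $S'$''), and you unwind Definition \ref{progression} by exhibiting explicit data $(\widetilde{S},R,\gamma',c)$ instead of citing Remark \ref{rk-prog}.
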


\begin{proof}
In view of Proposition \ref{high}, it is enough to prove that $S$ belongs to the thick part $CV_N^{\epsilon}$ for some value of $\epsilon>0$ only depending on $C$. Let $\epsilon<\exp(-C)$. Assume towards a contradiction that $S\notin CV_N^{\epsilon}$. Then there exists $g\in F_N$ whose translation length is at most $\epsilon$ in the covolume one representative of $S$. By assumption (see Remark \ref{rk-prog} above), there exists a tree $S'$ lying on the image of $\gamma$ such that $d_{CV_N}(S,S')\le C$ and $d_{FF_N}(\pi(S),\pi(S'))\ge C_0$. If $C_0$ has been chosen sufficiently large, then $g$ cannot have translation length smaller than $1$ in the covolume one representative of $S'$. However, this implies that $$d_{CV_N}(S,S')\ge\log\frac{||g||_{S'}}{||g||_S}\ge\log\frac{1}{\epsilon}>C,$$ a contradiction. 
\end{proof}

We will now prove that progress implies contraction. The following lemma is an exercise in hyperbolic metric spaces; we recall that $\mathbf{n}_{\tau}$ and $\mathbf{n}_{\tau'}$ denote the leftmost  
closest-point projections to the corresponding quasigeodesics.

\begin{lemma}\label{haddock}
For all $K,K_2>0$, there exists $\kappa'=\kappa'(K_2,K,\delta)$ such that for all $y\in FF_N$, and all $K_2$-quasigeodesic segments $\tau:[a,b]\to FF_N$ and $\tau':[a',b']\to FF_N$, if $d_{FF_N}(\tau(a),\tau'(a'))\le K$ and $d_{FF_N}(\tau(b),\tau'(b'))\le K$, then $d_{FF_N}(\mathbf{n}_{\tau}(y),\mathbf{n}_{\tau'}(y))\le\kappa'$.  
\qed
\end{lemma}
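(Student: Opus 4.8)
\textbf{Proof proposal for Lemma~\ref{haddock}.} This is a routine exercise in the coarse geometry of the $\delta$-hyperbolic geodesic space $FF_N$, and I would organize it around two standard mechanisms: two $K_2$-quasigeodesic segments with $K$-close endpoints must fellow-travel, and closest-point projections to uniformly Hausdorff-close quasiconvex sets agree up to a bounded error. First I would invoke the stability of quasigeodesics (Morse lemma): there is $R=R(K_2,\delta)$ such that $\tau$ lies within Hausdorff distance $R$ of the geodesic $g$ joining $\tau(a)$ to $\tau(b)$, and $\tau'$ lies within Hausdorff distance $R$ of the geodesic $g'$ joining $\tau'(a')$ to $\tau'(b')$. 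In particular each of $\tau,\tau'$ is $Q$-quasiconvex for some $Q=Q(K_2,\delta)$. Since $d_{FF_N}(\tau(a),\tau'(a'))\le K$ and $d_{FF_N}(\tau(b),\tau'(b'))\le K$, a thin-quadrilateral argument bounds the Hausdorff distance between $g$ and $g'$ by $2K+2\delta$; combining with the previous step, $\tau$ and $\tau'$ have Hausdorff distance at most $H:=2R+2K+2\delta$, a constant depending only on $K_2$, $K$, $\delta$.

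Next I would use the standard coarse estimate for projection to a $Q$-quasiconvex set $A$ in a $\delta$-hyperbolic space: if $p$ realizes $d_{FF_N}(y,A)$, then $d_{FF_N}(y,x)\ge d_{FF_N}(y,p)+d_{FF_N}(p,x)-c(Q,\delta)$ for all $x\in A$, so the set of points of $A$ lying at distance at most $d_{FF_N}(y,A)+\varepsilon$ from $y$ has diameter at most $\varepsilon+c(Q,\delta)$. Because $\tau$ and $\tau'$ are $H$-Hausdorff-close we also have $|d_{FF_N}(y,\tau)-d_{FF_N}(y,\tau')|\le H$. Picking a point of $\tau'$ within $H$ of $\mathbf{n}_\tau(y)$, this point is $2H$-almost-closest to $y$ on $\tau'$, hence lies within $2H+c(Q,\delta)$ of $\mathbf{n}_{\tau'}(y)$; therefore $d_{FF_N}(\mathbf{n}_\tau(y),\mathbf{n}_{\tau'}(y))\le 3H+c(Q,\delta)=:\kappa'(K_2,K,\delta)$, which is exactly the claimed bound. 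The same estimate shows that any two choices of closest point on a single $K_2$-quasigeodesic differ by at most $c(Q,\delta)$, so the ``leftmost'' normalization in the definition of $\mathbf{n}_\tau$ plays no role here.

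I do not expect any genuine obstacle: the argument is entirely classical. The only point requiring care is bookkeeping — checking that each constant produced along the way ($R$, the fellow-travelling bound $2K+2\delta$, the quasiconvexity constant $Q$, and $c(Q,\delta)$) depends only on $K_2$, $K$ and $\delta$, which is transparent from the cited facts — and, if one wants a fully self-contained write-up, recording the elementary proof of the almost-closest-point inequality for quasiconvex sets (which follows from thin triangles together with the Morse lemma).
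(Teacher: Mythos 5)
Your argument is correct, and the paper itself gives no proof of this lemma: it is stated with a \qed and explicitly labelled ``an exercise in hyperbolic metric spaces,'' so the intended justification is exactly the standard combination you use (Morse stability of quasigeodesics, thin quadrilaterals to get uniform Hausdorff closeness, and the almost-closest-point estimate for quasiconvex subsets). Your closing remark that any two closest points on a single quasigeodesic are uniformly close also correctly disposes of the ``leftmost'' normalization in the definition of $\mathbf{n}_{\tau}$, so nothing is missing.
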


\begin{prop}\label{contr-out}
There exist $\alpha_0,\beta_0,\gamma_0$ such that for all $A_0\ge\alpha_0$, all $B_0\ge\beta_0$ and all $C_0\ge\gamma_0$, there exists $B>0$ such that the following holds.\\
For all $C>0$, there exists $D>0$ such that for all $(T,T')\in\mathcal{UE}\times\mathcal{UE}\smallsetminus\Delta$, all greedy folding lines $\gamma:\mathbb{R}\to CV_N$ from $T$ to $T'$, and all $S\in CV_N$ lying on the image of $\gamma$, if $\gamma$ is $(C;A_0,B_0,C_0)$-progressing at $S$, then $\gamma$ is $(B,D)$-contracting at $S$. 
\end{prop}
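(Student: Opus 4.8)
The plan is to mimic the structure of the proof of Proposition \ref{contraction} in the mapping class group case, replacing the role of the Dowdall--Duchin--Masur result by a combination of Bestvina--Feighn's contraction property for projections to folding paths (Lemma \ref{BF-contr}), the Dowdall--Taylor comparison between the Bestvina--Feighn projection and the closest-point projection in $FF_N$ (Lemma \ref{DT}), and the distortion estimate along folding paths (Lemma \ref{bf}). Fix $(T,T')\in\mathcal{UE}\times\mathcal{UE}\smallsetminus\Delta$, a greedy folding line $\gamma:\mathbb{R}\to CV_N$ from $T$ to $T'$, and $S$ on the image of $\gamma$ at which $\gamma$ is $(C;A_0,B_0,C_0)$-progressing. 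Let $a\in\mathbb{R}$ with $\gamma(a)=S$, and let $b\ge a$ be minimal so that $\pi\circ\gamma([a,b])$ has diameter at least $B$ in $FF_N$; set $I:=[a,b]$. By the quasigeodesic property of $\pi\circ\gamma$, the $d_{CV_N}^{sym}$-diameter of $\gamma(I)$ is controlled once $B$ is large compared to the progress constant $C$ (this is where $B$ is chosen as a function of $B_0,C_0,C$ and the quasigeodesic constant $K_2$). Now let $\gamma':\mathbb{R}\to CV_N$ be another greedy folding line whose $FF_N$-projection crosses $\pi\circ\gamma_{|I}$ up to distance $\kappa$. The goal is to produce $D=D(C)$ and $a'\in\mathbb{R}$ with $d_{CV_N}(\gamma'(a'),S)\le D$ (note the asymmetric distance: we want a point on $\gamma'$ close to $S$ in the forward direction).

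First I would locate, using the crossing hypothesis, two trees $R_-$ and $R_+$ on $\gamma'$ whose $FF_N$-projections are $\kappa$-close to the two endpoints $\pi(S)$ and $\pi(\gamma(b))$ of the crossed subsegment, and then feed the subpath of $\gamma'$ between $R_-$ and $R_+$ (together with an appropriate fully recurrent starting tree, via Bestvina--Reynolds \cite{BR13}) into the progress hypothesis at $S$: with $\widetilde{S}$ chosen on $\gamma$ with $d_{FF_N}(\pi(S),\pi(\widetilde{S}))\le A_0$ and $R:=R_+$, the definition of $(C;A_0,B_0,C_0)$-progress forces that the initial segment of this subpath of length $C$ (in $d_{CV_N}^{sym}$) already has $FF_N$-diameter at least $C_0$, hence — taking $C_0\ge\gamma_0$ large enough relative to $\kappa,\kappa'$ — its $FF_N$-projection is forced to pass near $\pi(S)$. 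More precisely, I would use Lemma \ref{DT} to pass between $\mathbf{n}_{\pi\circ\gamma'}$ and the Bestvina--Feighn projection $\pi(\mathrm{Pr}_{\gamma'}(\cdot))$, and use Lemma \ref{haddock} to compare closest-point projections to $\pi\circ\gamma$ and to $\pi\circ\gamma'$ over their common (up to $\kappa$) stretch; this yields a tree $P$ on $\gamma'$ with $d_{FF_N}(\pi(P),\pi(S))$ bounded by a constant depending only on $K_2,\kappa,\delta$.

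Having a point $P$ on $\gamma'$ whose $FF_N$-projection is uniformly close to $\pi(S)$, the last step is to upgrade this $FF_N$-closeness to $CV_N$-closeness. Here I would use the candidate elements: White's theorem gives a candidate $g\in\mathrm{Cand}(S)$ realizing $d_{CV_N}$ distances out of $S$, with $||g||_S\le 4$; the progress hypothesis (combined with Proposition \ref{high-out}, i.e. highness of $S$) keeps $||g||$ from degenerating along the relevant stretch of $\gamma$, and Lemma \ref{bf} then says that once both $\gamma$ and $\gamma'$ are past $\mathrm{right}(g)$, the forward $d_{CV_N}$-distance is comparable (up to $K_0$) to $\log(||g||_{\gamma'(t)}/||g||_{\gamma'(a)})$, which is controlled because $\pi(P)$ is close to $\pi(S)$ so $g$ has not yet been substantially folded. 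Assembling these estimates produces a point $\gamma'(a')$ with $d_{CV_N}(\gamma'(a'),S)\le D(C)$, which is the contraction conclusion.

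The main obstacle I anticipate is precisely this last passage from $FF_N$-closeness to $CV_N$-closeness: unlike the Teichmüller setting, there is no available distance formula for $CV_N$ (as noted in Remark \ref{rk-prog}), so one cannot argue subsurface-by-subsurface. The whole point of the technical Definition \ref{progression} — with its three-layered quantification over $\widetilde{S}$, $R$, and $\gamma'$ starting from a fully recurrent tree — is to arrange the hypotheses so that the Bestvina--Feighn machinery (recurrence, legal segments growing at maximal speed, Lemma \ref{bf}) applies along $\gamma'$ itself, not merely along $\gamma$; verifying that the fully recurrent starting tree and the projection estimates can be chosen compatibly, and that the constants $\alpha_0,\beta_0,\gamma_0$ can be fixed uniformly, is the delicate part. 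The thickness of $S$ (Proposition \ref{high-out}) is what prevents the candidate element $g$ from having pathologically small length and is used crucially to keep the logarithmic estimates uniform.
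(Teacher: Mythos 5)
Your proposal assembles the right toolbox (Lemmas \ref{BF-contr}, \ref{DT}, \ref{haddock}, \ref{bf}, candidates, highness, fully recurrent trees), but the step you yourself flag as ``the main obstacle'' --- upgrading $FF_N$-closeness to $CV_N$-closeness --- is left unresolved, and the sketch you give for it does not work. Knowing that a point $P$ on $\gamma'$ has $\pi(P)$ within a bounded $FF_N$-distance of $\pi(S)$ says nothing about $\|g\|_P$ for a candidate $g$ of $S$: the fibers of $\pi$ are unbounded in $CV_N$, so ``$\pi(P)$ is close to $\pi(S)$, hence $g$ has not yet been substantially folded'' is precisely the implication one cannot make. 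The progress hypothesis converts $FF_N$-closeness into $d_{CV_N}^{sym}$-closeness only for pairs of points lying on $\gamma$ itself (or on the specific auxiliary folding paths $\gamma''$ allowed by Definition \ref{progression}, which must start at a fully recurrent tree in the simplex of some $\widetilde{S}$ on $\gamma$); it cannot be applied to a subpath of $\gamma'$ or to a point of $\gamma'$ versus $S$. A further problem is that you take $\gamma'$ to be a greedy folding line and propose to apply Lemma \ref{bf} along it; but Definition \ref{de-contr} quantifies over \emph{all} geodesic lines $\gamma'$, and Lemma \ref{bf} is only available for greedy folding paths.

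The paper's route around the obstacle is different in an essential way. It picks the entry point $U$ of $\gamma'$ (with $d_{FF_N}(\pi(U),\pi(S))\le\kappa$) and then defines $U'$ to be the point of $\gamma'$ lying at distance exactly $d_{CV_N}(U,\gamma(I))$ to the right of $U$. Lemma \ref{BF-contr} applies to the pair $(U,U')$ and shows that the Bestvina--Feighn projection $\text{Pr}_I(U')$ is still $FF_N$-close to $\pi(S)$; since this projection lies \emph{on} $\gamma$, progress converts this into $d_{CV_N}^{sym}(\text{Pr}_I(U'),S)\le C$. One then builds the auxiliary greedy folding path $\gamma''$ from a fully recurrent tree in the simplex of $\text{Pr}_I(U')$ to the exit point $U''$, and uses a candidate of $U'$ together with Lemma \ref{bf} (applied to $\gamma''$, not to $\gamma'$) to show that the concatenation $U'\to\text{Pr}_{\gamma''}(U')\to U''$ has length at most $d_{CV_N}(U',U'')+K_0$. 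Since $\text{Pr}_{\gamma''}(U')$ is $d^{sym}$-close to $S$ and $U,U',U''$ lie in order on the geodesic $\gamma'$ (so distances are additive along it), a reverse triangle inequality yields $d_{CV_N}(U',S)\le C+K'$. It is this combination --- the contraction lemma applied at the scale $d(U,\gamma(I))$, plus near-additivity along the auxiliary folding path --- that replaces the missing distance formula; without it your argument has a genuine hole at its central step.
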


\begin{proof}
The situation in the following proof is illustrated on Figure \ref{fig-proof}. Let $K_1>0$ be such that $$d_{FF_N}(\pi(x),\pi(y))\le K_1d_{CV_N}(x,y)+K_1$$ for all $x,y\in CV_N$.
We let $B:=B_0+K_1+2\kappa$. Let $a\in\mathbb{R}$ be such that $\gamma(a)=S$, and let $b\ge a$ be the infimum of all real numbers $b'$ such that $\pi\circ\gamma([a,b'])$ has $d_{FF_N}$-diameter at least $B$: this implies in particular that $\pi\circ\gamma([a,b])$ has $d_{FF_N}$-diameter at least $B_0+2\kappa$. We let $I:=[a,b]$, and let $S':=\gamma(b)$. Let $\gamma':\mathbb{R}\to CV_N$ be a geodesic line, whose $\pi$-image in $FF_N$ crosses $\pi\circ\gamma_{|I}$ up to distance $\kappa$. It is enough to prove that the image of $\gamma'$ in $CV_N$ passes at bounded $d_{CV_N}$-distance of $\gamma(I)$. 
\\
\\
\textbf{Step 1: Estimates coming from progress.}
\\
\\
Since $\pi\circ\gamma'$ crosses $\pi\circ\gamma_{|I}$ up to distance $\kappa$, there exists a point $U\in CV_N$ lying on the image of $\gamma'$ such that $d_{FF_N}(\pi(U),\pi(S))\le \kappa$. So $d_{FF_N}(\mathbf{n}_{\pi\circ\gamma_{|I}}(\pi(U)),\pi(S))\le 2\kappa$. Lemma \ref{DT} then implies that $d_{FF_N}(\pi(\text{Pr}_{I}(U)),\pi(S))\le 2\kappa+D_2$. 

Let $U'$ be the point lying to the right of $U$ on the image of $\gamma'$, and such that $d_{CV_N}(U,U')=d_{CV_N}(U,\gamma(I))$. Let $S_0$ be a point lying on $\gamma(I)$ such that $d_{CV_N}(U,U')=d_{CV_N}(U,S_0)$. By Lemma \ref{BF-contr}, we have $d_{FF_N}(\pi(\text{Pr}_I(U)),\pi(\text{Pr}_I(S_0)))\le D_1$. In view of Lemma \ref{DT}, we also have $d_{FF_N}(\pi(S_0),\pi(\text{Pr}_I(S_0)))\le D_2$. Using the triangle inequality, this implies that $d_{FF_N}(\pi(S_0),\pi(S))\le 2\kappa+D_1+2D_2$. Both trees $S$ and $S_0$ lie on the image of $\gamma$ (and $S_0$ lies to the right of $S$). Assume that $C_0\ge 2\kappa+D_1+2D_2$. Since $\gamma$ is $(C;A_0,B_0,C_0)$-progressing at $S$, by choosing the point $R$ from Definition \ref{progression} to lie sufficiently far apart from $S$ (see Remark \ref{rk-prog}), this implies that 
\begin{equation}\label{eqe0}
d_{CV_N}^{sym}(S_0,S)\le C.
\end{equation}
By Lemma \ref{BF-contr}, we also have $d_{FF_N}(\pi(\text{Pr}_I(U')),\pi(\text{Pr}_I(U)))\le D_1$. The triangle inequality then implies that $d_{FF_N}(\pi(\text{Pr}_I(U')),\pi(S))\le 2\kappa+D_1+D_2$. As above, this implies that 
\begin{equation}\label{e1}
d_{CV_N}^{sym}(S,\text{Pr}_I(U'))\le C.
\end{equation} 
Since $\pi\circ\gamma'$ crosses $\pi\circ\gamma_{|I}$ up to distance $\kappa$, there exists a point $U''$ lying on the image of $\gamma'$, such that $d_{FF_N}(\pi(U''),\pi(S'))\le \kappa$. In particular, we have $d_{FF_N}(\pi(S),\mathbf{n}_{\pi\circ\gamma}(\pi(U'')))\ge B-2\kappa=B_0$. Let $f$ be a morphism from a tree $\widetilde{\text{Pr}_I(U')}$ in the simplex $\Delta(\text{Pr}_{I}(U'))$ to $U''$, where $\widetilde{\text{Pr}_I(U')}$ is fully recurrent with respect to $U''$, and let $\gamma''$ be the greedy folding path determined by $f$. Notice that $d_{FF_N}(\pi(\text{Pr}_I(U)),\pi(\widetilde{\text{Pr}_I(U)}))\le K_0$ for some uniform constant $K_0$. Applying Lemma \ref{haddock} to $\tau:=\pi\circ\gamma_{|I}$ and $\tau':=\pi\circ\gamma''$ with $K:=2\kappa+D_1+D_2+K_0$, we get $d_{FF_N}(\mathbf{n}_{\pi\circ\gamma_{|I}}(\pi(U')),\mathbf{n}_{\pi\circ\gamma''}(\pi(U')))\le\kappa'$. Therefore, it follows from Lemma \ref{DT} that $d_{FF_N}(\pi(\text{Pr}_I(U')),\pi(\text{Pr}_{\gamma''}(U')))\le \kappa'+2D_2$. We also recall from the previous paragraph that $d_{FF_N}(\pi(S),\pi(\text{Pr}_I(U')))\le 2\kappa+D_1+D_2$. Assume that $A_0\ge 2\kappa+D_1+D_2$. Since $\gamma$ is $(C;A_0,B_0,C_0)$-progressing at $S$, the same argument as in the proof of Proposition \ref{high-out} implies that both $\text{Pr}_I(U')$ and $\widetilde{\text{Pr}_I(U')}$ are thick, so there exists $K>0$ (only depending on $C$ and on the rank $N$ of the free group) such that  
\begin{equation}\label{ee0}
d^{sym}_{CV_N}(\text{Pr}_I(U'),\widetilde{\text{Pr}_I(U')})\le K.
\end{equation}
Both trees $\widetilde{\text{Pr}_I(U')}$ and $\text{Pr}_{\gamma''}(U')$ lie on the image of $\gamma''$ (with $\text{Pr}_{\gamma''}(U')$ to the right of $\widetilde{\text{Pr}_I(U')}$). Assume that $C_0\ge \kappa'+2D_2$. Since $\gamma$ is $(C;A_0,B_0,C_0)$-progressing at $S$, we also have 
\begin{equation}\label{ee1}
d^{sym}_{CV_N}(\widetilde{\text{Pr}_I(U')},\text{Pr}_{\gamma''}(U'))\le C.
\end{equation}
Using Equations \eqref{ee0} and \eqref{ee1} and the triangle inequality, we obtain 
\begin{equation}\label{e2}
d^{sym}_{CV_N}(\text{Pr}_I(U'),\text{Pr}_{\gamma''}(U'))\le K+C.
\end{equation}

\noindent \textbf{Step 2: Estimates coming from the definition of the projection to a folding path.}
\\
\\
We will now prove that the length of the concatenation of the paths from $U'$ to $\text{Pr}_{\gamma''}(U')$ and from $\text{Pr}_{\gamma''}(U')$ to $U''$ represented in plain green lines on Figure \ref{fig-proof} is close to being equal to the distance from $U'$ to $U''$ (this is the meaning of Equation \eqref{eq3} below). This will imply in the next step of the proof that the total green length (both plain and dotted) is close to being equal to the distance from $U$ to $U''$, and therefore $U'$ has to be close to the image of $\gamma$. 

Let $\alpha\in F_N$ be a candidate in $U'$ (as defined in Section \ref{sec-out-background}, in the paragraphs before Proposition \ref{high}) such that 
\begin{equation}\label{1}
d_{CV_N}(U',\text{Pr}_{\gamma''}(U'))=\log\frac{||\alpha||_{\text{Pr}_{\gamma''}(U')}}{||\alpha||_{U'}}.
\end{equation}
Lemma \ref{bf} applied to the folding path $\gamma''$ implies that there exists a constant $K_0$ (only depending on the rank of the free group) such that 
\begin{equation}\label{2}
d_{CV_N}(\text{Pr}_{\gamma''}(U'),U'')\le\log\frac{||\alpha||_{U''}}{||\alpha||_{\text{Pr}_{\gamma''}(U')}}+K_0.
\end{equation}
By adding Equations \eqref{1} and \eqref{2}, we obtain that 
\begin{equation}\label{eq3}
d_{CV_N}(U',\text{Pr}_{\gamma''}(U'))+d_{CV_N}(\text{Pr}_{\gamma''}(U'),U'')-K_0\le\log\frac{||\alpha||_{U''}}{||\alpha||_{U'}}\le d_{CV_N}(U',U'').
\end{equation}
\textbf{Step 3: End of the proof.}
\\
\\
Recall that $d_{CV_N}^{sym}(\text{Pr}_I(U'),S)\le C$ (Equation \eqref{e1}) and $d^{sym}_{CV_N}(\text{Pr}_I(U'),\text{Pr}_{\gamma''}(U'))\le K+C$ (Equation \eqref{e2}). Together with Equation \eqref{eq3},     
 this implies that there exists a constant $K'$, only depending on $C$ (and on the rank of the free group), such that 
\begin{equation}\label{e3}
d_{CV_N}(U',U'')\ge d_{CV_N}(U',S)+d_{CV_N}(S,U'')-K'.
\end{equation} 
Hence
\begin{displaymath}
\begin{array}{rl}
d_{CV_N}(U',S)&\le d_{CV_N}(U',U'')-d_{CV_N}(S,U'')+K'\\
&\le d_{CV_N}(U',U'')-d_{CV_N}(U,U'')+d_{CV_N}(U,S)+K'\\
&=-d_{CV_N}(U,U')+d_{CV_N}(U,S)+K'\\
&\le -d_{CV_N}(U,U')+d_{CV_N}(U,S_0)+d_{CV_N}(S_0,S)+K'\\
&\le C+K'
\end{array}
\end{displaymath}
\noindent Indeed, the first inequality is   
 Equation \eqref{e3} and the second comes from the triangle inequality. The equality on the third line follows from the fact that $\gamma'$ is a geodesic, and that the trees $U$, $U'$ and $U''$ lie in this order on the image of $\gamma'$ (at least if $B_0$ has been chosen sufficiently large). The inequality on the fourth line follows from the triangle inequality. The last inequality uses the definition of $U'$ (which says in particular that $d_{CV_N}(U,U')=d_{CV_N}(U,S_0)$) and the fact that $d_{CV_N}^{sym}(S_0,S)\le C$ (Equation \eqref{eqe0}).
\end{proof}

\begin{figure}
\begin{center}
\includegraphics[scale=.6]{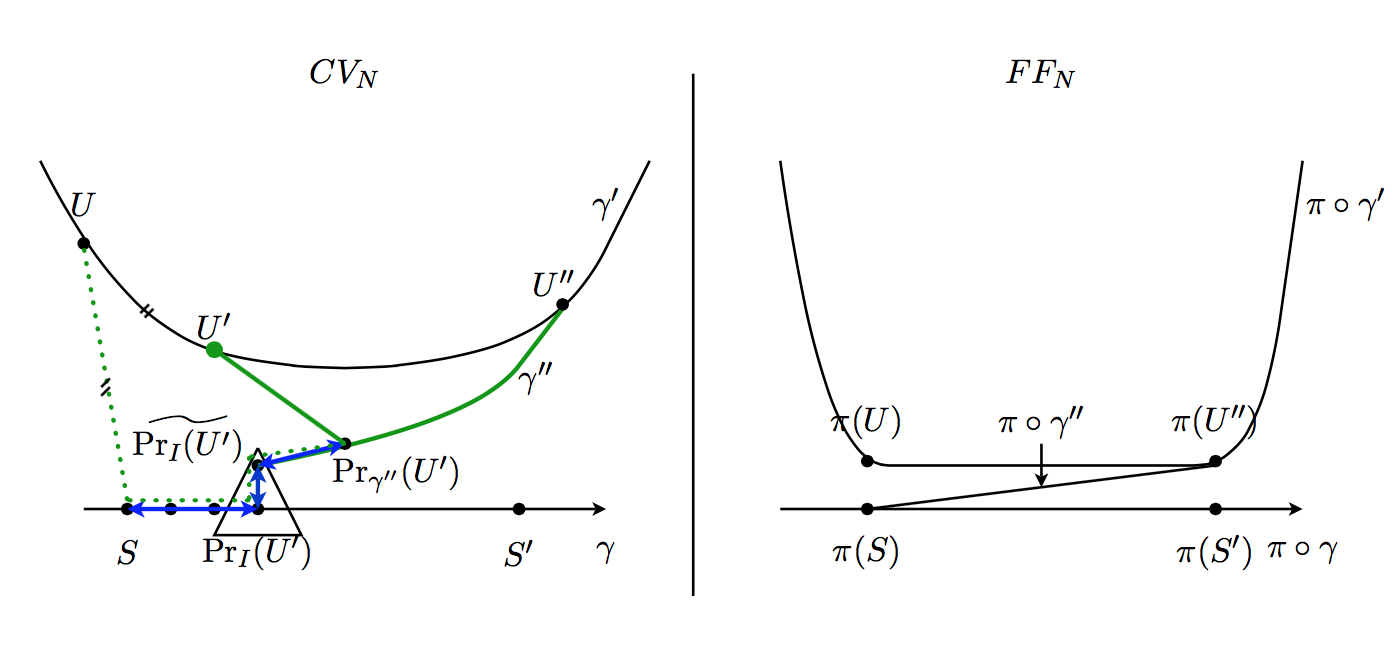}
\caption{Schematic picture of the situation in the proof of Proposition \ref{contr-out}. The blue parts all have uniformly bounded $d_{CV_N}^{sym}$-diameter.}
\label{fig-proof}
\end{center}
\end{figure}

\subsection{Typical rays are infinitely often progressing.}\label{sec4-5}

Let $(T,T')\in\mathcal{UE}\times\mathcal{UE}\smallsetminus\Delta$. For every neighborhood $\mathcal{U}$ of $(T,T')$ in $\mathcal{UE}\times\mathcal{UE}$, we let $\text{Tube}(\mathcal{U})$ be the collection of all greedy folding lines joining pairs in $\mathcal{U}$. Given a point $y_0\in CV_N$ and $A_0>0$, we then let $\text{Tube}_{y_0}(\mathcal{U};A_0)$ be the set of all trees $U\in CV_N$ such that there exists $\gamma\in\text{Tube}(\mathcal{U})$ such that 
\begin{itemize}
\item the tree $U=\gamma(t)$ belongs to the image of $\gamma$, and 
\item there exists a $d_{CV_N}^{sym}$-closest-point projection $U_0=\gamma(t_0)$ of $y_0$ to the image of $\gamma$, with $t_0\le t$, such that $\pi\circ\gamma([t_0,t])$ has diameter at most $A_0$ in $FF_N$.
\end{itemize}

\begin{lemma}\label{X_0-compact}
Let $A_0>0$,  
  $y_0\in CV_N$, and  $(T,T')\in\mathcal{UE}\times\mathcal{UE}\smallsetminus\Delta$. There exists a neighborhood $\mathcal{U}$ of $(T,T')$ in $\mathcal{UE}\times\mathcal{UE}$, such that the set $\overline{\text{Tube}_{y_0}(\mathcal{U};A_0)}$ is a compact subset of $CV_N$.   
\end{lemma}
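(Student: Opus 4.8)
The plan is to prove compactness via the Arzelà–Ascoli-style convergence result for greedy folding lines (Proposition \ref{cv-out}) together with a containment of $\text{Tube}_{y_0}(\mathcal{U};A_0)$ in a region of $CV_N$ of bounded diameter and bounded thickness from below. The key point is that $\overline{CV_N}$ is compact (Culler–Morgan), so any closed subset of $CV_N$ that stays a bounded $d_{CV_N}^{sym}$-distance from a fixed point and inside a uniform thick part $CV_N^\epsilon$ is compact: indeed the thick part is a proper metric space, and bounded closed subsets of a proper space are compact, and $CV_N^\epsilon$ is closed in $\overline{CV_N}$. So the whole argument reduces to producing, for a small enough neighborhood $\mathcal{U}$ of $(T,T')$, constants $R>0$ and $\epsilon>0$ such that $\text{Tube}_{y_0}(\mathcal{U};A_0)\subseteq CV_N^\epsilon$ and $d_{CV_N}^{sym}(y_0,U)\le R$ for every $U\in\text{Tube}_{y_0}(\mathcal{U};A_0)$.

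First I would bound the distance. Fix $\gamma\in\text{Tube}(\mathcal{U})$ and a closest-point projection $U_0=\gamma(t_0)$ of $y_0$, and let $U=\gamma(t)$ with $t\ge t_0$ and $\text{diam}_{FF_N}(\pi\circ\gamma([t_0,t]))\le A_0$. Since $\pi\circ\gamma$ is a $K_2$-unparameterized quasigeodesic (Proposition on folding lines projecting to quasigeodesics), the $FF_N$-diameter of $\pi\circ\gamma([t_0,t])$ being at most $A_0$ forces the $CV_N$-length of $\gamma([t_0,t])$ to be at most some $R_1=R_1(A_0,K_2)$: a long subsegment of a greedy folding path projects to a long-in-$FF_N$ subsegment, quantitatively, because greedy folding paths are geodesics in $CV_N$ and their $\pi$-images are unparameterized quasigeodesics, so the $FF_N$-diameter grows linearly in arclength (this is exactly the kind of estimate used implicitly in Section \ref{sec-contr-out}; alternatively it follows from Lemma \ref{bf} since candidate lengths grow with maximal speed along a folding path once past the relevant projection). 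Hence $d_{CV_N}^{sym}(U_0,U)\le R_1$. It remains to bound $d_{CV_N}^{sym}(y_0,U_0)$ uniformly over $\gamma\in\text{Tube}(\mathcal{U})$, and for this I would use Proposition \ref{cv-out}: if no such bound existed, there would be a sequence $\gamma_n\in\text{Tube}(\mathcal{U}_n)$ (for a shrinking basis of neighborhoods $\mathcal{U}_n$ of $(T,T')$) with $d_{CV_N}^{sym}(y_0,\gamma_n)\to\infty$; after reparametrizing by the translations $\tau_n$ of Proposition \ref{cv-out}, $\gamma_n\circ\tau_n$ converges uniformly on compacts to a greedy folding line $\gamma_\infty$ from $T$ to $T'$, whose $d_{CV_N}^{sym}$-distance to $y_0$ is some finite number $R_0$; but then for $n$ large the closest-point projection $U_0^{(n)}$ of $y_0$ to $\gamma_n$ would lie within $R_0+1$ of $y_0$, a contradiction. (The only mild subtlety is that $d_{CV_N}$ is asymmetric; one works throughout with $d_{CV_N}^{sym}$, which is a genuine metric, and notes closest-point projection is taken with respect to $d_{CV_N}^{sym}$, so the triangle inequality applies.) This yields $R:=R_0+1+R_1$.

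For the thickness bound, I would argue by the same compactness/contradiction scheme combined with the fact that $\mathcal{UE}\subseteq\mathcal{AT}$ consists of trees in $\partial CV_N$ that are ``far'' from the thin part: if there were a sequence $U_n\in\text{Tube}_{y_0}(\mathcal{U}_n;A_0)$ with systole going to $0$, then, $U_n$ lying inside a ball of radius $R$ about $y_0$ (by the previous paragraph, once $\mathcal{U}_n$ is small), it would converge in $\overline{CV_N}$ to a point $U_\infty$ with systole $0$, hence $U_\infty\in\partial CV_N$ is not arational (an arational tree has no point stabilizer and no short loop in the relevant sense; more precisely a tree that is a limit of trees with systole $\to 0$ and bounded $d_{CV_N}^{sym}$ from a fixed point is a simplicial-type degeneration fixing a nontrivial conjugacy class, contradicting arationality). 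On the other hand $U_\infty$ lies on the limiting greedy folding line $\gamma_\infty$ from $T$ to $T'$ (again Proposition \ref{cv-out} applied to the folding lines through $U_n$, after reparametrizing so that the projection of $y_0$ is at parameter $0$), and since $\gamma_\infty$ is a bi-infinite greedy folding line between two points of $\mathcal{UE}$, all of its points lie in $CV_N$, so $U_\infty\in CV_N$ has positive systole — a contradiction. Taking $\mathcal{U}$ small enough to simultaneously realize the distance bound and rule out the thin degeneration, we get $\text{Tube}_{y_0}(\mathcal{U};A_0)\subseteq CV_N^\epsilon\cap\{U:d_{CV_N}^{sym}(y_0,U)\le R\}$, whose closure in $\overline{CV_N}$ is contained in $CV_N^\epsilon$ (closed in $\overline{CV_N}$) and is closed and bounded, hence compact and contained in $CV_N$. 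The main obstacle is the thickness control: one must be careful that the degenerate limit of trees in the tube cannot be arational, and the cleanest route is to exploit that the limit also lies on a greedy folding line between two uniquely ergometric (hence non-simplicial, non-elliptic-free-factor) trees, which keeps it inside $CV_N$; the distance bound, by contrast, is a routine application of Proposition \ref{cv-out} plus the quasigeodesic property of $\pi\circ\gamma$.
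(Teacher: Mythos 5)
There is a genuine gap in your distance bound, and it occurs at the step you treat as routine. You claim that if the $\pi$-image of $\gamma([t_0,t])$ has $FF_N$-diameter at most $A_0$, then the $d_{CV_N}$-length of $\gamma([t_0,t])$ is bounded by some $R_1(A_0,K_2)$, ``because the $FF_N$-diameter grows linearly in arclength'' along a greedy folding path. This is false: the fact that $\pi\circ\gamma$ is a $K_2$-\emph{unparameterized} quasigeodesic gives no lower bound on the rate of progress in $FF_N$ per unit of arclength in $CV_N$ --- the implicit reparameterization $\theta$ can be nearly constant over arbitrarily long stretches, exactly when the folding path dives into the thin part or stays in a region associated to a fixed free factor. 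The coarse Lipschitz property of $\pi$ only bounds $FF_N$-progress from \emph{above} by $CV_N$-length, and Lemma \ref{bf} concerns the growth of $\|g\|_{\gamma(t)}$ for a fixed candidate $g$ past $\text{right}_\gamma(g)$, not the diameter of the projection to $FF_N$. Indeed, if your claim were true, the entire machinery of Definition \ref{progression}, Lemma \ref{C-bdd} and Corollary \ref{map-C} (and Lemma \ref{C-mod} in the surface case) would be vacuous, since ``progress'' would be automatic with uniform constants. Your thickness argument inherits the problem, since it presupposes that the $U_n$ stay in a bounded ball.

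The paper's proof avoids any such quantitative claim. It argues by contradiction: if trees $U_n$ in tubes over shrinking neighborhoods left every compact subset of $CV_N$, then (by Proposition \ref{cv-out} and \cite[Theorem 6.6]{BR13}) the folding lines accumulate on a line $\gamma$ from $T$ to $T'$, the projections $S_n$ accumulate on a closest-point projection $S$ of $y_0$, and the only possible accumulation points of $(U_n)$ in $\overline{CV_N}$ are $T$ and $T'$. Since $T,T'\in\mathcal{UE}\subseteq\mathcal{AT}$ are arational, Theorem \ref{bdy} forces $\pi(U_n)$ to converge (along a subsequence) to a point of $\partial_\infty FF_N$, i.e.\ to leave every bounded set of $FF_N$ --- contradicting $d_{FF_N}(\pi(S_n),\pi(U_n))\le A_0$ with $\pi(S_n)$ bounded. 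The essential input ruling out ``stalling'' is therefore arationality of the endpoints via Theorem \ref{bdy}, which your argument never invokes for the distance bound. You do use a limiting argument of roughly this shape for the thickness control, so the right fix is to run that same contradiction scheme for escape to infinity in $CV_N$ (not just for systole degeneration), replacing your $R_1(A_0,K_2)$ claim entirely.
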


\begin{proof}
Assume otherwise. Then we can find a sequence $((T_n,T'_n))_{n\in\mathbb{N}}\in (CV_N\times CV_N)^{\mathbb{N}}$ converging to $(T,T')$, together with greedy     
 folding paths $\gamma_n$ from $T_n$ to $T'_n$ for all $n\in\mathbb{N}$, and a sequence $(U_n)_{n\in\mathbb{N}}\in CV_N^{\mathbb{N}}$ with the following properties:
\begin{itemize}
\item for all $n\in\mathbb{N}$, the tree $U_n$ lies on the image of $\gamma_n$, and
\item for all $n\in\mathbb{N}$, there exists a $d_{CV_N}^{sym}$-closest-point projection $S_n$ of $y_0$ to $\gamma_n$, such that $U_n$ lies to the right of $S_n$, and the $\pi$-image of the subsegment of $\gamma_n$ joining $S_n$ to $U_n$ has diameter at most $A_0$ in $FF_N$, and
\item the sequence $(U_n)_{n\in\mathbb{N}}$ leaves every compact subspace of $CV_N$. 
\end{itemize}
\noindent However, in view of Proposition \ref{cv-out}, the sequence $(\gamma_n)_{n\in\mathbb{N}}$ accumulates on a folding line $\gamma$ from $T$ to $T'$. It follows from \cite[Theorem 6.6]{BR13} that the sequence $(S_n)_{n\in\mathbb{N}}$ should accumulate to a $d_{CV_N}^{sym}$-closest-point projection $S$ of $y_0$ to $\gamma$, and that the only possible accumulation points of the sequence $(U_n)_{n\in\mathbb{N}}$ in the compact space $\overline{CV_N}$ are the trees $T$ and $T'$. In view of Theorem \ref{bdy}, the sequence $(\pi(U_n))_{n\in\mathbb{N}}$ has a subsequence that converges to a point in $\partial_{\infty}FF_N$. However, the $\pi$-images of the trees $U_n$ remain in a bounded neighborhood of $\pi(S)$ in $FF_N$, which yields the desired contradiction. 
\end{proof}

Let $\mathcal{U}$ be a neighborhood of $(T,T')$ provided by Lemma \ref{X_0-compact}. Recall the definition of $\psi:\overline{CV_N}\to FF_N\cup\partial_{\infty}FF_N$ from Theorem \ref{bdy}. Given $A_0>0$ and $B\ge 0$, we let $\text{Right}_{y_0}(\mathcal{U};A_0,B)$ be the subspace of $\overline{CV_N}$ made of those trees $U\in\overline{CV_N}$ such that for all $\gamma\in\text{Tube}(\mathcal{U})$, the projection of $\psi(U)$ to the image of $\pi\circ\gamma$ in $FF_N$ is to the right, at distance at least $B$ from all trees in $\pi(\text{Tube}_{y_0}(\mathcal{U};A_0))$ (recall that $\psi(U)$ may belong to $\partial_{\infty}FF_N$; we define in this case the projection of $\psi(U)$ to the image of $\pi\circ\gamma$ as the leftmost possible accumulation point of a sequence of leftmost closest-point projections of $z_n$ to the image of $\pi\circ\gamma$, over all sequences $(z_n)_{n\in\mathbb{N}}\in FF_N^{\mathbb{N}}$ converging to $\psi(U)$).

\begin{lemma}\label{mor}
For all $A_0>0$, there exists $M>0$ such that for all sufficiently large $B>0$, one has $\overline{\text{Right}_{y_0}(\mathcal{U};A_0,B)}\subseteq \text{Right}_{y_0}(\mathcal{U};A_0,B-M)$. 
\end{lemma}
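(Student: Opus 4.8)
The plan is to read the statement as a coarse one-sided continuity property of $\psi$ followed by projection to the quasigeodesics $\pi\circ\gamma$: passing to a limit can only move such projections a bounded distance to the left, and this is exactly what Proposition \ref{ff-geom} provides once everything is reduced to sequences lying in $CV_N$. Since $\overline{CV_N}$ embeds in the countable product $\mathbb{PR}^{F_N}$ it is metrizable, so it suffices to take a sequence $(U_n)_n$ in $\text{Right}_{y_0}(\mathcal{U};A_0,B)$ converging to a tree $U$ and to prove $U\in\text{Right}_{y_0}(\mathcal{U};A_0,B-M)$. By Lemma \ref{X_0-compact}, after shrinking $\mathcal{U}$ the set $\overline{\text{Tube}_{y_0}(\mathcal{U};A_0)}$ is compact, hence $\pi(\text{Tube}_{y_0}(\mathcal{U};A_0))$ is a bounded subset of $FF_N$, of diameter at most some $\Delta_0=\Delta_0(A_0)$. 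Fixing $\gamma\in\text{Tube}(\mathcal{U})$ parametrised so that $\pi\circ\gamma$ is a $K_2$-quasigeodesic, and a reference tree $z_1\in\text{Tube}_{y_0}(\mathcal{U};A_0)$ (whose $\pi$-image lies within bounded distance of $\pi\circ\gamma$, the folding lines in $\text{Tube}(\mathcal{U})$ being uniform quasigeodesics with nearby endpoints), the condition defining $\text{Right}_{y_0}(\mathcal{U};A_0,B)$ becomes, for each such $\gamma$ and up to an additive error depending only on $\Delta_0,K_2,\delta$, the statement that $\mathbf{n}_{\pi\circ\gamma}(\psi(W))$ lies to the right of $\pi(z_1)$ at distance at least $B$; I would record this reformulation once and then argue with it.

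The key reduction is to replace each $U_n$ by trees of $CV_N$ whose $\pi$-images lie near $\psi(U_n)$, so that $\psi$ is controlled by the coarsely Lipschitz map $\pi$. If $U_n\in CV_N$, use the constant sequence; if $U_n\in\mathcal{AT}$, use any sequence of $CV_N$ converging to $U_n$ together with the first clause of Theorem \ref{bdy}; if $U_n\in\partial CV_N\smallsetminus\mathcal{AT}$, use a greedy folding ray converging to $U_n$ (which exists by \cite[Lemma 6.9]{BR13} and \cite[Remark 6.10]{BR13}, e.g. issued from the simplex of $y_0$) together with the second clause of Theorem \ref{bdy}. In all three cases one obtains, for $m$ large, trees $W^{(n)}_m\in CV_N$ with $W^{(n)}_m\to U_n$ and $d_{FF_N}(\pi(W^{(n)}_m),\psi(U_n))\le M$ for the uniform constant $M$ of Theorem \ref{bdy}. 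Since closest-point projection to a $K_2$-quasigeodesic in a $\delta$-hyperbolic space is coarsely continuous with constants independent of the quasigeodesic, and since this is compatible with the leftmost-accumulation convention used to define the projection when $\psi(U_n)\in\partial_{\infty}FF_N$, the images $\pi(W^{(n)}_m)$ still project to the right of $\pi(\text{Tube}_{y_0}(\mathcal{U};A_0))$ at distance at least $B-M_0$, for a constant $M_0=M_0(A_0,K_2,\delta)$, uniformly over $\gamma\in\text{Tube}(\mathcal{U})$. A diagonal extraction then produces $V_k\in CV_N$ with $V_k\to U$ and, for every $\gamma\in\text{Tube}(\mathcal{U})$, $\mathbf{n}_{\pi\circ\gamma}(\pi(V_k))$ to the right of $\pi(z_1)$ at distance at least $B-M_0$.

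It then remains to apply Proposition \ref{ff-geom} to the sequence $(V_k)$ converging to $U$, with $y_1:=z_1$: provided $B$ is large enough that the Gromov product encoding ``$B-M_0$ to the right'' exceeds the threshold in that proposition, it yields that $\psi(U)$ projects to the right of $\pi(\text{Tube}_{y_0}(\mathcal{U};A_0))$ at distance at least $B-M_0-M_1$, where $M_1$ is the constant of Proposition \ref{ff-geom}. Rephrasing in the language of $\text{Right}_{y_0}$ — and observing that, once $B$ dominates all the additive errors, the projection is still genuinely to the right — gives $U\in\text{Right}_{y_0}(\mathcal{U};A_0,B-M)$ with $M:=M_0+M_1+O(1)$, which depends only on $A_0$ (through $\Delta_0$), $K_2$ and $\delta$, not on $B$. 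Since $\gamma$ was arbitrary in $\text{Tube}(\mathcal{U})$ and all the constants are uniform in $\gamma$, the lemma follows.

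The step I expect to be the main obstacle is the approximation of the non-arational boundary limits $U_n\in\partial CV_N\smallsetminus\mathcal{AT}$: for these an arbitrary approximating sequence in $CV_N$ may have $\pi$-image escaping to infinity in $FF_N$, so one must approximate along greedy folding rays and invoke the folding-ray clause of Theorem \ref{bdy}, and then carry the diagonalisation and the coarse continuity of the projections through with constants uniform over the whole family $\text{Tube}(\mathcal{U})$ — it is precisely this uniformity that makes $M$ independent of $B$.
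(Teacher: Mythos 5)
Your overall strategy is exactly the paper's: the paper's entire proof of this lemma is the single sentence ``This follows from Proposition \ref{ff-geom}'', and your reduction (pass to a convergent sequence $(U_n)$ in $\text{Right}_{y_0}(\mathcal{U};A_0,B)$, approximate each $U_n$ by trees of $CV_N$ whose $\pi$-images track $\psi(U_n)$ via the two clauses of Theorem \ref{bdy}, diagonalize, and feed the resulting sequence into Proposition \ref{ff-geom}) is the natural way to fill in the details. One cosmetic remark: when $U_n\in\mathcal{AT}$ the point $\psi(U_n)$ lies in $\partial_{\infty}FF_N$, so ``$d_{FF_N}(\pi(W^{(n)}_m),\psi(U_n))\le M$'' is not literally meaningful; what you actually need -- and what convergence in $FF_N\cup\partial_{\infty}FF_N$ together with coarse continuity of closest-point projections gives -- is that the projections of $\pi(W^{(n)}_m)$ to $\pi\circ\gamma$ land uniformly close to the projection of $\psi(U_n)$. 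You draw exactly that conclusion in the following sentence, so this is only a misphrasing.

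The one step that fails as written is the instantiation $y_1:=z_1$ in Proposition \ref{ff-geom}. With basepoint $\pi(y_0)$ and $y_1=z_1\in\text{Tube}_{y_0}(\mathcal{U};A_0)$, the Gromov product $(\pi(V_k)|\pi(z_1))_{\pi(y_0)}$ is bounded above by $d_{FF_N}(\pi(y_0),\pi(z_1))$, a constant independent of $B$; it therefore can never ``exceed the threshold'' required for large $D$, and the proposition yields nothing. The correct encoding of ``$\pi(V_k)$ projects at distance at least $B-M_0$ to the right of $\pi(z_1)$ along $\pi\circ\gamma$'' is a Gromov product based at (a tree projecting near) $\pi(z_1)$, with the \emph{second} point $y_1$ taken to be a tree $w$ lying on $\gamma$ far to the right, say with $d_{FF_N}(\pi(z_1),\pi(w))\ge B$: then $(\pi(V_k)|\pi(w))_{\pi(z_1)}\ge B-M_0-O(1)$, Proposition \ref{ff-geom} gives $(\psi(U)|\pi(w))_{\pi(z_1)}\ge B-M_0-M_1-O(1)$, and hyperbolicity converts this back into the statement that $\psi(U)$ projects at distance at least $B-M$ to the right. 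With that substitution (and the uniformity over $\gamma\in\text{Tube}(\mathcal{U})$ you already track) the rest of your argument goes through.
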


\begin{proof}
This follows from Proposition \ref{ff-geom}.
\end{proof}

We now fix a point $y_0\in CV_N$, a neighborhood $\mathcal{U}$ of $(T,T')$ provided by Lemma \ref{X_0-compact}, and we let $M$ be the maximum of the constants provided by Theorem \ref{bdy} and Lemma \ref{mor}. Given $A_0,B_0>0$, we let $X_0(A_0,B_0)$ be the subset of $CV_N$ made of those trees $U$ such that there exists $S\in\text{Tube}_{y_0}(\mathcal{U};A_0)$ satisfying $U\in\Delta(S)$, and either $U=S$, or else $U$ is fully recurrent with respect to some tree in $\text{Right}_{y_0}(\mathcal{U};A_0,B_0)$. 

\begin{lemma}\label{compact}
For all $A_0>0$, there exists $\beta>0$ such that for all $B_0>\beta$, the space $\overline{X_0(A_0,B_0)}\subseteq CV_N$ is compact.
\end{lemma}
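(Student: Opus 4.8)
The plan is to argue by contradiction, following the pattern of the proof of Lemma~\ref{X_0-compact}. Suppose $\overline{X_0(A_0,B_0)}$ is not compact; since $\overline{CV_N}$ is compact, there is a sequence $(U_n)_{n\in\mathbb{N}}$ in $X_0(A_0,B_0)$ converging in $\overline{CV_N}$ to a tree $U_\infty\in\partial CV_N$. For each $n$ choose $S_n\in\text{Tube}_{y_0}(\mathcal{U};A_0)$ with $U_n\in\Delta(S_n)$. By Lemma~\ref{X_0-compact}, after passing to a subsequence we may assume $S_n\to S_\infty\in\overline{\text{Tube}_{y_0}(\mathcal{U};A_0)}\subseteq CV_N$. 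If $U_n=S_n$ for infinitely many $n$, then $U_\infty=S_\infty\in CV_N$, a contradiction; so we may assume that for every $n$ there is a tree $R_n\in\text{Right}_{y_0}(\mathcal{U};A_0,B_0)$ such that $U_n$ is fully recurrent with respect to $R_n$ (passing to a further subsequence one may also assume $R_n\to R_\infty$, which by Lemma~\ref{mor} lies in $\text{Right}_{y_0}(\mathcal{U};A_0,B_0-M)$, although the estimates below use $R_n$ directly).

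Next I locate $U_\infty$ and $\pi(U_n)$. As $S_n\to S_\infty\in CV_N$ and $CV_N$ has finitely many simplices near $S_\infty$, after a further subsequence all of the $S_n$, hence all of the $U_n$, lie in a single open simplex $\Delta_0$ of $CV_N$ with $S_\infty\in\overline{\Delta_0}$, so $U_\infty\in\overline{\Delta_0}$. A point of $\overline{\Delta_0}\cap\partial CV_N$ is obtained from $S_\infty$ by setting to zero the lengths of a set of edges of $S_\infty/F_N$ containing an embedded loop (setting to zero the lengths of a forest keeps one inside $CV_N$); since an embedded loop in a graph represents a primitive element $g_\infty\in F_N$, this element is elliptic in $U_\infty$; in particular $U_\infty$ is not arational (in agreement with Corollary~\ref{fz-ff}), and $\mathrm{sys}(U_n)\to 0$. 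Write $\mathrm{sys}(U_n)=\|g_n\|_{U_n}$ with $g_n$ primitive (the systole of a simplicial tree is realized by an embedded, hence primitive, loop). Moreover two trees in a common simplex admit the same collapses, so one may take $\pi(U_n)=\pi(S_n)$; as $\pi$ is coarsely well defined and coarsely Lipschitz and $S_n\to S_\infty$, the points $\pi(U_n)$ all lie in a region $\mathcal{R}\subseteq FF_N$ of uniformly bounded diameter. Finally, $S_n$ lies on some $\gamma_n\in\text{Tube}(\mathcal{U})$, on whose $\pi$-image the point $\pi(S_n)$ sits, and by definition of $\text{Right}_{y_0}(\mathcal{U};A_0,B_0)$ the closest-point projection of $\psi(R_n)$ to $\pi\circ\gamma_n$ lies at distance at least $B_0$ to the right of $\pi(S_n)$; hence $d_{FF_N}(\psi(R_n),\pi(U_n))\ge B_0-O(1)$, a quantity tending to $+\infty$ with $B_0$, uniformly in $n$.

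Now I derive the contradiction. Fix a universal constant $\kappa_0$ such that for every $T\in CV_N$ and every primitive $g$ with $\|g\|_T\le 2$ in the covolume-one representative one has $d_{FF_N}(\pi(T),[\langle g\rangle])\le\kappa_0$, and set $C_1:=2\kappa_0+1$; assume $B_0$ is large enough that $B_0-O(1)\ge C_1+1$. By full recurrence there is a greedy folding path $\gamma'_n$ issued from $U_n$ and converging to $R_n$, whose $\pi$-image is a $K_2$-unparameterized quasigeodesic from $\pi(U_n)$ to $\psi(R_n)$; let $V_n=\gamma'_n(s_n)$ be the first point along it with $d_{FF_N}(\pi(V_n),\pi(U_n))\ge C_1$, so $d_{FF_N}(\pi(V_n),\pi(U_n))\le C_1+O(1)$. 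If $\|g_n\|_{V_n}\le 2$, then $\pi(U_n)$ and $\pi(V_n)$ both lie within $\kappa_0$ of $[\langle g_n\rangle]$, contradicting $d_{FF_N}(\pi(V_n),\pi(U_n))\ge C_1>2\kappa_0$. Hence $\|g_n\|_{V_n}\ge 2$, and, working with covolume-one representatives,
\[
d_{CV_N}(U_n,V_n)\ \ge\ \log\frac{\|g_n\|_{V_n}}{\|g_n\|_{U_n}}\ \ge\ \log\frac{2}{\mathrm{sys}(U_n)}\ \longrightarrow\ +\infty\qquad(n\to\infty).
\]
On the other hand, because $U_n$ is fully recurrent with respect to $R_n$, the folding path $\gamma'_n$ makes definite progress in $FF_N$ from its very start; combining the Bestvina--Feighn control of the projection $\mathrm{Pr}_{\gamma'_n}$ (Lemmas~\ref{bf} and~\ref{DT}, see also Lemma~\ref{BF-contr}) with the recurrence of the train track structure on $U_n$ — which forces $\mathrm{right}_{\gamma'_n}(U_n)$ to correspond to a point at bounded $d_{CV_N}$-distance from $U_n$ — one gets a universal bound $d_{CV_N}(U_n,V_n)\le K(C_1,N)$. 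These two estimates are incompatible for $n$ large, which is the desired contradiction; this proves $\overline{X_0(A_0,B_0)}$ is compact for $B_0$ large enough.

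The main obstacle is the last step: showing that a greedy folding path issued from a fully recurrent tree attains a prescribed amount of progress in $FF_N$ within a uniformly bounded $d_{CV_N}$-length. This is precisely where the notion of full recurrence must be used in an essential way, rather than merely through the existence of a folding path, and it amounts to bounding the initial part of $\gamma'_n$ along which candidates of $U_n$ remain illegal, which is what rules out the folding path ``dawdling'' in a bounded region of $FF_N$ while spending unbounded $d_{CV_N}$-length near the degenerating tree $U_\infty$. The remaining ingredients (the simplex-gives-bounded-$\pi$-distance fact, the short-curve projection constant $\kappa_0$, and the description of $\overline{\Delta_0}\cap\partial CV_N$) are routine.
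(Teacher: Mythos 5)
Your reduction — extracting a sequence $(U_n)$ in $X_0(A_0,B_0)$ leaving every compact set, placing all $U_n$ in a single open simplex over the compact tube $\overline{\text{Tube}_{y_0}(\mathcal{U};A_0)}$, producing a primitive $g_n$ with $||g_n||_{U_n}\to 0$, locating the $\pi(U_n)$ in a bounded region of $FF_N$, and noting that $\psi(R_n)$ projects far to the right — is correct and parallels the first half of the paper's argument. The genuine gap is the final step, which you flag yourself: the claim that a greedy folding path issued from a fully recurrent tree achieves $FF_N$-progress $C_1$ within $d_{CV_N}$-length at most $K(C_1,N)$, \emph{uniformly over all fully recurrent starting trees}, is not proved, and it cannot be supplied independently of the lemma. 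Your own systole estimate shows that if a degenerating fully recurrent $U_n$ with $\psi(R_n)$ far away did exist, the $d_{CV_N}$-length needed to achieve progress $C_1$ would be at least $\log(2/||g_n||_{U_n})\to+\infty$; so the uniform bound is exactly the negation of the configuration you are trying to exclude, and asserting it begs the question. Deriving it ``from the recurrence of the train track structure'' would require controlling $\text{right}_{\gamma'_n}(U_n)$ and the growth of candidates uniformly as $U_n$ leaves every compact set, which is precisely what fails a priori; note also that a bound on $d_{CV_N}(U_n,\text{Pr}_{\gamma'_n}(U_n))$ would not by itself bound the length needed for $FF_N$-progress, since by Lemma~\ref{bf} that length is governed by the multiplicative growth of $||g_n||$, which must traverse the unbounded range $[||g_n||_{U_n},1]$.

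The paper closes the argument with a qualitative limiting statement rather than a quantitative one: pass to subsequential limits $U_n\to U\in\partial CV_N$ and $U'_n\to U'\in\overline{CV_N}$, observe that $U'\in\text{Right}_{y_0}(\mathcal{U};A_0,B_0-M)$ by Lemma~\ref{mor}, and invoke \cite[Lemma 6.11]{BR13}, which asserts that full recurrence of $U_n$ with respect to $U'_n$ forces the limits $U$ and $U'$ to share an elliptic proper free factor. This makes $\psi(U)$ and $\psi(U')$ coarsely equal in $FF_N$, whereas $\psi(U)$ lies at bounded distance from the $\pi(U_n)$ (Corollary~\ref{fz-ff}), hence from $\pi(\text{Tube}_{y_0}(\mathcal{U};A_0))$, while $\psi(U')$ lies at distance at least $B_0-M$ from that set — a contradiction once $B_0-M$ is large. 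If you wish to keep your quantitative route, the missing uniform-progress claim would itself have to be proved by such a compactness/limit argument, so the honest fix is to replace your last step by the appeal to \cite[Lemma 6.11]{BR13}.
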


\begin{proof}
Let $A_0>0$, and let $B_0>0$ be such that $B_0-M$ is sufficiently large (where \emph{sufficiently large} will be understood from the rest of the proof). Assume towards a contradiction that $\overline{X_0(A_0,B_0)}$ is not contained in $CV_N$. Then there exists a sequence $(U_n)_{n\in\mathbb{N}}\in {X_0(A_0,B_0)}^{\mathbb{N}}$ that leaves every compact subset of $CV_N$. Since we already know that $\overline{\text{Tube}_{y_0}(\mathcal{U};A_0)}$ is compact (Lemma \ref{X_0-compact}), we can assume that for all $n\in\mathbb{N}$, there exists a tree $U'_n\in \text{Right}_{y_0}(\mathcal{U};A_0,B_0)$ such that $U_n$ is fully recurrent with respect to $U'_n$. Since $\text{Tube}_{y_0}(\mathcal{U};A_0)$ has compact closure in $CV_N$, it meets only finitely many simplices of $CV_N$. Hence up to passing to a subsequence, we can assume that all trees $U_n$ belong to a common open simplex in $CV_N$, and that $(U_n)_{n\in\mathbb{N}}$ converges to a point $U\in\partial CV_N$. Up to passing to a further subsequence, we can also assume that the sequence $(U'_n)_{n\in\mathbb{N}}$ converges to a tree $U'\in\overline{CV_N}$. By Lemma \ref{mor}, we have $U'\in\text{Right}_{y_0}(\mathcal{U};A_0,B_0-M)$. It follows from \cite[Lemma 6.11]{BR13} that there exists a proper free factor of $F_N$ that is elliptic in both $U$ and $U'$. However if $B_0$ has been chosen so that $B_0-M$ is sufficiently large, then $\psi(U)$ and $\psi(U')$ are far apart in $FF_N$. This is a contradiction.
\end{proof}

Let $\overline{cv_0(F_N)}\subseteq\overline{cv_N}$ be the subset made of those trees $T$ such that the Lipschitz constant of an optimal map from $y_0$ to $T$ is equal to $1$. Given subsets $X\subseteq CV_N$ and $Y\subseteq \overline{CV_N}$, we denote by $\text{Opt}(X,Y)$ the set of all optimal $F_N$-equivariant maps from a tree in ${cv_0(F_N)}$ representing an element in $X$ to a tree in $\overline{cv_0(F_N)}$ representing an element in $Y$, which are linear on edges. This set is equipped with the equivariant Gromov--Hausdorff topology introduced in \cite[Section 3.2]{GL07}. If $X$ is a compact subset of $CV_N$, then the subsets of ${cv_0(F_N)}$ representing $X$ is also compact, so there is a bound on the Lipschitz constant of elements of $\text{Opt}(X,Y)$.

Let $A_0,B_0>0$ be given by Lemma \ref{compact}, and let $X_0:=X_0(A_0,B_0)$. We let $\mathcal{M}:=\text{Opt}(\overline{X_0},\text{Right}_{y_0}(\mathcal{U};A_0,B_0))$. In view of Lemma \ref{compact}, the set $\overline{\mathcal{M}}$ is compact, and Lemma \ref{mor} implies that $\overline{\mathcal{M}}\subseteq\text{Opt}(\overline{X_0},\text{Right}_{y_0}(\mathcal{U};A_0,B_0-M))$.

Given $C_0>0$ and $f\in\text{Opt}(\overline{X_0},\text{Right}_{y_0}(\mathcal{U};A_0,B_0-M))$, we let $C(f;C_0)$ be the smallest real number $C$ such that the smallest initial subsegment of $d_{CV_N}^{sym}$-diameter $C$ of the greedy folding path determined by $f$ projects to a region of $FF_N$ of diameter at least $C_0:=B_0-3M$ (here we need that $B_0$ was chosen sufficiently large so that $C_0$ is large enough). Using Theorem \ref{bdy}, we have $C(f;C_0)<+\infty$ for all $f\in\text{Opt}(\overline{X_0},\text{Right}_{y_0}(\mathcal{U};A_0,B_0-M))$ (otherwise the folding path directed by $f$ would leave the interior of outer space before having made enough progress in $FF_N$). In particular, we have $C(f;C_0)<+\infty$ for all $f\in\overline{\mathcal{M}}$. 

\begin{lemma}\label{C-bdd}
There exist $\beta_1,\beta_2>0$ such that if $C_0>\beta_1$ and $B_0>C_0+\beta_2$, then the map
\begin{displaymath}
\begin{array}{cccc}
C:&\overline{\mathcal{M}}&\to &\mathbb{R}\\
&f&\mapsto & C(f;C_0)
\end{array}
\end{displaymath}
\noindent is bounded from above. 
\end{lemma}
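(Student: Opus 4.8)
The plan is to prove Lemma \ref{C-bdd} by a compactness argument: we have already arranged (Lemma \ref{compact}) that $\overline{X_0}$ is compact in $CV_N$, hence the set of trees in $cv_0(F_N)$ representing $\overline{X_0}$ is compact, and therefore there is a uniform upper bound $\Lambda$ on the Lipschitz constant of every $f\in\overline{\mathcal{M}}$; moreover $\overline{\mathcal{M}}$ itself is compact for the equivariant Gromov--Hausdorff topology. The map $f\mapsto C(f;C_0)$ is finite on $\overline{\mathcal{M}}$ (as noted just above, using the last item of Theorem \ref{bdy}). So it suffices to show that this map is, say, upper semicontinuous — or merely locally bounded from above — and then invoke compactness of $\overline{\mathcal{M}}$ together with Lemma \ref{measurable} (or simply the fact that a locally bounded function on a compact space is bounded).

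The key steps, in order, are as follows. First I would recall that for $f\in\overline{\mathcal{M}}$ the greedy folding path $\gamma_f$ determined by $f$ starts at a tree in $\overline{X_0}$ and ends at a tree in $\text{Right}_{y_0}(\mathcal{U};A_0,B_0-M)$, and that $\pi\circ\gamma_f$ is a $K_2$-unparameterized quasigeodesic in $FF_N$ whose endpoints are separated by $\pi$-distance at least $B_0-M-(\text{diam of }\pi(\text{Tube}_{y_0}(\mathcal{U};A_0)))$, which is at least $C_0+1$ once $B_0$ is large enough compared with $C_0$; in particular $C(f;C_0)$ is well-defined. Second, suppose for contradiction that $C$ is not locally bounded above; then there is $f\in\overline{\mathcal{M}}$ and a sequence $f_n\to f$ in $\overline{\mathcal{M}}$ with $C(f_n;C_0)\to+\infty$. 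By \cite[Proposition 2.4]{GL07} the associated greedy folding paths $\gamma_{f_n}$ converge uniformly on compact sets to $\gamma_f$ (after suitable reparametrisation at the source, which we may fix since all source trees lie in the compact family $\overline{cv_0(F_N)}$-representatives of $\overline{X_0}$). Third, the failure $C(f_n;C_0)\to\infty$ means that the initial segment of $\gamma_{f_n}$ of $d_{CV_N}^{sym}$-diameter $n$ has $\pi$-image of $FF_N$-diameter less than $C_0$; passing to the limit and using coarse Lipschitzness and the unparametrised-quasigeodesic property of $\pi\circ\gamma_f$, this forces the entire ray $\gamma_f$ to have $\pi$-image of bounded diameter in $FF_N$, contradicting the fact that its endpoint lies in $\text{Right}_{y_0}(\mathcal{U};A_0,B_0-M)$ and hence its $\pi$-image makes progress of at least $B_0-M-(\ldots)>C_0$ in $FF_N$. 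Actually the cleanest way is to observe that a bounded-diameter $\pi$-image forces $\gamma_f$ to stay in a region that, by the argument of Proposition \ref{high-out}, must be entirely thick, hence contained in a compact part of $CV_N$, so that the $d_{CV_N}^{sym}$-length of $\gamma_f$ is infinite while its $\pi$-image stays bounded — impossible by Theorem \ref{bdy} (the last item), which was exactly the point used to prove $C(f;C_0)<+\infty$ in the first place. This contradiction establishes local boundedness, and compactness of $\overline{\mathcal{M}}$ finishes the proof.

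I expect the main obstacle to be a slightly delicate point in the convergence step: $C(f_n;C_0)$ is defined via the $d_{CV_N}^{sym}$-diameter of an \emph{initial} segment, and one must be careful that the reparametrisations making $\gamma_{f_n}\to\gamma_f$ converge uniformly on compacta do not distort which segment is "initial" — i.e., that the basepoints (source trees) converge. This is handled because the source trees all lie in the compact set of $cv_0(F_N)$-representatives of $\overline{X_0}$, so up to a subsequence they converge, and the limit is the source tree of $f$; combined with \cite[Proposition 2.4]{GL07} this pins down the parametrisation at the left endpoint. A second, more cosmetic, obstacle is bookkeeping the numerous constants ($M$, $\beta$, $\beta_1$, $\beta_2$) and verifying that one can indeed choose $C_0>\beta_1$ and $B_0>C_0+\beta_2$ so that all the earlier lemmas (\ref{compact}, \ref{mor}, \ref{X_0-compact}) apply simultaneously and so that $C_0=B_0-3M$ is genuinely the diameter threshold used in the definition of progress; this is routine but must be stated. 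Once these are in place, boundedness of $C$ on the compact set $\overline{\mathcal{M}}$ is immediate.
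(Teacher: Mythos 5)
Your proposal is correct and follows essentially the same route as the paper: argue by contradiction with a sequence $f_n\in\overline{\mathcal{M}}$ along which $C(f_n;C_0)\to+\infty$, use compactness of $\overline{\mathcal{M}}$ and \cite[Proposition 2.4]{GL07} to pass to a limiting greedy folding path, and contradict the definite progress in $FF_N$ forced by the target lying in $\text{Right}_{y_0}(\mathcal{U};A_0,B_0-M)$ (the paper localizes this by picking a point $z$ on the limit path with $d_{FF_N}(\pi(x),\pi(z))\ge C_0+4K_1$ and approximating it by points $z_n$ that eventually lie in the offending initial segments). Your parenthetical "cleanest way" variant via thickness and infinite $d_{CV_N}^{sym}$-length is the only shaky point (the limit path may terminate at an interior tree), but your primary argument does not rely on it.
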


\begin{proof}
We proceed as in the proof of Lemma \ref{C-mod}. Let $K_1>0$ be such that $$d_{FF_N}(\pi(x),\pi(y))\le K_1 d_{CV_N}(x,y)+K_1$$ for all $x,y\in CV_N$. Assume towards a contradiction that $C(f;C_0)$ is unbounded. Then there exists a sequence $(f_n)_{n\in\mathbb{N}}\in\overline{\mathcal{M}}^{\mathbb{N}}$ such that for all $n\in\mathbb{N}$, we have $C(f_n;C_0)> n$. For all $n\in\mathbb{N}$, we denote by $\gamma_n$ the greedy folding path determined by $f_n$. Then for all $n\in\mathbb{N}$, there exists a $d_{CV_N}^{sym}$-closest point projection $x_n$ of $y_0$ on the image of $\gamma_n$, so that the smallest subsegment $\sigma_n$ of the image of $\gamma_n$ of $d_{CV_N}^{sym}$-diameter $n$ starting at $x_n$ projects to a subset of $FF_N$ of diameter at most $C_0$. Compactness of the space $\overline{\mathcal{M}}$ ensures that up to passing to a subsequence, we can assume that the maps $f_n$ converge to a map $f\in\overline{\mathcal{M}}$. The greedy folding paths $\gamma_n$ then converge uniformly on compact sets to the greedy folding path $\gamma$ determined by $f$. Thus the sequence $(x_n)_{n\in \mathbb{N}}$ accumulates to a point $x$ lying on the image of $\gamma$. Let then $z$ be to the right of $x$ on the image of $\gamma$, such that $d_{FF_N}(\pi(x),\pi(z))\ge C_0+4K_1$. There exists a sequence $(z_n)_{n\in \mathbb{N}}$ of points $z_n$ on the image of $\gamma_n$ that converges to $z$. For $n$ large enough in our subsequence, we have $d_{CV_N}^{sym}(x_n,x)<\frac{1}{2}$ and $d_{CV_N}^{sym}(z_n,z)<\frac{1}{2}$, and hence $d_{FF_N}(\pi(x_n), \pi(x)) <2K_1$ and  $d_{FF_N}(\pi(z_n), \pi(z)) <2K_1$. Therefore $d_{FF_N}(\pi(x_n), \pi(z_n))>C_0$. 
However the point $z_n$ is eventually in the segment $\sigma_n$, since its $d_{CV_N}^{sym}$-distance to $x$ is bounded, and this contradicts the fact that the diameter of the projection of $\sigma_n$ to $FF_N$ is at most $C_0$. 
\end{proof}

Given $A_0,B_0,C_0>0$ and $(T,T')\in\mathcal{UE}\times\mathcal{UE}\smallsetminus\Delta$, we let $C'_{A_0,B_0,C_0}(T,T')$ be the infimum of all real numbers $C>0$ such that all greedy folding lines $\gamma:\mathbb{R}\to CV_N$ from $T$ to $T'$ are $(C;A_0,B_0,C_0)$-progressing at all $d_{CV_N}^{sym}$-closest point projections of $y_0$ to $\gamma$. As a consequence of Lemma \ref{C-bdd}, we obtain the following fact.

\begin{cor}\label{cor-c'}
For all $A_0>0$, there exist $\beta_1,\beta_2>0$ such that for all $C_0>\beta_1$ and all $B_0>0$ such that $B_0>C_0+\beta_2$, the map $$C'_{A_0,B_0,C_0}:\mathcal{UE}\times\mathcal{UE}\smallsetminus\Delta\to\mathbb{R}$$ is locally bounded from above.
\qed
\end{cor}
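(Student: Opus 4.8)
The plan is to derive Corollary \ref{cor-c'} from Lemma \ref{C-bdd} essentially by unwinding definitions. Since local boundedness is a pointwise statement, I fix an arbitrary $(T,T')\in\mathcal{UE}\times\mathcal{UE}\smallsetminus\Delta$ and the basepoint $y_0\in CV_N$, take the neighbourhood $\mathcal{U}$ of $(T,T')$ furnished by Lemma \ref{X_0-compact}, and form the associated data $X_0=X_0(A_0,B_0)$, $\text{Right}_{y_0}(\mathcal{U};A_0,B_0)$ and $\mathcal{M}=\text{Opt}(\overline{X_0},\text{Right}_{y_0}(\mathcal{U};A_0,B_0))$ exactly as in the construction preceding Lemma \ref{C-bdd}. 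Choosing $\beta_1,\beta_2$ at least as large as the corresponding constants in Lemma \ref{C-bdd}, enlarged by universal constants depending only on $K_1,K_2,\delta$ and on $M$, Lemma \ref{C-bdd} supplies a number $\overline{C}$ with $C(f;C_0)\le\overline{C}$ for every $f\in\overline{\mathcal{M}}$. The goal is then to show $C'_{A_0,B_0,C_0}(T_1,T_1')\le\overline{C}$ for all $(T_1,T_1')\in\mathcal{U}$ (after possibly shrinking $\mathcal{U}$), which is precisely the asserted local boundedness at $(T,T')$.

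The heart of the matter is the claim that, if $\gamma$ is a greedy folding line from $T_1$ to $T_1'$ with $(T_1,T_1')\in\mathcal{U}$, if $S$ is a $d_{CV_N}^{sym}$-closest-point projection of $y_0$ to $\gamma$, and if $\widetilde{S},R,\gamma':[a,b]\to CV_N$ are data as in Definition \ref{progression}, then the optimal morphism $f'$ directing $\gamma'$ lies in $\overline{\mathcal{M}}$. For the source tree: since $\pi\circ\gamma$ is a $K_2$-unparameterised quasigeodesic and $\widetilde{S}$ lies to the right of $S$ with $d_{FF_N}(\pi(S),\pi(\widetilde{S}))\le A_0$, the sub-quasigeodesic of $\pi\circ\gamma$ between $\pi(S)$ and $\pi(\widetilde{S})$ has uniformly bounded diameter, hence $\widetilde{S}\in\text{Tube}_{y_0}(\mathcal{U};A_0)$ (this is one place the additive enlargement of $\beta_2$ is used); then $S'\in\Delta(\widetilde{S})$ and $S'$ is either $\widetilde{S}$ or fully recurrent with respect to $R$, so $S'\in\overline{X_0}$ once $R\in\text{Right}_{y_0}(\mathcal{U};A_0,B_0)$ is known. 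For the target tree $R$: the hypothesis on $R$ in Definition \ref{progression} says $\mathbf{n}_{\pi\circ\gamma}(\pi(R))$ lies to the right of $\pi(S)$ and at distance at least $B_0$ from the $\pi$-images of all trees $\widetilde{S}$ on $\gamma$ to the right of $S$ with $d_{FF_N}(\pi(S),\pi(\widetilde{S}))\le A_0$; shrinking $\mathcal{U}$ so that all $\pi$-images of lines in $\text{Tube}(\mathcal{U})$ uniformly fellow-travel in $FF_N$ (using that $T,T'$ are uniquely ergometric, so their $\psi$-images are single points of $\partial_\infty FF_N$) and using that the closest-point projections of $y_0$ to these lines stay in a bounded region of $FF_N$ (Lemma \ref{X_0-compact}), this translates, up to an additive loss absorbed into $\beta_2$, into the defining condition of $\text{Right}_{y_0}(\mathcal{U};A_0,B_0)$. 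Hence $f'\in\overline{\mathcal{M}}$.

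Granting the claim, Lemma \ref{C-bdd} gives $C(f';C_0)\le\overline{C}$, which by definition of $C(f';C_0)$ means exactly that every initial subsegment $\gamma'([a,c])$ of $d_{CV_N}^{sym}$-diameter at least $\overline{C}$ has $\pi\circ\gamma'([a,c])$ of $d_{FF_N}$-diameter at least $C_0$. Since $\widetilde{S},R,\gamma'$ were arbitrary, $\gamma$ is $(\overline{C};A_0,B_0,C_0)$-progressing at $S$; since $\gamma$ and $S$ were arbitrary, $C'_{A_0,B_0,C_0}(T_1,T_1')\le\overline{C}$, as wanted. I expect the main obstacle to be the verification that $R\in\text{Right}_{y_0}(\mathcal{U};A_0,B_0)$: the condition on $R$ in the progress definition is relative to the single line $\gamma$, whereas $\text{Right}_{y_0}$ is quantified over all of $\text{Tube}(\mathcal{U})$, and bridging this gap is exactly what forces one to shrink $\mathcal{U}$ and to take $\beta_1,\beta_2$ large enough to swallow the hyperbolicity and quasigeodesic constants; the remainder is a routine traversal of the definitions combined with the bound from Lemma \ref{C-bdd}.
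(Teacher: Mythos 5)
Your proposal is correct and follows the paper's intended route: the corollary is stated with an immediate \qed precisely because it is meant to follow from Lemma \ref{C-bdd} by the unwinding you carry out, namely placing the morphisms arising from the data $(\widetilde{S},R,\gamma')$ of Definition \ref{progression} into $\overline{\mathcal{M}}$ after shrinking $\mathcal{U}$ and absorbing the quasigeodesic/fellow-travelling losses into $\beta_1,\beta_2$. The one delicate point you single out — reconciling the single-line condition on $R$ in Definition \ref{progression} with the quantification over all of $\text{Tube}(\mathcal{U})$ in the definition of $\text{Right}_{y_0}(\mathcal{U};A_0,B_0)$ — is exactly where the additive adjustments are needed, and your treatment of it is the right one.
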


\begin{cor}\label{map-C}
There exist $A_0,B_0,C_0>0$ satisfying the hypotheses from Proposition \ref{contr-out}, and an upper semi-continuous map $C:\mathcal{UE}\times\mathcal{UE}\smallsetminus\Delta \to \mathbb{R}$ such that for all $(T,T')\in\mathcal{UE}\times\mathcal{UE}\smallsetminus\Delta$, all greedy folding lines $\gamma:\mathbb{R}\to CV_N$ from $T$ to $T'$ are $(C(T,T');A_0,B_0,C_0)$-progressing at all $d_{CV_N}^{sym}$-closest-point projections of $y_0$ to $\gamma$.
\end{cor}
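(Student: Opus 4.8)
The plan is to obtain Corollary~\ref{map-C} as a bookkeeping consequence of Corollary~\ref{cor-c'}, Lemma~\ref{measurable} and the hypotheses of Proposition~\ref{contr-out}; the only real work is in choosing the constants $A_0,B_0,C_0$ so that all the constraints are met simultaneously. First I would invoke Proposition~\ref{contr-out} to produce constants $\alpha_0,\beta_0,\gamma_0>0$, and fix $A_0:=\alpha_0$. Feeding this value of $A_0$ into Corollary~\ref{cor-c'} yields constants $\beta_1,\beta_2>0$. I would then set $C_0:=\max\{\gamma_0,\beta_1\}+1$ and $B_0:=\max\{\beta_0,C_0+\beta_2\}+1$. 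With these choices one checks at once that $A_0\ge\alpha_0$, $B_0\ge\beta_0$ and $C_0\ge\gamma_0$, so that the hypotheses of Proposition~\ref{contr-out} are satisfied, and that $C_0>\beta_1$ and $B_0>C_0+\beta_2$, so that Corollary~\ref{cor-c'} applies: the map $C'_{A_0,B_0,C_0}:\mathcal{UE}\times\mathcal{UE}\smallsetminus\Delta\to\mathbb{R}$ is locally bounded from above (in particular it is finite at every point, since it is by definition an infimum of positive reals).

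Then I would apply Lemma~\ref{measurable} to $C'_{A_0,B_0,C_0}$, viewed as a function on the topological space $\mathcal{UE}\times\mathcal{UE}\smallsetminus\Delta$, to obtain an upper semi-continuous map $\widehat{C}:\mathcal{UE}\times\mathcal{UE}\smallsetminus\Delta\to\mathbb{R}$ with $\widehat{C}(T,T')\ge C'_{A_0,B_0,C_0}(T,T')$ for all $(T,T')$, and I would set $C:=\widehat{C}+1$. This $C$ is again upper semi-continuous, and it now satisfies $C(T,T')>C'_{A_0,B_0,C_0}(T,T')$ \emph{strictly} at every point.

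Finally I would record the elementary monotonicity observation that, for a fixed greedy folding line $\gamma:\mathbb{R}\to CV_N$ from $T$ to $T'$ and a fixed point of $CV_N$ lying on $\gamma$, being $(C;A_0,B_0,C_0)$-progressing there is a condition that becomes weaker as $C$ grows: indeed, in Definition~\ref{progression}, enlarging $C$ only weakens the clause ``$\gamma'([a,c])$ has $d_{CV_N}^{sym}$-diameter at least $C$''. Hence, for each $(T,T')$, the set of $C>0$ for which every greedy folding line from $T$ to $T'$ is $(C;A_0,B_0,C_0)$-progressing at every $d_{CV_N}^{sym}$-closest-point projection of $y_0$ is an upward-closed interval whose infimum is, by definition, $C'_{A_0,B_0,C_0}(T,T')$; since $C(T,T')$ strictly exceeds this infimum, it lies in that interval, which is exactly the claim. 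I do not expect a genuine obstacle here: the one point deserving a little care is precisely the passage from the infimum $C'_{A_0,B_0,C_0}(T,T')$ to an actual parameter at which the progress property holds, which is why I prefer to pass to $\widehat{C}+1$ rather than to $\widehat{C}$ itself, so as not to worry about whether the infimum defining $C'_{A_0,B_0,C_0}$ is attained.
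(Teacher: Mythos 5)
Your proof is correct and takes essentially the same route as the paper, whose entire argument is the one-line citation of Lemma \ref{measurable} and Corollary \ref{cor-c'}; you have simply made explicit the selection of $A_0,B_0,C_0$ compatible with both Proposition \ref{contr-out} and Corollary \ref{cor-c'}, and the monotonicity in $C$ of the progressing property. The extra step of passing to $\widehat{C}+1$ to avoid worrying about whether the infimum defining $C'_{A_0,B_0,C_0}$ is attained is a sensible precaution that the paper leaves implicit.
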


\begin{proof}
This follows from Lemma \ref{measurable} and Corollary \ref{cor-c'}. 
\end{proof}

We now deduce that typical geodesic rays contain infinitely many progressing (and hence contracting and high) subsegments. We fix constants $A_0,B_0,C_0>0$ provided by Corollary \ref{map-C}.

\begin{prop}\label{key-out}
Let $\mu$ be a nonelementary probability measure on $\text{Out}(F_N)$ with finite first moment with respect to $d_{CV_N}$. Then for all $\theta\in (0,1)$, there exists $C>0$ such that for $\overline{\mathbb{P}}$-a.e. bilateral sample path $\mathbf{\Phi}$ of the random walk on $(\text{Out}(F_N),\mu)$, and all greedy folding lines $\gamma:\mathbb{R}\to CV_N$ from $\text{bnd}^-(\mathbf{\Phi})$ to $\text{bnd}^+(\mathbf{\Phi})$, the set of integers $k\in\mathbb{Z}$ such that $\gamma$ is $(C;A_0,B_0,C_0)$-progressing at all $d_{CV_N}^{sym}$-closest-point projections of $y_0$, has density at least $\theta$.
\end{prop}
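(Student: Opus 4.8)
The plan is to follow verbatim the scheme of the proof of Proposition~\ref{density-2} in the mapping class group case, with the upper semicontinuous map $C$ now provided by Corollary~\ref{map-C} instead of by Lemma~\ref{C-mod}. First I would record that for $\overline{\mathbb{P}}$-a.e.\ bilateral sample path $\mathbf{\Phi}$, the pair $(\text{bnd}^-(\mathbf{\Phi}),\text{bnd}^+(\mathbf{\Phi}))$ lies in $\mathcal{UE}\times\mathcal{UE}\smallsetminus\Delta$: by Theorem~\ref{NPR} applied to both $\mu$ and $\check\mu$, together with the identification of $(\overline{\mathcal{P}},\overline{\mathbb{P}})$ with the product $(\check{\mathcal{P}},\check{\mathbb{P}})\otimes(\mathcal{P},\mathbb{P})$, we have $\text{bnd}^-(\mathbf{\Phi}),\text{bnd}^+(\mathbf{\Phi})\in\mathcal{UE}$ almost surely, the two limit points are independent, and the hitting measure is nonatomic, so they are almost surely distinct. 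Fixing once and for all the constants $A_0,B_0,C_0>0$ and the upper semicontinuous map $C:\mathcal{UE}\times\mathcal{UE}\smallsetminus\Delta\to\mathbb{R}$ from Corollary~\ref{map-C}, I would then consider
\[
\begin{array}{cccc}
\Psi:&\overline{\mathcal{P}}&\to&\mathbb{R}\\
&\mathbf{\Phi}&\mapsto&C(\text{bnd}^-(\mathbf{\Phi}),\text{bnd}^+(\mathbf{\Phi})).
\end{array}
\]
This map is $\overline{\mathbb{P}}$-almost everywhere defined, it is Borel (since $C$ is upper semicontinuous, hence Borel, and the maps $\text{bnd}^{\pm}$ are measurable), and it is $\overline{\mathbb{P}}$-a.e.\ finite.

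Next, given $\theta\in(0,1)$, since $\Psi$ is $\overline{\mathbb{P}}$-a.e.\ finite one may choose $M>0$ with $\overline{\mathbb{P}}\big(\{\mathbf{\Phi}:\Psi(\mathbf{\Phi})\le M\}\big)>\theta$. Let $U$ be the ergodic transformation of $\overline{\mathcal{P}}$ induced by the Bernoulli shift on bilateral increments. As in the proof of Proposition~\ref{density-2}, for all $k\in\mathbb{Z}$ one has $\text{bnd}^-(U^k.\mathbf{\Phi})=\Phi_k^{-1}\text{bnd}^-(\mathbf{\Phi})$ and $\text{bnd}^+(U^k.\mathbf{\Phi})=\Phi_k^{-1}\text{bnd}^+(\mathbf{\Phi})$, so $\Psi(U^k.\mathbf{\Phi})=C(\Phi_k^{-1}\text{bnd}^-(\mathbf{\Phi}),\Phi_k^{-1}\text{bnd}^+(\mathbf{\Phi}))$. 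By Corollary~\ref{map-C}, every greedy folding line from $\Phi_k^{-1}\text{bnd}^-(\mathbf{\Phi})$ to $\Phi_k^{-1}\text{bnd}^+(\mathbf{\Phi})$ is $(\Psi(U^k.\mathbf{\Phi});A_0,B_0,C_0)$-progressing at all $d_{CV_N}^{sym}$-closest-point projections of $y_0$. Applying $\Phi_k$ — which acts by isometries on $CV_N$, carries greedy folding lines to greedy folding lines, acts by isometries on $FF_N$, and commutes coarsely with $\pi$, so that the notion of being $(C;A_0,B_0,C_0)$-progressing (formulated entirely in terms of closest-point projections, simplices, full recurrence, and diameters of $\pi$-images) is preserved up to adjusting the constants by a bounded amount — I would conclude that every greedy folding line from $\text{bnd}^-(\mathbf{\Phi})$ to $\text{bnd}^+(\mathbf{\Phi})$ is $(\Psi(U^k.\mathbf{\Phi});A_0,B_0,C_0)$-progressing at all $d_{CV_N}^{sym}$-closest-point projections of $\Phi_k.y_0$.

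Finally I would apply Birkhoff's ergodic theorem to the indicator function of $\{\mathbf{\Phi}:\Psi(\mathbf{\Phi})\le M\}$, using the ergodicity of $U$ (and of $U^{-1}$): for $\overline{\mathbb{P}}$-a.e.\ $\mathbf{\Phi}$, the Cesàro averages $\tfrac1n|\{k\in[0,n]:\Psi(U^k.\mathbf{\Phi})\le M\}|$ and $\tfrac1n|\{k\in[-n,0]:\Psi(U^k.\mathbf{\Phi})\le M\}|$ both converge to $\overline{\mathbb{P}}(\Psi\le M)>\theta$, so the set of $k\in\mathbb{Z}$ with $\Psi(U^k.\mathbf{\Phi})\le M$ has density at least $\overline{\mathbb{P}}(\Psi\le M)>\theta$ in the sense of $\theta(\cdot)$. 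By the previous paragraph, for every such $k$ each greedy folding line from $\text{bnd}^-(\mathbf{\Phi})$ to $\text{bnd}^+(\mathbf{\Phi})$ is $(M;A_0,B_0,C_0)$-progressing at all $d_{CV_N}^{sym}$-closest-point projections of $\Phi_k.y_0$; taking $C:=M$ completes the proof. The main point requiring care is the $\text{Out}(F_N)$-equivariance step in the second paragraph — verifying that being $(C;A_0,B_0,C_0)$-progressing transfers correctly between the folding line $\gamma$ together with the projection of $\Phi_k.y_0$ and the $\Phi_k^{-1}$-translated folding line together with the projection of $y_0$, under the isometric actions on $CV_N$ and $FF_N$ and the coarse equivariance of $\pi$ — together with the a.e.\ existence of the off-diagonal boundary pair; everything else is the same bookkeeping as in the proof of Proposition~\ref{density-2}.
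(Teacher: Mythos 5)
Your proposal is correct and follows essentially the same route as the paper: both define the measurable function $\Psi=C\circ(\mathrm{bnd}^-,\mathrm{bnd}^+)$ via Corollary \ref{map-C}, use the equivariance $\mathrm{bnd}^{\pm}(U^k.\mathbf{\Phi})=\Phi_k^{-1}\mathrm{bnd}^{\pm}(\mathbf{\Phi})$ under the Bernoulli shift to transfer the progressing property to the closest-point projections of $\Phi_k.y_0$, choose a level $M$ with $\overline{\mathbb{P}}(\Psi\le M)>\theta$, and conclude with Birkhoff's ergodic theorem, exactly as in the proof of Proposition \ref{density-2}. The only differences are that you make explicit two points the paper leaves implicit (the a.e.\ existence of the off-diagonal boundary pair via Theorem \ref{NPR}, and the coarse-equivariance bookkeeping when translating the progressing property by $\Phi_k$), which is fine.
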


\begin{proof}
We recall that $\overline{\mathcal{P}}$ denotes the space of bilateral sample paths of the random walk on $(\text{Out}(F_N),\mu)$. By Corollary \ref{map-C}, the map 
\begin{displaymath}
\begin{array}{cccc}
\widetilde{C}: &\overline{\mathcal{P}}&\to &\mathbb{R}\\
&\mathbf{\Phi} &\mapsto & C(\text{bnd}^-(\mathbf{\Phi}),\text{bnd}^+(\mathbf{\Phi}))
\end{array}
\end{displaymath}
\noindent is measurable. Let $U$ be the transformation of $\overline{\mathcal{P}}$ induced by the Bernoulli shift on the space of bilateral sequences of increments. Then for $\overline{\mathbb{P}}$-a.e. bilateral sample path $\mathbf{\Phi}$ of the random walk and all $k\in\mathbb{Z}$, all greedy folding lines $\gamma:\mathbb{R}\to CV_N$ from $\text{bnd}^-(\mathbf{\Phi})$ to $\text{bnd}^+(\mathbf{\Phi})$ are $(\widetilde{C}(U^k.\mathbf{\Phi});A_0,B_0,C_0)$-progressing at all $d_{CV_N}^{sym}$-closest-point projections of $\Phi_k.y_0$ to $\gamma$. Let $\theta\in (0,1)$. We can choose $C>0$ such that $$\overline{\mathbb{P}}(\widetilde{C}(\mathbf{\Phi})\le C)>\theta.$$ As in the proof of Proposition \ref{density} in the case of mapping class groups, an application of Birkhoff's ergodic theorem to the ergodic transformation $U$ then implies that for $\mathbb{P}$-a.e. sample path $\mathbf{\Phi}:=(\Phi_n)_{n\in\mathbb{N}}$ of the random walk on $(\text{Out}(F_N),\mu)$, the density of integers $k\in\mathbb{N}$ such that $\gamma$ is $(C;A_0,B_0,C_0)$-progressing at all $d_{CV_N}^{sym}$-closest point projections of $\Phi_k.y_0$ to $\gamma$, is at least $\theta$.
\end{proof}

\begin{prop}\label{key-out-2}
Let $\mu$ be a nonelementary probability measure on $\text{Out}(F_N)$ with finite first moment with respect to $d_{CV_N}$. Then for all $\theta\in (0,1)$, there exist $B,D,M>0$ such that for $\overline{\mathbb{P}}$-a.e. bilateral sample path $\mathbf{\Phi}$ of the random walk on $(\text{Out}(F_N),\mu)$, and all greedy folding lines $\gamma:\mathbb{R}\to CV_N$ from $\text{bnd}^-(\mathbf{\Phi})$ to $\text{bnd}^+(\mathbf{\Phi})$, the set of integers $k\in\mathbb{Z}$ such that $\gamma$ is $(B,D)$-contracting and $M$-high at all $d_{CV_N}^{sym}$-closest-point projections of $y_0$, has density at least $\theta$.
\end{prop}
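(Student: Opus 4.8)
The plan is to deduce this statement directly from Proposition \ref{key-out} by combining it with the two \emph{pointwise} implications established in Section \ref{sec-contr-out}: that progress at a tree $S$ lying on a greedy folding line forces both the contraction property of Definition \ref{de-contr} (Proposition \ref{contr-out}) and highness of $S$ (Proposition \ref{high-out}). Since these implications are statements about individual trees on a folding line, they apply verbatim to each $d_{CV_N}^{sym}$-closest-point projection of $\Phi_k.y_0$ to $\gamma$; hence the density-$\theta$ set of integers $k$ produced by Proposition \ref{key-out} is already a density-$\theta$ set of integers for which $\gamma$ is $(B,D)$-contracting and $M$-high at all such projections, for appropriate constants $B,D,M$ depending only on $\theta$.

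Concretely, I would first fix the constants $A_0,B_0,C_0>0$ and the upper semicontinuous map $C(\cdot,\cdot)$ provided by Corollary \ref{map-C}; recall that Corollary \ref{map-C} guarantees these constants satisfy the hypotheses of Proposition \ref{contr-out}, and (enlarging $C_0$, and $B_0$ in tandem, if necessary, which is compatible with the construction in Corollary \ref{map-C} via Lemma \ref{C-bdd}) we may also assume $C_0$ exceeds the threshold $\alpha$ of Proposition \ref{high-out}. Now let $\theta\in(0,1)$. Applying Proposition \ref{key-out} yields $C>0$ such that, for $\overline{\mathbb{P}}$-a.e. bilateral sample path $\mathbf{\Phi}$ and every greedy folding line $\gamma:\mathbb{R}\to CV_N$ from $\text{bnd}^-(\mathbf{\Phi})$ to $\text{bnd}^+(\mathbf{\Phi})$, the set $E(\mathbf{\Phi},\gamma)$ of integers $k\in\mathbb{Z}$ for which $\gamma$ is $(C;A_0,B_0,C_0)$-progressing at every $d_{CV_N}^{sym}$-closest-point projection of $\Phi_k.y_0$ to $\gamma$ has density at least $\theta$. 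For this value of $C$, Proposition \ref{high-out} supplies $M>0$ and Proposition \ref{contr-out} supplies $B,D>0$ so that, for any tree $S$ lying on any greedy folding line $\gamma$ between points of $\mathcal{UE}$, the property that $\gamma$ be $(C;A_0,B_0,C_0)$-progressing at $S$ implies that $S$ is $M$-high and that $\gamma$ is $(B,D)$-contracting at $S$. Therefore, for every $k\in E(\mathbf{\Phi},\gamma)$, each closest-point projection $z$ of $\Phi_k.y_0$ to $\gamma$ is $M$-high and $\gamma$ is $(B,D)$-contracting at $z$, and since $E(\mathbf{\Phi},\gamma)$ has density at least $\theta$ this is precisely the asserted conclusion.

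I do not expect any genuine obstacle here: all the work has already been done, in Proposition \ref{key-out} (Birkhoff's ergodic theorem applied to the shift $U$, together with the compactness Lemmas \ref{X_0-compact}--\ref{C-bdd} that produce the measurable, locally bounded map $C(\cdot,\cdot)$) and in the geometric Propositions \ref{high-out} and \ref{contr-out}. The only point needing a little care is the bookkeeping of thresholds, i.e.\ checking that a single choice of $A_0,B_0,C_0$ can simultaneously satisfy the hypotheses of Corollary \ref{map-C}, of Proposition \ref{contr-out}, and of Proposition \ref{high-out}; this is straightforward because each of these statements only imposes \emph{lower} bounds on these parameters, so it suffices to take them large enough at once, after which $C$ — and hence $B$, $D$, $M$ — is determined by $\theta$ as above.
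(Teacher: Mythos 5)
Your proposal is correct and is exactly the paper's argument: the paper's proof of this proposition is the one-line deduction from Propositions \ref{high-out}, \ref{contr-out} and \ref{key-out}, with the constants $A_0,B_0,C_0$ having already been fixed via Corollary \ref{map-C} so as to satisfy the hypotheses of both geometric propositions. Your extra bookkeeping about compatibility of the thresholds is just a more explicit rendering of what the paper does implicitly.
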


\begin{proof}
This follows from Propositions \ref{high-out}, \ref{contr-out} and \ref{key-out}.
\end{proof}

\subsection{End of the proof of the spectral theorem}\label{sec4-6}

\begin{proof}[Proof of Theorem \ref{theo-spectral-out}]
We will check that $(\text{Out}(F_N),\mu)$ satisfies the hypotheses in Theorem \ref{spectral-gen}. The pair $(CV_N,FF_N)$ is a hyperbolic $\text{Out}(F_N)$-electrification. All elements of $\text{Out}(F_N)$ acting loxodromically on $FF_N$ (i.e. fully irreducible outer automorphisms) have an electric translation axis in $CV_N$, and $(\text{Out}(F_N),\mu)$ is $\overline{CV_N}$-boundary converging (Theorem \ref{NPR}). The first item in the definition of contracting pencils of geodesics (Definition \ref{pencil}) was established by Namazi--Pettet--Reynolds \cite[Lemmas 7.16 and 7.17]{NPR14} for the collection of greedy folding lines between any two distinct trees in $\mathcal{UE}$. In view of Proposition \ref{key-out-2}, the second item in this definition is also satisfied by this collection of lines. Therefore, Theorem \ref{spectral-gen} applies to $(\text{Out}(F_N),\mu)$. Recall that the translation length of any fully irreducible outer automorphism on $CV_N$ is equal to the logarithm of its stretching factor. Therefore, for $\mathbb{P}$-a.e. sample path of the random walk on $(\text{Out}(F_N),\mu)$, we have $$\lim_{n\to +\infty}\frac{1}{n}\log\lambda(\Phi_n^{-1})=L,$$ where $L$ is the drift of the random walk on $(\text{Out}(F_N),\mu)$ with respect to $d_{CV_N}$. 
\end{proof}

{\small
\bibliographystyle{amsplain}
\bibliography{TS-biblio-final}

\medskip

\sc \noindent Fran\c{c}ois Dahmani,  Universit\'e de Grenoble Alpes,  Institut Fourier, F-38000 Grenoble, France.

\tt e-mail:francois.dahmani@univ-grenoble-alpes.fr

\smallskip

\sc \noindent Camille Horbez, Laboratoire de Math\'ematiques d'Orsay, Univ. Paris-Sud, CNRS, Universit\'e Paris-Saclay, 91405 Orsay, France.

\tt e-mail:camille.horbez@math.u-psud.fr

}
\end{document}